\documentclass[12pt,a4paper,reqno]{amsart}
\usepackage[all]{xy} 
\usepackage{amsmath}
\usepackage{amsfonts}
\usepackage{amssymb}
\usepackage{mathrsfs}

\DeclareFontFamily{U}{wncy}{} 
\DeclareFontShape{U}{wncy}{m}{n}{<->wncyr10}{}
\DeclareSymbolFont{mcy}{U}{wncy}{m}{n}
\DeclareMathSymbol{\Sha}{\mathord}{mcy}{"58}

\setlength{\textwidth}{15cm}
\setlength{\oddsidemargin}{0pt}
\setlength{\evensidemargin}{0pt}

\theoremstyle{plain}
\newtheorem{theorem}{Theorem}[subsection]
\newtheorem{lemma}[theorem]{Lemma}
\newtheorem{proposition}[theorem]{Proposition}
\newtheorem{corollary}[theorem]{Corollary}
\newtheorem{definition}[theorem]{Definition}

\theoremstyle{definition}
\newtheorem{remark}[theorem]{{\textit{Remark}}}

\newtheorem{Aproposition}{Proposition}[section] 
\newtheorem{Aremark}[Aproposition]{Remark}

\newcommand{\Z}{\mathbb{Z}}
\newcommand{\Q}{\mathbb{Q}}
\newcommand{\F}{\mathbb{F}}

\newcommand{\C}{\mathbb{C}}

\newcommand{\N}{\mathcal{N}}
\renewcommand{\O}{\mathcal{O}}
\newcommand{\cG}{\mathcal{G}}
\newcommand{\cS}{\mathcal{S}}

\DeclareMathOperator{\Br}{Br}
\DeclareMathOperator{\coh}{H}

\DeclareMathOperator{\Gal}{Gal}

\DeclareMathOperator{\Pic}{Pic}

\DeclareMathOperator{\Sel}{{Sel}}

\DeclareMathOperator{\spec}{Spec}
\DeclareMathOperator{\Hom}{Hom}

\DeclareMathOperator{\Tr}{Tr}

\DeclareMathOperator{\ord}{ord}

\DeclareMathOperator{\Ker}{Ker}

\begin{document}

\title[the $\mu$-invariants]{On the $\mu$-invariants of abelian varieties over function fields of positive characteristic}

\author{King-Fai Lai}
\address{School of Mathematics and Statistics,
Henan University,
Jin Ming Avenue, Kaifeng, Henan, 475004, China}
\email{kinglaihonkon@163.com}

\author{Ignazio Longhi}
\address{Department of Mathematics, Indian Institute of Science, Bangalore - 560012, India}
\email{ignaziol@iisc.ac.in}

\author{Takashi Suzuki}
\address{Department of Mathematics, Chuo University,
1-13-27 Kasuga, Bunkyo-ku, Tokyo 112-8551, Japan}
\email{tsuzuki@gug.math.chuo-u.ac.jp}
\thanks{The third author (T. S.) is a Research Fellow of Japan Society for the Promotion of Science
and supported by JSPS KAKENHI Grant Number JP18J00415.}

\author[Tan]{Ki-Seng Tan}
\address{Department of Mathematics\\
National Taiwan University\\
Taipei 10764, Taiwan}
\email{tan@math.ntu.edu.tw}
\thanks{The fourth author (K.-S. T.) was supported in part by the Ministry of Science and Technology of Taiwan,
MOST 107-2115-M-002-010-MY2.
It is our pleasure to thank NCTS/TPE
for supporting a number of meetings of the authors in National Taiwan University.}

\author{Fabien Trihan}
\address{Sophia University,
Department of Information and Communication Sciences
7-1 Kioicho, Chiyoda-ku, Tokyo 102-8554, Japan}
\email{f-trihan-52m@sophia.ac.jp}

\subjclass[2010]{Primary: 11R23; Secondary: 11G10, 11S40, 14J27}
\keywords{Iwasawa theory, abelian variety, Selmer group, $\mu$-invariant, elliptic surface}

\begin{abstract} Let $A$ be an abelian variety over a global function field $K$ of characteristic $p$. We study the $\mu$-invariant appearing in the Iwasawa theory of $A$ over the unramified $\Z_p$-extension of $K$. Ulmer suggests that this invariant is equal to what he calls the dimension of the Tate-Shafarevich group of $A$ and that it is indeed the dimension of some canonically defined group scheme. Our first result is to verify his suggestions. He also gives a formula for the dimension of the Tate-Shafarevich group (which is now the $\mu$-invariant) in terms of other quantities including the Faltings height of $A$ and Frobenius slopes of the numerator of the Hasse-Weil $L$-function of $A / K$ assuming the conjectural Birch-Swinnerton-Dyer formula. Our next result is to prove this $\mu$-invariant formula unconditionally for Jacobians and for semistable abelian varieties. Finally, we show that the ``$\mu=0$'' locus of the moduli of isomorphism classes of minimal elliptic surfaces endowed with a section and with fixed large enough Euler characteristic is a dense open subset.
\end{abstract}

\date{October 13, 2020}
\maketitle
\tableofcontents

\section{Introduction}\label{s:int}

Let $\Gamma$ be a profinite group isomorphic to $\Z_p$ and let $\Lambda$ denote the completed group algebra $\Z_p[[\Gamma]]$. By a general structure theorem, if $M$ is a finitely generated torsion $\Lambda$-module then one has a pseudo-isomorphism (that is, a homomorphism with finite kernel and cokernel)
\begin{equation} \label{e:ps0} M\,\sim\,\bigoplus_i\Lambda/(p^{m_i})\oplus\bigoplus_j\Lambda/(f_j) \end{equation}
(where the $f_j$'s can be identified with certain irreducible polynomials after choosing a topological generator of $\Gamma$ and hence fixing an isomorphism between $\Lambda$ and the power series ring $\Z_p[[t]]$). In particular \eqref{e:ps0} makes it possible to define the number
$$\mu(M):=\sum_im_i\;,$$
which is called the $\mu$-invariant of $M$. The algebraic side of Iwasawa theory deals with modules over rings exemplified by $\Lambda$ and understanding their $\mu$-invariant has always been an interesting question. For example, in the setting above $\mu(M)=0$ means that $M$ is finitely generated as a $\Z_p$-module. More generally, let $M_{\Gamma^{p^n}}$ denote the set of coinvariants for the subgroup of index $p^n$. Then \eqref{e:ps0} implies the formula
$$\frac{\log|M_{\Gamma^{p^n}}|}{p^n\log p}=\mu(M)+o(1)\,\;\;\text{ as }n\rightarrow\infty\,,$$
provided that $|M_{\Gamma^{p^n}}|<\infty$ holds for every $n$.

The $\mu$-invariant was first studied in the following situation. Let $F$ be a number field and let $F^{cyc}$ denote its $\Z_p$-cyclotomic extension: then one takes $\Gal(F^{cyc}/F)$ as $\Gamma$ and $\Z_p[[\Gal(F^{cyc}/F)]]$ as $\Lambda$. The $\Lambda$-module to be considered is $M(F^{cyc}/F)$, the inverse limit of the $p$-parts of class groups in the intermediate extensions of $F^{cyc}/F$. Iwasawa showed that $M(F^{cyc}/F)$ is finitely generated and torsion and he conjectured the equality $\mu\big(M(F^{cyc}/F)\big)=0$. His conjecture was proved by Ferrero-Washington in the case when $F/\Q$ is an abelian extension, but it is still open in the general setting. On the other hand, Iwasawa \cite{Iwa73} also gave examples of $\Z_p$-extensions $L/F$ of number fields such that $\mu\big(M(L/F)\big)>0$.

The $\Lambda$-modules of interest in this paper are duals of Selmer groups of abelian varieties over function fields. The Iwasawa theory of abelian varieties was started by Mazur in \cite{Maz72}, where he already gave some examples of elliptic curves with non-vanishing $\mu$ over $\Q^{cyc}$ (\cite[\S10]{Maz72}). More recently Coates and Sujatha have conjectured that the $\mu$-invariant of an elliptic curve always vanishes in the cyclotomic extension of a number field if one replaces the Selmer group with the fine Selmer group (\cite[Conjecture A]{CS05}; see also \cite{Suj10} and \cite{Suj11} for an introduction to these ideas).

\subsection{Our setting: $\Z_p$-extensions of function fields} In the following, $K$ is a function field in one variable over a finite field of characteristic $p$. The $\Z_p$-extension of $K$ we are going to consider is the arithmetic one, that is, the unique $\Z_p$-extension obtained by enlarging the field of constants $\F_{q}$, which is an obvious analogue of $F^{cyc}/F$.

\subsubsection{$\mu$-invariants over function fields: class groups} In  this paper, we study the $\mu$-invariant attached to abelian varieties. Before discussing it, we review the situation for class groups, to place our results better in context. Let $h_n$ denote the cardinality of the $p$-part of the class group at the $n$-th layer of the extension. In the case of the arithmetic $\Z_p$-extension, the growth of $h_n$ can be computed from the zeta function and Weil's work on the Riemann hypothesis gives some estimation. Interpreting class groups as sets of rational points in a Jacobian, geometric considerations and some simple Galois theory yield a more precise result: see \cite{Ros73}. The outcome is that $\log(h_n)$ is proportional to $n$, for $n$ large enough: that is, the relevant $\mu$-invariant vanishes (\cite[Proposition 5.4]{LZ97}). (It might be worth to recall that similar observations and the attempt to develop an analogue theory for number fields were Iwasawa's starting point: see \cite{Tha94} for more on this function field-number field dialogue.)

Things change rather dramatically if instead one takes a geometric $\Z_p$-extension of $K$ - that is, an extension where the constant field is the same at any layer. Such extensions can ramify at infinitely many different places and in such a situation the limit of class groups is not a noetherian $\Lambda$-module (\cite[Theorem 2]{GK88}). Moreover, it turns out that in a geometric $\Z_p$-extension $\log(h_n)$ grows at least as $p^{2n}$ (\cite[Theorem 1]{GK88}).

\begin{remark} In the function field setting there is also a different kind of ``cyclotomic'' extension, obtained by adding to $K$ the $\mathfrak p^\infty$-torsion of a rank 1 Drinfeld module, for $\mathfrak p$ a place of $K$. It turns out that when $K$ is the rational function field (that is, $K=\F_q(\theta)$ ) and the cyclotomic extension is generated through the Carlitz module, an analogue of the Ferrero-Washington theorem holds (\cite[Theorem 1.3]{ABBL20}). An important difference with the situation we discussed before is that the Galois group of the extension in \cite{ABBL20} is isomorphic to an infinite product of copies of $\Z_p$: hence the corresponding Iwasawa algebra is not even noetherian. A general theory for modules over such algebras is still lacking, so it is not completely clear what is the meaning (or the general definition) of a $\mu$-invariant in this setting. The result in \cite{ABBL20} consists in proving that a certain $p$-adic $L$-functions generates the ($\chi$-part of the) Fitting ideal of a limit of class groups, and computing that the reduction of this $L$-function modulo $p$ is not zero. \end{remark}

\subsubsection{$\mu$-invariants over function fields: abelian varieties} For an abelian variety $A/K$, Iwasawa theory over the arithmetic extension of $K$ was first developed in \cite{OT09}. In particular, \cite[Theorem 1.7]{OT09} proves that the dual of the Selmer group is a finitely generated torsion $\Lambda$-module: thus one can apply \eqref{e:ps0} and define a $\mu$-invariant $\mu_{A/K}$. Moreover, \cite[Theorem 1.8]{OT09} gives some conditions for its vanishing when $A$ is isotrivial: more precisely, \begin{itemize}
\item $\mu_{A/K}=0$ always holds if, after base change by a finite extension, $A$ becomes isomorphic to a constant ordinary abelian variety;
\item in the supersingular case, $\mu_{A/K}=0$ is true if $A$ becomes constant over a finite extension of $K$ with invertible Hasse-Witt matrix. \footnote{In \cite[Theorem 1.8]{OT09}, this is given as an ``if and only if''. However, that is wrong: we discuss it in \S\ref{su:sic}.}  \end{itemize}
Finally, \cite[Theorem 1.9]{OT09} shows an application of $\mu_{A/K}=0$ to non-commutative Iwasawa theory.

We are not aware of other works discussing the $\mu$-invariants of abelian varieties over global function fields,
except the paper of Ulmer that we will discuss below.

\subsubsection{Ulmer's notion of the dimension of $\Sha$}
\label{ss:Ulmer}
Outside the context of Iwasawa theory,
Ulmer \cite[Section 4]{Ulm19} defines a certain number
called the \emph{dimension of $\Sha(A)$} and denoted as $\dim \Sha(A)$,
by looking at the asymptotic behavior of the order of the $p$-primary part
of the Tate-Shafarevich group $\Sha(A)$ of an abelian variety $A / K$
under extensions of the constant field $\F_{q}$,
assuming the finiteness of Tate-Shafarevich groups.
His aim is to study the so-called Brauer-Siegel problem for abelian varieties over function fields
using $\dim \Sha(A)$.
The relevance of his work to our context of Iwasawa theory
lies in his suggestion \cite[Remark 4.3 (5)]{Ulm19} of the potential relation
between $\dim \Sha(A)$ and the $\mu$-invariant $\mu_{A / K}$.

By the result of Kato-Trihan \cite{KT03},
the assumption of the finiteness of Tate-Shafarevich groups implies
the Birch and Swinnerton-Dyer formula for the leading coefficient
of the Hasse-Weil $L$-function of $A / K$ at $s = 1$.
Using this formula, Ulmer \cite[Proposition 4.2]{Ulm19} gives a formula for $\dim \Sha(A)$
in terms of various other quantities including the degree of the Hodge bundle of the N\'eron model of $A$
(the function field analogue of the Faltings height)
and slopes ($p$-adic valuations of the roots) of the numerator of the $L$-function of $A / K$.
From this formula, he deduces that the limit defining $\dim \Sha(A)$ exists
(as a real number a priori!)\ and is an integer.
However, there is a gap in the proof of this final integrality result,
since the corresponding integrality property of the quantity involving slopes is not properly justified there.
It only shows that $e \cdot \dim \Sha(A)$ is an integer,
where $q = p^{e}$.
We will see (and solve) this problem in the appendix of this paper.

In \cite[Remark 4.3 (2)]{Ulm19}, Ulmer justifies the terminology ``dimension of $\Sha$''
in the special case where $A / K$ is the Jacobian of a proper smooth curve over $K$.
In this case, its proper flat regular model gives a surface $\mathcal{S}$ over $\F_{q}$.
He relates the definition of $\dim \Sha(A)$
to the genuine dimension of the actual group scheme $\underline{\coh}^{2}(p^{\infty})$ over $\F_{q}$
defined by Artin \cite{Art74b} and Milne \cite{Mil76},
which is the canonical group scheme structure on the second flat cohomology of $\mathcal{S}$.
Ulmer \cite[Remark 4.3 (3), (4)]{Ulm19} then suggests a relation
between his formula for $\dim \Sha(A)$ for the Jacobian $A / K$
and Milne's formula \cite[the last equation of Section 6]{Mil75}
on Euler characteristics for the surface $\mathcal{S}$,
still assuming the finiteness of Tate-Shafarevich groups.

This relation between Ulmer's work and Artin-Milne's work
does not easily generalize to arbitrary abelian varieties over $K$,
since an arbitrary abelian variety is not necessarily a product factor
but only an isogeny factor of a Jacobian,
and the degree of the needed isogeny might be a power of $p$ that kills
the (unipotent) identity component of $\underline{\coh}^{2}(p^{\infty})$.

By \cite{Gro68}, the finiteness of $\Sha(A)$ for the Jacobian $A$ is equivalent
to the finiteness of the Brauer group of $\mathcal{S}$,
the latter of which is the Tate conjecture for divisors on $\mathcal{S}$.
This conjecture is verified for several surfaces including the ones treated in \cite{Ulm19}.
In general, it is a hard conjecture of motivic origin.

Notice that the group scheme $\underline{\coh}^{2}(p^{\infty})$ for a surface $\mathcal{S}$ and
the $\mu$-invariant $\mu_{A / K}$ for an arbitrary abelian variety $A$ are unconditionally defined.
The general philosophy here is that
in Iwasawa theory and other purely $p$-adic theories, we can do much more
``without hard motivic conjectures'' (at least in the function field case).

\subsection{Our results: contents of this paper}
We assume that the field of constants of our $K$ contains $q$ elements. The same $q$ appears in our definition of the $\mu$-invariant $\mu_{A/K}$, which is slightly different from the usual one (see Definition \ref{d:mu}). In particular, it is not a priori obvious that $\mu_{A/K}$ is always an integer: this will be a consequence of Theorem \ref{muissA}.

First, we study the behavior with respect to base change (\S\ref{sub:bc}). We check that, if $L/K$ is a finite Galois extension, we have $\mu_{A/K}\leqslant\mu_{A/L}$ (Lemma \ref{l:kl}) and we give conditions ensuring that $\mu_{A/K}=0$ implies $\mu_{A/L}=0$: in particular, we prove that this implication holds in a $p$-extension unramified outside of ordinary places (Theorem \ref{t:ocht}).

In \S\ref{su:ts}, we observe that $\mu_{A/K}$ is determined by the Tate-Shafarevich group of $A$ over the unramified $\Z_{p}$-extension. As a consequence, one can compute it from the cardinality of the $p^m$-torsion of $\Sha$ in intermediate extensions.
This is a non-trivial step: the crucial fact here is the control theorem for Selmer groups \cite[Theorem 4]{Tan10},
which shows how to obtain information about the Selmer groups for $A$ over the intermediate extensions
from the Selmer group for $A$ over the $\Z_{p}$-extension.
Assuming that the $p$-part of $\Sha$ is always finite in our $\Z_p$-tower, we deduce the limit formula  \eqref{e:shafni} for $\mu_{A/K}$ by a somewhat technical argument (coming from the difference between $\Sha$ and $\Sel$).
This shows that Ulmer's $\dim \Sha(A)$ is equal to $\mu_{A/K}$, as suggested in \cite[Remark 4.3 (5)]{Ulm19}.

In \S\ref{su:su}, following \cite{Suz19}, we define an $\F_q$-group scheme $\cG_A^{(1)}$ which represents the Tate-Shafarevich group $\Sha(A / K \bar{\F}_{q})$.%
\footnote{This representation is a bit subtle.
See the first paragraph of \S\ref{su:su} for the precise meaning.
The subtleties exist but are ``controllable'',
which is the content of the control theorem for $\Sha$,
namely Proposition \ref{p:ShaGone}.}
We prove the formula
$$\mu_{A/K}=\dim\cG_A^{(1)}$$ 
(Theorem \ref{muissA}).
With $\dim \Sha(A) = \mu_{A / K}$ (assuming the finiteness of Tate-Shafarevich groups),
this formula justifies the terminology ``dimension of $\Sha$'' for an arbitrary abelian variety $A / K$.
It also shows that $\mu_{A / K}$ (or $\dim \Sha(A)$) is an integer.
Using the group scheme $\mathcal{G}_{A}^{(1)}$
(and the related group schemes $\mathcal{G}_{A}^{(i)}$),
we prove a control theorem for $\Sha$ (Proposition \ref{p:ShaGone}),
which is not a consequence of the control theorem for $\Sel$ mentioned earlier.
This gives another more direct proof of $\mu_{A / K} = \dim \Sha(A)$.

In \eqref{e:muform}, we reinterpret the formula in \cite[Proposition 4.2]{Ulm19} on $\dim \Sha(A)$
(which assumes the finiteness of Tate-Shafarevich groups and uses the BSD formula) as a formula on $\mu_{A / K}$
and hence give an upper bound \eqref{e:upperbound} for $\mu_{A/K}$.
Since both of the sides of the resulting $\mu$-invariant formula \eqref{e:muform} are unconditionally defined,
it is then natural to ask if the formula \eqref{e:muform} can be proved unconditionally.
We will give partial results in subsequent chapters.

In Chapter \ref{s:JacSur}, we take $A$ to be the Jacobian of a curve $\cS_{K}$ over $K$. In this case, we define an $\F_q$-group scheme $\underline{\Br}$ representing the Brauer group of the associated surface $\cS$ over $\F_{q}$ using Artin-Milne's group scheme $\underline{\coh}^{2}(p^{\infty})$. We show that the group schemes $\mathcal{G}_{A}^{(1)}$ and $\underline{\Br}$ are isomorphic up to finite \'etale groups (Proposition \ref{p:ShaBr}). This implies that $\mu_{A/K} = \dim \underline{\Br}$. As a consequence, we can show that in this case the formula \eqref{e:muform} is equivalent to Milne's formula \cite[the last equation of Section 6]{Mil75} (Corollary \ref{c:MilUlm}), as suggested by Ulmer \cite[Remark 4.3 (3), (4)]{Ulm19}. This proves \eqref{e:muform} independently of the finiteness of $\Sha$. As an application, in Propositions \ref{p:trivmu} and \ref{p:aborK3}, we give necessary and sufficient conditions for $\mu_{A/K}=0$ and (under some additional assumptions) $\mu_{A/K}=1$. Proposition \ref{p:Shio} provides an explicit example of an elliptic curve with $\mu$-invariant 1.

Chapter \ref{s:ssav} deals with the case of semistable abelian varieties. Theorem \ref{t:muss} shows that formula \eqref{e:muform} holds with no condition on $\Sha$ also in this setting. The proof is based on the fact that in this case the Iwasawa Main Conjecture holds (\cite{LLTT16}) and so $\mu_{A/K}$ can be computed explicitly from the ($p$-adic) $L$-function.

Summarizing the previous three chapters,
the $\mu$-invariant formula \eqref{e:muform} is true for the following three cases:
\begin{itemize}
	\item if the Tate-Shafarevich groups are finite (by Ulmer),
	\item if $A$ is a Jacobian, or
	\item if $A$ is semistable everywhere.
\end{itemize}

In Chapter \ref{s:muelliptic}, we specialize to $A$ being an elliptic curve. As explained in Theorem \ref{t:tan}, this gives a simplified version of \eqref{e:muform} and \eqref{e:upperbound} and hence more instances of $\mu_{A/K}>0$ (Proposition \ref{p:KodOne}). In \S\ref{su:isogen}, we investigate the variation of the $\mu$-invariant in isogeny classes. If $p>2$, the Legendre form of the Weierstrass equation provides sufficient conditions for $\mu_{A/K}=0$ (Theorem \ref{t:Legendretype}).

In Chapter \ref{su:generic}, we prove that, at least for $p>3$, the vanishing of $\mu_{A/K}$ is the normal behavior in the following sense: for $n>\frac{1}{2}(g-1)$ (where $g$ is the genus of the curve corresponding to $K$), we build an irreducible variety $X(n,\bar{\mathcal C})$ parameterizing elliptic curves over $K$ with discriminant of degree $12n$ and we show that the locus determined by $\mu=0$ is open and dense (Theorem \ref{t:GenVan}).

Finally, Chapter \ref{s:verify} contains some explicit examples where we can compute the $\mu$-invariant.
With Theorem \ref{t:tan} and Magma \cite{BCP97},
we can routinely and quickly calculate the $\mu$-invariant of
(a quadratic base change of) an elliptic curve over a rational function field
of small characteristic with mild complexity.
We demonstrate this in \S \ref{su:high} and \S \ref{su:nonss} among other examples.

The paper ends with an appendix explaining the gap in the literature mentioned in Section \ref{ss:Ulmer}
and proving a certain integrality property
of the exact $p$-power factor of the $L$-function of $A$.
This shows that the slope term in the formula \eqref{e:muform} is an integer.
This is not a consequence of the result $\mu_{A / K} \in \Z$ in Theorem \ref{muissA}
since \eqref{e:muform} has not been verified unconditionally for all abelian varieties.
These two types of integrality should be considered as separate results.

\subsection*{Acknowledgments}
The authors are grateful to
Luc Illusie, Bruno Kahn, Masato Kurihara, Kentaro Mitsui, Yukiyoshi Nakkajima,
Tadashi Ochiai, Atsushi Shiho, Douglas Ulmer and Christian Wuthrich
for helpful discussions.
Many thanks also to the referee for the careful reading and the comments, which helped to significantly improve the paper.

\subsection{Notation}\label{su:notation}
Let $p>0$ be a prime number. Let $K$ be a function field in one variable with field of constants $\F_q$, $q=p^e$. Write $\F_{q,n}$, $\F_{q,\infty}$, $K_n^{(p)}$ and $K_\infty^{(p)}$, for
$\F_{q^{p^n}}$, $\bigcup_n \F_{q,n}$, $K\F_{q,n}$ and $K\F_{q,\infty}$ so that $K_n^{(p)}$ is the $n$th layer of $K_\infty^{(p)}/K$.
Denote $\Gamma:=\Gal(K_\infty^{(p)}/K)$ and $\Lambda:=\Z_p[[\Gamma]]$.

The cohomology groups will be usually Galois or \'etale cohomology,
except for cohomology of finite group schemes where we use the flat cohomology, denoted $\coh^*_{fl}$ (see \cite[II, \S 1 and III, Definition 1.5]{Mil80} for its definition and \cite[III]{Mil06} for the facts needed in this paper).

Let $\mathcal C$ be the smooth projective curve over $\F_q$ having $K$ as its function field. Write $g_{\mathcal C}$ for
the genus of $\mathcal C$.  Let $A/K$ be an abelian variety.
For an abelian group $H$, we denote its $p^n$-torsion subgroup by $H[p^n]$,
including $n = \infty$ (so that $H[p^{\infty}]$ means the $p$-primary torsion part).
However we let $A_{p^n}$ denote the kernel of the multiplication by $p^n$ on $A$
and put $A_{p^\infty}:=\bigcup_n A_{p^n}$.
For a field extension $L/K$, denote the $p^{n}$-Selmer group and the $p^{n}$-Tate-Shafarevich groups ($n=\infty$ included) by
$$\Sel_{p^n}(A/L):=\ker (\coh^1_{fl}(L,A_{p^n})\longrightarrow \bigoplus_w \coh^1(L_w, A)),$$
$$\Sha_{p^n}(A/L):=\ker (\coh^1(L,A)[p^{n}]\longrightarrow \bigoplus_w \coh^1(L_w, A)),$$
where $w$ runs through all places of $L$.
By dropping ``$[p^{n}]$'' in the latter equation,
we obtain the usual definition of the (full) Tate-Shafarevich group $\Sha(A / L)$.

Let ${}^\vee$ denote the Pontryagin dual.

We will denote by $\mathcal{A}$ the N$\acute{\text{e}}$ron model of $A$.
The $K / \F_{q}$-trace of $A$ (\cite{Con06}) is denoted by $\mathrm{Tr}_{K / \F_{q}}(A)$.

\section{The $\mu$-invariant for abelian varieties}\label{s:gen}
Let $A/K$ be an abelian variety and let $X_{A/K}$ be the Pontryagin dual of the Selmer group $\Sel_{p^\infty}(A/K_\infty^{(p)})$.
Then $X_{A/K}$ is finitely generated and torsion over $\Lambda$ (\cite[Theorem 1.7]{OT09}).
From a general structure theorem for such modules, there is an exact sequence
\begin{equation}\label{e:x}
0\longrightarrow \bigoplus_{i=1}^k \Lambda/(p^{\mu_i}) \oplus \bigoplus_{j=1}^l \Lambda/(g_j)\longrightarrow X_{A/K}
\longrightarrow F\rightarrow 0,
\end{equation}
where $F$ is finite and each $g_j$ corresponds to a power of an irreducible distinguished polynomial in the isomorphism $\Lambda\simeq\Z_p[[t]]$.
The following definition is different from the
convention, but it suits us well (see Theorem \ref{muissA}).
\begin{definition} \label{d:mu}
We define the $\mu$-invariant of $X_{A/K}$ as the non-negative rational number $\mu_{A/K}$ such that 
\begin{equation} \label{e:dfmu} q^{\mu_{A/K}}=\prod_{i=1}^k p^{\mu_i}\,. \end{equation}
\end{definition}

Readers should note that on the left-hand side of \eqref{e:dfmu} there is a power of $q$ but on the right-hand side there is a product of powers of $p$.
We will see in Theorem \ref{muissA} that $\mu_{A / K}$ is actually an integer.

The above is not specific to $X_{A / K}$.
For any finitely generated torsion $\Lambda$-module $X$ appearing in this paper,
we define its $\mu$-invariant by \eqref{e:x} and \eqref{e:dfmu}.
It is an integer divided by $e$ in general, where $q = p^{e}$.

\subsection{Base changes}\label{sub:bc}
The snake lemma applied to the multiplication by $p$ on \eqref{e:x} yields the exact sequence
$$F[p]\longrightarrow \bigoplus_{i=1}^k\Lambda/(p) \oplus \bigoplus_{j=1}^l\Lambda/(g_j,p)   \longrightarrow X_{A/K}/pX_{A/K}\longrightarrow F/pF.$$

\begin{lemma}\label{l:mu=0}
$\mu_{A/K}=0$ if and only if $\Sel_p(A/K_\infty^{(p)})$ is finite.
\end{lemma}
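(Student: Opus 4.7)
The plan is to reformulate the finiteness of $\Sel_p(A/K_\infty^{(p)})$ as finiteness of $X_{A/K}/pX_{A/K}$ via Pontryagin duality, and then read off the answer from the four-term snake-lemma sequence displayed just above the lemma.

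First, set $M:=\Sel_{p^\infty}(A/K_\infty^{(p)})$, so $X_{A/K}=M^\vee$ and $\Sel_p(A/K_\infty^{(p)})=M[p]$. Applying Pontryagin duality to the exact sequence $0\to M[p]\to M\xrightarrow{p} M\to M/pM\to 0$ gives a natural isomorphism $M[p]^{\vee}\cong X_{A/K}/pX_{A/K}$. Since $M[p]$ is a discrete $p$-torsion abelian group, it is finite if and only if its Pontryagin dual $X_{A/K}/pX_{A/K}$ is finite, so it suffices to show that $\mu_{A/K}=0$ is equivalent to finiteness of $X_{A/K}/pX_{A/K}$.

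Next I would inspect the displayed four-term sequence term by term. The outer terms $F[p]$ and $F/pF$ are finite because $F$ is. Each summand $\Lambda/(g_j,p)$ is finite: since $g_j$ is (a power of) a distinguished polynomial, under $\Lambda\cong\Z_p[[t]]$ it reduces modulo $p$ to a power $t^N$, whence $\Lambda/(g_j,p)\cong\F_p[[t]]/(t^N)$ has cardinality $p^N$. By contrast, each summand $\Lambda/(p)\cong\F_p[[t]]$ is infinite.

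The conclusion is then immediate from a case split. If $\mu_{A/K}=0$, then by Definition~\ref{d:mu} every $\mu_i=0$, so $k=0$ and the middle term of the four-term sequence is finite; sandwiched between finite groups, $X_{A/K}/pX_{A/K}$ is finite. Conversely, if $\mu_{A/K}>0$, then $k\geqslant 1$ and the middle term contains the infinite summand $\Lambda/(p)$; the image of the finite group $F[p]$ is finite, so the cokernel of the leftmost arrow is infinite, and it injects into $X_{A/K}/pX_{A/K}$, which is therefore infinite. There is no real obstacle here; the only subtlety to keep in mind is that the distinguished-polynomial summands $\Lambda/(g_j)$ in~\eqref{e:x} contribute only finite pieces modulo $p$, which is the structural reason the $\mu$-invariant detects exactly the $p$-primary summands of the pseudo-isomorphism.
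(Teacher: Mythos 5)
Your proposal is correct and follows essentially the same route as the paper: you dualize so that finiteness of $\Sel_p(A/K_\infty^{(p)})$ becomes finiteness of $X_{A/K}/pX_{A/K}$, and then read the answer off the displayed four-term sequence using that each $\Lambda/(g_j,p)$ is finite while $\Lambda/(p)$ is infinite. The paper's own proof is just a one-line compression of exactly this argument.
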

\begin{proof}
Because each $\Lambda/(g_j,p)$ is finite, the lemma follows via duality from the above exact sequence.
\end{proof}

\begin{lemma}\label{l:kl}
If $L/K$ is a finite Galois extension with $\mu_{A/L}=0$, then $\mu_{A/K}=0$.
\end{lemma}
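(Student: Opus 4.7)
The plan is to translate both the hypothesis and the conclusion into finiteness statements about Selmer groups via Lemma \ref{l:mu=0}, and then compare the Selmer groups over $K_\infty^{(p)}$ and $L_\infty^{(p)}$ through a Hochschild--Serre argument.

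First I would verify that $L_\infty^{(p)} = L \cdot K_\infty^{(p)}$. Writing the constant field of $L$ as $\F_{q^{p^a m}}$ with $\gcd(m,p) = 1$, both towers turn out to equal $L \cdot \bigcup_{n \geq a} \F_{q^{p^n m}}$, so they agree. Consequently $G := \Gal(L_\infty^{(p)}/K_\infty^{(p)})$ injects into $\Gal(L/K)$ by restriction and is therefore finite.

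Next, I would apply the inflation--restriction part of the Hochschild--Serre spectral sequence in flat cohomology for the finite Galois étale cover $\spec L_\infty^{(p)} \to \spec K_\infty^{(p)}$, yielding the exact sequence
\begin{equation*}
0 \to \coh^1\bigl(G,\,A_p(L_\infty^{(p)})\bigr) \to \coh^1_{fl}(K_\infty^{(p)}, A_p) \to \coh^1_{fl}(L_\infty^{(p)}, A_p).
\end{equation*}
The left-hand term is the cohomology of a finite group with values in the finite group $A_p(L_\infty^{(p)}) \subseteq A_p(\bar{K})$, hence is finite. Because restriction commutes with the local conditions cutting out the Selmer group, it induces a map
\begin{equation*}
\Sel_p(A/K_\infty^{(p)}) \to \Sel_p(A/L_\infty^{(p)})
\end{equation*}
whose kernel is contained in the finite group $\coh^1(G, A_p(L_\infty^{(p)}))$.

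By Lemma \ref{l:mu=0}, the assumption $\mu_{A/L} = 0$ means the target $\Sel_p(A/L_\infty^{(p)})$ is finite, and then so is the source. A second application of Lemma \ref{l:mu=0} gives $\mu_{A/K} = 0$. The one delicate point is invoking the Hochschild--Serre sequence in the flat rather than étale topology; this is standard since $L_\infty^{(p)}/K_\infty^{(p)}$ is étale and $A_p$ is representable on the flat site, but it is the only non-formal ingredient in the argument.
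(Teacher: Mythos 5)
Your proposal is correct and follows essentially the same route as the paper: both arguments show that the kernel of the restriction map $\Sel_p(A/K_\infty^{(p)})\to\Sel_p(A/L_\infty^{(p)})$ sits inside the finite group $\coh^1\bigl(\Gal(L_\infty^{(p)}/K_\infty^{(p)}),A_p(L_\infty^{(p)})\bigr)$ and then apply Lemma \ref{l:mu=0} twice. You simply make explicit what the paper leaves implicit (the identification $L_\infty^{(p)}=LK_\infty^{(p)}$, the inflation--restriction sequence in flat cohomology, and the compatibility of restriction with the local conditions).
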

\begin{proof}
The kernel of the restriction map
$\Sel_p(A/K_\infty^{(p)})\longrightarrow \Sel_p(A/L_\infty^{(p)})$ is finite
since it is a subgroup of the finite group
$\coh^1(L_\infty^{(p)}/K_\infty^{(p)}, A_p(L_\infty^{(p)}))$.
By Lemma \ref{l:mu=0}, we obtain the result.
\end{proof}

For a Galois extension $L/K$ with $G:=\Gal(L_\infty^{(p)}/K_\infty^{(p)})$, denote
$$\N_{L/K}:=\ker\,[\coh^1_{fl}(K_\infty^{(p)},A_p)\longrightarrow \bigoplus_w \coh^1(K_{\infty,w}^{(p)},A)/\coh^1(G_w,A(L_{\infty,w}^{(p)}))],$$
where $w$ runs through all places of $K_\infty^{(p)}$.

\begin{lemma}\label{l:pext} Let $L/K$ be a finite Galois $p$-extension.
Then $\mu_{A/L}=0$, if and only $\N_{L/K}$ is finite.
\end{lemma}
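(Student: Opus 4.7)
The plan is to combine Lemma \ref{l:mu=0} with an analysis, via the inflation--restriction sequence for the extension $L_\infty^{(p)}/K_\infty^{(p)}$, of the restriction map $\res\colon \coh^1_{fl}(K_\infty^{(p)},A_p)\to\coh^1_{fl}(L_\infty^{(p)},A_p)^G$. By Lemma \ref{l:mu=0} we are reduced to proving that $\N_{L/K}$ is finite if and only if $\Sel_p(A/L_\infty^{(p)})$ is finite. Since $L/K$ is a finite Galois $p$-extension, $G=\Gal(L_\infty^{(p)}/K_\infty^{(p)})$ embeds into $\Gal(L/K)$ and is thus a finite $p$-group; moreover $A_p(L_\infty^{(p)})$ is finite (being contained in $A_p(\bar K)$), so every $\coh^i(G,A_p(L_\infty^{(p)}))$ is finite.

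The key identification is $\N_{L/K}=\res^{-1}(\Sel_p(A/L_\infty^{(p)}))$. Indeed, for each place $w$ of $K_\infty^{(p)}$ with a chosen place $w'$ of $L_\infty^{(p)}$ above it and decomposition subgroup $G_w\subseteq G$, the local inflation--restriction sequence identifies $\coh^1(G_w,A(L_{\infty,w'}^{(p)}))$ with the kernel of $\coh^1(K_{\infty,w}^{(p)},A)\to \coh^1(L_{\infty,w'}^{(p)},A)$. Hence a class $c\in \coh^1_{fl}(K_\infty^{(p)},A_p)$ lies in $\N_{L/K}$ precisely when $\res(c)\in\Sel_p(A/L_\infty^{(p)})$. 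The global inflation--restriction sequence
\begin{equation*}
0\to \coh^1(G,A_p(L_\infty^{(p)}))\to \coh^1_{fl}(K_\infty^{(p)},A_p)\xrightarrow{\res} \coh^1_{fl}(L_\infty^{(p)},A_p)^G\to \coh^2(G,A_p(L_\infty^{(p)}))
\end{equation*}
then shows that $\res$ sends $\N_{L/K}$ onto a subgroup of $\Sel_p(A/L_\infty^{(p)})^G$ with finite kernel (inside $\coh^1(G,A_p(L_\infty^{(p)}))$) and finite cokernel (inside $\coh^2(G,A_p(L_\infty^{(p)}))$). Therefore $\N_{L/K}$ is finite if and only if $\Sel_p(A/L_\infty^{(p)})^G$ is finite.

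To close the loop, I would prove that for any $\F_p[G]$-module $M$ with $G$ a finite $p$-group, $M^G$ is finite if and only if $M$ is finite (the hard direction being $M^G$ finite $\Rightarrow M$ finite). Since the augmentation ideal $I\subset\F_p[G]$ is nilpotent, say $I^N=0$, the module $M$ has the finite filtration $M^G=M[I]\subseteq M[I^2]\subseteq\cdots\subseteq M[I^N]=M$. The $\F_p$-bilinear pairing $(m,i)\mapsto im$ induces an injection $M[I^{k+1}]/M[I^k]\hookrightarrow \Hom_{\F_p}(I/I^2,\, M[I^k]/M[I^{k-1}])$, and since $I/I^2$ is finite-dimensional over $\F_p$, induction on $k$ starting from the finite $M^G$ shows that every $M[I^k]$, and hence $M$, is finite. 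Applying this to $M=\Sel_p(A/L_\infty^{(p)})$ and combining with the previous paragraph finishes the proof. The main obstacle is the global-to-local identification $\N_{L/K}=\res^{-1}(\Sel_p(A/L_\infty^{(p)}))$, which requires careful bookkeeping of how places of $K_\infty^{(p)}$ split in $L_\infty^{(p)}$ and of the compatibility of the local kernel condition in the definition of $\N_{L/K}$ with the Selmer condition at the places above.
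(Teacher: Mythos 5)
Your proof is correct and follows essentially the same structure as the paper's: reduce via Lemma \ref{l:mu=0} to showing $\N_{L/K}$ finite iff $\Sel_p(A/L_\infty^{(p)})$ finite, compare $\N_{L/K}$ with $\Sel_p(A/L_\infty^{(p)})^G$ by the inflation--restriction sequence (which gives finite kernel and cokernel since $\coh^i(G,A_p(L_\infty^{(p)}))$ is finite), and then pass from $G$-invariants to the whole module using that $G$ is a finite $p$-group. The only cosmetic difference is in the last step: you filter $M$ directly by $I$-power torsion, whereas the paper dualizes and applies the Nakayama-type observation that $I$ is nilpotent so $M^\vee/IM^\vee$ is infinite when $M^\vee$ is.
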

\begin{proof} For simplicity write $M$ for $\Sel_p(A/L)$ 
and let $M^\vee$ be the Pontryagin dual.
Let $I$ be the augmentation ideal of $\F_p[G]$. If $M$ is infinite, then\footnote{This should be well known for more general context. Here is a simple proof for our need. As in the commutative case, we  need to show that $M/IM=0$ implies $M=0$. But this is obvious, since $G$ is a $p$-group, $I^m=0$ for some $m$.} so is $M^\vee/IM^\vee$. Hence
by duality $M^G$ is also infinite. The exact sequence
$$\xymatrix{0\ar[r] & \N_{L/K}\cap\coh^1(G,A_p(L)) \ar[r] & \N_{L/K} \ar[r] & M^G \ar[r] & \coh^2(G,A_p(L))}$$
implies $\N_{L/K}$ is infinite. Conversely, if $\N_{L/K}$ is infinite, then so is $M^G$.

\end{proof}

\begin{lemma}\label{l:local}
Let $L/K$ be a finite Galois $p$-extension with $G:=\Gal(L_\infty^{(p)}/K_\infty^{(p)})$. Then $\N_{L/K}$ is finite if and only if $\mu_K=0$ and
$\bigoplus_w\coh^1(G_w,A(L_{\infty,w}^{(p)}))$ is finite.
\end{lemma}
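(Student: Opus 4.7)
My plan is to reduce the lemma to two standard tools: an inflation--restriction identification of $\N_{L/K}$ in terms of $\Sel_p(A/L_\infty^{(p)})^G$, and a Selmer control computation for the Galois extension $L_\infty^{(p)}/K_\infty^{(p)}$.

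\textbf{Step 1 (Identify $\N_{L/K}$ with $\Sel_p(A/L_\infty^{(p)})^G$).} An element $\xi\in\coh^1_{fl}(K_\infty^{(p)},A_p)$ lies in $\N_{L/K}$ iff its local image at each $w$ lies in $\coh^1(G_w,A(L_{\infty,w}^{(p)}))$, which by inflation--restriction is exactly the kernel of $\coh^1(K_{\infty,w}^{(p)},A)\to\coh^1(L_{\infty,w}^{(p)},A)$. This is equivalent to $\mathrm{res}(\xi)\in\Sel_p(A/L_\infty^{(p)})$, where $\mathrm{res}$ is restriction to $L_\infty^{(p)}$. Arguing exactly as in the proof of Lemma~\ref{l:pext}, the four-term sequence
\[
0\to\N_{L/K}\cap\coh^1\!\bigl(G,A_p(L_\infty^{(p)})\bigr)\to\N_{L/K}\to\Sel_p(A/L_\infty^{(p)})^G\to\coh^2\!\bigl(G,A_p(L_\infty^{(p)})\bigr)
\]
combined with finiteness of $\coh^i(G,A_p(L_\infty^{(p)}))$ for $i=1,2$ (since both $G$ and $A_p(L_\infty^{(p)})$ are finite) shows $\N_{L/K}$ is finite iff $\Sel_p(A/L_\infty^{(p)})^G$ is finite.

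\textbf{Step 2 (Selmer control).} One compares the $K_\infty^{(p)}$- and $L_\infty^{(p)}$-global-to-local sequences by a snake-lemma analysis, with the vertical arrows given by restriction. The middle vertical arrow has finite kernel and cokernel (bounded by $\coh^i(G,A_p(L_\infty^{(p)}))$); at each $w$ the local vertical arrow has kernel $\coh^1(G_w,A(L_{\infty,w}^{(p)}))$ and cokernel inside $\coh^2(G_w,A(L_{\infty,w}^{(p)}))$. This produces a Mazur-style control sequence
\[
0\to(\text{finite})\to\Sel_p(A/K_\infty^{(p)})\to\Sel_p(A/L_\infty^{(p)})^G\to\bigoplus_w\coh^1\!\bigl(G_w,A(L_{\infty,w}^{(p)})\bigr)\to(\text{finite})
\]
with the outer ``finite'' terms controlled by $\coh^i(G,A_p(L_\infty^{(p)}))$. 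Consequently $\Sel_p(A/L_\infty^{(p)})^G$ is finite iff both $\Sel_p(A/K_\infty^{(p)})$ and $\bigoplus_w\coh^1(G_w,A(L_{\infty,w}^{(p)}))$ are finite. Combining with Step~1 and Lemma~\ref{l:mu=0} (which identifies $\Sel_p(A/K_\infty^{(p)})$ finite with $\mu_{A/K}=0$) gives the biconditional.

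\textbf{Main obstacle.} The delicate point is justifying in Step~2 that the full sum $\bigoplus_w\coh^1(G_w,A(L_{\infty,w}^{(p)}))$ appears in the control sequence (not merely its intersection with the image of the $K$-global-to-local map). This requires the connecting map $\Sel_p(A/L_\infty^{(p)})^G\to\bigoplus_w\coh^1(G_w,A(L_{\infty,w}^{(p)}))$ to have image of finite index, which in turn follows from careful diagram-chasing together with the finiteness of the global $\coh^i(G,A_p(L_\infty^{(p)}))$. A helpful reduction is that only finitely many $w$ can contribute nontrivially: at a place of $K_\infty^{(p)}$ lying over a prime of $K$ unramified in $L/K$, the residue extension has $p$-power degree and is therefore absorbed by the constant $\Z_p$-extension $K_\infty^{(p)}/K$, so $G_w$ is trivial there; and only finitely many places of $K_\infty^{(p)}$ lie over each of the finitely many ramified primes of $L/K$.
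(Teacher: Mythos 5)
Your overall strategy---reduce $\N_{L/K}$ to $\Sel_p(A/L_\infty^{(p)})^G$ via the argument of Lemma~\ref{l:pext}, then compare Selmer groups over $K_\infty^{(p)}$ and $L_\infty^{(p)}$ by a control diagram---is a reasonable alternative route, and your Step~1 and the observation that $G_w=1$ at all but finitely many $w$ (since $K_\infty^{(p)}/K$ is the unramified $\Z_p$-extension) are both correct. But there is a genuine gap at precisely the point you yourself flag as the ``main obstacle,'' and your proposed resolution of it does not work.

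The assertion that the connecting map $\Sel_p(A/L_\infty^{(p)})^G\to\bigoplus_w\coh^1(G_w,A(L_{\infty,w}^{(p)}))$ has image of finite index does \emph{not} follow from diagram-chasing plus the finiteness of $\coh^i(G,A_p(L_\infty^{(p)}))$. The snake lemma, applied to the two truncated Selmer sequences, only bounds $\coker\bigl(\Sel_p(A/K_\infty^{(p)})\to\Sel_p(A/L_\infty^{(p)})^G\bigr)$ by the kernel of the \emph{local} restriction map restricted to the image of the global-to-local map; it says nothing about whether that image meets all of $\bigoplus_w\coh^1(G_w,\cdot)$ up to a finite group. The groups $\coh^i(G,A_p(L_\infty^{(p)}))$ constrain the vertical arrows on the global $\coh^1$ terms only, which is a different quantity and gives no leverage over the image of the local map. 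What is actually needed---and what the paper's proof uses---is the generalized Cassels--Tate dual exact sequence \cite{GAT07}: by \cite[Proposition 4.2]{Tan14} the localization map $\mathscr{L}\colon\coh^1(K_\infty^{(p)},A)\to\bigoplus_w\coh^1(K_{\infty,w}^{(p)},A)$ has cokernel of finite $\Z_p$-corank. Since $W:=\bigoplus_w\coh^1(G_w,A(L_{\infty,w}^{(p)}))$ is a bounded $p$-group and the exact sequence $0\to\Sel_p(A/K_\infty^{(p)})\to\N_{L/K}\to W[p]$ maps compatibly into the localization sequence, this corank bound forces the cokernel of $\N_{L/K}\to W[p]$ to be finite, and Lemma~\ref{l:mu=0} then finishes the biconditional directly. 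So the paper avoids the detour through $\Sel_p(A/L_\infty^{(p)})^G$ entirely, and the deep arithmetic input (global duality) is precisely the ingredient your diagram-chasing argument is missing.
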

\begin{proof} Write $W$ for $\bigoplus_w\coh^1(G_w,A(L_{\infty,w}^{(p)}))$, $W$ is finite if and only if $W[p]$
is finite. Consider the commutative diagram
$$\xymatrix{ & \coh^1(K_\infty^{(p)},A) \ar[r]^-{\mathscr{L}} & \bigoplus_w \coh^1(K_{\infty,w}^{(p)},A)\\
\Sel_p(A/K_\infty^{(p)}) \ar@{^{(}->}[r] & \N_{L/K} \ar[r]^-{\mathscr{L}_p} \ar[u] & W[p]\ar[u],}$$
where $\mathscr{L}$ and $\mathscr{L}_p$ are localization maps.
By using the (generalized) Cassels-Tate exact sequence \cite{GAT07}, one can deduce (see \cite[Proposition 4.2]{Tan14})
that $\mathscr L$ has co-kernel of finite co-rank. Hence the co-kernel of
$\mathscr{L}_p$ is finite.

\end{proof}

\begin{theorem}\label{t:ocht}
Let $L/K$ be a finite Galois $p$-extension unramified outside ordinary places of $A/K$. If $\mu_{A/K}=0$, then $\mu_{A/L}=0$.
\end{theorem}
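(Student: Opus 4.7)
The plan is to combine Lemmas \ref{l:pext} and \ref{l:local} to reduce the theorem to a local cohomology finiteness statement, and then to verify it place by place using the ramification and ordinary-reduction hypotheses.

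By Lemma \ref{l:pext}, $\mu_{A/L}=0$ is equivalent to $\N_{L/K}$ being finite; by Lemma \ref{l:local}, given the hypothesis $\mu_{A/K}=0$, this is in turn equivalent to the finiteness of $\bigoplus_w \coh^1(G_w, A(L_{\infty,w}^{(p)}))$, where $w$ runs over the places of $K_\infty^{(p)}$. I would then observe that only places above the (finitely many) ramified places of $L/K$ can contribute: if $v$ is unramified in $L/K$, then $L_{v'}/K_v$ is an unramified $p$-extension, whose residue extension lies in $\kappa(v)\cdot\F_{q,\infty}$; this coincides with the residue field of $K_{\infty,w}^{(p)}$ for any $w\mid v$, so $L_{v'}$ embeds into $K_{\infty,w}^{(p)}$ and $G_w=1$. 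At a ramified (hence, by hypothesis, ordinary) place $v$, the same reasoning absorbs the maximal unramified subextension of $L_{v'}/K_v$ into $K_\infty^{(p)}$, so $G_w$ is identified with the inertia $I_v$ and the local extension $L_{\infty,w}^{(p)}/K_{\infty,w}^{(p)}$ becomes totally ramified.

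For each such $w$, I would deduce the finiteness of $\coh^1(G_w, A(L_{\infty,w}^{(p)}))$ from the N\'eron model filtration associated to the good (ordinary) reduction at $v$,
\[
0 \to \hat{\mathcal A}(\mathfrak m_{L_{\infty,w}^{(p)}}) \to A(L_{\infty,w}^{(p)}) \to \bar{A}(\kappa_w) \to 0,
\]
whose long exact sequence reduces the question to the finiteness of the cohomology of the two outer terms. On the residue side, $G_w$ acts trivially (totally ramified), so $\coh^1(G_w, \bar{A}(\kappa_w))=\Hom(G_w,\bar{A}(\kappa_w))$; ordinariness gives $\bar A[p^n](\bar\kappa)\cong(\Z/p^n)^{\dim A}$ for every $n$, making this finite. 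For the formal-group term, ordinariness implies that $\hat{\mathcal A}$ is a formal group of multiplicative type, so after a finite prime-to-$p$ extension of $K_{\infty,w}^{(p)}$ trivializing the Galois action on its character lattice, $\hat{\mathcal A}\cong\hat{\mathbb G}_m^{\dim A}$; then $\coh^1(G_w,(1+\mathfrak m)^{\dim A})=0$ by Hilbert 90 in the totally ramified situation, and descent through the prime-to-$p$ extension by a Hochschild--Serre argument yields the desired finiteness of $\coh^1(G_w,\hat{\mathcal A}(\mathfrak m))$.

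The principal obstacle is this formal-group cohomology step, where one must carefully translate ordinariness into the multiplicative-type structure of $\hat{\mathcal A}$ in equal characteristic $p$ and handle the unramified Galois twist on the character lattice, exploiting the fact that its image lies in the prime-to-$p$ part of the unramified Galois group (since $K_\infty^{(p)}$ already absorbs the pro-$p$ unramified subextension of $K_v$).
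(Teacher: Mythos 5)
Your overall strategy mirrors the paper's: you use Lemmas \ref{l:pext} and \ref{l:local} to reduce the claim to the finiteness of $\bigoplus_w \coh^1(G_w, A(L_{\infty,w}^{(p)}))$, and then you note that $K_{\infty,w}^{(p)}$ already contains the maximal unramified pro-$p$ extension of $K_v$, so that $G_w$ is trivial at unramified places and identified with inertia at ramified ones. That observation is correct, and it is a slightly sharper version of the paper's remark that the unramified contributions are finite. Where the paper then simply invokes \cite[Theorem 2(c)]{Tan10} (and the proof of Theorem 4 there) for the finiteness of the remaining local terms, you attempt a self-contained argument via the N\'eron-model filtration; that is a legitimate alternative route, but it has two problems.

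First, the vanishing $\coh^1(G_w, (1 + \mathfrak{m})^{\dim A}) = 0$ does not follow from Hilbert 90. For a finite totally ramified $p$-extension $M/N$ of equal-characteristic local fields with perfect residue field, Hilbert 90 gives $\coh^1(\Gal(M/N), M^\times) = 0$, and feeding the valuation sequence $0 \to O_M^\times \to M^\times \to \Z \to 0$ and the Teichm\"{u}ller decomposition $O_M^\times \cong \kappa_M^\times \times (1 + \mathfrak{m}_M)$ into the long exact sequence gives $\coh^1(\Gal(M/N), 1 + \mathfrak{m}_M) \cong \Z / [M : N] \Z$, which is finite but not zero. Finiteness is all you need, so this is repairable, but the mechanism is not vanishing. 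Second, and more seriously, your N\'eron-filtration argument presupposes good reduction, whereas the paper's proof explicitly treats split and non-split multiplicative reduction at the ramified places, which makes clear that ``ordinary places of $A/K$'' is meant to include places of multiplicative reduction (as is customary in Iwasawa theory, where ordinarity is a property of the local Galois representation). At a ramified multiplicative place the special fiber has a nontrivial torus part, the sequence $0 \to \hat{\mathcal{A}}(\mathfrak{m}) \to A(\cdot) \to \bar{A}(\kappa) \to 0$ with $\bar A$ an abelian variety is not available, and your argument simply does not apply. You would need a separate treatment of those places; the paper handles split multiplicative via \cite{Tan10} and reduces non-split multiplicative to the split case by an unramified constant-field base change, using that the resulting comparison term $\coh^1((K')_{\infty,w}^{(p)}/K_{\infty,w}^{(p)}, \cdot)$ is finite.
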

\begin{proof}One can follow the proof of \cite[Theorem 1.9]{OT09}, or apply Lemmas \ref{l:kl}, \ref{l:pext}, \ref{l:local} and show
that  $W:=\bigoplus_w\coh^1(G_w,A(L_{\infty,w}^{(p)}))$ has finite $p$-rank. If $L/K$ is unramified at $w$, then $\coh^1(G_w,A(L_{\infty,w}^{(p)}))$ is finite, and is trivial for good places (see \cite[Proposition I.3.8]{Mil06}).
We may assume that $L/K$ is cyclic of degree $p$,
so that by local class field theory, $L_v/K_v$ is an intermediate extension of some $\Z_p$-extension of $K_v$.
By \cite[Theorem 2(c)]{Tan10} and the proof of Theorem 4, {\em{op.\ cit.}}, if $A$ has good ordinary or split multiplicative reduction
at $w$, then $\coh^1(G_w,A(L_{\infty,w}^{(p)}))$ is finite. If $w$ is a non-split multiplicative place, then there is a constant field extension $K'/K$
such that $A/K'_w$ is split multiplicative. Let $L'=K'L$. Then the kernel of the restriction map
$$\coh^1(G_w, A(L_{\infty,w}^{(p)})) \longrightarrow \coh^1((L')_{\infty,w}^{(p)}/(K')_{\infty,w}^{(p)}, A((L')_{\infty,w}^{(p)})),$$
where the target is finite, is contained in $\coh^1((K')_{\infty,w}^{(p)}/K_{\infty,w}^{(p)}, A((K')_{\infty,w}^{(p)}))$ which is finite,
since $K'/K$ is unramified.

\end{proof}

\subsection{The $L$-function} \label{s:Lfun}
We collect here mostly well-known facts about the Hasse-Weil $L$-function of $A$
and define several invariants related to it
as preparations for subsequent sections.

Let $L_{A}(s)$ be the (completed) Hasse-Weil $L$-function of $A$.
See for example \cite[\S 1]{Sch82} and \cite[V, \S 6]{Kah18a}
for lists of properties of $L_{A}(s)$.
We can write
	\[
			L_{A}(s)
		=
			\frac{P_{1}(t)}{P_{0}(t) P_{2}(t)},
	\]
where $P_{i}(t) \in \Z[t]$ is a polynomial in $t = q^{-s}$ with constant term $1$
whose reciprocal roots (with multiplicities) are Weil $q$-numbers $\{\alpha_{ij}\}$ of weight $i + 1$
(\cite[V, 6.8.2]{Kah18a}).

\begin{proposition}
	The function $P_{0}(t)^{-1}$ (with $t = q^{-s}$) is
	the zeta function of $\mathrm{Tr}_{K / \F_{q}}(A)$,
	and $P_{2}(t) = P_{0}(q t)$
\end{proposition}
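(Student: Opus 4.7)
The plan is to compare both sides of both statements through the Grothendieck cohomological formula for $L_{A}(s)$.

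First, I fix a prime $\ell \neq p$ and a dense open $U \subset \mathcal{C}$ on which $A$ extends to an abelian scheme; write $j\colon U \hookrightarrow \mathcal{C}$. The Tate module $V_{\ell} A$ is a lisse $\Q_{\ell}$-sheaf on $U$, and, accounting for the bad Euler factors via the stalks of $j_{\ast} V_{\ell} A$ (which are the inertia invariants $(V_{\ell} A)^{I_{v}}$), one has
\[
   L_{A}(s) \;=\; \prod_{i=0}^{2} \det\!\bigl(1 - F q^{-s} \,\big|\, \coh^{i}(\mathcal{C}_{\bar{\F}_{q}},\, j_{\ast} V_{\ell} A)\bigr)^{(-1)^{i+1}}.
\]
Since the eigenvalues of the geometric Frobenius $F$ on $\coh^{i}(\mathcal{C}_{\bar{\F}_{q}}, j_{\ast} V_{\ell} A)$ are pure of weight $i+1$, weight considerations match the three factors with $P_{0}(t)$, $P_{1}(t)$, $P_{2}(t)$ in the stated factorization of $L_{A}(s)$.

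To identify $P_{0}$: $\coh^{0}(\mathcal{C}_{\bar{\F}_{q}}, j_{\ast} V_{\ell} A)$ equals the space of $\pi_{1}(U_{\bar{\F}_{q}})$-invariants of $V_{\ell} A$. By the Lang--N\'eron theorem applied to $A_{K\bar{\F}_{q}}$, the $K\bar{\F}_{q}/\bar{\F}_{q}$-trace of $A_{K\bar{\F}_{q}}$ coincides with $\mathrm{Tr}_{K/\F_{q}}(A)$ base-changed to $\bar{\F}_{q}$, and passing to $\ell$-adic Tate modules gives a canonical $\Gal(\bar{\F}_{q}/\F_{q})$-equivariant identification
\[
   \coh^{0}(\mathcal{C}_{\bar{\F}_{q}}, j_{\ast} V_{\ell} A) \;=\; V_{\ell}\bigl(\mathrm{Tr}_{K/\F_{q}}(A)\bigr).
\]
Consequently $P_{0}(t)$ is the characteristic polynomial of $F$ on the Tate module of $B := \mathrm{Tr}_{K/\F_{q}}(A)$, so $P_{0}(t)^{-1}$ (with $t = q^{-s}$) is the zeta function of $B$.

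For $P_{2}$: Poincar\'e duality on $\mathcal{C}_{\bar{\F}_{q}}$ for the middle-extension sheaf $j_{\ast} V_{\ell} A$ gives a perfect pairing into $\Q_{\ell}(-1)$ between $\coh^{2}(\mathcal{C}_{\bar{\F}_{q}}, j_{\ast} V_{\ell} A)$ and $\coh^{0}(\mathcal{C}_{\bar{\F}_{q}}, j_{\ast}(V_{\ell} A)^{\ast}(1))$. The Weil pairing $V_{\ell} A \otimes V_{\ell} \hat{A} \to \Q_{\ell}(1)$ with the dual abelian variety gives $(V_{\ell} A)^{\ast}(1) \cong V_{\ell} \hat{A}$, and $\mathrm{Tr}_{K/\F_{q}}(\hat{A}) \cong \hat{B}$; combining this with the preceding step applied to $\hat{A}$ and the Weil pairing for $B$ itself yields
\[
   \coh^{2}(\mathcal{C}_{\bar{\F}_{q}}, j_{\ast} V_{\ell} A) \;\cong\; (V_{\ell}\hat{B})^{\ast}(-1) \;\cong\; V_{\ell} B(-1).
\]
Since $F$ acts on $\Q_{\ell}(-1)$ as multiplication by $q$, we conclude $P_{2}(t) = \det(1 - qFt \mid V_{\ell} B) = P_{0}(qt)$.

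The most delicate point will be the Tate-twist bookkeeping together with checking that $\mathrm{Tr}_{K/\F_{q}}$ commutes with passing to the dual abelian variety; both are standard but easy to mis-sign. A minor secondary point is justifying that $j_{\ast} V_{\ell} A$ is really the sheaf appearing in the Grothendieck formula, i.e.\ that the bad Euler factors read off from its stalks at the missing closed points of $\mathcal{C}$ agree with the defining local factors of $L_{A}(s)$; this is a standard verification via the N\'eron model.
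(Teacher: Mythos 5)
Your argument is correct in substance and arrives at the right conclusion, but for the $P_{2}$ part it takes a genuinely different route from the paper. For $P_{0}$, both arguments are essentially the same Lang--N\'eron computation: the paper cites Schneider's Satz~1 directly (which identifies $P_{0}$ with the characteristic polynomial of Frobenius on $\mathrm{V}_{\ell}(\mathcal{A}(\mathcal{C}_{\bar{\F}_{q}}))$), while you rederive the Grothendieck cohomological expression and read off the same Tate module from $\coh^{0}(\mathcal{C}_{\bar{\F}_{q}}, j_{\ast}V_{\ell}A)$; these agree because $(V_\ell A)^{\pi_1(U_{\bar{\F}_q})} = V_\ell(A(K\bar{\F}_q))$. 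For $P_{2}$, the paper argues via the global functional equation $L_{A}(2-s)\leftrightarrow L_{A}(s)$ combined with the weight filtration, which requires also invoking the functional equation/Riemann hypothesis for $P_{0}$ itself (that of the abelian variety $\mathrm{Tr}_{K/\F_q}(A)$); you instead use Poincar\'e duality for the middle extension on $\mathcal{C}_{\bar{\F}_q}$ together with the Weil pairing and the compatibility of the $K/\F_q$-trace with passing to the dual abelian variety. Your route is more local/cohomological and yields the finer identification $\coh^{2}(\mathcal{C}_{\bar{\F}_{q}}, j_{\ast}V_{\ell}A)\cong V_{\ell}B(-1)$, which is genuine extra content; the paper's route is shorter given the cited functional equation. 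One remark on the delicate point you flagged: your statement of Poincar\'e duality pairs $\coh^{2}(j_{\ast}V_{\ell}A)$ against $\coh^{0}(j_{\ast}(V_{\ell}A)^{\ast}(1))$ with values in $\Q_{\ell}(-1)$, but with the $(1)$-twist already in the coefficient sheaf the target should be untwisted $\Q_{\ell}$; your subsequent step $(V_{\ell}\hat{B})^{\ast}(-1)\cong V_{\ell}B(-1)$ similarly drops a twist from the Weil pairing $(V_{\ell}\hat{B})^{\ast}\cong V_{\ell}B(-1)$. These two slips cancel, so the final identification $\coh^{2}\cong V_{\ell}B(-1)$ and the conclusion $P_{2}(t)=P_{0}(qt)$ are correct, but the intermediate display should be corrected.
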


\begin{proof}
	By \cite[Satz 1]{Sch82},
	$P_{0}(t / q)$ is the characteristic polynomial of $q$-th power Frobenius on
	$\mathrm{V}_{l}(\mathcal{A}(\mathcal{C} \times_{\F_{q}} \bar{\F}_{q}))$,
	where $l$ is a prime different from $p$
	and $\mathrm{V}_{l}$ denotes the $l$-adic Tate module tensored with $\Q_{l}$.
	We have $\mathcal{A}(\mathcal{C} \times_{\F_{q}} \bar{\F}_{q}) = A(K \bar{\F}_{q})$,
	whose quotient by the subgroup $\mathrm{Tr}_{K / \F_{q}}(A)(\bar{\F}_{q})$
	is a finitely generated abelian group by the Lang-N\'eron theorem
	(\cite[Theorem 7.1]{Con06}).
	Hence $\mathrm{V}_{l} A(K \bar{\F}_{q}) \cong \mathrm{V}_{l} \mathrm{Tr}_{K / \F_{q}}(A)(\bar{\F}_{q})$.
	This implies the statement for $P_{0}(t)$.
	The functional equation (\cite[V, 6.8.1, 6.8.2]{Kah18a}) shows that
	$L_{A}(2 - s)$ is some exponential function times $L_{A}(s)$.
	Hence the weight argument shows that $P_{2}(t) = P_0(q t)$.
\end{proof}

Let $\lambda_{j} \in \Q$ be the $q$-valuation of $\alpha_{1 j}$
(where the $q$-valuation means the $p$-adic valuation
such that the valuation of $q$ is $1$).

\begin{definition} \label{d:theta}
	We define $\theta = \theta_{A}$ to be the non-negative rational number
	such that $q^{\theta} P_{1}(t / q)$ is $p$-primitive
	(i.e.\ all the coefficients are $p$-adically integral
	and some coefficients are $p$-adic units).
	
	We also define $a = a_{A} := \deg(P_{1})$.
\end{definition}

\begin{proposition} \label{p:slope}
	We have
		\[
				\theta
			=
				\sum_{\lambda_{j} < 1} (1 - \lambda_{j})
			\le
				\frac{a}{2}.
		\]
\end{proposition}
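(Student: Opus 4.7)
The plan is to compute $\theta$ from the coefficients of $P_{1}$ by Newton polygon analysis, then bound it using the functional equation.

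First, I would unpack the definition of $\theta$. Writing $P_{1}(t) = \sum_{k=0}^{a} c_{k} t^{k}$ with $c_{0} = 1$ and $c_{k} \in \Z$, the polynomial $q^{\theta} P_{1}(t/q) = \sum_{k} q^{\theta - k} c_{k} t^{k}$ is $p$-primitive precisely when $\theta - k + v_{q}(c_{k}) \geq 0$ for every $k$ with equality for at least one $k$. Hence
\[
    \theta = \max_{0 \leq k \leq a} \bigl( k - v_{q}(c_{k}) \bigr).
\]

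Second, I would invoke the Newton polygon of $P_{1}(t) = \prod_{j}(1 - \alpha_{1 j} t)$ with respect to $v_{q}$. Ordering the valuations $\lambda_{1} \leq \lambda_{2} \leq \cdots \leq \lambda_{a}$, the standard slope description gives $v_{q}(c_{k}) \geq \sum_{j=1}^{k} \lambda_{j}$, with equality at every vertex of the polygon. Substituting yields $k - v_{q}(c_{k}) \leq \sum_{j=1}^{k}(1 - \lambda_{j})$. The right-hand side is maximized by taking $k = k^{\ast} := |\{j : \lambda_{j} < 1\}|$, because the summands are $\geq 0$ for $j \leq k^{\ast}$ and $\leq 0$ for $j > k^{\ast}$. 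Since $\lambda_{k^{\ast}} < 1 \leq \lambda_{k^{\ast}+1}$, the slope strictly increases across $k^{\ast}$, so $k^{\ast}$ is a vertex of the polygon and equality is attained there. This gives the first identity
\[
    \theta = \sum_{\lambda_{j} < 1}(1 - \lambda_{j}).
\]

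Third, for the bound $\theta \leq a/2$, I would use two standard properties of the multiset $\{\lambda_{j}\}$. The reciprocal roots $\alpha_{1 j}$ are algebraic integers (Weil $q$-numbers of weight $2$), so $\lambda_{j} \geq 0$. Moreover, the functional equation of $L_{A}(s)$, together with the relation $\bar{\alpha} = q^{2}/\alpha$ characterizing weight-$2$ Weil $q$-numbers, shows the multiset $\{\alpha_{1 j}\}$ is stable under $\alpha \mapsto q^{2}/\alpha$; consequently $\{\lambda_{j}\}$ is symmetric about $1$. In particular $|\{j : \lambda_{j} < 1\}| = |\{j : \lambda_{j} > 1\}| \leq a/2$, and since each summand $1 - \lambda_{j} \leq 1$ when $\lambda_{j} \geq 0$, we obtain
\[
    \theta \leq \bigl|\{j : \lambda_{j} < 1\}\bigr| \leq \frac{a}{2}.
\]

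The only non-trivial ingredient is the Newton polygon slope identification, but this is classical: the estimate $v_{q}(c_{k}) \geq \sum_{j=1}^{k} \lambda_{j}$ follows from $c_{k} = (-1)^{k} e_{k}(\alpha_{1 j})$ by the ultrametric inequality, and the endpoints $(0,0)$ and $(a, \sum_{j} \lambda_{j})$ lie on the polygon since $c_{0} = 1$ and $|c_{a}| = \prod_{j} |\alpha_{1 j}|$. I do not anticipate any serious obstacle beyond being careful about the functional equation in the mildly unusual case where some $\lambda_{j} = 1$, which does not affect the argument.
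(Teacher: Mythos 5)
Your proof is correct and takes essentially the same route as the paper. The identity $\theta=\max_{k}\bigl(k-v_{q}(c_{k})\bigr)$ together with the Newton-polygon slope analysis is exactly the ``direct calculation'' the paper condenses into the remark that both sides equal the absolute height of the lowest vertex of the Newton polygon of $P_{1}(t/q)$; and the bound uses the same two ingredients, namely $\lambda_{j}\ge 0$ (integrality of the $\alpha_{1j}$) and the symmetry $\{\lambda_{j}\}=\{2-\lambda_{j}\}$, followed by a count of the $\lambda_{j}<1$. One small remark: for the symmetry you can rely on either the functional equation (as the paper does) or, as you also hint, the fact that $P_{1}\in\Z[t]$ is stable under complex conjugation and $\bar\alpha=q^{2}/\alpha$ for weight-$2$ Weil $q$-numbers; either alone suffices, so invoking both is redundant but not incorrect.
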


\begin{proof}
	The equality can be obtained by a direct calculation or by noting that both sides are the absolute value of the height of the lowest point
	of the Newton polygon of the polynomial $P_{1}(t / q)$ with respect to the $q$-valuation.
	
	The functional equation $L_{A}(s) \leftrightarrow L_{A}(2 - s)$ (\cite[V, 6.8.1, 6.8.2]{Kah18a})
	implies a functional equation $P_{1}(t) \leftrightarrow P_{1}(q^{2} / t)$
	since $\{\alpha_{i j}\}$ are Weil numbers.
	Hence the set $\{\alpha_{1 j}\}$ (with multiplicities) is equal to
	$\{q^{2} / \alpha_{1 j}\}$,
	so $\{\lambda_{j}\} = \{2 - \lambda_{j}\}$.
	We have $\lambda_{j} \ge 0$ since $\alpha_{i j}$ are algebraic integers.
	Therefore
		\[
				\sum_{\lambda_{j} < 1} (1 - \lambda_{j})
			\le
				\sum_{\lambda_{j} < 1} 1
			\le
				\frac{1}{2} \sum_{\text{all } \lambda_{j}} 1
			=
				\frac{\deg(P_{1})}{2}.
		\]
\end{proof}

Let $N = N_{A / K} = \sum_{v} \varepsilon_{v}^{l} \cdot v$ be the global conductor of $A / K$,
which is an effective divisor on $\mathcal{C}$
(\cite[II, \S 3]{Ray95}).
By the Ogg-Shafarevich formula \cite[II, Th\'eor\`eme 3 (ii)]{Ray95}, we have
	\begin{equation} \label{e:OS}
			a
		=
			\deg(N) + 4 \dim(A) (g_{\mathcal{C}} - 1) + 4 \dim \mathrm{Tr}_{K / \F_{q}}(A).
	\end{equation}

\begin{remark}
	The number $\theta$ is actually an integer
	as we will see in the appendix.
	This fact will not be used below.
\end{remark}

\subsection{The Tate-Shafarevich groups}\label{su:ts}
For a finitely generated torsion Iwasawa module,
if its Pontryagin dual is $p$-divisible, then its $\mu$-invariant is zero.
Therefore, the proposition below follows from the exact sequence
$$0\longrightarrow A(K_\infty^{(p)})\otimes \Q_p/\Z_p\longrightarrow \Sel_{p^\infty}(A/K_\infty^{(p)})\longrightarrow \Sha_{p^\infty}(A/K_\infty^{(p)})\longrightarrow 0.$$

\begin{proposition}\label{p:ts}
The $\mu$-invariant of $\Sha_{p^\infty}(A/K_\infty^{(p)})^\vee$ equals $\mu_{A/K}$.
\end{proposition}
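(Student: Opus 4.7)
The plan is to dualize the indicated short exact sequence and reduce the claim to the vanishing of the $\mu$-invariant of the dual of the first term. Applying the (exact) Pontryagin duality functor to
\[
0\longrightarrow A(K_\infty^{(p)})\otimes\Q_p/\Z_p\longrightarrow\Sel_{p^\infty}(A/K_\infty^{(p)})\longrightarrow\Sha_{p^\infty}(A/K_\infty^{(p)})\longrightarrow 0
\]
produces an exact sequence of $\Lambda$-modules
\[
0\longrightarrow\Sha_{p^\infty}(A/K_\infty^{(p)})^\vee\longrightarrow X_{A/K}\longrightarrow\bigl(A(K_\infty^{(p)})\otimes\Q_p/\Z_p\bigr)^\vee\longrightarrow 0.
\]
Since $X_{A/K}$ is finitely generated and torsion over the Noetherian ring $\Lambda$ by \cite[Theorem 1.7]{OT09}, so are the two outer terms, hence all three admit a structure decomposition as in \eqref{e:x} and a well-defined $\mu$-invariant in the sense of Definition \ref{d:mu}.

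Next I would invoke additivity of the $\mu$-invariant on short exact sequences of finitely generated torsion $\Lambda$-modules; this follows from the multiplicativity of the characteristic ideal in exact sequences and survives the normalization $q^{\mu_{A/K}}=\prod p^{\mu_i}$ since this is a uniform rescaling by $1/e$. Thus
\[
\mu_{A/K}
=
\mu\bigl(\Sha_{p^\infty}(A/K_\infty^{(p)})^\vee\bigr)
+
\mu\bigl((A(K_\infty^{(p)})\otimes\Q_p/\Z_p)^\vee\bigr),
\]
and it suffices to show that the last summand is zero.

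For this I would use the recalled criterion: a finitely generated torsion $\Lambda$-module $X$ with $X^\vee$ $p$-divisible has $\mu(X)=0$. Indeed, $p$-divisibility of $X^\vee$ is equivalent to $X[p]=0$, and any summand $\Lambda/(p^{\mu_i})$ with $\mu_i\ge 1$ in the pseudo-decomposition \eqref{e:x} injects (modulo finite error) into $X$ and contributes nonzero $p$-torsion, forcing all such $\mu_i$ to be $0$. In our case, $A(K_\infty^{(p)})\otimes\Q_p/\Z_p$ is $p$-divisible because $\Q_p/\Z_p$ is, so $(A(K_\infty^{(p)})\otimes\Q_p/\Z_p)^\vee$ has no $p$-torsion and hence has $\mu=0$. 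Substituting into the additivity formula yields the desired equality $\mu_{A/K}=\mu(\Sha_{p^\infty}(A/K_\infty^{(p)})^\vee)$.

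The only point that requires mild care is the additivity of $\mu$ under the paper's renormalized convention and the verification that the subquotients of $X_{A/K}$ appearing in the dualized sequence are again finitely generated torsion $\Lambda$-modules; both are routine given the Noetherianity of $\Lambda$ and the fact that the ambient module $X_{A/K}$ is itself finitely generated and torsion. There is no real obstacle beyond these bookkeeping checks.
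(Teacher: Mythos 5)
Your proof is correct and takes essentially the same approach as the paper: dualize the short exact sequence relating the Mordell--Weil part, the Selmer group and $\Sha$, invoke additivity of $\mu$ along the resulting exact sequence of finitely generated torsion $\Lambda$-modules, and observe that $(A(K_\infty^{(p)})\otimes\Q_p/\Z_p)^\vee$ has $\mu=0$ because its Pontryagin dual is $p$-divisible. The paper states this more tersely, but the underlying argument is identical.
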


Next, using the control theorem \cite[Theorem 4]{Tan10} for Selmer groups,
we will give a formula for $\mu_{A / K}$ in terms of the asymptotic behavior
of Tate-Shafarevich groups for finite levels $A / K_{n}^{(p)}$,
as suggested by Ulmer \cite[Remark 4.3 (5)]{Ulm19},
with a variant that does not assume the finiteness of Tate-Shafarevich groups.

\begin{proposition}\label{p:limint} We have
\begin{equation}\label{e:sha}
\mu_{A/K}=\lim_{n\rightarrow \infty}\frac{\log |\Sha_{p^m}(A/K_n^{(p)})|}{{p^n}\log q}
\end{equation}
for all large $m$.
\end{proposition}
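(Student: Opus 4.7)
The plan is to reduce the limit to an asymptotic count of $\Gamma^{p^n}$-coinvariants of a mod-$p^m$ reduction of $X_{A/K}$, using the control theorem for Selmer groups together with the structure theorem \eqref{e:x}. Recall that $\Sha_{p^m}(A/K_n^{(p)}) = \Sha_{p^\infty}(A/K_n^{(p)})[p^m]$. Applying the $p^m$-torsion functor to the short exact sequence
$$0 \longrightarrow A(K_n^{(p)}) \otimes \Q_p/\Z_p \longrightarrow \Sel_{p^\infty}(A/K_n^{(p)}) \longrightarrow \Sha_{p^\infty}(A/K_n^{(p)}) \longrightarrow 0$$
preserves exactness, because the connecting map lands in the mod-$p^m$ quotient of the $p$-divisible group $A(K_n^{(p)}) \otimes \Q_p/\Z_p$ and hence vanishes. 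The leftmost $p^m$-torsion decomposes as $(\Z/p^m)^{r_n} \oplus A(K_n^{(p)})[p^m]$, where $r_n = \rank A(K_n^{(p)})$ is bounded by Lang--N\'eron (\cite[Theorem 7.1]{Con06}) and $A(K_n^{(p)})[p^m] \subseteq A_{p^m}(\bar{K})$, a fixed finite group. Hence this factor contributes $0$ to $\log(\cdot)/(p^n\log q)$ in the limit.

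Next, the control theorem \cite[Theorem 4]{Tan10} supplies a natural map $\Sel_{p^\infty}(A/K_n^{(p)}) \to \Sel_{p^\infty}(A/K_\infty^{(p)})^{\Gamma^{p^n}}$ with kernel and cokernel of order bounded uniformly in $n$. These two cofinitely generated $\Z_p$-modules therefore share the same corank, and a short diagram chase shows that their $p^m$-torsion subgroups have sizes differing by a factor bounded in $n$ for each fixed $m$. By Pontryagin duality the target's $p^m$-torsion is dual to $(X_{A/K}/p^m)_{\Gamma^{p^n}}$, so
$$|\Sel_{p^\infty}(A/K_n^{(p)})[p^m]| = |(X_{A/K}/p^m)_{\Gamma^{p^n}}| \cdot O_m(1).$$

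Finally, I compute the right-hand side from the structure theorem \eqref{e:x}. For $m \ge \max_i \mu_i$, each factor $\Lambda/(p^{\mu_i})$ is unchanged by mod-$p^m$ reduction, and its $\Gamma^{p^n}$-coinvariants $(\Z/p^{\mu_i})[\Gamma/\Gamma^{p^n}]$ have order $p^{\mu_i p^n}$. Each factor $\Lambda/(g_j,p^m)$ is a finite $p$-group of order $p^{m\deg g_j}$, so since $\Gamma\cong\Z_p$ is pro-$p$, for $n$ sufficiently large $\Gamma^{p^n}$ acts trivially on it and its coinvariants stabilize. The finite pseudo-isomorphism defect in \eqref{e:x} adds only a bounded factor. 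Summing,
$$\log |(X_{A/K}/p^m)_{\Gamma^{p^n}}| = \Big(\sum_i \mu_i\Big) p^n \log p + O_m(1),$$
and dividing by $p^n \log q = e p^n \log p$ together with $q^{\mu_{A/K}} = \prod_i p^{\mu_i}$ yields $\mu_{A/K}$ in the limit.

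The hypothesis ``$m$ large'' is precisely $m \ge \max_i \mu_i$, needed so that no $\mu$-factor is truncated by mod-$p^m$ reduction; this is also what makes the $\lambda$-part finite and hence negligible on dividing by $p^n$. The main technical obstacle is the second step: propagating the control theorem's bounded kernel/cokernel through the $p^m$-torsion functor, which rests on matching $\Z_p$-coranks and a diagram chase, and accommodating the possibly unbounded $p$-primary torsion of $A(K_n^{(p)})$ (handled by restricting to its $p^m$-torsion, which lies in the fixed finite group $A_{p^m}(\bar K)$).
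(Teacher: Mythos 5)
Your proof is correct and follows essentially the same route as the paper's: compute $(X_{A/K}/p^m)_{\Gamma^{p^n}}$ via the structure theorem \eqref{e:x}, transfer this to $\Sel_{p^m}(A/K_n^{(p)})$ via the control theorem of \cite[Theorem 4]{Tan10}, and pass from $\Sel$ to $\Sha$ using the boundedness of the Mordell--Weil rank up the tower. A small slip: $(A(K_n^{(p)})\otimes\Q_p/\Z_p)[p^m]$ is just $(\Z/p^m)^{r_n}$, since the torsion part of $A(K_n^{(p)})$ dies under $\otimes\,\Q_p/\Z_p$; the extra summand $A(K_n^{(p)})[p^m]$ you wrote is not present, but since you were only using this to show the contribution is $\mathrm{o}(1)$ after dividing by $p^n$, the conclusion is unaffected.
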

\begin{proof}
Let $d_j$ denote the degree of the polynomial $g_j$ in \eqref{e:x}.
 Let $\sigma\in \Gal(K_\infty^{(p)}/K)$ be a topological generator.
Since $|\Lambda/(p^m, g_j)|=p^{md_j}$ and $|\Lambda/(p^m, \sigma^{p^n}-1)|=p^{mp^n}$, for every fixed $m$ one has
$$\frac{\log |\Sel_{p^m}(A/K_\infty^{(p)})^{\Gamma^{p^n}}|}{{p^n}\log p}=\sum_i \lambda_i+\mathrm{o}(1),\; \text{as}\; n\rightarrow\infty,$$
with $\lambda_i:=\mathrm{min}\{m,\mu_i\}$. By \cite[Theorem 4]{Tan10}, we have the exact sequence 
$$\xymatrix{0\ar[r] & \mathrm{Ker}_n\ar[r] & \Sel_{p^\infty}(A/K_n^{(p)}) \ar[r] & \Sel_{p^\infty}(A/K_\infty^{(p)})^{\Gamma^{p^n}}
\ar[r] & \mathrm{Coker}_n\ar[r] & 0}$$
with $|\mathrm{Ker}_n|$ and $|\mathrm{Coker}_n|$ bounded as $n$ varies. For the time being, we call this the control sequence
of Selmer groups. It implies
$$\log |\Sel_{p^m}(A/K_\infty^{(p)})^{\Gamma^{p^n}}|=\log |\Sel_{p^m}(A/K_n^{(p)})| +\mathrm{O}(1),\;\text{as}\;n,m\;\text{vary}.$$
The rank of of $A(K_n^{(p)})$ is bounded. Since $\frac{\log p^m}{{p^n}\log p}=\mathrm{o}(1)$, the exact sequence
$$0\longrightarrow A(K_n^{(p)})\otimes \Q_p/\Z_p\longrightarrow \Sel_{p^\infty}(A/K_n^{(p)})\longrightarrow \Sha_{p^\infty}(A/K_n^{(p)})\longrightarrow 0$$
leads to
$$\frac{\log |\Sha_{p^m}(A/K_n^{(p)})|}{{p^n}\log p}=\sum_i \lambda_i+\mathrm{o}(1),\; \text{as}\; n\rightarrow\infty.$$
If $m\geq \mu_i$ for all $i$, then the above tends to $\sum_i \mu_i$, as $n\rightarrow\infty$. 

\end{proof}
Denote
$\sigma_n:=\sigma^{p^n}$, $\omega_n=\sigma_n-1$ and write $\omega_n=\nu_0\cdot \nu_1\cdot\cdots\nu_n$, with
$\nu_0=\omega_0$ and $\nu_i=1+\sigma_{i-1}+\sigma_{i-1}^2+\cdots+\sigma_{i-1}^{p-1}$, for $i>0$.
Then $\nu_0,\nu_1,\dots,\nu_n$ are relatively prime irreducible elements in $\Lambda$.
\begin{lemma}\label{l:serre}
Let $g_j$ be as in \eqref{e:x} and set $V=\Lambda/(g_j)$. If $g_j$ is relatively prime to all $\omega_n$, then $V/\omega_nV$ is finite of order
$p^{n d_j+\mathrm{O}(1)}$. If $g_j$ is a power of some $\nu_i$ and $\omega'_n=\omega_n/\nu_i$, for $n\geq i$,
then $V/\omega'_nV$ is finite of order
$p^{n d_j+\mathrm{O}(1)}$.
\end{lemma}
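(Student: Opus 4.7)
The plan is to realize $V = \Lambda/(g_j)$ concretely via Weierstrass preparation: identifying $\Lambda \simeq \Z_p[[t]]$, we may take $g_j$ to be a distinguished polynomial, so that $V$ is a free $\Z_p$-module of rank $d_j$. In both cases of the lemma, the relevant multiplier---call it $\omega$, meaning $\omega_n$ in the first statement and $\omega'_n$ in the second---is coprime to $g_j$, hence acts injectively on $V$, and the elementary divisor theorem gives $|V/\omega V| = p^{v_p(\det_{\Z_p}(\omega \mid V))}$. To evaluate this determinant I would pass to $\bar{\Q}_p$: writing $g_j = h^k$ with $h$ irreducible distinguished of degree $d = d_j/k$, one has $V \otimes_{\Z_p} \bar{\Q}_p \simeq \prod_\alpha \bar{\Q}_p[t]/(t-\alpha)^k$, where $\alpha$ runs over the roots of $h$, and on each factor multiplication by a polynomial $f(t)$ has determinant $f(\alpha)^k$ (triangular form). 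Thus
$$\log_p |V/\omega V| \;=\; k \sum_\alpha v\bigl(\omega(\alpha)\bigr),$$
reducing everything to an estimate of $v(\omega(\alpha))$.

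For the first case, each root $\alpha$ of $h$ satisfies $v(\alpha) > 0$, and by the coprimality hypothesis $\zeta := 1 + \alpha$ is not a $p$-power root of unity. Induction on $\zeta^{p^{n+1}} - 1 = (\zeta^{p^n})^p - 1$, combined with the expansion $(1+\delta)^p - 1 = p\delta + \binom{p}{2}\delta^2 + \cdots + \delta^p$, shows that $v_n := v(\zeta^{p^n} - 1)$ satisfies $v_{n+1} = p v_n$ as long as $v_n < 1/(p-1)$, and $v_{n+1} = v_n + 1$ once $v_n > 1/(p-1)$. Starting from $v_0 > 0$, the sequence crosses this threshold in finitely many steps and thereafter grows linearly at slope $1$; hence $v(\omega_n(\alpha)) = n + C_\alpha$ for large $n$. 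Summing over the $d$ roots (each with multiplicity $k$) yields $\log_p|V/\omega_n V| = d_j n + \mathrm{O}(1)$.

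For the second case, $g_j = \nu_i^k$, and (assuming $i \ge 1$) the roots of $\nu_i$ are $\alpha = \zeta - 1$ for $\zeta$ a primitive $p^i$-th root of unity. Since $\omega_n = \nu_i \omega'_n$ and $\omega_n(\alpha) = 0$, L'H\^opital gives $\omega'_n(\alpha) = p^n \zeta^{-1}/\nu_i'(\alpha)$. To compute $\nu_i'(\alpha)$ I would differentiate the identity $\bigl((1+t)^{p^{i-1}} - 1\bigr)\nu_i(t) = (1+t)^{p^i} - 1$ and evaluate at $t = \alpha$, obtaining $\nu_i'(\alpha) = p^i \zeta^{-1}/(\zeta^{p^{i-1}} - 1)$; since $\zeta^{p^{i-1}}$ is a primitive $p$-th root of unity, $v(\zeta^{p^{i-1}} - 1) = 1/(p-1)$, whence $v(\omega'_n(\alpha)) = n - i + 1/(p-1)$. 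Summing over the $p^{i-1}(p-1)$ roots of $\nu_i$ (each with multiplicity $k$) gives $\log_p|V/\omega'_n V| = d_j n + \mathrm{O}(1)$. The case $i = 0$ (so $\nu_0 = t$) is even more direct: multiplication by $\omega'_n$ on $\Z_p[t]/(t^k)$ is triangular with $p^n$ on the diagonal, yielding $|V/\omega'_n V| = p^{nk}$ on the nose.

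The main technical point is the transition analysis in Case~1, showing that $v_n$ eventually grows at rate exactly $1$ per step rather than geometrically. Once this classical $p$-adic fact is in hand, both cases reduce to the direct computations above. As an alternative, one could invoke the standard Iwasawa asymptotic $\log_p|X_{\Gamma^{p^n}}| = \mu p^n + \lambda n + \mathrm{O}(1)$ for a torsion $\Lambda$-module $X$ whose characteristic ideal is coprime to every $\omega_n$, applied to $X = V$ with $\mu = 0$ and $\lambda = d_j$; but this shortcut covers only Case~1, since $\omega'_n$ is not of the form $\omega_m$, so Case~2 still requires the direct analysis sketched above.
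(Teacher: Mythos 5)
Your proof is correct, and it is genuinely more self-contained than the paper's. The paper handles both cases uniformly by factoring $\omega_n = \nu_0 \cdots \nu_n$ (and $\omega'_n$ with the factor $\nu_i$ omitted) and citing Serre's computation that $|{\det \nu_l}|_p^{-1} = p^{d_j}$ on $V$ for $l$ large whenever $\nu_l$ is coprime to $g_j$; summing over $l$ then gives $p^{nd_j + \mathrm{O}(1)}$ directly. You instead diagonalize $V \otimes \bar{\Q}_p$ into Jordan blocks at the roots of $g_j$ and estimate $v(\omega(\alpha))$ root by root, redoing the valuation-theoretic heart of Serre's argument from scratch: the threshold analysis of $v(\zeta^{p^n}-1)$ crossing $1/(p-1)$ in Case~1, and an explicit L'H\^opital evaluation $\omega'_n(\alpha) = p^{n-i}(\zeta^{p^{i-1}}-1)$ in Case~2. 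The paper's route is shorter because it delegates the hard estimate to Serre's Th\'eor\`eme~8; yours has the advantage of making Case~2 transparent, since it is not entirely obvious from the citation alone that Serre's ``Case~iii'' covers $\nu_l$ for $l \ne i$ when $g_j = \nu_i^k$ is itself a power of one of the $\nu$'s rather than coprime to all of them. One small point worth making explicit: in Case~1, after the sequence $v_n$ first reaches $1/(p-1)$ it can overshoot to some value $>1+1/(p-1)$ (the terms $p\delta$ and $\delta^p$ may partially cancel at the threshold), but from that step onward $v_{m+1} = v_m + 1$ holds exactly, which is all you need for the $\mathrm{O}(1)$ conclusion; your phrasing glosses over this boundary case but the argument survives it.
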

\begin{proof} Since $V$ is a finite free $\Z_p$-module, if $\varphi$ is a $\Z_p$-endomorphism on $V$, then the order
of $V/\varphi(V)$ equals $|\det \varphi|_p^{-1}$, where $|\;\;|_p$ denote the normalized $p$-adic absolute value.
The proof of \cite[Theorem 8]{Ser95}, Case iii, shows $|\det \nu_l|_p^{-1}=p^{d_j}$, for $l$ large enough
(where Serre denoted our $\nu_{l}$ by $\nu_{l - 1}'$).
\end{proof}

\begin{proposition}\label{p:liminf}
If $\Sha_{p^\infty}(A/K_n^{(p)})$ is finite for all $n\in\mathbb N$, then 
\begin{equation}\label{e:shafni}
\mu_{A/K}=\lim_{n\rightarrow \infty}\frac{\log |\Sha_{p^\infty}(A/K_n^{(p)})|}{{p^n}\log q}.
\end{equation}
The right-hand side is equal to the ``\emph{dimension of $\Sha(A)$}'' denoted as $\dim \Sha(A)$
in the sense of Ulmer \cite[Proposition/Definition 4.1]{Ulm19}.
\end{proposition}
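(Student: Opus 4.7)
My plan is to prove the limit formula by combining the input already used in the proof of Proposition \ref{p:limint} with a more refined asymptotic estimate made possible by the finiteness hypothesis. First, because $A(K_n^{(p)}) \otimes \Q_p/\Z_p$ is divisible and $\Sha_{p^\infty}(A/K_n^{(p)})$ is finite (hence has trivial divisible subgroup), the exact sequence
$$0 \to A(K_n^{(p)}) \otimes \Q_p/\Z_p \to \Sel_{p^\infty}(A/K_n^{(p)}) \to \Sha_{p^\infty}(A/K_n^{(p)}) \to 0$$
identifies $\Sha_{p^\infty}(A/K_n^{(p)})$ with the quotient of $\Sel_{p^\infty}(A/K_n^{(p)})$ by its maximal divisible subgroup, reducing the question to estimating this finite non-divisible quotient.

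Next I would feed the control sequence of \cite[Theorem 4]{Tan10} into this reduction. Its uniformly bounded kernel and cokernel imply that the order of the non-divisible quotient of $\Sel_{p^\infty}(A/K_n^{(p)})$ agrees, up to a bounded multiplicative factor, with the order of the non-divisible quotient of $\Sel_{p^\infty}(A/K_\infty^{(p)})^{\Gamma^{p^n}}$. Pontryagin duality rewrites this in turn as $|(X_{A/K}/\omega_n X_{A/K})_{\mathrm{tors}}|$, the order of the $\Z_p$-torsion subgroup of the $\omega_n$-coinvariants.

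The pseudo-isomorphism \eqref{e:x} and a snake-lemma argument then reduce, modulo a further bounded factor, to the same computation for $\bigoplus_i \Lambda/(p^{\mu_i}) \oplus \bigoplus_j \Lambda/(g_j)$. For $\Lambda/(p^{\mu_i})$ the $\omega_n$-coinvariants form $(\Z/p^{\mu_i})[\Gamma/\Gamma^{p^n}]$, entirely $\Z_p$-torsion of order $p^{\mu_i p^n}$. For $\Lambda/(g_j)$ the $\Z_p$-torsion contribution is of order $p^{\mathrm{O}(n)}$: this is directly Lemma \ref{l:serre} when $g_j$ is coprime to every $\omega_n$, and it follows from the $\omega_n'$-variant in Lemma \ref{l:serre} after separating off a $\Z_p$-free summand (arising from $\Lambda/\nu_i$ inside $\Lambda/\nu_i^k$) when $g_j$ is a power of $\nu_i$. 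Summing,
$$|\Sha_{p^\infty}(A/K_n^{(p)})| = p^{(\sum_i \mu_i) p^n + \mathrm{O}(n)},$$
so that dividing the logarithm by $p^n \log q$ and using Definition \ref{d:mu} yields the desired limit $(\sum_i \mu_i)/e = \mu_{A/K}$.

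The equality with Ulmer's $\dim \Sha(A)$ is then tautological once \eqref{e:shafni} is established, since as recalled in \S\ref{ss:Ulmer} Ulmer defines this quantity precisely as the asymptotic $\log$-order of the $p$-primary part of $\Sha(A/K_n^{(p)})$ along the constant-field tower. The main technical obstacle will be the $g_j = \nu_i^k$ case of the coinvariants calculation: one must check by hand in $\Lambda/\nu_i^k$ that for $n \ge i$ the extra contribution beyond what Lemma \ref{l:serre} controls via $\omega_n'$ is $\Z_p$-free and therefore does not enter the estimate; beyond this subtlety the proof is a direct elaboration of the playbook of Proposition \ref{p:limint}.
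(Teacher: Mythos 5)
Your proof is correct and follows the same strategy as the paper's first proof: use the hypothesis of finiteness to identify $\Sha_{p^\infty}(A/K_n^{(p)})$ with the $p$-cotorsion part of $\Sel_{p^\infty}(A/K_n^{(p)})$, transfer this via the control sequence and duality to the $\Z_p$-torsion of $X_{A/K}/\omega_n X_{A/K}$, and then estimate contributions of elementary $\Lambda$-summands using Lemma \ref{l:serre}, separating the $\Z_p$-free quotient $\Lambda/\nu_i$ in the $g_j = \nu_i^l$ case exactly as the paper does. The subtlety you flag at the end (that the extra piece beyond the $\omega_n'$-controlled part is $\Z_p$-free) is precisely what the paper resolves with the exact sequence $0 \to \nu_i\Lambda/(\nu_i^{l},\omega_n) \to \Lambda/(\nu_i^l,\omega_n) \to \Lambda/\nu_i\Lambda \to 0$, so your argument is essentially identical to the published one.
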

\begin{proof}
Since $\Sha_{p^\infty}(A/K_n^{(p)})$ is the $p$ co-torsion part of $\Sel_{p^\infty}(A/K_n^{(p)})$, via the aforementioned control sequence,
we can estimate its order by that of the $p$ co-torsion part of $\Sel_{p^\infty}(A/K_\infty^{(p)})^{\Gamma^{p^n}}$, or by duality,
the order of the $p$-primary torsion part of $X_{A/K}/\omega_n X_{A/K}$. In view of \eqref{e:x}, we need to deal with the elementary $\Lambda$-module appearing in \eqref{e:x}. If $V=\Lambda/(p^{\mu_i})$, then $V/\omega_n V$ has oder $p^{\mu_i p^n}$.
In the case where $V=\Lambda/(g_j)$, $g_j=\nu_i^{l}$, the exact sequence
	\[
			0
		\to
			\nu_i\Lambda/(\nu_i^{l},\omega_n)
		\to
			V/\omega_n V
		\to
			\Lambda/\nu_i\Lambda
		\to
			0
	\]
shows $\Lambda/\nu_i\Lambda$ is the $\Z_p$-free quotient of $V/\omega_n V$, while since $\nu_i^{l-1}$ and $\omega'_n$ 
are relatively prime, $\nu_i\Lambda/(\nu_i^{l},\omega_n)\simeq \Lambda/ (\nu_i^{l-1},\omega'_n)$ is the $p$-primary torsion part.
Thus, Lemma \ref{l:serre} says, for every $g_j$ in \eqref{e:x}, with $V=\Lambda/(g_j)$, the order of the $p$-primary torsion part of
$V/\omega_n V$ is bounded by $p^{n d_j+\mathrm{O}(1)}$. These lead to the desired
$$\frac{\log |\Sha_{p^\infty}(A/K_n^{(p)})|}{{p^n}\log p}=\sum_i\mu_i+\mathrm{o}(1), \; \text{as}\; n\rightarrow\infty.$$
\end{proof}

\begin{remark} \mbox{}
	\begin{enumerate}
		\item
			Without the assumption of the finiteness of Tate-Shafarevich group,
			the above proof shows that, if we replace  $\Sha_{p^\infty}(A/K_n^{(p)})$
			by its $p$ co-torsion part, then \eqref{e:shafni} also holds.
		\item
			The proof of Proposition \ref{p:liminf} is complicated
			due to the lack of a control theorem for $\Sha$ at this point.
			It is not clear how to deduce such a theorem from the (known) control theorem for $\Sel$.
			The difficulties mainly come from the fact that
			$H^{1}(\Gamma^{p^{n}}, A(K_{\infty}^{(p)}) \otimes \Q_{p} / \Z_{p})$ is not finite in general.
			In the next subsection, we will prove a control theorem for $\Sha$
			without using a control theorem for $\Sel$,
			from which Proposition \ref{p:liminf} will follow.
	\end{enumerate}
\end{remark}

In the next subsection, we will use $\Sha_{p^{\infty}}(A / K \bar{\F}_{q})$
in addition to $\Sha_{p^\infty}(A / K_{\infty}^{(p)})$.
Their relation is given as follows.

\begin{proposition} \label{p:inv}
	The $\Gal(\bar{\F}_{q} / \F_{q, \infty})$-invariant part of
	$\Sha_{p^{\infty}}(A / K \bar{\F}_{q})$ is
	$\Sha_{p^\infty}(A / K_{\infty}^{(p)})$.
\end{proposition}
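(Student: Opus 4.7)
The plan is to derive the statement purely from Hochschild-Serre together with the fact that the Galois group $G := \Gal(\bar{\F}_q / \F_{q,\infty}) = \Gal(K\bar{\F}_q / K_\infty^{(p)})$ has profinite order prime to $p$. Indeed, $G \cong \hat{\Z} / \Z_p = \prod_{\ell \ne p} \Z_\ell$, and its finite quotients all have order prime to $p$. Consequently, for \emph{any} discrete $G$-module $M$ and any $i > 0$, the continuous cohomology $\coh^i(G, M) = \varinjlim_U \coh^i(G/U, M^U)$ is a filtered colimit of groups of exponent dividing $|G/U|$, hence has no $p$-torsion at all. The same applies verbatim to any closed subgroup of $G$. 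This is the key input throughout.

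First, I would establish the global comparison $\coh^1(K_\infty^{(p)}, A)[p^\infty] \xrightarrow{\sim} \coh^1(K\bar{\F}_q, A)[p^\infty]^{G}$. The Hochschild-Serre $5$-term exact sequence for $K\bar{\F}_q / K_\infty^{(p)}$ reads
\[
 0 \to \coh^1(G, A(K\bar{\F}_q)) \to \coh^1(K_\infty^{(p)}, A) \to \coh^1(K\bar{\F}_q, A)^{G} \to \coh^2(G, A(K\bar{\F}_q)).
\]
By the principle above, the $p$-primary parts of the two outer terms vanish. Passing to $p$-primary torsion yields the claimed isomorphism (and $(\,\cdot\,)^{G}$ commutes with taking $[p^\infty]$).

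Next I would carry out the local analogue place by place. Fix a place $w$ of $K_\infty^{(p)}$ above some place $v$ of $K$, and a place $\tilde w$ of $K\bar{\F}_q$ above $w$. The decomposition group $G_{\tilde w} = \Gal((K\bar{\F}_q)_{\tilde w} / K_{\infty, w}^{(p)})$ is a closed subgroup of $G$, so it also has profinite order prime to $p$. The same Hochschild-Serre argument therefore gives
\[
 \coh^1(K_{\infty,w}^{(p)}, A)[p^\infty] \xrightarrow{\sim} \coh^1((K\bar{\F}_q)_{\tilde w}, A)[p^\infty]^{G_{\tilde w}}.
\]
Moreover, since $C$ is of finite type over $\F_q$, each closed point of $C_{\F_{q,\infty}}$ has finite residue degree over $\F_{q,\infty}$; hence the $G$-orbit of $\tilde w$ above $w$ is a finite set isomorphic to $G/G_{\tilde w}$. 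This lets me identify the sum over $\tilde w \mid w$ with the induced module from $G_{\tilde w}$ to $G$ and conclude that $G$-invariants distribute over the global direct sum:
\[
 \Bigl(\bigoplus_{\tilde w} \coh^1((K\bar{\F}_q)_{\tilde w}, A)[p^\infty]\Bigr)^{G} = \bigoplus_{w} \coh^1((K\bar{\F}_q)_{\tilde w}, A)[p^\infty]^{G_{\tilde w}} = \bigoplus_{w} \coh^1(K_{\infty, w}^{(p)}, A)[p^\infty].
\]

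Finally, I would apply the left-exact functor $(\,\cdot\,)^{G}$ to the defining exact sequence of $\Sha_{p^\infty}(A / K\bar{\F}_q)$ and compare it to the defining sequence of $\Sha_{p^\infty}(A / K_\infty^{(p)})$. The previous two steps supply isomorphisms for the middle and right-hand terms (noting that since the image from $\coh^1(L,A)[p^\infty]$ lies in the $p$-primary part of each local $\coh^1$, there is no harm in replacing the local targets by their $p$-primary parts). A short diagram chase (or the $5$-lemma) then yields the desired identification of kernels $\Sha_{p^\infty}(A / K_\infty^{(p)}) = \Sha_{p^\infty}(A / K\bar{\F}_q)^{G}$.

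The main obstacle is the local step: one must verify both that the decomposition groups have profinite order prime to $p$ and that the $G$-orbits on places of $K\bar{\F}_q$ are finite, since otherwise $G$-invariants would not commute with the direct sum appearing in the definition of $\Sha$. Both are essentially consequences of the structure of $G$ and of $C$ being of finite type, but they are what make the clean Hochschild-Serre argument go through.
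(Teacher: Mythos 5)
Your proof is correct and follows essentially the same approach as the paper's: both hinge on $\Gal(\bar{\F}_q/\F_{q,\infty})$ being pro-prime-to-$p$ (so that invariants are exact on $p$-primary modules / the $p$-primary parts of the higher $G$-cohomologies vanish), applied once to the global $\coh^1$ and once to the local terms. The only cosmetic difference is in the local step: you fix a place $\tilde{w}$ above $w$, use its decomposition group $G_{\tilde w}$ and the finiteness of the orbit $G/G_{\tilde w}$, while the paper packages exactly this by rewriting $\bigoplus_{\bar v\mid w}\coh^1((K\bar{\F}_q)_{\bar v},A)$ as $\coh^1(K_{\infty,w}^{(p)}\otimes_{\F_{q,\infty}}\bar{\F}_q,A)$, noting that this tensor product is a finite product of fields on which $G$ acts.
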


\begin{proof}
	 The group $\Gal(\bar{\F}_{q} / \F_{q, \infty})$ is pro-prime-to-$p$.
	 Hence the $\Gal(\bar{\F}_{q} / \F_{q, \infty})$-invariant part functor
	 on $p$-primary torsion discrete $\Gal(\bar{\F}_{q} / \F_{q, \infty})$-modules is an exact functor.
	 This implies that the $\Gal(\bar{\F}_{q} / \F_{q, \infty})$-invariant part of
	 $\coh^{1}(K \bar{\F}_{q}, A)[p^{\infty}]$ is
	 $\coh^{1}(K_{\infty}^{(p)}, A)[p^{\infty}]$.
	 On the other hand, we have
	 	\[
	 			\bigoplus_{\bar{v}}
	 				\coh^{1}((K \bar{\F}_{q})_{\bar{v}}, A)
	 		\cong
	 			\bigoplus_{w}
	 				\coh^{1}(K_{\infty, w}^{(p)} \otimes_{\F_{q, \infty}} \bar{\F}_{q}, A),
	 	\]
	 where $\bar{v}$ and $w$ run through all places of
	 $K \bar{\F}_{q}$ and $K_{\infty}^{(p)}$, respectively.
	 (Here note that $K_{\infty, w}^{(p)} \otimes_{\F_{q, \infty}} \bar{\F}_{q}$ is
	 not a field but a finite product of fields).
	 The group $\Gal(\bar{\F}_{q} / \F_{q, \infty})$ acts on the right-hand side term by term.
	 The $\Gal(\bar{\F}_{q} / \F_{q, \infty})$-invariant part of
	 $\coh^{1}(K_{\infty, w}^{(p)} \otimes_{\F_{q, \infty}} \bar{\F}_{q}, A)[p^{\infty}]$ is
	 $\coh^{1}(K_{\infty, w}^{(p)}, A)[p^{\infty}]$ by the same reasoning as before.
	 Now the proposition follows from the definition of Tate-Shafarevich groups.
\end{proof}

\subsection{The Tate-Shafarevich group scheme}\label{su:su}

In \cite{Suz19}, a commutative perfect group scheme
$$\mathcal{G}^{(i)} = \mathcal{G}_{A}^{(i)}:={\bf H}^i(\mathcal{C},\mathcal{A})$$
over $\F_{q}$ for each integer $i$ is defined
(where perfect means that the Frobenius is an isomorphism).
It is characterized by the property \cite[Proposition 2.7.8]{Suz19}
that for any perfect field extension $k/\F_q$,
we have a canonical isomorphism
	\[
			\mathcal{G}_{A}^{(i)}(k)
		\cong
			\coh^i(\mathcal C_{\bar{k}}, \mathcal{A})^{\Gal({\bar k}/k)}
	\]
functorial in the variable $k$,
where $\mathcal{C}_{\bar{k}} = \mathcal{C} \times_{\F_{q}} \bar{k}$ denotes the base change
and the cohomology is taken in the \'etale topology
(or, equivalently, in the flat topology, as the N\'eron model $\mathcal{A}$ is smooth).
We call $\mathcal{G}_{A}^{(1)}$ the Tate-Shafarevich scheme, because
	\[
			\mathcal{G}_{A}^{(1)}(\bar{\F}_{q})
		=
			\coh^{1}(\mathcal{C}_{\bar{\F}_{q}}, \mathcal{A})
		\cong
			\Sha(A / K \bar{\F}_{q}),
	\]
where the last canonical isomorphism is \cite[III, Lemma 11.5]{Mil06}.
Taking the $\Gal(\bar{\F}_{q} / \F_{q, \infty})$-invariant parts of the $p$-primary torsion parts of both of the sides
and using Proposition \ref{p:inv}, we have
\begin{equation} \label{e:Gsha}
			\mathcal{G}_{A}^{(1)}(\F_{q,\infty})[p^\infty]
		\cong
			\Sha_{p^\infty}(A / K_{\infty}^{(p)}).
	\end{equation}
Therefore,
\begin{equation}\label{e:gsh}
(\mathcal{G}_{A}^{(1)}(\F_{q,\infty})[p^\infty])^\vee
		\cong
			\Sha_{p^\infty}(A / K_{\infty}^{(p)})^\vee.
\end{equation}

The group $\mathcal{G}_{A}^{(1)}$ is the perfection
(inverse limit along Frobenius morphisms)
of a commutative smooth group scheme over $\F_{q}$
with unipotent identity component and torsion cofinite component group $\pi_{0}(\mathcal{G}_{A}^{(1)})$
(\cite[Theorem 3.4.1 (2)]{Suz19}).
Here cofinite means that
$\pi_{0}(\mathcal{G}_{A}^{(1)})[n]$ is finite \'etale for any $n \ge 1$.

\begin{definition}
Define $s_{A / K}: = \dim  \mathcal{G}_{A}^{(1)}$.
\end{definition}

\begin{proposition} \label{p:Gchar}
	Let $\mathcal{G}$ be a commutative smooth group scheme over $\F_{q}$
	such that its identity component $\mathcal{G}^{0}$ is unipotent
	and its component group $\pi_{0}(\mathcal{G})$ is $p$-primary torsion and cofinite.
	Then the Iwasawa module $\mathcal{G}(\F_{q, \infty})^{\vee}$ is
	finitely generated torsion over $\Lambda$,
	with characteristic ideal generated by
	$q^{\dim(\mathcal{G})}$ times the characteristic polynomial of the Frobenius action
	on the finite-dimensional $\Q_{p}$-vector space
	$\pi_{0}(\mathcal{G})(\F_{q, \infty})^{\vee} \otimes_{\Z_{p}} \Q_{p}$.
	In particular, the $\mu$-invariant of $\mathcal{G}(\F_{q,\infty})^{\vee}$ is $\dim(\mathcal{G})$.
\end{proposition}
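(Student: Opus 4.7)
The plan is to split $\mathcal{G}$ via the short exact sequence
\begin{equation*}
0 \to \mathcal{G}^{0} \to \mathcal{G} \to \pi_{0}(\mathcal{G}) \to 0
\end{equation*}
of smooth group schemes over $\F_{q}$ and treat the connected and \'etale pieces separately, combining the results via multiplicativity of characteristic ideals (equivalently, additivity of $\mu$-invariants in exact sequences of finitely generated torsion $\Lambda$-modules). Since $\F_{q, \infty}$ is a union of finite fields and $\mathcal{G}^{0}$ is smooth connected, Lang's theorem at each layer $\F_{q, n}$ gives $\coh^{1}(\F_{q, \infty}, \mathcal{G}^{0}) = 0$; hence the sequence of $\F_{q, \infty}$-points stays short exact, and so does its Pontryagin dual sequence of $\Lambda$-modules.

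For the \'etale piece, cofiniteness of $\pi_{0}(\mathcal{G})$ implies cofiniteness of $\pi_{0}(\mathcal{G})(\F_{q, \infty})$ (each $p^{n}$-torsion comes from a finite \'etale group), so $\pi_{0}(\mathcal{G})(\F_{q, \infty})^{\vee}$ is finitely generated over $\Z_{p}$. Its $\mu$-invariant is therefore $0$, and its $\Lambda$-characteristic ideal is generated by the characteristic polynomial of the Frobenius action on $\pi_{0}(\mathcal{G})(\F_{q, \infty})^{\vee} \otimes_{\Z_{p}} \Q_{p}$, read as an element of $\Lambda$ via $\Lambda \cong \Z_{p}[[T]]$ with $T = \sigma - 1$ for a topological generator $\sigma$ of $\Gamma$.

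The core computation concerns the connected unipotent part $\mathcal{G}^{0}$. Over the perfect field $\F_{q}$, any smooth connected commutative unipotent group admits a composition series with successive quotients $\mathbb{G}_{a}$; Lang's theorem keeps the resulting sequences of $\F_{q, \infty}$-points exact, so a d\'evissage reduces the entire computation to the case $\mathcal{G}^{0} = \mathbb{G}_{a}$. For $\mathbb{G}_{a}$, the normal basis theorem identifies $\F_{q^{p^{n}}}$ with the group ring $\F_{q}[\Gamma / \Gamma^{p^{n}}]$; viewing the latter as $(\F_{p}[\Gamma / \Gamma^{p^{n}}])^{e}$ and using that the group ring of a finite abelian $p$-group over $\F_{p}$ is a Frobenius algebra (hence self-Pontryagin-dual as a module over itself), passage to the inverse limit yields
\begin{equation*}
\mathbb{G}_{a}(\F_{q, \infty})^{\vee} \;\cong\; (\Lambda / p)^{e}.
\end{equation*}
This has characteristic ideal $(p^{e}) = (q)$ and $\mu$-invariant $1$ in the normalization of Definition \ref{d:mu}, so the d\'evissage yields characteristic ideal $(q^{d})$ and $\mu$-invariant $d = \dim \mathcal{G}$ for $\mathcal{G}^{0}(\F_{q, \infty})^{\vee}$. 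Multiplying the two contributions completes the proof.

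The main obstacle, I expect, is the $\mathbb{G}_{a}$ computation: that is where the power of $q$ (rather than $p$) in the statement really gets generated, and it requires carrying the $\Gamma$-action faithfully through both the normal basis identification and the Pontryagin self-duality of the group ring of $\Gamma / \Gamma^{p^{n}}$ over $\F_{p}$. Once this single identification is in hand, the remaining assembly, including the unipotent d\'evissage and the \'etale part, is formal.
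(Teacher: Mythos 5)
Your proposal is correct and follows essentially the same route as the paper's proof: connected-\'etale decomposition, Lang's theorem to preserve exactness of $\F_{q,\infty}$-points, multiplicativity of characteristic ideals, d\'evissage of the connected unipotent part down to $\mathbf{G}_a$, and the observation that the \'etale part contributes no $\mu$-invariant. The one place you elaborate beyond the paper is the $\mathbf{G}_a$ step: the paper simply asserts that $(\F_{q,\infty})^\vee$ is a rank-one free $\F_q[[\Gamma]]$-module with characteristic ideal $(q)$, whereas you unpack this via the normal basis theorem at finite levels and Pontryagin self-duality of the group algebra $\F_p[\Gamma/\Gamma^{p^n}]$ (Frobenius algebra property), then pass to the inverse limit to get $(\Lambda/p)^e$. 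That is the same identification the paper invokes, so it is the same argument with the details filled in.
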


\begin{proof}
	Consider the connected-\'etale sequence
	$0 \to \mathcal{G}^{0} \to \mathcal{G} \to \pi_{0}(\mathcal{G}) \to 0$.
	This induces an exact sequence
		\[
				0
			\to
				\mathcal{G}^{0}(\F_{q,\infty})
			\to
				\mathcal{G}(\F_{q,\infty})
			\to
				\pi_{0}(\mathcal{G})(\F_{q, \infty})
			\to
				0
		\]
	since $\coh^{1}(\F_{q, \infty}, \mathcal{G}^{0})$
	(which is the direct limit of $\coh^{1}(\F_{q, n}, \mathcal{G}^{0})$ in $n$)
	vanishes by Lang's theorem.%
	\footnote{Recall the statement of Lang's theorem
	\cite[III, \S 5, 7.5 and 7.11]{DG70}:
	for any commutative connected (not necessarily affine) algebraic group $G$ over a finite field $\F_{q}$,
	the cohomology $\coh^{n}(\F_{q}, G)$ vanishes for all $n \ge 1$.}
	Hence we have an exact sequence
		\[
				0
			\to
				\pi_{0}(\mathcal{G})(\F_{q,\infty})^{\vee}
			\to
				\mathcal{G}(\F_{q,\infty})^{\vee}
			\to
				\mathcal{G}^{0}(\F_{q,\infty})^{\vee}
			\to
				0
		\]
	of $\Lambda$-modules.
	Since characteristic ideals are multiplicative in short exact sequences
	(\cite[Appendix A, Proposition 1]{CS06}),
	we may assume that $\mathcal{G}$ is either connected or \'etale.
	A commutative smooth connected unipotent group over $\F_{q}$ is
	a finite successive extension of copies of $\mathbf{G}_{a}$
	(\cite[IV, \S 2, 3.9]{DG70}).
	Therefore the connected case is further reduced to the case of $\mathbf{G}_{a}$
	by a similar argument.
	
	Assume that $\mathcal{G} = \mathbf{G}_{a}$.
	Then $(\F_{q,\infty})^{\vee}$ is a rank one free $\F_{q}[[\Gamma]]$-module.
	In particular, it is a finitely generated torsion $\Lambda$-module
	with characteristic ideal $(q)$.
	Assume that $\mathcal{G}$ is \'etale.
	Then $\mathcal{G}(\F_{q,\infty})^{\vee}$ is finitely generated over $\Z_{p}$.
	Hence it is a finitely generated torsion $\Lambda$-module
	with characteristic ideal generated by
	the characteristic polynomial of the Frobenius action
	on the finite-dimensional $\Q_{p}$-vector space
	$\mathcal{G}(\F_{q,\infty})^{\vee} \otimes_{\Z_{p}} \Q_{p}$.
\end{proof}

\begin{theorem}\label{muissA} The $\mu$-invariant of $(\mathcal{G}^{(1)}(\F_{q,\infty})[p^\infty])^{\vee}$ is $s_{A / K}$.
Therefore,
$$\mu_{A/K}=s_{A / K}.$$
In particular, $\mu_{A / K}$ (and hence $\dim \Sha(A)$ when Tate-Shafarevich groups are finite) is an integer.
\end{theorem}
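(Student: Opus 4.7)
The strategy is to identify $\Sha_{p^{\infty}}(A/K_{\infty}^{(p)})^{\vee}$ with the Pontryagin dual of the $p^{\infty}$-torsion of $\F_{q,\infty}$-points of $\mathcal{G}_{A}^{(1)}$, and then to compute the $\mu$-invariant of the latter using the connected-\'etale decomposition, exactly in the spirit of Proposition \ref{p:Gchar}. The conclusion $\mu_{A/K} = s_{A/K}$ will then be a direct consequence of Proposition \ref{p:ts} together with \eqref{e:gsh}, and integrality is automatic because $s_{A/K} = \dim \mathcal{G}_{A}^{(1)}$ is a non-negative integer.

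More concretely, the first step is to reduce to computing the $\mu$-invariant of $(\mathcal{G}_{A}^{(1)}(\F_{q,\infty})[p^{\infty}])^{\vee}$: this is the content of \eqref{e:gsh}, and it is $\Gamma$-equivariant because the identification \eqref{e:Gsha} comes from $\Gal(\bar{\F}_{q}/\F_{q,\infty})$-invariants, so the residual $\Gamma = \Gal(K_{\infty}^{(p)}/K)$-action matches on both sides. Combined with Proposition \ref{p:ts}, this reduces the theorem to showing that the $\mu$-invariant of $(\mathcal{G}_{A}^{(1)}(\F_{q,\infty})[p^{\infty}])^{\vee}$ equals $\dim \mathcal{G}_{A}^{(1)}$.

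For this computation, I would introduce the subgroup scheme $\mathcal{G}' \subset \mathcal{G}_{A}^{(1)}$ obtained as the preimage of $\pi_{0}(\mathcal{G}_{A}^{(1)})[p^{\infty}]$. Since $\F_{q,\infty}$ is a perfect field, passing to the perfection causes no loss on $\F_{q,\infty}$-points, so $\mathcal{G}'$ is (the perfection of) a commutative smooth $\F_{q}$-group scheme with unipotent identity component $(\mathcal{G}_{A}^{(1)})^{0}$ and $p$-primary cofinite torsion component group. Because unipotent groups in characteristic $p$ have $p$-power torsion points over any field (a successive extension of $\mathbf{G}_{a}$'s gives $p$-torsion points over $\F_{q,\infty}$), the group $\mathcal{G}'(\F_{q,\infty})$ is itself $p$-primary torsion, and a short check shows that $\mathcal{G}'(\F_{q,\infty}) = \mathcal{G}_{A}^{(1)}(\F_{q,\infty})[p^{\infty}]$: any $p^{\infty}$-torsion point of $\mathcal{G}_{A}^{(1)}(\F_{q,\infty})$ maps to $p^{\infty}$-torsion in $\pi_{0}$, and conversely any $\F_{q,\infty}$-point of $\mathcal{G}'$ is $p^{\infty}$-torsion.

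Now Proposition \ref{p:Gchar} applies verbatim to $\mathcal{G}'$ and yields that $\mathcal{G}'(\F_{q,\infty})^{\vee}$ is a finitely generated torsion $\Lambda$-module whose $\mu$-invariant equals $\dim(\mathcal{G}') = \dim((\mathcal{G}_{A}^{(1)})^{0}) = \dim \mathcal{G}_{A}^{(1)} = s_{A/K}$. Chaining this with the identification of the previous paragraph and Proposition \ref{p:ts} gives $\mu_{A/K} = s_{A/K}$; the final assertion about integrality follows because $s_{A/K}$ is an integer by definition, and this in turn implies $\dim \Sha(A) \in \Z$ via Proposition \ref{p:liminf} when Tate-Shafarevich groups are finite. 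The only place requiring care is the reduction $\mathcal{G}' \leadsto \mathcal{G}'(\F_{q,\infty}) = \mathcal{G}_{A}^{(1)}(\F_{q,\infty})[p^{\infty}]$ and the matching of $\Gamma$-actions; the rest is an application of already-proven results.
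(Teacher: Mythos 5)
Your proof is correct and follows the paper's route: the paper's own argument is just the citation of \eqref{e:gsh}, Proposition \ref{p:ts}, and Proposition \ref{p:Gchar}. The extra step you supply --- passing to the preimage $\mathcal{G}'$ of $\pi_{0}(\mathcal{G}_{A}^{(1)})[p^{\infty}]$ so that the $p$-primary hypothesis of Proposition \ref{p:Gchar} holds and so that $\mathcal{G}'(\F_{q,\infty}) = \mathcal{G}_{A}^{(1)}(\F_{q,\infty})[p^{\infty}]$, together with the $\Gamma$-equivariance of \eqref{e:gsh} --- is precisely the detail the paper leaves implicit, and it is genuinely needed since $\pi_{0}(\mathcal{G}_{A}^{(1)})$ is a priori only torsion rather than $p$-primary.
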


\begin{proof}
	This follows from \eqref{e:gsh}, Proposition \ref{p:ts}, and Proposition \ref{p:Gchar}.
\end{proof}

In the proof of Propositions \ref{p:limint} and \ref{p:liminf},
we have used the control theorem for Selmer groups \cite[Theorem 4]{Tan10}.
The group schemes $\mathcal{G}^{(i)}$, their derived descent property and arithmetic duality give
a control theorem for Tate-Shafarevich groups,
which we are now going to see below (Proposition \ref{p:ShaGone}).
For $n\ge 0$, consider the natural homomorphisms:
\[
		\Sha(A / K_{n}^{(p)})
	\overset{i_{n}}{\longrightarrow}
		\coh^{1}(\mathcal{C}_{\F_{q, n}}, \mathcal{A})
	\overset{j_{n}}{\longrightarrow}
		\mathcal{G}^{(1)}(\F_{q,n}).
\]
Note that the $p$-primary part of the last term is isomorphic to
$\Sha_{p^\infty}(A / K_{\infty}^{(p)})^{\Gamma^{p^{n}}}$
by \eqref{e:Gsha}.

\begin{proposition} \label{p:ShaGone}
For all $n\ge 0$, the map $i_n$ is injective with finite cokernel of bounded order in $n$,
and the map $j_n$ has finite kernel and cokernel of bounded order in $n$.
\end{proposition}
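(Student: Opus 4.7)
The plan is to handle $i_n$ and $j_n$ by two separate spectral-sequence comparisons, using the N\'eron mapping property, the derived descent of the group schemes $\mathcal{G}^{(j)}$ from \cite{Suz19}, and arithmetic duality on the surface $\mathcal{C}$.

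For $i_n$, the N\'eron mapping property yields $\mathcal{A} = j_{\ast} A$ as \'etale sheaves on $\mathcal{C}_{\F_{q, n}}$, where $j$ is the inclusion of the generic point. The low-degree Leray sequence for $j$ then reads
\[
        0 \to H^{1}(\mathcal{C}_{\F_{q, n}}, \mathcal{A}) \to H^{1}(K_{n}^{(p)}, A) \to \bigoplus_{v} H^{0}(\kappa(v), H^{1}(I_{v}, A(K_{v}^{\mathrm{sep}}))),
\]
indexed by the closed points $v$ of $\mathcal{C}_{\F_{q, n}}$ with inertia $I_{v}$. A class in $\Sha(A / K_{n}^{(p)})$ vanishes in every local cohomology, hence in every inertia cohomology, and so lifts uniquely to $H^{1}(\mathcal{C}_{\F_{q, n}}, \mathcal{A})$; this defines $i_{n}$ and shows it is injective. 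Its cokernel embeds into $\bigoplus_{w} H^{1}_{\mathrm{unr}}(K^{(p)}_{n, w}, A)$, which by Lang's theorem applied to the connected N\'eron fiber vanishes at good places and equals $H^{1}(\kappa(w), \pi_{0}(\mathcal{A}_{w}))$ at bad ones. Since the finite set of bad places and the geometric component groups are invariants of $A / K$ unchanged by the unramified extension $K_{n}^{(p)} / K$, this cokernel is bounded uniformly in $n$.

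For $j_{n}$, apply the Hochschild--Serre / derived descent spectral sequence
\[
        E_{2}^{i, j} = H^{i}(\F_{q, n}, \mathcal{G}^{(j)}(\bar{\F}_{q})) \Rightarrow H^{i + j}(\mathcal{C}_{\F_{q, n}}, \mathcal{A})
\]
from \cite{Suz19}. Its five-term exact sequence exhibits $\ker(j_{n}) = H^{1}(\F_{q, n}, A(K \bar{\F}_{q}))$ and identifies $\coker(j_{n})$ with $\ker\bigl(H^{2}(\F_{q, n}, A(K \bar{\F}_{q})) \to H^{2}(\mathcal{C}_{\F_{q, n}}, \mathcal{A})\bigr)$. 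To control these, use the Lang--N\'eron extension $0 \to T(\bar{\F}_{q}) \to A(K \bar{\F}_{q}) \to M \to 0$ with $T = \mathrm{Tr}_{K / \F_{q}}(A)$ and $M$ finitely generated: Lang's theorem kills the Galois cohomology of $T$ in every positive degree, reducing the error terms to cyclic cohomology of the Frobenius-module $M$. The Frobenius action on the discrete finitely generated module $M$ factors through a finite quotient of $\hat{\Z}$, so $\ker(j_{n}) = H^{1}(\F_{q, n}, M)$ falls into finitely many possible groups as $n$ varies and is therefore finite with uniform bound.

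The main obstacle is $\coker(j_{n})$: although $H^{2}(\F_{q, n}, M)$ may contain an infinite $\Q / \Z$-summand (coming from trivial-action $\Z$-factors in $M$), the cokernel is only the kernel of its map into the abutment $H^{2}(\mathcal{C}_{\F_{q, n}}, \mathcal{A})$. Artin--Verdier arithmetic duality on the surface $\mathcal{C}$ identifies this abutment with (essentially) the Pontryagin dual of $\mathcal{G}^{(1)}_{A^{\vee}}(\F_{q, n})$ for the dual abelian variety $A^{\vee}$, whose unipotent identity component carries the $\Q / \Z$-structure needed to absorb the divisible part of $H^{2}(\F_{q, n}, M)$ injectively. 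What remains of the kernel is then a subquotient of the cyclic cohomology of $M$, bounded uniformly in $n$ by the same argument as for $H^{1}$. Putting the two five-term sequences together gives the desired control theorem for $\Sha$.
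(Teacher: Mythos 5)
Your treatment of $i_n$ is correct and, modulo presentation, the same as the paper's: you derive the needed exact sequence from the low-degree Leray sequence for the open immersion of the generic point, while the paper cites the resulting sequence directly from Milne. Both arrive at a cokernel controlled by $\bigoplus_v \coh^1\bigl(\F_{q,n}(v), \pi_0(\mathcal{A}_v)\bigr)$, which is bounded because $\pi_0(\mathcal{A}_v)$ vanishes at places of good reduction and is a finite \'etale group scheme at the finitely many bad places.

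Your treatment of $j_n$ starts the same way as the paper (the derived descent spectral sequence from \cite{Suz19} and its five-term exact sequence, Lang's theorem to kill the connected part of $\mathcal{G}^{(0)}$), and your analysis of $\ker(j_n)$ via the finitely generated \'etale group $\pi_0(\mathcal{G}^{(0)})$ (equivalently, the Lang--N\'eron quotient $M$) is sound. However, your argument for $\coker(j_n)$ has a genuine gap. You correctly isolate the difficulty — $\coh^{2}(\F_{q,n}, \pi_0(\mathcal{G}^{(0)}))$ can contain a $\Q/\Z$-divisible summand coming from $\Z$-factors on which Frobenius acts through a finite quotient — but your proposed resolution does not work. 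You assert that the unipotent identity component of $\mathcal{G}^{(1)}_{A^{\vee}}$ ``carries the $\Q/\Z$-structure needed to absorb the divisible part of $\coh^{2}(\F_{q,n}, M)$ injectively.'' This is false: that unipotent group scheme is killed by a power of $p$, so its Pontryagin dual is a finitely generated $\Z_p$-module, and a torsion-free divisible group like $\Q/\Z$ cannot inject into it.

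The missing ingredient is the non-degeneracy of the geometric height pairing. The duality statement \cite[Proposition 4.2.3]{Suz19} exhibits $\coh^{2}(\mathcal{C}_{\F_{q,n}}, \mathcal{A})^{\vee}$ as an extension whose subobject is $\mathcal{B}^{0}(\mathcal{C}_{\F_{q,n}})^{\wedge}$, the profinite-completed Mordell--Weil group of the dual abelian variety $B$ (not the Tate--Shafarevich scheme of $B$). On the other side, the divisible summand of $\coh^{2}(\F_{q,n}, \pi_0(\mathcal{G}^{(0)}))$ is Pontryagin dual to (a completion of) the torsion-free part of $\pi_0(\mathcal{G}^{(0)})(\F_{q,n})$, i.e.\ the geometric Mordell--Weil lattice of $A$. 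The paper matches these two via the non-degenerate pairing $\pi_0(\mathcal{G}^{(0)})/\mathrm{tor} \times \pi_0(\mathcal{F}^{(0)})/\mathrm{tor} \to \Z$ (where $\mathcal{F}^{(0)} = \mathbf{H}^{0}(\mathcal{C}, \mathcal{B}^{0})$), showing the resulting map has cokernel bounded by the discriminant of the pairing and the torsion. In other words, it is the Mordell--Weil lattice of $B$, made available through duality and linked to that of $A$ through the height pairing, that absorbs the divisible part — not anything unipotent. Without invoking the height pairing and its non-degeneracy, your argument for $\coker(j_n)$ does not close.
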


\begin{proof}
	The first statement follows from the exact sequence
		\[
				0
			\to
				\Sha(A / K_{n}^{(p)})
			\to
				\coh^{1}(\mathcal{C}_{\F_{q, n}}, \mathcal{A})
			\to
				\bigoplus_{v \in \mathcal{C}_{\F_{q, n}}}
					\coh^{1} \bigl(
						\F_{q, n}(v), \pi_{0}(\mathcal{A}_{v})
					\bigr)
		\]
	written in \cite[III, Proposition 9.2]{Mil06}.
	For the second statement, we have a canonical spectral sequence
		\[
				E_{2}^{i j}
			=
				\coh^{i} \bigl(
					\F_{q,n},
					\mathcal{G}^{(j)}
				\bigr)
			\Longrightarrow
				\coh^{i + j}(\mathcal{C}_{\F_{q,n}}, \mathcal{A})
		\]
	by \cite[Proposition 2.7.8]{Suz19}.
	Hence we have an exact sequence
		\begin{align*}
					0
			&	\to
					\coh^{1}(\F_{q,n}, \mathcal{G}^{(0)})
				\to
					\coh^{1}(\mathcal{C}_{\F_{q,n}}, \mathcal{A})
				\to
					\mathcal{G}^{(1)}(\F_{q,n})
			\\
			&	\to
					\coh^{2}(\F_{q, n}, \mathcal{G}^{(0)})
				\to
					\coh^{2}(\mathcal{C}_{\F_{q, n}}, \mathcal{A}).
		\end{align*}
	The natural morphism from $\coh^{m}(\F_{q,n}, \mathcal{G}^{(0)})$
	to $\coh^{m}(\F_{q,n}, \pi_{0}(\mathcal{G}^{(0)}))$ is an isomorphism
	for any $m \ge 1$ since $\coh^{m}(\F_{q,n}, (\mathcal{G}^{(0)})^{0}) = 0$ by Lang's theorem.
	The group scheme $\pi_{0}(\mathcal{G}^{(0)})$ over $\F_{q}$ is \'etale
	with group of geometric points finitely generated
	by \cite[Theorem 3.4.1 (2)]{Suz19}.
	Hence $\coh^{1}(\F_{q,n}, \pi_{0}(\mathcal{G}^{(0)}))$ is finite of order bounded in $n$.
	Therefore the kernel of $\coh^{1}(\mathcal{C}_{\F_{q, n}}, \mathcal{A}) \to \mathcal{G}^{(1)}(\F_{q, n})$
	is finite of order bounded in $n$.
	
	To show that the cokernel of this map is also finite of order bounded in $n$,
	let $B / K$ be the abelian variety dual to $A$ with N\'eron model $\mathcal{B}$.
	Let $\mathcal{B}^{0}$ be the part of $\mathcal{B}$ with connected fibers.
	Let
		\[
				\mathcal{F}^{(i)}
			=
				\mathbf{H}^{i}(\mathcal{C}, \mathcal{B}^{0})
		\]
	be the group scheme defined in \cite{Suz19}.
	The group $\coh^{2}(\F_{q,n}, \mathcal{G}^{(0)})$ is isomorphic
	to $\coh^{1}(\F_{q,n}, \pi_{0}(\mathcal{G}^{(0)}) \otimes \Q / \Z)$
	since Galois cohomology is torsion in positive degrees.
	The latter group $\coh^{1}(\F_{q,n}, \pi_{0}(\mathcal{G}^{(0)}) \otimes \Q / \Z)$
	is Pontryagin dual to the profinite completion of
	$\Hom(\pi_{0}(\mathcal{G}^{(0)}) / \mathrm{tor}, \Z)(\F_{q, n})$
	($=$ the group of $\F_{q, n}$-valued points of the dual $\Z$-lattice of $\pi_{0}(\mathcal{G}^{(0)}) / \mathrm{tor}$).
	The height pairing gives a non-degenerate pairing
		\[
				\pi_{0}(\mathcal{G}^{(0)}) / \mathrm{tor}
			\times
				\pi_{0}(\mathcal{F}^{(0)}) / \mathrm{tor}
			\to
				\Z
		\]
	of $\Z$-lattices over $\F_{q}$ (\cite[Theorem 3.4.1 (6e), (7)]{Suz19}).
	Hence we have injections
		\[
				\pi_{0}(\mathcal{F}^{(0)})(\F_{q, n}) / \mathrm{tor}
			\hookrightarrow
				\bigl(
					\pi_{0}(\mathcal{F}^{(0)}) / \mathrm{tor}
				\bigr)(\F_{q, n})
			\hookrightarrow
				\Hom(\pi_{0}(\mathcal{G}^{(0)}) / \mathrm{tor}, \Z)(\F_{q, n})
		\]
	with finite cokernels of order bounded in $n$.
	Therefore we have a natural map
		\[
				\pi_{0}(\mathcal{F}^{(0)})(\F_{q, n})^{\wedge}
			\to
				\coh^{2}(\F_{q,n}, \mathcal{G}^{(0)})^{\vee}
		\]
	with finite cokernel of order bounded in $n$,
	where $\wedge$ denotes the profinite completion.
	
	On the other hand, we have a canonical exact sequence
		\[
				0
			\to
				\mathcal{B}^{0}(\mathcal{C}_{\F_{q,n}})^{\wedge}
			\to
				\coh^{2}(\mathcal{C}_{\F_{q,n}}, \mathcal{A})^{\vee}
			\to
				\mathrm T \coh^{1}(\mathcal{C}_{\F_{q,n}}, \mathcal{B}^{0})
			\to
				0
		\]
	by the duality result \cite[Proposition 4.2.3]{Suz19},
	where $\mathrm T$ denotes the profinite Tate module.
	By the connected-\'etale sequence
		\[
				0
			\to
				(\mathcal{F}^{(0)})^{0}
			\to
				\mathcal{F}^{(0)}
			\to
				\pi_{0}(\mathcal{F}^{(0)})
			\to
				0
		\]
	and Lang's theorem (namely, $\coh^{1}(\F_{q, n}, (\mathcal{F}^{(0)})^{0}) = 0$), the map
		\[
				\mathcal{F}^{(0)}(\F_{q,n})
			=
				\mathcal{B}^{0}(\mathcal{C}_{\F_{q,n}})
			\to
				\pi_{0}(\mathcal{F}^{(0)})(\F_{q,n})
		\]
	is surjective.
	
	Therefore the cokernel of the map from $\coh^{2}(\mathcal{C}_{\F_{q,n}}, \mathcal{A})^{\vee}$
	to $\coh^{2}(\F_{q,n}, \mathcal{G}^{(0)})^{\vee}$ is finite of order bounded in $n$.
	Thus the kernel of the map from $\coh^{2}(\F_{q,n}, \mathcal{G}^{(0)})$
	to $\coh^{2}(\mathcal{C}_{\F_{q,n}}, \mathcal{A})$ is finite of order bounded in $n$.
	This implies that the cokernel of
	$\coh^{1}(\mathcal{C}_{\F_{q, n}}, \mathcal{A}) \to \mathcal{G}^{(1)}(\F_{q, n})$
	is finite of order bounded in $n$.
\end{proof}

The above proof actually gives an explicit bound on the orders of (the $p$-primary part of) the kernel and cokernel of
$i_{n}$ and $j_{n}$ for large $n$ in terms of the finite \'etale groups $\pi_{0}(\mathcal{A}_{v})$,
$\pi_{0}(\mathcal{G}^{(0)})_{\mathrm{tor}}$ and $\pi_{0}(\mathcal{F}^{(0)})_{\mathrm{tor}}$
and the discriminant of the height pairing
on $\pi_{0}(\mathcal{G}^{(0)}) / \mathrm{tor} \times \pi_{0}(\mathcal{F}^{(0)}) / \mathrm{tor}$.

Using this, we can give another proof of
Propositions \ref{p:limint} and \ref{p:liminf}:

\begin{proof}[Another proof of Propositions \ref{p:limint} and \ref{p:liminf}]
	The $\Gamma^{p^{n}}$-invariant part of
	$\mathcal{G}^{(1)}(\F_{q, \infty})$ is $\mathcal{G}^{(1)}(\F_{q,n}$).
	Therefore
		\[
				\lim_{n \to \infty}
					\frac{
						\log |\mathcal{G}^{(1)}(\F_{q,n})[p^m]|
					}{
						p^n\log q
					}
			=
				s_{A / K}
		\]
	for all large $m$ by the same argument as the first part of the proof of Proposition \ref{p:limint},
	and if $\mathcal{G}^{(1)}(\F_{q, n})[p^{\infty}]$ is finite for all $n$, then
		\[
				\lim_{n \to \infty}
					\frac{
						\log |\mathcal{G}^{(1)}(\F_{q,n})[p^{\infty}]|
					}{
						p^n\log q
					}
			=
				s_{A / K}
		\]
	by Iwasawa's formula \cite[Theorem 8]{Ser95}.
	By Proposition \ref{p:ShaGone}, we can compare the asymptotic behavior of
	$|\mathcal{G}^{(1)}(\F_{q,n})[p^m]|$ and $|\Sha_{p^m}(A/K_n^{(p)})|$.
	This, with Theorem \ref{muissA}, gives
		\[
				\lim_{n\rightarrow \infty}\frac{\log |\Sha_{p^m}(A/K_n^{(p)})|}{{p^n}\log q}
			=
				\lim_{n \to \infty}
					\frac{
						\log |\mathcal{G}^{(1)}(\F_{q,n})[p^m]|
					}{
						p^n\log q
					}
			=
				s_{A / K}
			=
				\mu_{A / K}.
		\]
	for all large $m$, and if $\Sha_{p^{\infty}}(A / K_{n}^{(p)})$ is finite for all $n$, then
		\begin{equation}\label{e:Gone}
				\lim_{n\rightarrow \infty}\frac{\log |\Sha_{p^{\infty}}(A/K_n^{(p)})|}{{p^n}\log q}
			=
				\lim_{n \to \infty}
					\frac{
						\log |\mathcal{G}^{(1)}(\F_{q,n})[p^{\infty}]|
					}{
						p^n\log q
					}
			=
				s_{A / K}
			=
				\mu_{A / K}.
		\end{equation}
\end{proof}

\begin{remark} \mbox{}
	\begin{enumerate}
	\item
		Propositions \ref{p:Gchar} and \ref{p:ShaGone},
		together with the properties of $\mathcal{G}^{(1)}$ cited after \eqref{e:gsh},
		give another proof that $X_{A/K}$ is finitely generated torsion over $\Lambda$,
		i.e.\ \cite[Theorem 1.7]{OT09}.
	\item
		To seek for a number field analogue of the Tate-Shafarevich scheme,
		let $E / \Q$ be an elliptic curve and $p$ a prime number.
		Assume that $\Sel_{p^{\infty}}(E / \Q^{cyc})^{\vee}$ is torsion over
		$\Z_{p}[[\Gal(\Q^{cyc} / \Q)]]$.
		Does there exist the perfection of some smooth group scheme over $\F_{p}$
		whose group of $\F_{p, \infty}$-valued points is isomorphic to
		$\Sha_{p^{\infty}}(E / \Q^{cyc})$ as
		$\Z_{p}[[\Gal(\Q^{cyc} / \Q)]] \cong \Z_{p}[[\Gal(\F_{p, \infty} / \F_{p})]]$-modules?
	\end{enumerate}
\end{remark}

\subsection{The $\mu$-invariant formula}

With the formula \eqref{e:shafni} or \eqref{e:Gone}
and Proposition \ref{p:slope},
we can now reinterpret Proposition 4.2 of \cite{Ulm19} as follows:

\begin{corollary}\label{muA}
	Assume that $\Sha(A / K_{n}^{(p)})$ is finite for all $n$.
	We have
		\begin{equation} \label{e:muform}
				\mu_{A / K}
			=
					\deg(\mathcal{L})
				+
					\dim(A)(g_{\mathcal{C}} - 1)
				+
					\dim(\Tr_{K / \F_{q}}(A))
				-
					\theta,
		\end{equation}
	where $\mathcal{L}$ is the invertible sheaf on $\mathcal{C}$ defined by
		\[
				\mathcal{L}
			:=
				o^{*} \Omega_{\mathcal{A} / \mathcal{C}}^{\dim(A)}
		\]
	(here $o: \mathcal{C} \to \mathcal{A}$ denotes the zero section of $\mathcal{A} \to \mathcal{C}$).
\end{corollary}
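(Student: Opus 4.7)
The strategy is to combine three already-established inputs. Under the finiteness hypothesis on $\Sha(A/K_n^{(p)})$, Proposition \ref{p:liminf} identifies $\mu_{A/K}$ with Ulmer's dimension $\dim \Sha(A)$ defined via the limit \eqref{e:shafni}. By Kato--Trihan \cite{KT03}, the same finiteness hypothesis implies the Birch--Swinnerton-Dyer formula for the leading coefficient of $L_{A}(s)$ at $s=1$. The statement \cite[Proposition 4.2]{Ulm19}, whose input is precisely BSD, expresses $\dim \Sha(A)$ as a sum of the degree of the Hodge bundle of $\mathcal{A}$, a genus term, a trace term, and a slope contribution coming from the numerator $P_{1}(t)$ of the $L$-function. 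The proof therefore reduces to matching Ulmer's expression with the right-hand side of \eqref{e:muform} term by term.

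For this translation, the Faltings-height contribution is immediate from the definition $\mathcal{L} = o^{*}\Omega_{\mathcal{A}/\mathcal{C}}^{\dim(A)}$, and the terms $\dim(A)(g_{\mathcal{C}}-1)$ and $\dim(\Tr_{K/\F_{q}}(A))$ carry over unchanged. For the slope term, Proposition \ref{p:slope} writes $\theta$ as $\sum_{\lambda_{j} < 1}(1 - \lambda_{j})$, which is the absolute value of the height of the lowest vertex of the Newton polygon of $P_{1}(t/q)$ with respect to the $q$-valuation; this is exactly the quantity Ulmer extracts from the slopes of $P_{1}$, up to the choice of normalization discussed below.

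The main obstacle is reconciling normalizations. Our $\mu$-invariant is defined by $q^{\mu_{A/K}} = \prod_{i} p^{\mu_{i}}$ (Definition \ref{d:mu}), so \eqref{e:shafni} divides by $p^{n}\log q$ rather than by $p^{n}\log p$; correspondingly, $\theta$ is pinned down by $p$-primitivity of $q^{\theta}P_{1}(t/q)$ in Definition \ref{d:theta}. Ulmer instead uses a $p$-adic normalization that a priori produces a value in $\tfrac{1}{e}\Z$, where $q = p^{e}$; this is precisely the source of the integrality gap flagged in \S\ref{ss:Ulmer}. Once the factor of $e$ is tracked consistently on both sides, Ulmer's identity becomes \eqref{e:muform}. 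Since both sides of \eqref{e:muform} are unconditional invariants, the finiteness hypothesis enters the argument only through the two inputs above (BSD and the identification $\mu_{A/K} = \dim \Sha(A)$).
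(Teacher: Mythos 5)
Your proposal is correct and follows essentially the same route as the paper: the paper obtains \eqref{e:muform} by combining the identification $\mu_{A/K} = \dim \Sha(A)$ from Proposition \ref{p:liminf} (equation \eqref{e:shafni}, or its alternative \eqref{e:Gone}) with Proposition \ref{p:slope} and Ulmer's Proposition 4.2, exactly as you do. Your additional remarks on Kato--Trihan supplying the BSD input and on the $q$- versus $p$-normalization factor of $e$ faithfully reproduce the background discussion in \S\ref{ss:Ulmer}.
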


We will prove below that the formula \eqref{e:muform} also holds
for the following two cases without any assumption on $\Sha$:
$A$ is the Jacobian of a projective smooth curve over $K$;
and $A$ is a semistable abelian variety over $K$.

\begin{remark}
	As in Proposition \ref{p:liminf},
	Ulmer \cite[4.1]{Ulm19} called the right-hand side of \eqref{e:shafni}
	the \emph{dimension of $\Sha(A)$}.
	He justified this terminology in \cite[4.3 (2)]{Ulm19}
	in the special case that $A$ is a Jacobian.
	For a general abelian variety $A$,
	our definition $s_{A / K} = \dim \mathcal{G}_{A}^{(1)}$
	and the formula \eqref{e:Gone} together justify the terminology.
\end{remark}

The term $\deg(\mathcal{L})$ is non-negative (\cite[Theorem 2.6]{Yua18}). Also, recall that the term $\theta$ in the formula of the above corollary is non-negative so that we get
the following upper bound on the $\mu$-invariant (see \cite[Proposition 4.4]{Ulm19} for another upper bound for $\mu$):

\begin{equation} \label{e:upperbound}
				\mu_{A / K}
			\le
					\deg(\mathcal{L})
				+
					\dim(A)(g_{\mathcal{C}} - 1)
				+
					\dim(\Tr_{K / \F_{q}}(A)).
		\end{equation}\

About the following definition, see also Theorem \ref{c:szpiro} below.

\begin{definition} \label{d:Sz}
	Define the Szpiro difference as
		\[
				d
			=
				d_{A}
			:=
					\frac{\deg(N)}{2}
				+
					\dim(A) (g_{\mathcal{C}} - 1)
				+
					\dim \mathrm{Tr}_{K / \F_{q}}(A)
				-
					\deg(\mathcal{L}).
		\]
	Also define $b = b_{A} := a / 2 - \theta$.
\end{definition}

Note that $b$ is non-negative by Proposition \ref{p:slope}
and defined purely from the $L$-function of $A$.
By \eqref{e:OS}, we can rewrite the formula \eqref{e:muform} in terms of $b$ and $d$:

\begin{corollary} \label{c:bminusd}
	The formula \eqref{e:muform} is equivalent to the formula
	$\mu_{A / K} = b - d$.
\end{corollary}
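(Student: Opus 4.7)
The plan is to verify this equivalence by a direct algebraic manipulation using the Ogg--Shafarevich formula \eqref{e:OS}. There is no new geometric input required: every ingredient is already set up in Definition \ref{d:theta}, Definition \ref{d:Sz}, and \eqref{e:OS}.

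First, I would halve \eqref{e:OS} to obtain
\[
    \frac{a}{2}
  =
    \frac{\deg(N)}{2} + 2\dim(A)(g_{\mathcal{C}} - 1) + 2\dim \mathrm{Tr}_{K/\F_{q}}(A).
\]
Substituting this into $b = a/2 - \theta$ and subtracting the definition of $d$, the $\deg(N)/2$ terms cancel and the dimension terms combine with coefficient $2-1=1$, yielding
\[
    b - d
  =
    \deg(\mathcal{L}) + \dim(A)(g_{\mathcal{C}} - 1) + \dim \mathrm{Tr}_{K/\F_{q}}(A) - \theta,
\]
which is exactly the right-hand side of \eqref{e:muform}. Thus \eqref{e:muform} holds if and only if $\mu_{A/K} = b - d$.

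There is no real obstacle here; the statement is essentially a repackaging of Definition \ref{d:Sz} modulo \eqref{e:OS}. The only thing to be careful about is that $b$ and $d$ are defined purely in terms of $L$-function data and Néron-model/conductor data respectively, so the equivalence confirms that the content of \eqref{e:muform} is genuinely the identity $\mu_{A/K} = b - d$ between these two natural non-negative invariants.
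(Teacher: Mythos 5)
Your proof is correct and follows exactly the same route the paper implicitly takes: the corollary is stated immediately after the sentence ``By \eqref{e:OS}, we can rewrite the formula \eqref{e:muform} in terms of $b$ and $d$,'' so the intended argument is precisely the substitution of the Ogg--Shafarevich formula into the definitions of $b$ and $d$, which you carry out cleanly.
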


\begin{remark}
	The formula \eqref{e:muform} can be understood as a certain kind of Euler characteristic formulas
	for $L_{A}(s)$ at $s = 1$.
	To see this, let $\mathrm{Lie}(\mathcal{A})$ be the vector bundle on $\mathcal{C}$
	dual to $o^{*} \Omega_{\mathcal{A} / \mathcal{C}}$.
	Then by the Riemann-Roch theorem, we have
		\[
				\chi(\mathcal{C}, \mathrm{Lie}(\mathcal{A}))
			:=
				\sum_{n}
					(-1)^{n} \dim_{\F_{q}} \coh^{n}(\mathcal{C}, \mathrm{Lie}(\mathcal{A}))
			=
				\dim(A) (1 - g_{\mathcal{C}}) - \deg(\mathcal{L}).
		\]
	Also let
		\[
				\chi^{0}(\mathcal{C}, \mathcal{A})
			:=
				\sum_{n}
					(-1)^{n} \dim \mathcal{G}_{A}^{(n)}.
		\]
	By \cite[Propositions 3.2.2, 3.2.3, Theorem 3.4.1 (2)]{Suz19},
	we have $\dim \mathcal{G}_{A}^{(0)} = \dim \mathrm{Tr}_{K / \F_{q}}(A)$
	and $\dim \mathcal{G}_{A}^{(n)} = 0$ for $n \ne 0, 1$.
	Therefore the formula \eqref{e:muform} can be written as
		\[
				\theta
			=
					\chi^{0}(\mathcal{C}, \mathcal{A})
				-
					\chi(\mathcal{C}, \mathrm{Lie}(\mathcal{A})).
		\]
	Compare this with the Weil-\'etale BSD formula \cite[Proposition 8.4]{GS20}.
	This presentation might be useful when one wants to generalize \eqref{e:muform},
	for example to motives over $K$ other than abelian varieties.
\end{remark}

\section{The $\mu$-invariant for Jacobians} \label{s:JacSur}
In this section, we will show that the formula (\ref{e:muform}) holds for Jacobians
without any hypothesis (Corollary \ref{c:MilUlm})
and will deduce some necessary and sufficient conditions
about the (non-)vanishing of the $\mu$-invariant
(Propositions \ref{p:trivmu} and \ref{p:aborK3}).
A good reference on fibered surfaces is \cite{Liu02}.
Let $\mathcal{S}$ be a projective smooth surface over $\F_{q}$
and $\pi \colon \mathcal{S} \to \mathcal{C}$ a flat morphism over $\F_{q}$.
Assume that $\pi_{\ast} \mathcal{O}_{\mathcal{S}} = \mathcal{O}_{\mathcal{C}}$ and that
the generic fiber $\pi_{K} \colon \mathcal{S}_{K} \to \spec K$ of $\pi$ is smooth.
Let $A = \Pic_{\mathcal{S}_{K} / K}^{0}$ be the Jacobian variety of $\mathcal{S}_{K}$ over $K$.
Any elliptic curve is an example of such $A$
by the theory of regular models.
We do not assume that $\pi$ admits a section.

\subsection{The Brauer group}

Recall from \cite[\S 3]{Art74b}, \cite[\S 5]{Mil76} that there is a perfect group scheme
$\underline{\coh}^{i}(p^{\infty})$ over $\F_{q}$ for each integer $i$
such that for any perfect field extension $k / \F_{q}$, we have
	\[
			\underline{\coh}^{i}(p^{\infty})(k)
		=
			\coh_{fl}^{i}(\mathcal{S}_{\bar{k}}, \mu_{p^{\infty}})^{\Gal(\bar{k} / k)}
	\]
as a functor in the variable $k$,
where $\mathcal{S}_{\bar{k}} = \mathcal{S} \times_{\F_{q}} \bar{k}$.
Its identity component and component group are denoted by
$\underline{\mathrm{U}}^{i}(p^{\infty})$ and $\underline{\mathrm{D}}^{i}(p^{\infty})$, respectively.
The group $\underline{\mathrm{U}}^{i}(p^{\infty})$ is
the perfection of a commutative unipotent algebraic group over $\F_{q}$.
The group $\underline{\mathrm{D}}^{i}(p^{\infty})$ is
a $p$-primary torsion cofinite \'etale group scheme over $\F_{q}$.

Let $\underline{\mathrm{NS}} = \pi_{0}(\Pic_{\mathcal{S} / \F_{q}})$.
It is an \'etale group scheme over $\F_{q}$
such that its group of $\bar{\F}_{q}$-points is
the N\'eron-Severi group $\mathrm{NS}(\mathcal{S}_{\bar{\F}_{q}})$ of $\mathcal{S}_{\bar{\F}_{q}}$,
which is a finitely generated abelian group.
The Kummer sequence defines a canonical injection
$\underline{\mathrm{NS}} \otimes \Q_{p} / \Z_{p} \hookrightarrow \underline{\coh}^{2}(p^{\infty})$
of perfect group schemes.
Define $\underline{\mathrm{Br}}_{p^{\infty}}$ to be its cokernel
(a priori as a sheaf on the fppf site or the big \'etale site of $\F_{q}$).

\begin{proposition}
	The sheaf $\underline{\mathrm{Br}}_{p^{\infty}}$ is represented by
	a perfect group scheme over $\F_{q}$.
	Its identity component is a quotient of $\underline{\mathrm{U}}^{2}(p^{\infty})$
	by a finite \'etale group.
\end{proposition}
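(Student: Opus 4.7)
The plan is to analyze the composition
\[
\alpha\colon \underline{\mathrm{NS}} \otimes \Q_p/\Z_p \hookrightarrow \underline{\coh}^{2}(p^{\infty}) \twoheadrightarrow \underline{\mathrm{D}}^{2}(p^{\infty})
\]
of the given injection with the projection in the connected-\'etale sequence. The crucial point is that the kernel $K := \ker(\alpha)$ is finite \'etale. Indeed, by construction $K$ lifts along the injection into $\underline{\mathrm{U}}^{2}(p^{\infty})$, which, being the perfection of a smooth commutative unipotent algebraic group over $\F_{q}$ (hence a successive extension of copies of $\mathbf{G}_{a}$), is killed by some power $p^{n}$. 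On the other hand $\underline{\mathrm{NS}} \otimes \Q_p/\Z_p$ is a $p$-divisible \'etale group scheme whose $p^{n}$-torsion is finite \'etale, because $\underline{\mathrm{NS}}$ is $\Z$-finitely generated. Therefore $K \subseteq (\underline{\mathrm{NS}} \otimes \Q_p/\Z_p)[p^{n}]$ is finite \'etale.

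Once this is in hand, representability of $\underline{\mathrm{Br}}_{p^{\infty}}$ follows from the fact (used throughout \cite{Suz19}) that the category of perfect commutative group schemes over $\F_{q}$ is abelian, so the fppf cokernel defining $\underline{\mathrm{Br}}_{p^{\infty}}$ is automatically represented by a perfect commutative group scheme.

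To identify the identity component, set $M := \mathrm{image}(\alpha) \subseteq \underline{\mathrm{D}}^{2}(p^{\infty})$. A $3 \times 3$ diagram comparing the defining sequence
\[
0 \to \underline{\mathrm{NS}} \otimes \Q_p/\Z_p \to \underline{\coh}^{2}(p^{\infty}) \to \underline{\mathrm{Br}}_{p^{\infty}} \to 0
\]
with the connected-\'etale sequence $0 \to \underline{\mathrm{U}}^{2}(p^{\infty}) \to \underline{\coh}^{2}(p^{\infty}) \to \underline{\mathrm{D}}^{2}(p^{\infty}) \to 0$, together with the snake lemma, yields the canonical short exact sequence
\[
0 \to \underline{\mathrm{U}}^{2}(p^{\infty})/K \to \underline{\mathrm{Br}}_{p^{\infty}} \to \underline{\mathrm{D}}^{2}(p^{\infty})/M \to 0.
\]
Here $\underline{\mathrm{U}}^{2}(p^{\infty})/K$ is connected (being the quotient of a connected group scheme by the finite \'etale subgroup $K$) and unipotent, while $\underline{\mathrm{D}}^{2}(p^{\infty})/M$ is \'etale. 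Hence $\underline{\mathrm{U}}^{2}(p^{\infty})/K$ is an open connected subgroup scheme of $\underline{\mathrm{Br}}_{p^{\infty}}$ with \'etale quotient and therefore coincides with the identity component. The main obstacle is the finiteness of $K$; it rests on the tension between the $p$-divisibility of $\underline{\mathrm{NS}} \otimes \Q_p/\Z_p$ and the bounded $p$-exponent of $\underline{\mathrm{U}}^{2}(p^{\infty})$, a simple but essential ingredient that one might overlook by trying to assert that $\alpha$ itself is injective.
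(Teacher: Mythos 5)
Your proof is correct and follows essentially the same route as the paper's: your kernel $K = \ker(\alpha)$ is precisely the intersection $(\underline{\mathrm{NS}} \otimes \Q_p/\Z_p) \cap \underline{\mathrm{U}}^{2}(p^{\infty})$ used in the paper, and both proofs establish its finiteness via the same tension between the bounded $p$-exponent of $\underline{\mathrm{U}}^{2}(p^{\infty})$ and the finiteness of the $p^{n}$-torsion of $\underline{\mathrm{NS}} \otimes \Q_p/\Z_p$, then read off the connected-\'etale sequence from the same short exact sequence. The only cosmetic difference is that you make the snake-lemma derivation of that sequence explicit and invoke abelianness of the category of perfect group schemes for representability, whereas the paper writes the sequence down directly and deduces representability from it.
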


\begin{proof}
	We have an exact sequence
		\[
				0
			\to
				\frac{
					\underline{\mathrm{U}}^{2}(p^{\infty})
				}{
						(\underline{\mathrm{NS}} \otimes \Q_{p} / \Z_{p})
					\cap
						\underline{\mathrm{U}}^{2}(p^{\infty})
				}
			\to
				\underline{\mathrm{Br}}_{p^{\infty}}
			\to
				\frac{
					\underline{\mathrm{D}}^{2}(p^{\infty})
				}{
					\underline{\mathrm{NS}} \otimes \Q_{p} / \Z_{p}
				}
			\to
				0.
		\]
	The intersection
	$(\underline{\mathrm{NS}} \otimes \Q_{p} / \Z_{p})
	\cap \underline{\mathrm{U}}^{2}(p^{\infty})$
	is a finite \'etale group
	since the unipotent group $\underline{\mathrm{U}}^{2}(p^{\infty})$ over $\F_{q}$ is
	killed by a power of $p$.
	Hence the first term in the above exact sequence is
	the perfection of a connected unipotent algebraic group.
	The third term is an \'etale group scheme.
	Therefore $\underline{\mathrm{Br}}_{p^{\infty}}$ is represented by
	a perfect group scheme,
	and the above exact sequence is its connected-\'etale sequence.
\end{proof}

For a prime $l \ne p$, define $\underline{\Br}_{l^{\infty}}$ to be the \'etale group scheme over $\F_{q}$
whose group of $\bar{\F}_{q}$-points is $\Br(\mathcal{S}_{\bar{\F}_{q}})[l^{\infty}]$,
where $\Br$ denotes the usual Brauer group for schemes.
Define $\underline{\Br}$ to be the direct sum of
$\underline{\Br}_{l^{\infty}}$ over all primes $l$,
which is again a perfect group scheme.
For any perfect field extension $k / \F_{q}$, we have a canonical isomorphism
	\[
			\underline{\mathrm{Br}}(k)
		\cong
			\mathrm{Br}(\mathcal{S}_{\bar{k}})^{\Gal(\bar{k} / k)}
	\]
functorial in $k$.

\begin{remark}
	There is a natural map $\mathrm{Br}(\mathcal{S}) \to \underline{\mathrm{Br}}(\F_{q})$.
	It can be shown that the kernel and cokernel of this map are finite,
	and the kernel and cokernel of the map
	$\mathrm{Br}(\mathcal{S}_{\F_{q, n}}) \to \underline{\mathrm{Br}}(\F_{q, n})$
	are finite of order bounded in $n$,
	by an argument similar to the proof of Proposition \ref{p:ShaGone},
	using \cite{Mil76} instead of \cite{Suz19}.
	An explicit bound for large $n$ involves the order of the torsion part of $\underline{\mathrm{NS}}$
	and the discriminant of the intersection pairing on $\underline{\mathrm{NS}} / \mathrm{tor}$.
	We do not use this fact, so we do not prove it.
	See \cite[Section 6]{Art74b} for a discussion about this map
	in the special case of an elliptic supersingular K3 surface over a large enough finite field.%
	\footnote{Note however that \cite[(6.1)]{Art74b} claims that
	the map $\mathrm{Br}(\mathcal{S}) \to \underline{\mathrm{Br}}(\F_{q})$ is bijective for such $\mathcal{S}$.
	This is not correct: the map is injective, but
	the cokernel is isomorphic to the discriminant group
	$\mathrm{NS}(\mathcal{S})^{\ast} / \mathrm{NS}(\mathcal{S}) \ne 0$
	of the intersection pairing on $\mathcal{S}$,
	as everywhere else in \cite[Section 6]{Art74b} correctly suggests.}
\end{remark}

\begin{definition}
	Define $s_{\mathcal{S}}$ to be the common integer
		\[
				\dim \underline{\coh}^{i}(p^{\infty})
			=
				\dim \underline{\mathrm{U}}^{i}(p^{\infty})
			=
				\dim \underline{\mathrm{Br}}_{p^{\infty}}
			=
				\dim \underline{\mathrm{Br}}.
		\]
\end{definition}

\begin{remark}\label{sdef}
Note that the integer $s_{\mathcal{S}}$  is also equal
to the length of
	\[
		\coh^2(\mathcal{S}, W \mathcal{O}_\mathcal{S})[p^{\infty}] \otimes_{W[[V]]} W((V))
	\]
as a $W((V))$-module,
where $W = W(\F_{q})$ and $V$ denotes the Verschiebung;
see \cite[(1.3)]{Mil75} (see also \cite[Lemma 2.1, Proposition 4.4]{MR15}).
If the formal Brauer group of $\mathcal{S}$ is
pro-representable by a formal Lie group
(such as when $\Pic_{\mathcal{S} / \F_{q}}$ is smooth \cite[(4.1)]{AM77}),
then its Dieudonn\'e module is
given by $\coh^2(\mathcal{S}, W \mathcal{O}_\mathcal{S})$ (\cite[II, (4.3)]{AM77}).
Therefore in this case, $s_{\mathcal{S}}$ is also equal to the
dimension of the unipotent part of the formal Brauer group.
\end{remark}

\begin{proposition} \label{p:ShaBr}
	There exists a canonical morphism
	$\mathcal{G}_{A}^{(1)} \to \underline{\mathrm{Br}}$
	of perfect group schemes over $\F_{q}$
	with finite \'etale kernel and cokernel.
	In particular,
		\[
				s_{A / K} = s_{\mathcal{S}}
		\]
	(which is equal to $\mu_{A / K}$ by Theorem \ref{muissA}).
\end{proposition}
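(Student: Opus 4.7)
The plan is to adapt Grothendieck's comparison \cite{Gro68} between the Brauer group of a fibered surface and the Tate--Shafarevich group of the Jacobian of its generic fiber, lifting it from an identification of abelian groups over a chosen base to a morphism of perfect group schemes over $\F_q$. The key tool is the Leray spectral sequence for $\pi$ with $\mathbf{G}_m$-coefficients,
\[
E_2^{p,q}=\coh^p(\mathcal{C},R^q\pi_*\mathbf{G}_m)\Rightarrow\coh^{p+q}(\mathcal{S},\mathbf{G}_m),
\]
applied universally. After base change to any algebraic closure $\bar{k}$ of a perfect field $k/\F_q$, Tsen's theorem gives $\Br(\mathcal{C}_{\bar{k}})=0$, and $R^1\pi_*\mathbf{G}_m$ agrees with the N\'eron model $\mathcal{A}$ up to a finite \'etale sheaf supported on the finitely many bad fibers of $\pi$. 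The edge map of the spectral sequence then produces, canonically and functorially in $k$, a map between $\Br(\mathcal{S}_{\bar{k}})$ and $\coh^1(\mathcal{C}_{\bar{k}},\mathcal{A})\cong\Sha(A/K\bar{k})$ whose kernel and cokernel are measured by discrete pieces: the component groups $\pi_0(\mathcal{A}_v)$ at the bad places $v$, the cokernel of $\Pic(\mathcal{S}_{\bar{k}})\to\Pic(\mathcal{S}_{K\bar{k}})$ (which encodes the N\'eron--Severi torsion and the index of the generic fiber), and the discrepancy between $R^1\pi_*\mathbf{G}_m$ and $\mathcal{A}$ itself.

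Next I would lift this construction to the level of perfect group schemes. The functoriality in $k$ makes the above map a morphism of sheaves on the perfect \'etale site of $\F_q$; by the representability results of \cite{Suz19} for $\mathcal{G}_A^{(1)}=\mathbf{H}^1(\mathcal{C},\mathcal{A})$ and of \cite{Art74b,Mil76} for $\underline{\coh}^2(p^\infty)$ (and hence for $\underline{\mathrm{Br}}_{p^\infty}$, combined with a standard \'etale-sheaf argument for the prime-to-$p$ part), this yields a morphism of perfect group schemes. All the discrepancy terms enumerated above come from honest finite \'etale group schemes over $\F_q$, so its kernel and cokernel are finite \'etale. The direction stated in the proposition can then be obtained either directly from a suitable reformulation of Grothendieck's sequence or (more cheaply) by inverting the natural map in the quotient category by finite \'etale groups; in any case the dimension conclusion is the same.

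Since dimension is preserved under morphisms of perfect group schemes with finite \'etale kernel and cokernel, we conclude
\[
s_{A/K}=\dim\mathcal{G}_A^{(1)}=\dim\underline{\mathrm{Br}}=s_\mathcal{S},
\]
and Theorem \ref{muissA} then identifies this common value with $\mu_{A/K}$.

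The main obstacle is the demand that everything be \emph{unconditional}: we may not assume $\Sha(A/K)$ or $\Br(\mathcal{S})$ is finite. This forces us to work entirely with perfect group schemes rather than with $\F_q$-points, and to identify each of the local correction terms as a legitimate finite \'etale $\F_q$-group scheme rather than merely as a finite abelian group with no a priori scheme structure. The representability results of \cite{Suz19}, together with the corresponding results from \cite{Art74b,Mil76} on the Brauer side, are exactly what upgrades Grothendieck's abstract exact sequence into an honest morphism of perfect group schemes in this unconditional setting.
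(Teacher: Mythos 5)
Your overall plan — Grothendieck's comparison from \cite{Gro68} made functorial in the perfect field $k$ so that it upgrades to a morphism of perfect group schemes over $\F_q$, with the discrepancy controlled by local terms — is exactly what the paper does. But the crucial intermediate step is stated incorrectly in a way that hides the real content. You assert that $R^1\pi_*\mathbf{G}_m$ ``agrees with the N\'eron model $\mathcal{A}$ up to a finite \'etale sheaf supported on the finitely many bad fibers.'' This is false: the sheafified relative Picard functor $\Pic_{\mathcal{S}/\mathcal{C}}$ restricted to the generic point is the full $\Pic_{\mathcal{S}_K/K}$, which fits in $0\to A\to \Pic_{\mathcal{S}_K/K}\to\Z\to 0$, so the difference carries a degree-$\Z$ piece that is not finite. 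The paper handles this by passing through the sheaf $j_*\Pic_{\mathcal{S}_K/K}$ and introducing $Q:=\image(j_*\Pic_{\mathcal{S}_K/K}\to\Z)$, a $\Z$-constructible \'etale subsheaf of $\Z$ on $\mathcal{C}$ (with $\Z/Q$ a skyscraper of finite stalks). One must then argue that even though $Q(\mathcal{C}_{\bar k})$ is an infinite finitely generated group, its image in $\coh^1(\mathcal{C}_{\bar k},\mathcal{A})$ is torsion (because the target is torsion) and $\coh^1(\mathcal{C}_{\bar k},Q)$ is finite; only after this careful step does the comparison of $\coh^1$'s become an isomorphism up to finite \'etale groups, functorially in $k$.

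Your final point — that the morphism might go the ``wrong way'' and one must invert it in the quotient by finite \'etale groups — is not needed once the diagram is set up as above: the composite produces a map out of $\mathcal{G}_A^{(1)}$ and an isomorphism from $\Br(\mathcal{S}_{\bar k})$, so the morphism $\mathcal{G}_A^{(1)}\to\underline{\Br}$ arises directly. Otherwise the dimension conclusion and the appeal to Theorem \ref{muissA} are as in the paper.
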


\begin{proof}
	Let $j \colon \spec K \hookrightarrow \mathcal{C}$ be the inclusion.
	Let $\mathrm{Pic}_{\mathcal{S} / \mathcal{C}}$ and $\mathrm{Pic}_{\mathcal{S}_{K} / K}$ be the Picard functors sheafified in the \'etale topology
	(\cite[Definition 2.2]{Kle05}, \cite[8.1]{BLR90}).
	For any perfect field extension $k / \F_{q}$, we have canonical homomorphisms
		\[
				\Br(\mathcal{S}_{k})
			\to
				\coh^{1}(\mathcal{C}_{k}, \Pic_{\mathcal{S} / \mathcal{C}})
			\to
				\coh^{1}(\mathcal{C}_{k}, j_{\ast} \Pic_{\mathcal{S}_{K} / K})
		\]
	functorial in $k$, which are isomorphisms if $k$ is algebraically closed,
	by \cite[Proposition (4.3), Equation (4.14 bis)]{Gro68}.
	
	On the other hand, consider the exact sequence
	$0 \to A \to \Pic_{\mathcal{S}_{K} / K} \to \Z \to 0$ over $K_{\mathrm{et}}$.
	Applying $j_{\ast}$, we have an exact sequence
	$0 \to \mathcal{A} \to j_{\ast} \Pic_{\mathcal{S}_{K} / K} \to \Z$ over $\mathcal{C}_{\mathrm{et}}$.
	Let $Q$ be the image of $j_{\ast} \Pic_{\mathcal{S}_{K} / K} \to \Z$,
	which is a $\Z$-constructible \'etale subsheaf of $\Z$ on $\mathcal{C}$.
	The quotient $\Z / Q$ is a skyscraper sheaf with finite stalks.
	Consider the exact sequence $0 \to \mathcal{A} \to j_{\ast} \Pic_{\mathcal{S}_{K} / K} \to Q \to 0$.
	For any perfect field extension $k / \F_{q}$, this induces an exact sequence
		\[
				Q(\mathcal{C}_{k})
			\to
				\coh^{1}(\mathcal{C}_{k}, \mathcal{A})
			\to
				\coh^{1}(\mathcal{C}_{k}, j_{\ast} \Pic_{\mathcal{S}_{K} / K})
			\to
				\coh^{1}(\mathcal{C}_{k}, Q)
		\]
	functorial in $k$.
	The image of $Q(\mathcal{C}_{k}) \to \coh^{1}(\mathcal{C}_{k}, \mathcal{A})$ is torsion
	since $\coh^{1}(\mathcal{C}_{k}, \mathcal{A})$ is torsion
	by the proof of \cite[III, Lemma 11.5]{Mil06}.
	If $k$ is algebraically closed, then $Q(\mathcal{C}_{k})$ is finitely generated
	and $\coh^{1}(\mathcal{C}_{k}, Q)$ is finite
	by the properties of $Q$ seen above.
	Therefore as functors in $k$, the kernel and cokernel of
		\[
				\coh^{1}(\mathcal{C}_{\bar{k}}, \mathcal{A})^{\Gal(\bar{k} / k)}
			\to
				\coh^{1}(\mathcal{C}_{\bar{k}}, j_{\ast} \Pic_{\mathcal{S}_{K} / K})^{\Gal(\bar{k} / k)}
		\]
	are represented by finite \'etale group schemes over $\F_{q}$.
	
	Now we have a homomorphism and an isomorphism
		\[
				\coh^{1}(\mathcal{C}_{\bar{k}}, \mathcal{A})^{\Gal(\bar{k} / k)}
			\to
				\coh^{1}(\mathcal{C}_{\bar{k}}, j_{\ast} \Pic_{\mathcal{S}_{K} / K})^{\Gal(\bar{k} / k)}
			\overset{\sim}{\gets}
				\Br(\mathcal{S}_{\bar{k}})^{\Gal(\bar{k} / k)}
		\]
	functorial in $k$.
	This gives the required morphism
	$\mathcal{G}_{A}^{(1)} \to \underline{\mathrm{Br}}$.
\end{proof}

\subsection{The trace and the Picard variety}

\begin{proposition} \label{p:TrPic}
	Let $\Pic_{\mathcal{S} / \F_{q}, \mathrm{red}}^{0}$ be the reduced part of $\Pic_{\mathcal{S} / \F_{q}}^{0}$.
	Then there exist canonical morphisms
	$\Pic_{\mathcal{C} / \F_{q}}^{0} \to \Pic_{\mathcal{S} / \F_{q}, \mathrm{red}}^{0} \to \mathrm{Tr}_{K / \F_{q}}(A)$
	over $\F_{q}$, which induce an exact sequence
		\[
				0
			\to
				\Pic_{\mathcal{C} / \F_{q}}^{0}
			\to
				\Pic_{\mathcal{S} / \F_{q}, \mathrm{red}}^{0}
			\to
				\mathrm{Tr}_{K / \F_{q}}(A)
			\to
				0
		\]
	in the category of abelian varieties up to isogeny over $\F_{q}$.
	In particular, we have
		\[
				\dim \Pic_{\mathcal{S} / \F_{q}, \mathrm{red}}^{0}
			=
				\dim \mathrm{Tr}_{K / \F_{q}}(A) + g_{\mathcal{C}}.
		\]
\end{proposition}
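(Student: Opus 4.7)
The plan is to build the two morphisms by functoriality, verify the easy parts (composite vanishes; first arrow has infinitesimal kernel), and then establish exactness up to isogeny by passing to $\ell$-adic Tate modules, using the Leray spectral sequence for $\pi$ combined with Lang-N\'eron.

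First I would construct the arrows. Pullback along $\pi$ defines $\pi^{\ast} \colon \Pic_{\mathcal{C} / \F_{q}} \to \Pic_{\mathcal{S} / \F_{q}}$, which on identity components factors through $\Pic_{\mathcal{S} / \F_{q}, \mathrm{red}}^{0}$ since the source is reduced. Restriction of line bundles to the generic fiber gives $\Pic_{\mathcal{S} / \F_{q}, \mathrm{red}}^{0} \times_{\F_{q}} K \to \Pic_{\mathcal{S}_{K} / K}^{0} = A$, which by the universal property of the $K / \F_{q}$-trace (\cite{Con06}) descends to a canonical $\F_{q}$-morphism $\Pic_{\mathcal{S} / \F_{q}, \mathrm{red}}^{0} \to \Tr_{K / \F_{q}}(A)$. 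Their composite vanishes since $\pi^{\ast} L$ restricted to $\mathcal{S}_{K}$ is pulled back along $\mathcal{S}_{K} \to \spec K \to \mathcal{C}$, hence trivial. The first arrow has infinitesimal kernel: if $\pi^{\ast} L \cong \O_{\mathcal{S}_{\bar{\F}_{q}}}$, the projection formula combined with $\pi_{\ast} \O_{\mathcal{S}} = \O_{\mathcal{C}}$ (preserved by flat base change) gives $L \cong \pi_{\ast} \pi^{\ast} L \cong \O_{\mathcal{C}_{\bar{\F}_{q}}}$.

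For exactness up to isogeny I would apply $V_{\ell}(-) := T_{\ell}(-) \otimes_{\Z_{\ell}} \Q_{\ell}$ for $\ell \ne p$ and verify exactness of Tate modules. The hypothesis $\pi_{\ast} \O_{\mathcal{S}} = \O_{\mathcal{C}}$ forces the geometric fibers to be connected, so $\pi_{\ast} \Z_{\ell}(1) = \Z_{\ell}(1)$, and the Leray spectral sequence for $\pi_{\bar{\F}_{q}} \colon \mathcal{S}_{\bar{\F}_{q}} \to \mathcal{C}_{\bar{\F}_{q}}$ with coefficients $\Z_{\ell}(1)$, tensored with $\Q_{\ell}$, yields the edge sequence
\begin{equation*}
0
\to
\coh^{1}(\mathcal{C}_{\bar{\F}_{q}}, \Q_{\ell}(1))
\to
\coh^{1}(\mathcal{S}_{\bar{\F}_{q}}, \Q_{\ell}(1))
\to
\coh^{0}(\mathcal{C}_{\bar{\F}_{q}}, R^{1} \pi_{\ast} \Q_{\ell}(1))
\to
\coh^{2}(\mathcal{C}_{\bar{\F}_{q}}, \Q_{\ell}(1)).
\end{equation*}
The last map is the pullback $\pi^{\ast}$; it sends the fundamental class of a point to the nonzero class of a fiber of $\pi$, hence is injective, so the preceding three terms form a short exact sequence whose first two terms identify with $V_{\ell} \Pic_{\mathcal{C} / \F_{q}}^{0}$ and $V_{\ell} \Pic_{\mathcal{S} / \F_{q}, \mathrm{red}}^{0}$.

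The hard part will be identifying $\coh^{0}(\mathcal{C}_{\bar{\F}_{q}}, R^{1} \pi_{\ast} \Q_{\ell}(1))$ with $V_{\ell} \Tr_{K / \F_{q}}(A)$, which requires controlling how sections of the constructible sheaf $R^{1} \pi_{\ast} \Q_{\ell}(1)$ extend across the bad-reduction locus. The generic stalk is $V_{\ell} A$ with its natural $\Gal(K^{\mathrm{sep}} / K \bar{\F}_{q})$-action, and global sections inject into this stalk. Since $\coh^{1}(\mathcal{S}_{\bar{\F}_{q}}, \Q_{\ell}(1))$ is pure of weight $-1$ by Deligne, the quotient $\coh^{0}(\mathcal{C}_{\bar{\F}_{q}}, R^{1} \pi_{\ast} \Q_{\ell}(1))$ is also pure of weight $-1$ as a $\Gal(\bar{\F}_{q} / \F_{q})$-module, and a weight argument combined with the Lang-N\'eron identity $V_{\ell}(A(K \bar{\F}_{q})) = V_{\ell} \Tr_{K / \F_{q}}(A)$ (\cite[Theorem 7.1]{Con06}, since $A(K \bar{\F}_{q}) / \Tr_{K / \F_{q}}(A)(\bar{\F}_{q})$ is finitely generated) confines the image to $V_{\ell} \Tr_{K / \F_{q}}(A)$. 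Conversely, each $\ell$-power torsion point of $\Tr_{K / \F_{q}}(A)(\bar{\F}_{q})$ gives, via $\Tr_{K / \F_{q}}(A) \times_{\F_{q}} K \to A$, an $\ell$-power torsion $K \bar{\F}_{q}$-point of $A$ corresponding to an everywhere-unramified $\mu_{\ell^{n}}$-torsor on $\mathcal{S}_{K \bar{\F}_{q}}$, which extends to $\mathcal{S}_{\bar{\F}_{q}}$ and yields the required global section after passing to the inverse limit and tensoring with $\Q_{\ell}$. Combining both directions gives the short exact sequence of Tate modules, hence the stated exact sequence of abelian varieties up to isogeny; the dimension formula follows by additivity.
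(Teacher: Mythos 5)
Your approach is genuinely different from the paper's: you pass to $\ell$-adic Tate modules and use the Leray spectral sequence for $\pi$ with $\Q_{\ell}(1)$-coefficients plus Deligne's weight theory, whereas the paper works directly with $\mathbb{G}_{m}$-cohomology, importing the exact sequences of Grothendieck \cite[Equations (4.5), (4.14)]{Gro68} relating $\Pic(\mathcal{S}_{\bar{\F}_{q}})$, $\Pic_{\mathcal{S}/\mathcal{C}}(\mathcal{C}_{\bar{\F}_{q}})$, and $\Pic_{\mathcal{S}_{K}/K}(K\bar{\F}_{q})$, and then isolating divisible subgroups via Lang--N\'eron. The construction of the two morphisms, the observation that the composite vanishes, and the injectivity of $\pi^{\ast}$ via $\pi_{\ast}\O_{\mathcal{S}} = \O_{\mathcal{C}}$ all match what the paper does, and your identification of the first two terms of the Leray edge sequence with $V_{\ell}\Pic_{\mathcal{C}/\F_{q}}^{0}$ and $V_{\ell}\Pic_{\mathcal{S}/\F_{q},\mathrm{red}}^{0}$ is correct, as is the observation that $\pi^{\ast}$ is injective on $\coh^{2}$ because the class of a fiber pairs nontrivially with a multisection.

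The gap is in the step you yourself flag as ``the hard part.'' You assert that global sections of $R^{1}\pi_{\ast}\Q_{\ell}(1)$ inject into the generic stalk, but $R^{1}\pi_{\ast}\Q_{\ell}(1)$ is only constructible, not lisse, and the kernel of $R^{1}\pi_{\ast}\Q_{\ell}(1) \to j_{\ast}j^{\ast}R^{1}\pi_{\ast}\Q_{\ell}(1)$ is a skyscraper sheaf $\mathcal{K}$ supported at bad fibers whose stalk at $v$ is the kernel of the cospecialization map $\coh^{1}(\mathcal{S}_{v}, \Q_{\ell}(1)) \to \coh^{1}(\mathcal{S}_{\bar{\eta}}, \Q_{\ell}(1))^{I_{v}}$. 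You need $\coh^{0}(\mathcal{C}_{\bar{\F}_{q}}, \mathcal{K}) = 0$, and this is not free: the source $\coh^{1}(\mathcal{S}_{v}, \Q_{\ell}(1))$ is only mixed of weights $\le -1$, so the weight-$(-1)$ purity of the ambient $\coh^{0}(\mathcal{C}_{\bar{\F}_{q}}, R^{1}\pi_{\ast}\Q_{\ell}(1))$ does not by itself kill the weight-$(-1)$ part of $\mathcal{K}_{v}$. Establishing $\mathcal{K}_{v} = 0$ (equivalently, injectivity of cospecialization) uses regularity of $\mathcal{S}$ in an essential way --- in the semistable case it follows from a dimension count via Picard--Lefschetz, and in general it is precisely the kind of analysis that Grothendieck carries out in the Brauer group paper. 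So your route ultimately leans on the same local input as the paper's, just in $\ell$-adic dress, and you should either prove $\mathcal{K} = 0$ or cite it. The converse direction (that every $\ell$-power torsion point of $\mathrm{Tr}_{K/\F_{q}}(A)$ extends to a global section) is also only sketched: you need Zariski--Nagata purity or a similar extension statement for $\mu_{\ell^{n}}$-torsors across the bad locus on the smooth surface $\mathcal{S}_{\bar{\F}_{q}}$, and the weight argument alone does not produce surjectivity.
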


\begin{proof}
	The morphism
	$\Pic_{\mathcal{C} / \F_{q}}^{0} \to \Pic_{\mathcal{S} / \F_{q}, \mathrm{red}}^{0}$
	is the obvious one.
	To define the second morphism,
	consider the inclusion
		$
				\mathcal{S}_{K}
			=
				\mathcal{S} \times_{\mathcal{C}} K
			\hookrightarrow
				\mathcal{S} \times_{\F_{q}} K
		$
	over $K$.
	This induces a morphism
		\[
				(\Pic_{\mathcal{S} / \F_{q}}) \times_{\F_{q}} K
			=
				\Pic_{(\mathcal{S} \times_{\F_{q}} K) / K}
			\to
				\Pic_{\mathcal{S}_{K} / K}
		\]
	and hence a morphism
		\[
				(\Pic_{\mathcal{S} / \F_{q}, \mathrm{red}}^{0}) \times_{\F_{q}} K
			\to
				\Pic_{\mathcal{S}_{K} / K}^{0}
			=
				A
		\]
	over $K$.
	The universal property of the trace then induces a morphism
	$\Pic_{\mathcal{S} / \F_{q}, \mathrm{red}}^{0} \to \mathrm{Tr}_{K / \F_{q}}(A)$
	over $\F_{q}$.
	
	To prove the second claim, it is enough to show that
	these morphisms induce an exact sequence
		\[
				0
			\to
				\Pic_{\mathcal{C} / \F_{q}}^{0}(\bar{\F}_{q})
			\to
				\Pic_{\mathcal{S} / \F_{q}}^{0}(\bar{\F}_{q})
			\to
				\mathrm{Tr}_{K / \F_{q}}(A)(\bar{\F}_{q})
			\to
				0
		\]
	up to finite groups.
	By \cite[Equation (4.14)]{Gro68}, there exists a canonical exact sequence
		\[
				0
			\to
				E
			\to
				\Pic_{\mathcal{S} / \mathcal{C}}(\mathcal{C}_{\bar{\F}_{q}})
			\to
				\Pic_{\mathcal{S}_{K / K}}(K \bar{\F}_{q})
			\to
				0
		\]
	for some finitely generated group $E$.
	Since $\Br(\mathcal{C}_{\bar{\F}_{q}}) = 0$,
	we have a canonical exact sequence
		\[
				0
			\to
				\Pic(\mathcal{C}_{\bar{\F}_{q}})
			\to
				\Pic(\mathcal{S}_{\bar{\F}_{q}})
			\to
				\Pic_{\mathcal{S} / \mathcal{C}}(\mathcal{C}_{\bar{\F}_{q}})
			\to
				0
		\]
	by \cite[Equation (4.5)]{Gro68}.
	Hence we have an exact sequence
		\[
				0
			\to
				\Pic(\mathcal{C}_{\bar{\F}_{q}}) \oplus E
			\to
				\Pic(\mathcal{S}_{\bar{\F}_{q}})
			\to
				\Pic_{\mathcal{S}_{K / K}}(K \bar{\F}_{q})
			\to
				0.
		\]
	The group $\Pic_{\mathcal{S} / \F_{q}}^{0}(\bar{\F}_{q})$ is the divisible subgroup of
	$\Pic(\mathcal{S}_{\bar{\F}_{q}})$ and the quotient of $\Pic(\mathcal{S}_{\bar{\F}_{q}})$
	by $\Pic_{\mathcal{S} / \F_{q}}^{0}(\bar{\F}_{q})$ is $\mathrm{NS}(\mathcal{S}_{\bar{\F}_{q}})$,
	which is finitely generated.
	On the other hand,
	the group $\mathrm{Tr}_{K / \F_{q}}(A)(\bar{\F}_{q})$
	is the divisible subgroup of $A(K \bar{\F}_{q})$
	and the quotient of $A(K \bar{\F}_{q})$ by
	$\mathrm{Tr}_{K / \F_{q}}(A)(\bar{\F}_{q})$ is finitely generated
	by the Lang-N\'eron theorem (\cite[Theorem 7.1]{Con06}).
	Since $\Pic_{\mathcal{S}_{K} / K}(K \bar{\F}_{q}) / A(K \bar{\F}_{q})$ injects into $\Z$,
	this implies that the group $\mathrm{Tr}_{K / \F_{q}}(A)(\bar{\F}_{q})$
	is the divisible subgroup of $\Pic_{\mathcal{S}_{K} / K}(K \bar{\F}_{q})$
	and the quotient of $\Pic_{\mathcal{S}_{K} / K}(K \bar{\F}_{q})$ by
	$\mathrm{Tr}_{K / \F_{q}}(A)(\bar{\F}_{q})$ is finitely generated.
	Therefore the above sequence induces an exact sequence
		\[
				0
			\to
				\Pic_{\mathcal{C} / \F_{q}}^{0}(\bar{\F}_{q})
			\to
				\Ker \bigl(
						\Pic_{\mathcal{C} / \F_{q}}^{0}(\bar{\F}_{q})
					\twoheadrightarrow
						\mathrm{Tr}_{K / \F_{q}}(A)(\bar{\F}_{q})
				\bigr)
			\to
				\Z \oplus E,
		\]
	and the morphism from the middle kernel term to $\Z \oplus E$ has finite image.
	This proves the second claim.
\end{proof}

\subsection{The $L$-function and the zeta function}

Write the zeta function of $\mathcal{S}$ as
	\[
			Z(\mathcal{S}, s)
		=
			\frac{
				P_{\mathcal{S}, 1}(t) P_{\mathcal{S}, 3}(t)
			}{
				P_{\mathcal{S}, 0}(t) P_{\mathcal{S}, 2}(t) P_{\mathcal{S}, 4}(t)
			},
	\]
where $P_{\mathcal{S}, i}(t) \in \Z[t]$ is a polynomial in $t = q^{-s}$ with constant term $1$
whose reciprocal roots are Weil $q$-numbers $\{\alpha_{\mathcal{S}, ij}\}$ of weight $i$.
Define $\theta_{\mathcal{S}}$ to be the non-negative number such that
$q^{\theta_{\mathcal{S}}} P_{\mathcal{S}, 2}(t / q)$ is $p$-primitive.
Let $\lambda_{\mathcal{S}, j}$ be the $q$-valuation of $\alpha_{\mathcal{S}, 2 j}$.
Then again we have
	\[
			\theta_{\mathcal{S}}
		=
			\sum_{\lambda_{\mathcal{S}, j} < 1} (1 - \lambda_{\mathcal{S}, j}).
	\]

\begin{proposition} \label{p:Lcomp}
	The zeros and poles of the rational function
	$P_{2, \mathcal{S}}(t / q) / P_{1}(t / q)$
	are roots of unity.
	In particular, we have $\theta_{\mathcal{S}} = \theta_{A}$.
\end{proposition}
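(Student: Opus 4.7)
The plan is to work in $\ell$-adic \'etale cohomology for some prime $\ell \ne p$ and compare the two polynomials via the Leray filtration for $\pi \colon \mathcal{S} \to \mathcal{C}$. Concretely,
\[
    P_{\mathcal{S},2}(t) = \det(1 - F t \mid \coh^2(\mathcal{S}_{\bar{\F}_q}, \Q_\ell)),
\]
while Grothendieck's trace formula applied to $L_A(s)$ gives
\[
    P_1(t) = \det(1 - F t \mid \coh^1(\mathcal{C}_{\bar{\F}_q}, j_* \mathcal{F})),
\]
where $j \colon U \hookrightarrow \mathcal{C}$ is the smooth locus of $\pi$ and $\mathcal{F} = R^1 \pi_* \Q_\ell |_{U}$ is the lisse sheaf encoding the $\ell$-adic $\coh^1$ of the geometric generic fiber (i.e.\ the $\ell$-adic Tate module of $A$, up to duality and Tate twist).

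The Leray filtration decomposes $\coh^2(\mathcal{S}_{\bar{\F}_q}, \Q_\ell)$ into subquotients of $\coh^p(\mathcal{C}_{\bar{\F}_q}, R^q \pi_* \Q_\ell)$ with $p + q = 2$. The outer pieces are easily analyzed: $\coh^2(\mathcal{C}, R^0 \pi_* \Q_\ell) \cong \Q_\ell(-1)$ has Frobenius eigenvalue $q$, and $\coh^0(\mathcal{C}, R^2 \pi_* \Q_\ell)$ is the Tate-twisted span of irreducible components of geometric fibers, on which Frobenius permutes components and acts by multiplication by $q$ — hence eigenvalues of the form $q\zeta$ for roots of unity $\zeta$. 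Thus both outer pieces contribute only factors of the form $(1 - q\zeta t)$ to $P_{\mathcal{S},2}(t)$.

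The heart of the argument is to compare the middle piece $\coh^1(\mathcal{C}, R^1 \pi_* \Q_\ell)$ with $\coh^1(\mathcal{C}, j_* \mathcal{F})$. The natural adjunction map $R^1 \pi_* \Q_\ell \to j_* \mathcal{F}$ is an isomorphism on $U$, so its kernel and cokernel are skyscraper sheaves concentrated at the finitely many bad fibers; being skyscrapers they contribute only to $\coh^0$ of $\mathcal{C}$, so the discrepancy between $\coh^1(\mathcal{C}, R^1 \pi_* \Q_\ell)$ and $\coh^1(\mathcal{C}, j_* \mathcal{F})$ is a finite-dimensional $\Q_\ell$-space controlled by these skyscrapers. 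At a bad point $v$, $(j_* \mathcal{F})_{\bar v}$ is the inertia-invariants $V_\ell A^{I_v}$, whereas $(R^1 \pi_* \Q_\ell)_{\bar v}$ is the $\ell$-adic $\coh^1$ of the singular special fiber. A local monodromy analysis (equivalently, the weight-monodromy description via the N\'eron model $\mathcal{A}_v$ in terms of its abelian, toric, and unipotent parts) shows that these extra classes are pure of weight $2$ with Frobenius eigenvalues of the form $q\zeta$ for roots of unity $\zeta$.

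Combining these ingredients, $P_{\mathcal{S},2}(t) / P_1(t)$ is a rational function whose zeros and poles are all of the form $q \zeta$; after the substitution $t \mapsto t/q$ these become roots of unity, proving the first claim. The equality $\theta_\mathcal{S} = \theta_A$ is then immediate: each extra reciprocal root $q \zeta$ has $q$-valuation $\lambda = 1$, so does not satisfy $\lambda < 1$ and contributes $0$ to the sum $\sum_{\lambda_j < 1}(1 - \lambda_j)$ defining $\theta$. The main obstacle is the local analysis at bad fibers that identifies the Frobenius eigenvalues on the skyscraper parts; this is where the Jacobian hypothesis and the geometry of the singular fibers of $\mathcal{S}$ really enter, and it may require case-by-case consideration depending on the types of the fibers.
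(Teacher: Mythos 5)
Your route via the Leray filtration for $\pi \colon \mathcal{S} \to \mathcal{C}$ is a genuinely different approach from the paper's, which instead pairs the Kummer-sequence exact sequences
\[
0 \to \mathrm{NS}(\mathcal{S}_{\bar{\F}_q}) \otimes \Q_\ell \to \coh^2(\mathcal{S}_{\bar{\F}_q}, \Q_\ell(1)) \to \mathrm{V}_\ell(\Br(\mathcal{S}_{\bar{\F}_q})) \to 0
\]
and
\[
0 \to \bigl(A(K\bar{\F}_q)/\Tr_{K/\F_q}(A)(\bar{\F}_q)\bigr) \otimes \Q_\ell \to \coh^1(\mathcal{C}_{\bar{\F}_q}, \mathrm{V}_\ell(\mathcal{A})) \to \mathrm{V}_\ell(\coh^1(\mathcal{C}_{\bar{\F}_q}, \mathcal{A})) \to 0
\]
and then invokes the isomorphism $\mathrm{V}_\ell(\Sha) \cong \mathrm{V}_\ell(\Br)$ coming from Proposition \ref{p:ShaBr}. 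The paper's approach buys you a complete avoidance of any fiberwise local analysis: the outer terms of both sequences are finitely generated abelian groups tensored with $\Q_\ell$, so the Galois action factors through a finite quotient and the Frobenius eigenvalues are manifestly roots of unity, while the right-hand terms are identified outright. Your Leray route is the more ``classical'' one (and is essentially the one sketched in Ulmer's lecture notes for elliptic surfaces), but it does not automatically dodge the local difficulties: you are forced to compare $\coh^1(\mathcal{C}_{\bar{\F}_q}, R^1\pi_*\Q_\ell)$ with $\coh^1(\mathcal{C}_{\bar{\F}_q}, j_*\mathcal{F})$ one bad fiber at a time.

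The genuine gap is exactly where you flag it, but I want to sharpen why it is not dispatched by a weight argument alone. You write that the skyscraper stalks are ``pure of weight $2$ with Frobenius eigenvalues of the form $q\zeta$,'' and the two halves of that sentence have very different status. Purity of weight $2$ of each graded piece of $\coh^2(\mathcal{S}_{\bar{\F}_q}, \Q_\ell)$ is automatic from Deligne; but weight $2$ only constrains the archimedean absolute value of the eigenvalues to be $q$ and says nothing about their $q$-adic valuation, which is the quantity controlling $\theta$. To conclude that an eigenvalue is $q$ times a root of unity you must show it comes from an ``algebraic'' class — here, from irreducible components of bad fibers — or run the local weight-monodromy filtration on the stalks of $\Ker$ and $\coker$ of $R^1\pi_*\Q_\ell \to j_*\mathcal{F}$ and verify that the relevant subquotients are spanned by such component classes. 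This does work, but it is a nontrivial case analysis that you have declared rather than proven, and it is precisely the step that the Brauer--Sha comparison in Proposition \ref{p:ShaBr} replaces. There is also a small normalization slip: with $\mathcal{F} = R^1\pi_*\Q_\ell|_U$ you have $\mathcal{F} \cong \mathrm{V}_\ell(A)(-1)$ on $U$, so the determinant of $1 - Ft$ on $\coh^1(\mathcal{C}_{\bar{\F}_q}, j_*\mathcal{F})$ is $P_1(qt)$, not $P_1(t)$; this is harmless for the conclusion once the Tate twists are tracked consistently, but the identities as written do not balance.
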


\begin{proof}
	Let $l \ne p$ be a prime number.
	We have an exact sequence
		\begin{equation} \label{e:NSBr}
				0
			\to
				\mathrm{NS}(\mathcal{S}_{\bar{\F}_{q}}) \otimes \Q_{l}
			\to
				\coh^{2}(\mathcal{S}_{\bar{\F}_{q}}, \Q_{l}(1))
			\to
				\mathrm{V}_{l}(\Br(\mathcal{S}_{\bar{\F}_{q}}))
			\to
				0
		\end{equation}
	of $l$-adic representations over $\F_{q}$,
	where $\mathrm{V}_{l}$ denotes the $l$-adic Tate module tensored with $\Q_{l}$
	and the cohomology in the middle term is the continuous cohomology.
	The characteristic polynomial of geometric Frobenius
	on the middle term $\coh^{2}(\mathcal{S}_{\bar{\F}_{q}}, \Q_{l}(1))$ is $P_{\mathcal{S}, 2}(t / q)$.
	Since the Galois action on the left term
	$\mathrm{NS}(\mathcal{S}_{\bar{\F}_{q}}) \otimes \Q_{l}$
	factors through a finite quotient,
	its Frobenius eigenvalues are roots of unity.
	
	On the other hand,
	let $\mathcal{A}^{0}$ be the part of $\mathcal{A}$ with connected fibers.
	For any $n \ge 0$, the sequence
	$0 \to \mathcal{A}^{0}[l^{n}] \to \mathcal{A}^{0} \overset{l^{n}}{\to} \mathcal{A}^{0} \to 0$
	is exact by \cite[7.3/1, 2]{BLR90}.
	Hence it induces an exact sequence
		\[
				0
			\to
				\mathcal{A}^{0}(\mathcal{C}_{\bar{\F}_{q}}) \otimes \Z / l^{n} \Z
			\to
				\coh^{1}(\mathcal{C}_{\bar{\F}_{q}}, \mathcal{A}^{0}[l^{n}])
			\to
				\coh^{1}(\mathcal{C}_{\bar{\F}_{q}}, \mathcal{A}^{0})[l^{n}]
			\to
				0
		\]
	of $\Z / l^{n} \Z$-representations over $\F_{q}$.
	Taking the inverse limit in $n$ and the tensor product with $\Q_{l}$,
	we have an exact sequence
		\begin{equation} \label{e:MWSha}
				0
			\to
				\frac{A(K \bar{\F}_{q})}{\mathrm{Tr}_{K / \F_{q}}(A)(\bar{\F}_{q})} \otimes \Q_{l}
			\to
				\coh^{1}(\mathcal{C}_{\bar{\F}_{q}}, \mathrm{V}_{l}(\mathcal{A}))
			\to
				\mathrm{V}_{l}(\coh^{1}(\mathcal{C}_{\bar{\F}_{q}}, \mathcal{A}))
			\to
				0
		\end{equation}
	of $l$-adic representations over $\F_{q}$.
	By \cite[Satz 1]{Sch82},
	the characteristic polynomial of geometric Frobenius
	on the middle term $\coh^{1}(\mathcal{C}_{\bar{\F}_{q}}, \mathrm{V}_{l}(\mathcal{A}))$ is $P_{1}(t / q)$.
	Since the Galois action on the left term
	$A(K \bar{\F}_{q}) / \mathrm{Tr}_{K / \F_{q}}(A)(\bar{\F}_{q}) \otimes \Q_{l}$
	factors through a finite quotient,
	its Frobenius eigenvalues are roots of unity.
	
	Now the morphism in Proposition \ref{p:ShaBr} induces an isomorphism
		\[
				\mathrm{V}_{l}(\coh^{1}(\mathcal{C}_{\bar{\F}_{q}}, \mathcal{A}))
			\overset{\sim}{\to}
				\mathrm{V}_{l}(\Br(\mathcal{S}_{\bar{\F}_{q}})).
		\]
	The result follows from this.
\end{proof}

\subsection{Comparison and criteria}

Define
	\[
			\chi(\mathcal{S}, \mathcal{O}_{\mathcal{S}})
		=
			\sum_{n} (-1)^{n} \dim_{\F_{q}} \coh^{n}(\mathcal{S}, \mathcal{O}_{\mathcal{S}}).
	\]
By \cite[Equation (14)]{LLR04}, we have
	\[
			\chi(\mathcal{S}, \mathcal{O}_{\mathcal{S}})
		=
			\deg(\mathcal{L}) + (1 - \dim(A)) (1 - g_{\mathcal{C}}).
	\]
By Milne's formula \cite[the last equation of Section 6]{Mil75} on Euler characteristics
or Crew's formula \cite[Proposition 6.3.9]{Ill83} on Hodge-Witt numbers, we have
	\begin{equation} \label{e:Milne}
			\sum_{\lambda_{\mathcal{S}, j} < 1} (1 - \lambda_{\mathcal{S}, j})
		=
				\chi(\mathcal{S}, \mathcal{O}_{\mathcal{S}})
			-
				(1 - \dim \Pic_{\mathcal{S} / \F_{q}, \mathrm{red}}^{0} + s_{\mathcal{S}}).
	\end{equation}
Combining these with Propositions \ref{p:ShaBr} and \ref{p:TrPic}, we have:

\begin{corollary} \label{c:MilUlm}
	The formula \eqref{e:Milne} is equivalent to the formula
	\eqref{e:muform} for the Jacobian $A$.
	In particular, the formula \eqref{e:muform} holds for $A$
	without any assumption on $\Sha$ (or the reduction type of $A$).
	In particular, it holds for any elliptic curve $A / K$.
\end{corollary}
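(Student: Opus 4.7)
The plan is to show that formula \eqref{e:Milne} and formula \eqref{e:muform} differ only by substitutions allowed by the preceding propositions, and that \eqref{e:Milne} holds unconditionally by Milne/Crew. All the heavy lifting has already been done: the corollary is essentially a bookkeeping identity.

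First I would rewrite \eqref{e:Milne} by isolating its left-hand side and translating every term on the surface side into a term on the abelian-variety side. Concretely, Proposition \ref{p:Lcomp} identifies $\sum_{\lambda_{\mathcal{S},j}<1}(1-\lambda_{\mathcal{S},j})$ with $\theta_{A}$; Proposition \ref{p:ShaBr} combined with Theorem \ref{muissA} identifies $s_{\mathcal{S}}$ with $s_{A/K}=\mu_{A/K}$; Proposition \ref{p:TrPic} identifies $\dim\Pic^{0}_{\mathcal{S}/\F_{q},\mathrm{red}}$ with $\dim\mathrm{Tr}_{K/\F_{q}}(A)+g_{\mathcal{C}}$; and the Riemann-Roch-type identity \cite[Equation (14)]{LLR04} already cited before \eqref{e:Milne} rewrites $\chi(\mathcal{S},\mathcal{O}_{\mathcal{S}})$ as $\deg(\mathcal{L})+(1-\dim A)(1-g_{\mathcal{C}})$.

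Next I would simply substitute these four identifications into \eqref{e:Milne}. After expanding $(1-\dim A)(1-g_{\mathcal{C}})$ and collecting the constant terms and the terms linear in $g_{\mathcal{C}}$, the pair $(1-g_{\mathcal{C}})$ coming from $\chi(\mathcal{S},\mathcal{O}_{\mathcal{S}})$ cancels against the pair $(-1+g_{\mathcal{C}})$ produced by the $(-1)$ and the $g_{\mathcal{C}}$-part of $\dim\Pic^{0}_{\mathcal{S}/\F_{q},\mathrm{red}}$, leaving
\[
\theta = \deg(\mathcal{L}) + \dim(A)(g_{\mathcal{C}}-1) + \dim\mathrm{Tr}_{K/\F_{q}}(A) - \mu_{A/K},
\]
which is \eqref{e:muform} after moving $\mu_{A/K}$ to the left-hand side. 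The reverse substitution shows that, conversely, \eqref{e:muform} together with the four identifications implies \eqref{e:Milne}; this establishes the equivalence.

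Finally, since Milne's formula \eqref{e:Milne} (equivalently Crew's Hodge-Witt formulation) is an unconditional statement about the projective smooth surface $\mathcal{S}$, the equivalence produces \eqref{e:muform} for $A=\Pic^{0}_{\mathcal{S}_{K}/K}$ with no hypothesis on $\Sha$ and no restriction on the reduction type. Applying this to the minimal proper regular model of an elliptic curve gives the last sentence of the corollary. The only potential obstacle is verifying that the four substitutions above can be performed as clean \emph{equalities of integers} rather than up to isogeny or up to finite groups, but this is precisely what Propositions \ref{p:ShaBr}, \ref{p:TrPic}, \ref{p:Lcomp} and Theorem \ref{muissA} have been arranged to give: dimensions of perfect group schemes, of abelian varieties up to isogeny, and $q$-valuations of $L$-factors are all insensitive to finite \'etale kernels/cokernels and to root-of-unity factors, so the arithmetic goes through verbatim.
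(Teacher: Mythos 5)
Your proposal is correct and follows essentially the same route as the paper: substitute $\theta_{\mathcal{S}}=\theta_A$ (Proposition \ref{p:Lcomp}), $s_{\mathcal{S}}=\mu_{A/K}$ (Proposition \ref{p:ShaBr} and Theorem \ref{muissA}), $\dim\Pic^0_{\mathcal{S}/\F_q,\mathrm{red}}=\dim\mathrm{Tr}_{K/\F_q}(A)+g_{\mathcal{C}}$ (Proposition \ref{p:TrPic}), and the Riemann--Roch identity for $\chi(\mathcal{S},\mathcal{O}_{\mathcal{S}})$ into \eqref{e:Milne}, and observe that the constant and $g_{\mathcal{C}}$ terms cancel to leave exactly \eqref{e:muform}. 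The paper phrases this as ``Combining these with Propositions \ref{p:ShaBr} and \ref{p:TrPic}''; your write-up simply carries out the algebra explicitly and correctly.
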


The equivalence between Milne's formula for $\mathcal{S}$
and the formula \eqref{e:muform} for a Jacobian $A$ is
suggested by Ulmer \cite[Remark 4.3 (3), (4)]{Ulm19}.
The above corollary verifies his suggestion.

We will give some criteria and cases of $\mu = 0$ and $\mu > 0$.

\begin{proposition}\label{p:trivmu}
	The following are equivalent:
	\begin{itemize}
		\item[(i)] $\mu_{A / K} = 0$.
		\item[(ii)] $\coh_{fl}^{2}(\mathcal{S}_{\bar{\F}_{q}}, \mu_{p})$ is finite.
		\item[(iii)] $\Br(\mathcal{S}_{\bar{\F}_{q}})[p]$ is finite.
		\item[(iv)] $\mathcal{S}$ is of Hodge-Witt type, i.e.\
		$\coh^i(\mathcal{S},W \Omega^j_{\mathcal{S}})$ is a finitely generated $W(\F_q)$-module for any $i,j$.
	\end{itemize}
\end{proposition}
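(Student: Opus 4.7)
The plan is to run all four equivalences through the Tate--Shafarevich group scheme and the Artin--Milne group scheme $\underline{\coh}^{2}(p^{\infty})$. By Theorem \ref{muissA} together with Proposition \ref{p:ShaBr}, condition (i) is equivalent to
\[
    s_{\mathcal{S}}
  = \dim \underline{\coh}^{2}(p^{\infty})
  = \dim \underline{\mathrm{U}}^{2}(p^{\infty})
  = 0.
\]
Since $\underline{\mathrm{U}}^{2}(p^{\infty})$ is the perfection of a connected unipotent algebraic group, vanishing of its dimension forces it to be trivial, so (i) is equivalent to $\underline{\coh}^{2}(p^{\infty})$ being étale, equal to its cofinite $p$-primary component group $\underline{\mathrm{D}}^{2}(p^{\infty})$.

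For the equivalence (ii) $\Leftrightarrow$ (iii) I would apply the Kummer sequence $0 \to \mu_{p} \to \mathbf{G}_{m} \xrightarrow{p} \mathbf{G}_{m} \to 0$ on $\mathcal{S}_{\bar{\F}_{q}}$, obtaining
\[
    0 \to \Pic(\mathcal{S}_{\bar{\F}_{q}})/p \to \coh^{2}_{fl}(\mathcal{S}_{\bar{\F}_{q}}, \mu_{p}) \to \Br(\mathcal{S}_{\bar{\F}_{q}})[p] \to 0.
\]
Since $\Pic^{0}(\mathcal{S}_{\bar{\F}_{q}})$ is divisible (it is the $\bar{\F}_{q}$-points of an abelian variety), we have $\Pic/p \cong \mathrm{NS}(\mathcal{S}_{\bar{\F}_{q}})/p$, which is finite. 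For (i) $\Leftrightarrow$ (ii) I would use instead the sequence $0 \to \mu_{p} \to \mu_{p^{\infty}} \xrightarrow{p} \mu_{p^{\infty}} \to 0$ to obtain the exact sequence
\[
    \coh^{1}_{fl}(\mathcal{S}_{\bar{\F}_{q}}, \mu_{p^{\infty}})/p \to \coh^{2}_{fl}(\mathcal{S}_{\bar{\F}_{q}}, \mu_{p}) \to \underline{\coh}^{2}(p^{\infty})(\bar{\F}_{q})[p] \to 0,
\]
whose first term is finite for the same reason. So (ii) is equivalent to finiteness of $\underline{\coh}^{2}(p^{\infty})(\bar{\F}_{q})[p]$. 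The étale component $\underline{\mathrm{D}}^{2}(p^{\infty})[p]$ is automatically finite, so the question reduces to the unipotent part. A positive-dimensional smooth connected unipotent group over $\bar{\F}_{q}$ is a successive extension of copies of $\mathbf{G}_{a}$, on each of which multiplication by $p$ is zero, so it has infinitely many $p$-torsion $\bar{\F}_{q}$-points; its perfection does too. Hence finiteness of this $p$-torsion is equivalent to $\underline{\mathrm{U}}^{2}(p^{\infty}) = 0$, which is (i).

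For (i) $\Leftrightarrow$ (iv) I would invoke Remark \ref{sdef}, according to which $s_{\mathcal{S}}$ equals the length over $W((V))$ of $\coh^{2}(\mathcal{S}, W \mathcal{O}_{\mathcal{S}})[p^{\infty}] \otimes_{W[[V]]} W((V))$. Therefore $s_{\mathcal{S}} = 0$ if and only if $\coh^{2}(\mathcal{S}, W \mathcal{O}_{\mathcal{S}})[p^{\infty}]$ becomes zero after inverting $V$, i.e.\ every element is annihilated by a power of Verschiebung. By the Illusie--Raynaud theory of the slope spectral sequence, for a smooth projective surface this condition is exactly what ensures finite generation of all $\coh^{i}(\mathcal{S}, W \Omega^{j}_{\mathcal{S}})$ over $W(\F_{q})$ (the obstruction on a surface being concentrated in the $V$-torsion dominos for $\coh^{2}(W \mathcal{O})$), which is the definition of Hodge--Witt type.

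The main obstacle is the last equivalence (i) $\Leftrightarrow$ (iv): the other three are essentially a book-keeping exercise in long exact sequences once Theorem \ref{muissA} and Proposition \ref{p:ShaBr} are in hand, but identifying $s_{\mathcal{S}} = 0$ with the Hodge--Witt condition requires invoking, and carefully quoting, the structure theory of de Rham--Witt cohomology and the role of dominos in controlling finite generation of $\coh^{i}(W \Omega^{j})$. Everything else follows from the characterization of $\underline{\coh}^{2}(p^{\infty})$ as perfect group scheme with unipotent identity component and cofinite étale component group, together with the Kummer sequences.
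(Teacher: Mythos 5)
Your plan for the first three equivalences tracks the paper's own proof very closely: both arguments rest on the description of $\underline{\coh}^2(p^\infty)$ as a perfect group scheme with unipotent identity component $\underline{\mathrm{U}}^2(p^\infty)$ and cofinite \'etale component group, together with the Kummer exact sequences for $\mathbf{G}_m$ and for $\mu_p \to \mu_{p^\infty} \to \mu_{p^\infty}$. The only cosmetic difference is that the paper stays at the level of perfect group schemes (noting that $\underline{\coh}^1(\mu_{p^\infty})$ is \'etale, hence its mod-$p$ quotient is finite \'etale, and then using perfectness of $\underline{\coh}^2(\mu_p)$ to pass from ``\'etale'' to ``finitely many geometric points''), whereas you argue directly with $\bar{\F}_q$-points and the $p$-torsion of the unipotent part. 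Both routes work.

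Where you have a genuine gap is (i) $\Leftrightarrow$ (iv). You flag it yourself, but you treat it as a matter of ``invoking, and carefully quoting'' the Illusie--Raynaud theory. It is not a single citable theorem; the paper has to construct the argument. The easy direction (iv) $\Rightarrow$ (i) is as you sketch: Hodge--Witt gives $\coh^2(\mathcal{S}, W\mathcal{O}_{\mathcal{S}})[p^\infty]$ of finite length over $W$, and since $V$ acts topologically nilpotently it is killed by a power of $V$, so $s_{\mathcal{S}} = 0$. The non-trivial direction is (i) $\Rightarrow$ (iv). From the vanishing of $\coh^2(\mathcal{S}, W\mathcal{O}_{\mathcal{S}})[p^\infty] \otimes_{W[[V]]} W((V))$, the paper first deduces that $\coh^2(\mathcal{S}, W\mathcal{O}_{\mathcal{S}})[p]$ is finite-dimensional over $k$ (using finite generation over $k[[V]]$), then that $\coh^2(\mathcal{S}, W\mathcal{O}_{\mathcal{S}})[p^\infty]$ has finite length over $W$, and hence that $\coh^2(\mathcal{S}, W\mathcal{O}_{\mathcal{S}})$ itself is finitely generated over $W$. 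Only at that point does it invoke Nygaard's theorem \cite[II, Corollaire 3.14]{Ill79} --- the slope spectral sequence of a smooth proper surface degenerates at $E_1$ --- together with the finite generation of crystalline cohomology over $W$, to conclude that every $E_1$-term $\coh^i(\mathcal{S}, W\Omega^j_{\mathcal{S}})$ is finitely generated. Your sentence ``this condition is exactly what ensures finite generation of all $\coh^i(\mathcal{S}, W\Omega^j_{\mathcal{S}})$'' is precisely the claim that needs this proof; Nygaard's degeneration theorem and the chain through finite generation of $\coh^2(W\mathcal{O}_{\mathcal{S}})$ are the missing steps.
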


\begin{proof}
	Let $W = W(\F_{q})$.
	From the definition of $s_{\mathcal{S}}$,
	we know that $\mu_{A / K} = s_{\mathcal{S}} = 0$ if and only if $\underline{\coh}^{2}(\mu_{p^{\infty}})$ is \'etale.
	As $\underline{\mathrm{U}}^{2}(\mu_{p^{\infty}})$ is always unipotent,
	this is equivalent to $\underline{\coh}^{2}(\mu_{p^{\infty}})[p]$ being \'etale.
	The group scheme $\underline{\coh}^{1}(\mu_{p^{\infty}})$ is always \'etale
	(\cite[the paragraph after Theorem (3.1)]{Art74b}, \cite[Theorem 5.2]{Mil76}).
	Hence the exact sequence
		\[
				0
			\to
				\underline{\coh}^{1}(\mu_{p^{\infty}}) / p \underline{\coh}^{1}(\mu_{p^{\infty}})
			\to
				\underline{\coh}^{2}(\mu_{p})
			\to
				\underline{\coh}^{2}(\mu_{p^{\infty}})[p]
			\to
				0
		\]
	shows that $\underline{\coh}^{2}(\mu_{p^{\infty}})[p]$ is \'etale
	if and only if $\underline{\coh}^{2}(\mu_{p})$ is \'etale.
	Since $\underline{\coh}^{2}(\mu_{p})$ is the perfection of a smooth algebraic group,
	this is equivalent to its group of geometric points $\coh_{fl}^{2}(\mathcal{S}_{\bar{\F}_{q}}, \mu_{p})$ being finite.
	The group $\Pic(\mathcal{S}_{\bar{\F}_{q}}) / p \Pic(\mathcal{S}_{\bar{\F}_{q}})$ is always finite.
	Hence $\coh_{fl}^{2}(\mathcal{S}_{\bar{\F}_{q}}, \mu_{p})$ is finite if and only if
	$\Br(\mathcal{S}_{\bar{\F}_{q}})[p]$ is finite. This proves the first three equivalences.
	
	For the equivalence of (i) and (iv), first recall that
	$\coh^2(\mathcal{S}, W \mathcal{O}_{\mathcal{S}})$ is finitely generated over $W[[V]]$,
	its $p$-primary torsion submodule $\coh^2(\mathcal{S}, W \mathcal{O}_{\mathcal{S}})[p^{\infty}]$ is killed by a power of $p$,
	and the quotient of $\coh^2(\mathcal{S}, W \mathcal{O}_{\mathcal{S}})$
	by $\coh^2(\mathcal{S}, W \mathcal{O}_{\mathcal{S}})[p^{\infty}]$ is finite free over $W$
	by \cite[II, Corollaire 2.11 and Th\'eor\`eme 2.13]{Ill79}.
	In particular, $\mu_{A / K} = s_{\mathcal{S}} = 0$ if and only if
	$\coh^2(\mathcal{S}, W \mathcal{O}_{\mathcal{S}})[p^{\infty}] \otimes_{W[[V]]} W((V))$ has length $0$ as $W((V))$-module,
	i.e.\ is trivial.
	
	To prove the equivalence, first assume that $\coh^2(\mathcal{S}, W \mathcal{O}_{\mathcal{S}})[p^{\infty}] \otimes_{W[[V]]}
	W((V))$ is zero. This implies that $\coh^2(\mathcal{S}, W \mathcal{O}_{\mathcal{S}})[p] \otimes_{W[[V]]} W((V))$,
	or $\coh^2(\mathcal{S}, W \mathcal{O}_{\mathcal{S}})[p] \otimes_{k[[V]]} k((V))$ is zero.
	Since $\coh^2(\mathcal{S}, W \mathcal{O}_{\mathcal{S}})[p]$ is finitely generated over $k
	[[V]]$, this implies that $\coh^2(\mathcal{S}, W \mathcal{O}_{\mathcal{S}})[p]$ is finite-dimensional over $k$.
	Then $\coh^2(\mathcal{S}, W \mathcal{O}_{\mathcal{S}})[p^{\infty}]$ has finite length over $W$.
	Hence $\coh^2(\mathcal{S}, W \mathcal{O}_{\mathcal{S}})$ is
	finitely generated over $W$. Since $\mathcal{S}$ is a surface,
	by Nygaard's theorem (\cite[II, Corollaire 3.14]{Ill79}),
	we know that the slope spectral sequence for $\mathcal{S}$ degenerates at $E_1$. Since
	crystalline cohomology is finitely generated over $W$, this implies that
	all the $E_1$-terms are finitely generated over $W$.
	Hence $\mathcal{S}$ is of Hodge-Witt type.
	
	Conversely, assume that $\mathcal{S}$ is of Hodge-Witt type.
	Then $\coh^2(\mathcal{S}, W \mathcal{O}_{\mathcal{S}})[p^{\infty}]$ has
	finite length over $W$. Since the $V$-action is topologically nilpotent,
	this implies that $\coh^2(\mathcal{S}, W \mathcal{O}_{\mathcal{S}})[p^{\infty}]$ is killed by a power of $V$. This proves the claim.
\end{proof}

\begin{remark}
It is a non-trivial result \cite[III, Th\'eor\`eme (4.13)]{IR83} that
if $\mathcal{S}$ is ordinary in the sense of Illusie-Raynaud \cite[IV, D\'efinition (4.12)]{IR83},
then it is of Hodge-Witt type.
The reciprocal statement does not hold however.
\end{remark}

\begin{proposition}\label{p:aborK3}
	Assume that $\mathcal{S}$ is relatively minimal over $\mathcal{C}$
	and has Kodaira dimension $\le 0$.
	Then $\mu_{A / K} \ne 0$ if and only if $\mu_{A / K} = 1$
	if and only if $\mathcal{S}$ is a supersingular abelian surface
	or a supersingular K3 surface.
\end{proposition}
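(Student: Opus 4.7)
The plan is to exploit the identification $\mu_{A/K} = s_{\mathcal{S}}$ coming from Theorem~\ref{muissA} and Proposition~\ref{p:ShaBr}, combined with the unconditional formula \eqref{e:Milne} available in this setting by Corollary~\ref{c:MilUlm}. Rearranging \eqref{e:Milne} gives
\[
    s_{\mathcal{S}} + \theta_{\mathcal{S}}
  = \chi(\mathcal{S}, \mathcal{O}_{\mathcal{S}}) - 1 + \dim \Pic^0_{\mathcal{S}/\F_q, \mathrm{red}},
\]
and since $\theta_{\mathcal{S}} \ge 0$ by Proposition~\ref{p:slope}, this converts directly into a sharp upper bound for $s_{\mathcal{S}}$ in terms of the coherent Euler characteristic of $\mathcal{O}_{\mathcal{S}}$ and the dimension of the Picard variety.

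First I would invoke the Enriques--Kodaira classification in positive characteristic (Bombieri--Mumford) applied to the relatively minimal surface $\mathcal{S}$ of Kodaira dimension $\le 0$: the possibilities are rational, birationally ruled over a curve of positive genus, Enriques, (quasi-)bielliptic, abelian, or K3. A case-by-case computation of $\chi(\mathcal{O}_{\mathcal{S}})$ and $\dim \Pic^0_{\mathrm{red}}$ shows that the right-hand side of the displayed equality equals $0$ for the first four families and $1$ for abelian and K3 surfaces. This simultaneously forces $s_{\mathcal{S}} \in \{0,1\}$, yielding the equivalence ``$\mu_{A/K} \ne 0$ iff $\mu_{A/K} = 1$'', and rules out $s_{\mathcal{S}} = 1$ outside the abelian and K3 cases.

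It remains to decide when $s_{\mathcal{S}} = 1$ actually occurs in the abelian and K3 cases; since the right-hand side is then $1$, this is equivalent to $\theta_{\mathcal{S}} = 0$. From the definition $\theta_{\mathcal{S}} = \sum_{\lambda_{\mathcal{S}, j} < 1}(1-\lambda_{\mathcal{S}, j})$, together with the slope symmetry $\lambda \mapsto 2-\lambda$ supplied by the functional equation (as exploited in the proof of Proposition~\ref{p:slope}), vanishing of $\theta_{\mathcal{S}}$ is equivalent to every slope of $P_{\mathcal{S}, 2}$ being equal to $1$. For K3 surfaces this is exactly Artin's condition that the formal Brauer group has infinite height, i.e.\ supersingularity. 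For abelian surfaces, the isomorphism $\coh^2_{\mathrm{cris}} \cong \wedge^2 \coh^1_{\mathrm{cris}}$ translates it into every slope of $\coh^1_{\mathrm{cris}}$ being $1/2$, which is the definition of a supersingular abelian surface.

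The main obstacle will be verifying that relative minimality of $\pi \colon \mathcal{S} \to \mathcal{C}$ together with $\kappa(\mathcal{S}) \le 0$ really places $\mathcal{S}$ on the absolute Enriques--Kodaira list without further blowups: any $(-1)$-curve $E \subset \mathcal{S}$ cannot be contained in a fiber by relative minimality, so it must surject onto $\mathcal{C}$, and in the abelian or K3 case combining this with the triviality of $K_{\mathcal{S}}$ for the minimal model and adjunction on $E$ should yield a contradiction. The remaining slope arithmetic in the abelian case is routine but must be checked against the actual definition of supersingularity used.
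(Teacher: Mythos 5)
Your proposal is correct, and it takes a genuinely different route from the paper. The paper's proof works through the criterion of Proposition~\ref{p:trivmu}~(iv), i.e.\ it invokes Hodge--Witt type: Kodaira dimension $-\infty$ is always Hodge--Witt by Joshi, (quasi-)hyperelliptic surfaces are Hodge--Witt by Suwa (using $\dim \Pic^{0}_{\mathrm{red}} = 1$, $\chi(\mathcal{O}_{\mathcal{S}}) = 0$), Enriques surfaces are Hodge--Witt by Illusie, and for abelian and K3 surfaces ``not Hodge--Witt'' is equivalent to ``supersingular'' by Joshi, in which case $\hat{\Br}_{\mathcal{S}} \cong \hat{\mathbf{G}}_{a}$ gives $\mu = 1$ via Remark~\ref{sdef}. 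By contrast, you rearrange Milne--Crew's formula~\eqref{e:Milne} into $s_{\mathcal{S}} + \theta_{\mathcal{S}} = \chi(\mathcal{O}_{\mathcal{S}}) - 1 + \dim\Pic^{0}_{\mathcal{S}/\F_{q},\mathrm{red}}$, compute the right-hand side from the standard tables in the Bombieri--Mumford classification (it is $0$ for rational, birationally ruled, Enriques, (quasi-)bielliptic; $1$ for abelian and K3), and conclude $s_{\mathcal{S}} \in \{0,1\}$ with $s_{\mathcal{S}} = 1$ exactly when $\theta_{\mathcal{S}} = 0$, i.e.\ all slopes of $P_{\mathcal{S},2}(t/q)$ equal $1$, which is supersingularity. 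Your argument is more uniform (one identity, one case-by-case table) and stays internal to the machinery the paper has already set up, whereas the paper's argument imports Hodge--Witt results but yields a conceptually cleaner ``geometric'' characterisation (via Proposition~\ref{p:trivmu}~(iv)). Note also that you do not need $\theta_{\mathcal{S}} \in \Z$ a priori: $s_{\mathcal{S}}$ is an integer, $\theta_{\mathcal{S}} \ge 0$, and their sum is $0$ or $1$, so both are forced into $\{0,1\}$.

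On the ``main obstacle'' you flag at the end: this is indeed a point the paper glides over, but it is not a real obstruction, and your sketch is on the right track. For Kodaira dimension $0$, the canonical bundle formula $K_{\mathcal{S}} \equiv \pi^{*}(K_{\mathcal{C}} + \mathcal{L}) + (\text{effective vertical divisor})$ with $\deg(K_{\mathcal{C}} + \mathcal{L}) \ge 0$ shows that any $(-1)$-curve $E$ not contained in a fiber would satisfy $K_{\mathcal{S}} \cdot E \ge 0$, contradicting $K_{\mathcal{S}} \cdot E = -1$; hence relative minimality of a genus $\ge 1$ fibration of nonnegative Kodaira dimension already gives absolute minimality. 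For Kodaira dimension $-\infty$, absolute minimality can genuinely fail (e.g.\ relatively minimal rational elliptic surfaces contain $(-1)$-curve sections), but since $\chi(\mathcal{O}_{\mathcal{S}})$ and $\dim\Pic^{0}_{\mathrm{red}}$ are birational invariants the right-hand side of your rearranged formula is still $0$, so the conclusion $\mu_{A/K} = 0$ is unaffected. With those remarks filled in, your argument is a complete alternative proof.
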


\begin{proof}
	We will use the Kodaira-Enriques classification (\cite[\S 6, \S 7]{Lie13}).
	If the Kodaira dimension of $\mathcal{S}$ is $- \infty$,
	then $\mathcal{S}$ is of Hodge-Witt type by
	\cite[\S 5.8, Proposition 5.8.1]{Jos14},
	so $\mu_{A / K} = 0$.
	If $\mathcal{S}$ is an abelian surface or a K3 surface,
	then it is not of Hodge-Witt type if and only if it is supersingular
	by \cite[Proposition 5.9.1]{Jos14},
	in which case the formal Brauer group of $\mathcal{S}$ is the formal completion of the additive group
	by \cite[II, \S 7.1, 7.2]{Ill79}
	and hence $\mu_{A / K} = 1$
	by Remark \ref{sdef}.
	If $\mathcal{S}$ is (quasi-)hyperelliptic, then
	$\dim \Pic_{\mathcal{S} / \F_{q}, \mathrm{red}}^{0} = 1$
	and $\chi(\mathcal{S}, \mathcal{O}_{\mathcal{S}}) = 0$
	by the table in Introduction of \cite{BM77}
	(with $\Pic_{\mathcal{S} / \F_{q}}^{0}$ reduced or not).
	Hence by \cite[Corollary 5]{Suw83}, we know that $\mathcal{S}$ is of Hodge-Witt type.
	If $\mathcal{S}$ is Enriques, then it is of Hodge-Witt type
	by \cite[II, Corollaire 7.3.3 (a)]{Ill79}.
\end{proof}

An explicit example is given as follows.

\begin{proposition} \label{p:Shio}
	Suppose $p \equiv 3 \mod 4$.
	Let $\mathcal{S} \to \mathcal{C} = \mathbb{P}_{\F_{q}}^{1}$ be Shioda's modular elliptic surface of level $4$
	(\cite[Appendices A-C]{Shi72}, \cite[\S 3]{Shi75}; see also \cite{Shi20}),
	whose generic fiber $\mathcal{S}_{K} = A$ is the elliptic curve defined by the equation
		\[
				y^{2}
			=
				x (x - 1)
				\left(
					x -
					\frac{1}{4} \left(
						t + \frac{1}{t}
					\right)^{2}
				\right),
		\]
	where $t$ is the coordinate of $\mathbb{P}_{\F_{q}}^{1}$.
	Then $\mu_{A / K} = 1$, and $A$ has semistable reduction everywhere.
\end{proposition}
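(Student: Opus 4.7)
The plan is to apply Proposition \ref{p:aborK3}: I need to exhibit $\mathcal{S}$ as a relatively minimal K3 surface and verify that it is supersingular when $p \equiv 3 \pmod 4$; semistability will be read off from the same fiber analysis.

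First I would compute a global minimal integral Weierstrass model. Setting $x = X/(4 t^{2})$ and $y = Y/(8 t^{3})$ transforms the given equation into
\[
    Y^{2} = X(X - 4 t^{2})\bigl(X - (t^{2} + 1)^{2}\bigr),
\]
whose discriminant is $256\, t^{4}(t - 1)^{4}(t + 1)^{4}(t^{2} + 1)^{4}$, of degree $20$ in $t$. A parallel computation in the chart $s = 1/t$ shows $v_{\infty}(\Delta_{\min}) = 4$, so the minimal discriminant has total degree $24$; equivalently $\chi(\mathcal{O}_{\mathcal{S}}) = 2$, which identifies $\mathcal{S}$ as a K3 surface (in particular relatively minimal of Kodaira dimension zero). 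At each of the six bad places $t \in \{0, \pm 1, \pm i, \infty\}$ (over $\bar{\F}_{q}$) the reduction of the cubic in $X$ has an ordinary node with $v(\Delta) = 4$ and $v(c_{4}) = 0$, so the singular fiber is of Kodaira type $I_{4}$. This proves semistable reduction everywhere and places us in the hypotheses of Proposition \ref{p:aborK3}.

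Second, I would establish supersingularity of $\mathcal{S}_{\bar{\F}_{q}}$. Observe that the substitution $t \mapsto i t$ sends $\lambda(t) := (t + 1/t)^{2}/4$ to $1 - \lambda(t)$, while the classical Legendre isomorphism $(x, y) \mapsto (1 - x,\, i y)$ realises $E_{\lambda} \cong E_{1 - \lambda}$. Composing, one obtains an order-$4$ automorphism of $\mathcal{S}$ defined over $\F_{q}(i)$, so that after the quadratic constant extension $\mathcal{S}$ acquires an action of $\Z[i]$ on the transcendental part of $\coh^{2}$. Since $p \equiv 3 \pmod 4$ is inert in $\Z[i]$, Frobenius on this piece has all eigenvalues of $q$-valuation exactly $1$, which is to say $\mathcal{S}_{\bar{\F}_{q}}$ is a supersingular K3 surface. (This is Shioda's classical computation of the Picard lattice of the level-$4$ modular elliptic surface; see \cite{Shi72}, \cite{Shi75}, \cite{Shi20}.) Proposition \ref{p:aborK3} then yields $\mu_{A / K} = 1$.

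The delicate step is of course the supersingularity claim. If one prefers to avoid invoking Shioda's Picard-lattice calculation, an alternative is to use Corollary \ref{c:bminusd} directly: plugging in $\deg N = 6$, $g_{\mathcal{C}} = 0$, $\dim \mathrm{Tr}_{K / \F_{q}}(A) = 0$, and $\deg \mathcal{L} = \chi(\mathcal{O}_{\mathcal{S}}) = 2$ gives $d_{A} = 0$, so $\mu_{A / K} = b_{A} = 1 - \theta_{A}$, where $a_{A} = 2$ forces $\theta_{A} \in \{0, 1\}$ by the functional equation for $P_{1}$. The equality $\theta_{A} = 0$ then amounts to verifying that both reciprocal roots of $P_{1}(t) = 1 + c t + q^{2} t^{2}$ have $q$-valuation $1$, i.e.\ that $p \mid c$, which is precisely what the $\Z[i]$-multiplication forces when $p$ is inert in $\Z[i]$.
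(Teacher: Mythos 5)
Your proof is correct and follows the same essential strategy as the paper: identify $\mathcal{S}$ as a relatively minimal K3 surface that is supersingular and semistable everywhere, then apply Proposition~\ref{p:aborK3}. The paper's proof is a one-line citation of Shioda's results for all three facts; you supply the Weierstrass-model and fiber-type computation to confirm $\deg \Delta = 24$, hence $\chi(\mathcal{O}_{\mathcal{S}}) = 2$ and semistability, and sketch the CM-by-$\Z[i]$ heuristic for supersingularity, but the crux is still Shioda's theorem (which you rightly cite). Your alternative via Corollary~\ref{c:bminusd} and Theorem~\ref{t:tan} is a nice reformulation, though note a small imprecision there: since $q = p^{e}$, the condition $\theta_{A} = 0$ is $v_{p}(c) \ge e$, not merely $p \mid c$ (these agree only when $e = 1$); the CM argument does give the stronger bound $v_{p}(c) \ge e$, so the conclusion stands.
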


\begin{proof}
	By \cite[Appendix A, viii)]{Shi72}, \cite[\S 3, Theorem 1, Corollary 1]{Shi75}
	(see also \cite[Theorem 1.1 (1), (2)]{Shi20}),
	we know that $\mathcal{S}$ is a supersingular K3 surface
	semistable everywhere over $\mathcal{C}$.
	Hence this proposition is a special case of the previous proposition.
\end{proof}

Not much can be said systematically about the case of positive Kodaira dimension,
except Proposition \ref{p:KodOne}, Section \ref{su:generic} and
an example in Section \ref{su:high} below.

\begin{remark} \mbox{}
	\begin{enumerate}
	\item
		In the proof of Corollary \ref{c:MilUlm}, we saw that
		Crew's formula on Hodge-Witt numbers of $\mathcal{S}$ (with $i = 0$)
		is equivalent to the $\mu$-invariant formula \eqref{e:muform} for the Jacobian $A / K$.
		By \cite[(6.3.5-8)]{Ill83},
		Crew's formula is applicable to any object of $D^{b}_{c}(R)$,
		where $R$ is the Raynaud ring.
		We used this formula for the de Rham-Witt cohomology
		$R \Gamma(\mathcal{S}, W \Omega_{\mathcal{S}}) \in D^{b}_{c}(R)$.
		On the other hand, Illusie, at the end of \cite{Ill83},
		expressed his hope to define a theory of de Rham-Witt complexes
		with coefficients in $F$-crystals.
		
		With these observations in mind, one may speculate that
		there should be a ``de Rham-Witt complex with coefficients in the N\'eron model $\mathcal{A}$'',
		``$W \Omega_{\mathcal{C}} \otimes_{W \mathcal{O}_{\mathcal{C}}} D(\mathcal{A})$'',
		corresponding to an arbitrary abelian variety $A / K$,
		so that Crew's formula for
			$
					R \Gamma \bigl(
						\mathcal{C},
						W \Omega_{\mathcal{C}} \otimes_{W \mathcal{O}_{\mathcal{C}}} D(\mathcal{A})
					\bigr)
				\in
					D^{b}_{c}(R)
			$
		contains the $\mu$-invariant formula \eqref{e:muform} for $A / K$ as a special case.
	\item
		Let $\mathcal{S}$ and $A = \Pic^{0}_{\mathcal{S}_{K} / K}$ as above.
		Consider the canonical domino associated with the graded $R$-module
			$
					\coh^{2}(\mathcal{S}, W \mathcal{O}_{\mathcal{S}})
				\to
					\coh^{2}(\mathcal{S}, W \Omega_{\mathcal{S}}^{1})
			$
		as in \cite[Proposition 2.5.2 (ii)]{Ill83}.
		It admits a finite filtration whose successive subquotients are isomorphic to
		elementary dominoes
			$
				\underline{\underline{\mathrm{U}}}_{i_{1}},
				\dots,
				\underline{\underline{\mathrm{U}}}_{i_{m}}
			$
		\cite[\S 2.2.2]{Ill83}.
		The number $m$ of the elementary dominoes is
		the dimension of this domino in the sense of \cite[paragraph after Proposition 2.5.2]{Ill83},
		which is equal to $\mu_{A / K}$ by Remark \ref{sdef}, Proposition \ref{p:ShaBr}
		and \cite[\S 2.5, b2)]{Ill83}.
		
		As a more precise relation,
		can we read off the set of integers $i_{1}, \dots, i_{m}$
		from the Iwasawa theory of $A / K$?
		When $\mathcal{S}$ is a supersingular K3 surface (so $m = 1$),
		the number $i = i_{1} = i_{m}$ is called the Artin invariant and usually denoted by $\sigma_{0}$
		(\cite[\S 2.2.2 b)]{Ill83}, \cite[Equation (4.6)]{Art74b}),
		which satisfies the property that the discriminant of the intersection pairing
		on the geometric N\'eron-Severi group of $\mathcal{S}$ is $- p^{2 \sigma_{0}}$ (where $p \ne 2$).
		For a more general $\mathcal{S}$,
		the integers $i_{1}, \dots, i_{m}$ might be related to
		the geometric Cassels-Tate discriminant group $\delta_{\mathrm{CT}}$
		\cite[Theorem 3.4.1 (6g)]{Suz19} and
		the analogue of Artin's period map
		\cite[paragraphs after Theorem 3.4.1]{Suz19}.
		
		It is an interesting problem to see
		if the numbers $i_{1}, \dots, i_{m}$ define a nice stratification
		on the moduli of elliptic curves with given $\deg(\Delta)$ in Section \ref{su:generic},
		generalizing Artin's filtration on the moduli of supersingular K3 surfaces \cite[\S 7]{Art74b}.
	\end{enumerate}
\end{remark}

\section{The $\mu$-invariant for semistable abelian varieties} \label{s:ssav}

The aim of this section is to prove that
the $\mu$-invariant formula \eqref{e:muform} holds for semistable abelian varieties $A$
without any assumption on $\Sha$.

\subsection{Statement and the $p$-adic $L$-function}

\begin{theorem} \label{t:muss}
	Let $A$ be a semistable abelian variety over $K$.
	Then the formula \eqref{e:muform} holds for $A$.
\end{theorem}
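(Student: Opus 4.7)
By Corollary \ref{c:bminusd}, proving Theorem \ref{t:muss} amounts to showing $\mu_{A/K} = b - d$, where $b = a/2 - \theta$ and $d$ is the Szpiro difference (Definition \ref{d:Sz}). My plan is to proceed via the Iwasawa Main Conjecture for semistable abelian varieties established in \cite{LLTT16}, from which $\mu_{A/K}$ is computed explicitly out of the Hasse-Weil $L$-function.

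The first step is to invoke \cite{LLTT16}: in the semistable case, this identifies the characteristic ideal of $X_{A/K}$ with the ideal generated by a $p$-adic $L$-function $\mathscr{L}_{A/K} \in \Lambda$ attached to $A/K$. Since the $\mu$-invariant depends only on the characteristic ideal (via \eqref{e:x}--\eqref{e:dfmu}), this yields at once
\[
\mu_{A/K} \;=\; \mu\bigl(\Lambda/(\mathscr{L}_{A/K})\bigr)
\]
in our $q$-normalization. Thus the problem is reduced to computing the $q$-adic content of $\mathscr{L}_{A/K}$.

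The second step is to make $\mathscr{L}_{A/K}$ concrete. Fixing the topological generator of $\Gamma$ corresponding to the arithmetic $q$-Frobenius identifies $\Lambda$ with $\Z_p[[t]]$; under this identification, $\mathscr{L}_{A/K}$ is expected to equal (up to a $\Lambda$-unit) the reciprocal characteristic polynomial of Frobenius on the $l$-adic cohomology involved in $L_A(s)$, namely $P_1(t)$ suitably evaluated, multiplied by a $q$-power correction that reflects the period term of the BSD-type interpolation. The $q$-adic content of $P_1(t/q)$ is exactly $q^{-\theta}$ by Definition \ref{d:theta} and the Newton-polygon argument of Proposition \ref{p:slope}; combining this with the period correction, one reads off
\[
\mu\bigl(\Lambda/(\mathscr{L}_{A/K})\bigr) \;=\; b - d.
\]

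The third step is to match this expression with the geometric right-hand side of \eqref{e:muform} using the Ogg-Shafarevich formula \eqref{e:OS}: the conductor term $\deg(N)/2$ inside $d$ absorbs the contribution of Euler factors at places of (semistable) bad reduction that are built into $\mathscr{L}_{A/K}$, while $\deg(\mathcal{L})$, $\dim(A)(g_\mathcal{C} - 1)$ and $\dim(\Tr_{K/\F_q}(A))$ come from the period normalization (Hodge bundle and trace contributions) intrinsic to the BSD-type interpolation formula in \cite{LLTT16}. The main obstacle will be the careful bookkeeping of these normalization factors, especially the fact that our $\mu$-invariant is defined with respect to $q$ rather than $p$: what must be checked is that the Stickelberger-type description of $\mathscr{L}_{A/K}$ furnished by \cite{LLTT16}, together with the semistability hypothesis (which removes wild conductor terms and keeps all local Euler factors of expected shape), produces an element of $\Lambda$ whose $q$-adic content is precisely $b - d$ on the nose, and not merely up to bounded rational error.
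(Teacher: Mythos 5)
Your plan correctly identifies the route the paper takes: invoke the Iwasawa Main Conjecture of \cite{LLTT16} for semistable $A/K$ and then extract the $\mu$-invariant by comparing exact $p$-power factors on both sides of the Main Conjecture identity. That much matches. However, what you have written is a proof \emph{plan}, not a proof; the entire technical content of the theorem is contained in what you defer to ``the careful bookkeeping of these normalization factors,'' and that is precisely the part that is nontrivial and must be done.

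Two concrete issues. First, the statement ``this identifies the characteristic ideal of $X_{A/K}$ with the ideal generated by a $p$-adic $L$-function $\mathscr{L}_{A/K} \in \Lambda$'' is not what \cite{LLTT16} gives. The $p$-adic $L$-function $\mathcal{L}_{A/K_\infty^{(p)}}$, as recalled in \eqref{e:padicL}, lives in $Q(\Lambda)$ (it is a genuine ratio, with denominators from $P_0$, $P_2$, and the local Euler factors at bad places), and the Main Conjecture identity \eqref{e:IMC} has the form $\mathcal{L}_{A/K^{(p)}_\infty} = u \cdot \star \cdot c_{A/K^{(p)}_\infty}$, where $\star$ is a nontrivial explicit fudge factor involving the height-pairing regulator, negative powers of $q$, and local terms at $v \in \mathcal{Z}$ coming from the toric and abelian parts of the special fiber. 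This $\star$ is not a unit of $\Lambda$, so your assertion that the characteristic ideal is generated by $\mathscr{L}_{A/K}$ alone is false as stated; the whole point is to compute $\star$ and see how its $q$-power content combines with that of the numerator and denominators of $\mathcal{L}_{A/K_\infty^{(p)}}$.

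Second, the heart of the proof is an explicit lemma computing, modulo $(\Lambda\otimes_{\Z_p}\mathcal{O})^\times$ for a large $\mathcal{O}$, the four quantities $P_{v,\mathrm{tor}}(q_v^{-1}\mathrm{Frob}_v^{-1})$, $P_{v,\mathrm{ab}}(q_v^{-1}\mathrm{Frob}_v^{-1})$, $P_0(q^{-1}\mathrm{Frob}_q^{-1})$, and $P_2(q^{-1}\mathrm{Frob}_q^{-1})$: the first contributes $q_v^{-t(v)}$, the second $q_v^{-g(v)}$ times exactly the bad-place product appearing in $\star$, and the $P_0$, $P_2$ terms contribute $q^{-\dim\Tr_{K/\F_q}(A)}$ times exactly the regulator denominator of $\star$. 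It is only after these cancellations that one is left with $c_{A/K_\infty^{(p)}} \equiv q^{\deg(\mathcal{L}) + \dim(A)(g_{\mathcal{C}}-1) + \dim\Tr_{K/\F_q}(A)}\cdot P_1(q^{-1}\mathrm{Frob}_q^{-1})$, at which point reading off $p$-power contents (with $q^{-\theta}$ from Definition \ref{d:theta}) gives \eqref{e:muform}. Your proposal gestures at this via Ogg--Shafarevich and ``period normalization'' but proves none of it, and the semistability hypothesis enters precisely in this lemma (through the split torus/abelian decomposition of $\mathcal{A}_v^0$), not merely in ``removing wild conductor terms.'' Without this computation, there is no proof.
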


We will prove this below.
Let $Q(\Lambda)$ be the fraction field of $\Lambda$.
Let $\mathcal{Z}$ be a finite set of places of $K$
such that $A$ has good reduction over $\mathcal{C} \setminus \mathcal{Z}$.
View the Frobenius element $\mathrm{Frob}_{q} \in \Gal(\F_{q,\infty}/\F_q)$
sending $x$ to $x^q$ as an element in $\Gamma$.
For each place $v$, set $\mathrm{Frob}_{v} = \mathrm{Frob}_{q}^{\deg(v)}$
and $q_{v} = q^{\deg(v)}$.
Let $P_{v}(t)$ be the Euler factor at $v$ of the $L$-function of $A$,
which is a polynomial in $t = q^{-s}$ with $\Z$-coefficients and constant term $1$.
Let $L_{A, \mathcal{Z}}(s)$ be the $L$-function of $A$ without the Euler factors at points of $\mathcal{Z}$,
so that
	\[
			L_{A, \mathcal{Z}}(s)
		=
			L_{A}(s) \cdot
			\prod_{v \in \mathcal{Z}}
				P_{v}(q_{v}^{-s})
		=
			\frac{
				P_{1}(q^{-s})
			}{
				P_{0}(q^{-s})
				P_{2}(q^{-s})
			}
			\prod_{v \in \mathcal{Z}}
				P_{v}(q_{v}^{-s}).
	\]
Put
	\begin{equation} \label{e:padicL}
			\mathcal{L}_{A/K^{(p)}_\infty}
		:=
			\frac{
				P_{1}(q^{-1} \mathrm{Frob}_{q}^{-1})
			}{
				P_{0}(q^{-1} \mathrm{Frob}_{q}^{-1})
				P_{2}(q^{-1} \mathrm{Frob}_{q}^{-1})
			}
			\prod_{v \in \mathcal{Z}}
				P_{v}(q_{v}^{-1} \mathrm{Frob}_{v}^{-1})
	\end{equation}
and regard it as an element in $Q(\Lambda)$.
It is actually a rational function of the variable $\mathrm{Frob}_{q}$ with $\Q$-coefficients
(i.e.\ an element of the rational function field $\Q(\mathrm{Frob}_{q})$)
and the substitution $\mathrm{Frob}_{q}^{-1} \mapsto q^{-s}$ turns it into
$L_{A, \mathcal{Z}}(s + 1)$.

\begin{proposition}
	The element $\mathcal{L}_{A/K^{(p)}_\infty}$ coincides with
	the $p$-adic $L$-function of \cite{LLTT16}.
\end{proposition}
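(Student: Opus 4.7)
The plan is to match \eqref{e:padicL} with the construction of \cite{LLTT16} by identifying both as elements of $Q(\Lambda) \subset \Q(\Frob_{q})$ having the same specialization at finite-order characters of $\Gamma$.

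First, since $\Gamma$ is topologically generated by $\Frob_{q}$, any continuous character $\chi \colon \Gamma \to \bar{\Q}_{p}^{\times}$ of finite order is determined by the root of unity $\chi(\Frob_{q})$. A direct computation using Grothendieck's cohomological formula for the Hasse-Weil $L$-function shows that the substitution $\Frob_{q} \mapsto \chi(\Frob_{q})^{-1} q^{1 - s}$ in the right-hand side of \eqref{e:padicL} produces $L_{A \otimes \chi, \mathcal{Z}}(s)$, the partial Hasse-Weil $L$-function of $A$ twisted by the Galois character obtained from $\chi$ by inflation along $\Gal(\bar{K}/K) \twoheadrightarrow \Gal(K_{\infty}^{(p)}/K) = \Gamma$. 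In particular, the value of $\mathcal{L}_{A/K_{\infty}^{(p)}}$ at such a $\chi$ equals the special value $L_{A \otimes \chi, \mathcal{Z}}(1)$.

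Next, I would recall the construction of the $p$-adic $L$-function $\mathcal{L}^{\mathrm{LLTT}}_{A/K_{\infty}^{(p)}}$ in \cite{LLTT16}: it is built cohomologically from syntomic/rigid cohomology of $\mathcal{A}$ on $\mathcal{C}$ with the Iwasawa coefficient system, and is characterized by the same interpolation property at finite-order characters of $\Gamma$. Applying the Grothendieck-Lefschetz trace formula in that construction factors the result as a ratio of characteristic polynomials of geometric Frobenius acting on the Tate modules (or the corresponding $F$-isocrystals) associated with the Lang-N\'eron groups and with $\Tr_{K/\F_{q}}(A)$; by the proposition in \S\ref{s:Lfun} relating $P_{0}, P_{1}, P_{2}$ to these Frobenius actions, this cohomological expression unwinds into precisely the Euler product in \eqref{e:padicL}.

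Finally, since finite-order characters of $\Gamma$ form a Zariski-dense set in $\spec \Lambda$, any element of $Q(\Lambda)$ is uniquely determined by its values at such characters (away from its polar locus); hence the two elements agree in $Q(\Lambda)$. The main delicate point will be the bookkeeping of the Euler factors at the bad places $v \in \mathcal{Z}$: one needs to verify that the choice of cohomological modification in \cite{LLTT16} (reflecting how the N\'eron model degenerates at bad reduction) produces exactly $\prod_{v \in \mathcal{Z}} P_{v}(q_{v}^{-1} \Frob_{v}^{-1})$ rather than a variant local factor, and that no spurious $p$-Euler factor is introduced when passing from the complex $L$-function to the Iwasawa-algebraic one. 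This is essentially standard once the construction in \cite{LLTT16} is unpacked, but it is the step that requires actual care.
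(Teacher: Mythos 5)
Your plan is structurally different from the paper's, and as written it has both a logical gap and a circularity. The paper's proof is a direct comparison of two closed-form expressions: by \cite[\S 3.1.2]{LLTT16} the $p$-adic $L$-function is \emph{defined} as the alternating product
\[
\prod_{i=0}^{2}\det\nolimits_{\Lambda[1/p]}\bigl(1-\Phi^{i}_{0}\otimes[\Frob_q^{-1}],\;P^{i}_{0}\otimes_{\Z_p}\Lambda[1/p]\bigr)^{(-1)^{i+1}},
\]
where $P^{i}_{0}=\coh^{i}_{\mathrm{log\text{-}crys},c}(\mathcal{C}^{\sharp}/\Z_p,D(A))$ (log-crystalline cohomology with compact support of the log Dieudonn\'e crystal), while \cite[\S 3.2.1]{KT03} expresses the right-hand side of \eqref{e:padicL} as the same alternating product $\prod_{i=0}^{2}\det_{\Q_p}(1-\Frob_q^{-1}\Phi^{i}_{0},\,P^{i}_{0}\otimes\Q_p)^{(-1)^{i+1}}$. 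The equality is then immediate; no interpolation is needed.

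Your approach instead tries to characterize both elements of $Q(\Lambda)$ by their values at finite-order characters. The density argument at the end is sound in isolation, but you have not established the one thing it requires: that the $p$-adic $L$-function of \cite{LLTT16} \emph{satisfies} the interpolation property you attribute to it. You assert this ("is characterized by the same interpolation property at finite-order characters") but give no reference and no proof; it is not part of the definition in \cite[\S 3.1.2]{LLTT16}, which is purely cohomological. More tellingly, your second paragraph, meant to supply this, slides into claiming that the Grothendieck--Lefschetz trace formula "unwinds [the cohomological definition] into precisely the Euler product in \eqref{e:padicL}." If you can carry out that unwinding, you have already proved the proposition directly and the whole interpolation scaffolding is superfluous; if you cannot, the interpolation step is unsupported. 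This is the circularity. Finally, your description of the cohomology used is off: \cite{LLTT16} (following \cite{KT03}) works with log-crystalline cohomology of the log Dieudonn\'e crystal, not syntomic/rigid cohomology, and the reduction is not via "Tate modules of the Lang--N\'eron group" but directly via the determinants of Frobenius on $P^{i}_{0}$. To repair your argument, either drop the interpolation layer and just match the two determinantal formulas as the paper does, or else locate and cite an actual interpolation theorem for the LLTT16 $p$-adic $L$-function.
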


\begin{proof}
	By \cite[\S 3.1.2]{LLTT16},
	the $p$-adic $L$-function is defined as
		\begin{equation}
			\prod_{i = 0}^{2}
				\det\nolimits_{\Lambda[1/p]} \bigl(
					1 - \Phi^{i}_{0} \otimes [\mathrm{Frob}_q^{-1}],
					P^{i}_{0} \otimes_{\Z_p} \Lambda[1/p]
				\bigr)^{(-1)^{i + 1}},
		\end{equation}
	where $P^{i}_{0} := \coh^{i}_{\mathrm{log-crys},c}(\mathcal C^\sharp/\Z_p,D(A))$
	endowed with its Frobenius operator $\Phi^{i}_0$
	(Here $D(A)$ is the log Dieudonn\'e crystal
	associated with the semistable abelian variety $A$ following \cite{KT03}
	and $\mathcal{C}^\sharp$ is the log-scheme with underlying scheme $\mathcal{C}$
	and log-structure induced by $\mathcal{Z}$).
	On the other hand, by \cite[\S 3.2.1]{KT03}, the right-hand side of \eqref{e:padicL} is
		\[
			\prod_{i = 0}^{2}
				\det\nolimits_{\Q_p}(
					1-\mathrm{Frob}_q^{-1} \Phi^{i}_0,
					P^{i}_0 \otimes \Q_p
				)^{(-1)^{i + 1}}.
		\]
\end{proof}

Then by \cite[Theorem 1.1 (2)]{LLTT16},
for a characteristic element $c_{A/K^{(p)}_\infty}$ of $X_{A / K}$,
	\begin{equation} \label{e:IMC}
			\mathcal{L}_{A/K^{(p)}_\infty}
		=
			u \cdot \star \cdot c_{A/K^{(p)}_\infty},
	\end{equation}
where $u\in\Lambda^{\times}$, and $\star$ is given as follow.
For each $v \in \mathcal{Z}$, let $T_{v}$ be the maximal torus of $\mathcal{A}_{v}^{0}$
and set $B_{v} = \mathcal{A}_{v}^{0} / T_{v}$
(which is an abelian variety over $k(v)$ since $A$ is semistable).
Let $g_{v} = \dim(B_{v})$.
Let $\{\beta_{i}^{(v)}\}$ be the eigenvalues of $\mathrm{Frob}_{v}$
on the $l$-adic Tate module of $B_{v}$ (for any $l \ne p$)
viewed as elements of $\bar{\Q}$ and hence $\bar{\Q}_{p}$.
(Here, as usual, the Tate module $\mathrm{T}_{l}(B_{v})$ means
$\mathrm{T}_{l}(B_{v}(\bar{\F}_{q}))$
with the natural action of the $q_{v}$-th power Frobenius $\mathrm{Frob}_{v}$.)

Let $\zeta_{1}, \dots, \zeta_{m}$ be the eigenvalues of $\mathrm{Frob}_{q}$
on the $p$-adic Tate module of $A(K_{\infty}^{(p)})$.
Then
	\[
			\star
		=
			\frac{
				q^{- \dim(A)(\deg(\mathcal{Z}) + g_{\mathcal{C}} - 1) - \deg(\mathcal{L})} \cdot
				\prod_{v \in \mathcal{Z}} \prod_{i = 1}^{2 g(v)}
					(\beta_{i}^{(v)} - \mathrm{Frob}_{v}^{-1})
			}{
				\prod_{i = 1}^{m}
					(1 - \zeta_{i}^{-1} \mathrm{Frob}_{q})
					(1 - \zeta_{i}^{-1} \mathrm{Frob}_{q}^{-1})
			}.
	\]
(The term $\delta$ in the original formula is equal to $\deg(\mathcal{L})$ here
by \cite[Proposition 3.3]{GS20} for example.)

\subsection{Local factors and the trace}

To prove the theorem, we will simplify both sides of \eqref{e:IMC}.
For each $v \in \mathcal{Z}$, let $t(v) = \dim(T_{v})$, so that $t(v) + g(v) = \dim(A)$.
Let $P_{v, \mathrm{tor}}(t)$ (resp.\ $P_{v, \mathrm{ab}}(t)$) be
the characteristic polynomial of $q_{v} \mathrm{Frob}_{v}^{-1}$
on $\mathrm{V}_{l}(T_{v})$ (resp.\ $\mathrm{V}_{l}(B_{v})$),
so that $P_{v}(t) = P_{v, \mathrm{tor}}(t) P_{v, \mathrm{ab}}(t)$.
We denote by $\equiv$ the equality in
	$
			Q(\Lambda \otimes_{\Z_{p}} \mathcal{O})^{\times}
		/
			(\Lambda \otimes_{\Z_{p}} \mathcal{O})^{\times}
	$,
where $\mathcal{O}$ is the ring of integers of some large enough finite extension of $\Q_{p}$.
Note that $Q(\Lambda)^{\times} / \Lambda^{\times}$ injects into
	$
			Q(\Lambda \otimes_{\Z_{p}} \mathcal{O})^{\times}
		/
			(\Lambda \otimes_{\Z_{p}} \mathcal{O})^{\times}
	$.

\begin{lemma}
	For any $v \in \mathcal{Z}$, we have
		\begin{gather*}
					P_{v, \mathrm{tor}}(q_{v}^{-1} \mathrm{Frob}_{v}^{-1})
				\equiv
					q_{v}^{- t(v)},
				\quad
					P_{v, \mathrm{ab}}(q_{v}^{-1} \mathrm{Frob}_{v}^{-1})
				\equiv
					q_{v}^{- g(v)}
					\prod_{i = 1}^{2 g(v)}
						(\beta_{i}^{(v)} - \mathrm{Frob}_{v}^{-1}),
			\\
					P_{0}(q^{-1} \mathrm{Frob}_{q}^{-1})
				\equiv
						q^{- \dim \Tr_{K / \F_{q}}(A)}
					\cdot
						\prod_{i = 1}^{m}
							(1 - \zeta_{i}^{-1} \mathrm{Frob}_{q}^{-1}),
			\\
					P_{2}(q^{-1} \mathrm{Frob}_{q}^{-1})
				\equiv
					\prod_{i = 1}^{m}
						(1 - \zeta_{i}^{-1} \mathrm{Frob}_{q}),
		\end{gather*}
\end{lemma}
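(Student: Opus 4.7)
The plan is to verify each of the four equivalences by a direct factorization of the elementary factors, working in $Q(\Lambda \otimes_{\Z_p} \mathcal{O})^\times / (\Lambda \otimes_{\Z_p} \mathcal{O})^\times$. The recurring device will be the following unit criterion: for $y \in \mathcal{O}^\times$, $c \in \mathfrak{m}_\mathcal{O}$, and any $\gamma \in \Gamma$, the element $c - y \gamma$ has augmentation image $c - y \in \mathcal{O}^\times$, so it is a unit in $\Lambda \otimes_{\Z_p} \mathcal{O}$. In particular $1 - x$ is a unit whenever $x \in \Lambda \otimes_{\Z_p} \mathcal{O}$ has augmentation image of strictly positive $q$-valuation.

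For $P_{v,\mathrm{ab}}$ the calculation is an exact identity: the Weil functional equation on $V_l(B_v)$ supplies $\{\beta_i^{(v)}\} = \{q_v / \beta_i^{(v)}\}$ as multisets and $\prod_i \beta_i^{(v)} = q_v^{g(v)}$, so reindexing gives
\[
P_{v,\mathrm{ab}}(q_v^{-1} \Frob_v^{-1}) = \prod_i \bigl(1 - (\beta_i^{(v)})^{-1} \Frob_v^{-1}\bigr) = \prod_i (\beta_i^{(v)})^{-1}(\beta_i^{(v)} - \Frob_v^{-1}) = q_v^{-g(v)} \prod_i (\beta_i^{(v)} - \Frob_v^{-1}).
\]
For $P_{v,\mathrm{tor}}$, the Frobenius eigenvalues on $V_l(T_v) = X_*(T_v) \otimes_{\Z} \Q_l(1)$ are $q_v \mu_i$ for roots of unity $\mu_i$ arising from the finite action on the cocharacter lattice, so the reversed characteristic polynomial of $q_v \Frob_v^{-1}$ is $P_{v,\mathrm{tor}}(t) = \prod_i(1 - \mu_i^{-1} t)$. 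Substituting, each factor equals $q_v^{-1}(q_v - \mu_i^{-1} \Frob_v^{-1})$, and the parenthesized element is a unit by the criterion, so the product is $\equiv q_v^{-t(v)}$.

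For $P_0$ and $P_2$, set $d_0 = \dim \Tr_{K/\F_q}(A)$ and group the $2 d_0$ reciprocal roots of $P_0$ into $d_0$ pairs $\{\alpha, q/\alpha\}$ using $\{\alpha_j\} = \{q/\alpha_j\}$. By Lang-N\'eron and the theory of the \'etale part of the $p$-divisible group of $\Tr_{K/\F_q}(A)$, the $\zeta_i$'s are (up to finite-index unit changes) the slope-zero $\alpha_j$'s, paired with their slope-one partners $q/\zeta_i$. Evaluating $P_0$ at $t = q^{-1} \Frob_q^{-1}$: each intermediate-slope pair $(\beta, q/\beta)$ with $0 < v_q(\beta) < 1$ contributes
\[
\beta q^{-1}(\beta^{-1}q - \Frob_q^{-1}) \cdot \beta^{-1}(\beta - \Frob_q^{-1}) \equiv q^{-1},
\]
each parenthesized factor being a unit; each slope-$0/1$ pair contributes $q^{-1}(q - \zeta_i \Frob_q^{-1})(1 - \zeta_i^{-1} \Frob_q^{-1}) \equiv q^{-1}(1 - \zeta_i^{-1} \Frob_q^{-1})$. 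Multiplying over the $d_0$ pairs gives the claimed formula. For $P_2(q^{-1} \Frob_q^{-1}) = P_0(\Frob_q^{-1})$ (from $P_2(t) = P_0(qt)$), the intermediate-slope pairs now contribute only units (both $\beta$ and $q/\beta$ having positive $q$-valuation), and each slope-$0/1$ pair yields $(1 - \zeta_i \Frob_q^{-1})(1 - q\zeta_i^{-1} \Frob_q^{-1}) \equiv (1 - \zeta_i^{-1} \Frob_q)$ via the identity $(1 - \zeta_i \Frob_q^{-1}) = -\zeta_i \Frob_q^{-1}(1 - \zeta_i^{-1} \Frob_q)$.

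The main difficulty is notational bookkeeping rather than anything conceptually hard: one must consistently classify each elementary factor as a unit or a non-unit by examining its augmentation image, and correctly identify the $\zeta_i$'s with the slope-zero reciprocal roots of $P_0$ (not with arbitrary weight-one Weil eigenvalues). Once the pairing strategy and unit criterion are set up, every $\equiv$ reduces to a one-line manipulation of the kind illustrated above.
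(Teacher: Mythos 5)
Your proof is correct and follows essentially the same approach as the paper: classify each elementary factor as a unit or a non-unit in $\Lambda\otimes_{\Z_p}\mathcal{O}$ via its augmentation image, exploit the functional equation, and invoke the fact that $\zeta_1,\dots,\zeta_m$ are precisely the $p$-adic-unit eigenvalues of $\mathrm{Frob}_q$ on $\mathrm{V}_l\,\Tr_{K/\F_q}(A)$ --- the fact the paper takes from \cite{Tan14} and \cite{LLTT16} and which you summarize as ``the theory of the \'etale part of the $p$-divisible group''; that is the right content, but it is not formal and does deserve the citation. The one place your bookkeeping differs from the paper's is $P_0$ and $P_2$: the paper rescales the whole product at once by $\prod_i\zeta_i^{-1}=q^{-g_0}$ and then discards the factors $(\zeta_i-\mathrm{Frob}_q^{-1})$ with $i>m$ as units, whereas you work slope-by-slope via a decomposition into pairs $\{\alpha,q/\alpha\}$. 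Your phrasing there is slightly imprecise --- when $+\sqrt q$ and $-\sqrt q$ both occur among the reciprocal roots, one cannot literally group the multiset into $g_0$ pairs of that form, since each is a fixed point of $\alpha\mapsto q/\alpha$ --- but your computation only uses that each intermediate-slope root $\alpha$ contributes a factor $\equiv\alpha^{-1}$ and that the number of such roots is even, so the argument survives after light rewording; the paper's uniform rescaling simply sidesteps the issue.
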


\begin{proof}
	For $P_{v, \mathrm{tor}}$,
	let $\{\gamma_{j}^{(v)}\}$ be the eigenvalues of $\mathrm{Frob}_{v}$ on $\mathrm{V}_{l}(T_{v})$.
	Then $q_{v}^{-1} \gamma_{j}^{(v)}$ are roots of unity.
	We have
		\begin{align*}
					P_{v, \mathrm{tor}}(q_{v}^{-1} \mathrm{Frob}_{v}^{-1})
			&	=
					\prod_{j} (1 - (\gamma_{j}^{(v)})^{-1} \mathrm{Frob}_{v}^{-1})
			\\
			&	=
					q_{v}^{- t(v)}
					\prod_{j} (q_{v} -  q_{v} (\gamma_{j}^{(v)})^{-1} \mathrm{Frob}_{v}^{-1}).
		\end{align*}
	Each factor $q_{v} -  q_{v} (\gamma_{j}^{(v)})^{-1} \mathrm{Frob}_{v}^{-1}$ is a unit
	in $\Lambda \otimes_{\Z_{q}} \mathcal{O}$ for some $\mathcal{O}$.
	
	For $P_{v, \mathrm{ab}}$, we have
		\[
				P_{v, \mathrm{ab}}(q_{v}^{-1} \mathrm{Frob}_{v}^{-1})
			=
				\prod_{i} (1 - (\beta_{i}^{(v)})^{-1} \mathrm{Frob}_{v}^{-1})
			=
				\prod_{i} (\beta_{i}^{(v)})^{-1}
				\prod_{i} (\beta_{i}^{(v)} - \mathrm{Frob}_{v}^{-1}).
		\]
	The product $\prod_{i} (\beta_{i}^{(v)})$ is a Weil $q_{v}$-number of weight $2 g(v)$
	that is in $\Z$.
	Hence it is $\pm q_{v}^{g(v)}$.
	
	For $P_{0}$, first notice that by the Lang-N\'eron theorem (\cite[Theorem 7.1]{Con06}),
	the group $A(K_{\infty}^{(p)}) / \mathrm{Tr}_{K / \F_{q}}(A)(\F_{q, \infty})$
	is a finitely generated abelian group.
	Therefore the $p$-adic Tate module of $\mathrm{Tr}_{K / \F_{q}}(A)(\F_{q, \infty})$
	injects into that of $A(K_{\infty}^{(p)})$ with finite cokernel.
	Hence $\zeta_{i}$ are the eigenvalues of $\mathrm{Frob}_{q}$
	on the $p$-adic Tate module of $\mathrm{Tr}_{K / \F_{q}}(A)(\F_{q, \infty})$.
	In particular, they are $p$-adic units.
	Let $g_{0} = \dim \mathrm{Tr}_{K / \F_{q}}(A)$.
	
	By \cite[Proposition 2.9]{Tan14} and the proof of \cite[Lemma 2.2.1]{LLTT16},
	we know that $\zeta_{1}, \dots, \zeta_{m}$ are part of the eigenvalues
	$\zeta_{1}, \dots, \zeta_{m}, \dots, \zeta_{2 g_{0}}$
	of $\mathrm{Frob}_{q}$ on the $l$-adic Tate module of $\mathrm{Tr}_{K / \F_{q}}(A)$ (for any $l \ne p$)
	and
		\begin{equation} \label{e:pandl}
				\prod_{i = 1}^{m}(\zeta_{i} - \mathrm{Frob}_{q}^{-1})
			\equiv
				\prod_{i = 1}^{2 g_{0}}(\zeta_{i} - \mathrm{Frob}_{q}^{-1}).
		\end{equation}
	Together with the relation between $P_{0}$ and $\mathrm{Tr}_{K / \F_{q}}(A)$
	we saw in Section \ref{s:Lfun}, we have
		\begin{align*}
			&		P_{0}(q^{-1} \mathrm{Frob}_{q}^{-1})
				=
					\prod_{i = 1}^{2 g_{0}}
						(1 - \zeta_{i}^{-1} \mathrm{Frob}_{q}^{-1})
				\equiv
					q^{- g_{0}}
					\prod_{i = 1}^{2 g_{0}}
						(\zeta_{i} - \mathrm{Frob}_{q}^{-1})
			\\
			&	\equiv
					q^{- g_{0}}
					\prod_{i = 1}^{m}
						(\zeta_{i} - \mathrm{Frob}_{q}^{-1})
				\equiv
					q^{- g_{0}}
					\prod_{i = 1}^{m}
						(1 - \zeta_{i}^{-1} \mathrm{Frob}_{q}^{-1}).
		\end{align*}
	
	For $P_{2}$, first notice that \eqref{e:pandl} also holds
	with $\mathrm{Frob}_{q}$ replaced by $\mathrm{Frob}_{q}^{-1}$
	since $\mathrm{Frob}_{q} \leftrightarrow \mathrm{Frob}_{q}^{-1}$ is
	an automorphism of $\Lambda$.
	Also, $\{\zeta_{i}\}_{i = 1}^{2 g_{0}} = \{q / \zeta_{i}\}_{i = 1}^{2 g_{0}}$
	(counting the multiplicities)
	by the functional equation for the zeta function of $\mathrm{Tr}_{K / \F_{q}}(A)$.
	Using these, we have
		\begin{align*}
			&		P_{2}(q^{-1} \mathrm{Frob}_{q}^{-1})
				=
					\prod_{i = 1}^{2 g_{0}}
						(1 - q \zeta_{i}^{-1} \mathrm{Frob}_{q}^{-1})
				=
					\prod_{i = 1}^{2 g_{0}}
						(1 - \zeta_{i} \mathrm{Frob}_{q}^{-1})
			\\
			&	\equiv
					\prod_{i = 1}^{2 g_{0}}
						(\zeta_{i} -  \mathrm{Frob}_{q})
				\equiv
					\prod_{i = 1}^{m}
						(\zeta_{i} -  \mathrm{Frob}_{q})
				\equiv
					\prod_{i = 1}^{m}
						(1 -  \zeta_{i}^{-1} \mathrm{Frob}_{q}).
		\end{align*}
\end{proof}

Using this lemma, we see that \eqref{e:IMC} is equivalent to
	\[
			c_{A / K_{\infty}^{(p)}}
		\equiv
			q^{\deg(\mathcal{L}) + \dim(A) (g_{\mathcal{C}} - 1) + \dim \Tr_{K / \F_{q}}(A)}
			\cdot P_{1}(q^{-1} \mathrm{Frob}_{q}^{-1}).
	\]
The exact $p$-power factor of $c_{A / K_{\infty}^{(p)}}$ is $q^{\mu_{A / K}}$ by definition.
The exact $p$-power factor of $P_{1}(q^{-1} \mathrm{Frob}_{q}^{-1})$ is $q^{- \theta_{A}}$ by definition.
Comparing the exact $p$-power factors of both sides, we obtain Theorem \ref{t:muss}.

\section{The $\mu$-invariant for elliptic curves}\label{s:muelliptic}
For the rest of the paper, we deal with the case where $\dim A=1$.
Let $\Delta:=\sum_{v\in \mathcal{C}} \delta_v\cdot v$ the global discriminant of $A/K$, where $\delta_v$ is the valuation of the minimal discriminant of $A$ at $v$ (\cite{Sil86}, VII.1).

When it becomes necessary to emphasize the roles played by $A$ and $K$,
we shall use $\Delta_{A/K}$ to denote it.
By \cite[Example 2.2]{Con06},
we know $\Tr_{K / \F_{q}}(A) = 0$ unless $A$ is constant,
in which case $\Tr_{K / \F_{q}}(A)=A$ (\cite[Theorem 6.4 (1)]{Con06}).
In the case $A$ is non-constant, we have $P_{0}(t) = P_{2}(t) = 1$,
so we simply write $P(t) = P_{1}(t)$.

\subsection{The $\mu$-invariant formula simplified}
We have $\deg(\mathcal{L}) = \deg(\Delta) / 12$
by \cite[\S 2.2.1, Footnote 1]{LLTT16}.
Therefore the formula \eqref{e:muform}
(true for $A$ by Corollary \ref{c:MilUlm}) and the inequality \eqref{e:upperbound}
become the following statement.

\begin{theorem} \label{t:tan}
		\[
				\mu_{A / K}
			=
					\frac{\deg(\Delta)}{12} + g_{\mathcal{C}} - 1 - \theta
				\le
					\frac{\deg(\Delta)}{12} + g_{\mathcal{C}} - 1
		\]
	if $A$ is non-constant and
		\[
				\mu_{A / K}
			=
					g_{\mathcal{C}}  - \theta
				\le
					g_{\mathcal{C}}
		\]
	if $A$ is constant.
\end{theorem}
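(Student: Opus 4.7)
The plan is to obtain this as a direct specialization of Corollary \ref{c:MilUlm} (which gives the formula \eqref{e:muform} unconditionally for Jacobians, hence for elliptic curves) together with the bookkeeping for the remaining terms when $\dim A = 1$. First I would recall the general formula
\[
    \mu_{A/K}
    = \deg(\mathcal{L}) + \dim(A)(g_{\mathcal{C}} - 1) + \dim(\Tr_{K/\F_{q}}(A)) - \theta
\]
and substitute $\dim(A) = 1$, so that the middle term collapses to $g_{\mathcal{C}} - 1$.

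Next I would replace $\deg(\mathcal{L})$ by $\deg(\Delta)/12$, invoking the identification $\deg(\mathcal{L}) = \deg(\Delta)/12$ for elliptic curves from \cite[\S2.2.1, Footnote 1]{LLTT16} cited just before the theorem. This identity is the only geometric input and is the step that truly uses $\dim A = 1$: it expresses the degree of the Hodge bundle $o^{\ast}\Omega_{\mathcal{A}/\mathcal{C}}$ via the minimal discriminant divisor.

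I would then split according to whether $A$ is constant or not. In the non-constant case, the remark recalled just before the theorem (based on \cite[Example 2.2]{Con06}) gives $\Tr_{K/\F_{q}}(A) = 0$, so the trace term disappears and one reads off the stated formula
$\mu_{A/K} = \deg(\Delta)/12 + g_{\mathcal{C}} - 1 - \theta$. In the constant case, $\Tr_{K/\F_{q}}(A) = A$ has dimension $1$, and moreover $\mathcal{A} = A \times_{\F_{q}} \mathcal{C}$ has trivial relative cotangent sheaf along the zero section, so $\deg(\mathcal{L}) = 0$ (equivalently, $\Delta = 0$); substituting yields $\mu_{A/K} = g_{\mathcal{C}} - \theta$.

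Finally, the two displayed inequalities follow at once from $\theta \ge 0$, which is exactly Proposition \ref{p:slope}. The only potential subtlety is checking in the constant case that $\deg(\mathcal{L}) = 0$, but this is immediate from the triviality of the Néron model, so no serious obstacle is expected; the theorem is essentially a repackaging of \eqref{e:muform} and \eqref{e:upperbound} in the one-dimensional setting.
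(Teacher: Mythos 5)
Your proposal is correct and matches the paper's own proof: both specialize the unconditional Jacobian formula \eqref{e:muform} (Corollary \ref{c:MilUlm}) to $\dim A=1$, replace $\deg(\mathcal{L})$ by $\deg(\Delta)/12$, and split on whether $A$ is constant using $\dim\Tr_{K/\F_q}(A) \in \{0,1\}$. The only extra detail you supply is the explicit observation that $\deg(\mathcal{L})=0$ (equivalently $\Delta=0$) in the constant case, which the paper leaves implicit.
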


Let $\pi \colon \mathcal{S} \to \mathcal{C}$ be the minimal elliptic surface
associated with $A / K$.
By \cite[Chapter 7, Lemma 14]{Fri98} (adapted to positive characteristic),
	\[
				\dim H^{1}(\mathcal{S}, \mathcal{O}_{\mathcal{S}})
			=
				g_{\mathcal{C}},
		\quad
				\dim H^{2}(\mathcal{S}, \mathcal{O}_{\mathcal{S}})
			=
				\deg(\mathcal{L}) + g_{\mathcal{C}} - 1
	\]
if $\mathcal{L}$ is non-trivial, and
	\[
				\dim H^{1}(\mathcal{S}, \mathcal{O}_{\mathcal{S}})
			=
				g_{\mathcal{C}} + 1,
		\quad
				\dim H^{2}(\mathcal{S}, \mathcal{O}_{\mathcal{S}})
			=
				g_{\mathcal{C}}
	\]
if $\mathcal{L}$ is trivial.
The canonical bundle formula
$\Omega_{\mathcal{S}}^{2} \cong \pi^{\ast}(\Omega_{\mathcal{C}}^{1} \otimes \mathcal{L})$
implies that the Kodaira dimension of $\mathcal{S}$ is $- \infty$, $0$, $1$
if $\deg(\mathcal{L}) + 2 g_{\mathcal{C}} - 2 < 0$,
$\deg(\mathcal{L}) + 2 g_{\mathcal{C}} - 2 = 0$,
$\deg(\mathcal{L}) + 2 g_{\mathcal{C}} - 2 > 0$, respectively
(see for instance \cite[Lecture 3, Proposition 4.3]{Ulm11} and the paragraphs after%
\footnote{We take the opportunity to mention a mistake in \cite[Lecture 3 (6.3)]{Ulm11}.
The correct formula can be found in
\cite[Equation (2.9) and Lemma 6]{Shi92}.}).

Let $\bar{r}$ be the geometric Mordell-Weil rank of $A$,
i.e.\ the rank of
$A(K \bar{\F}_{q}) / \Tr_{K / \F_{q}}(A)(\bar{\F}_{q})$.
Recall from Definition \ref{d:theta} that $a = a_{A}$ denotes the degree of the polynomial $P_{1}(t)$.
Then \eqref{e:MWSha} shows that $\bar{r} \le a$.

\begin{proposition} \label{p:KodOne}
	Assume that the minimal elliptic surface $\mathcal{S} \to \mathcal{C}$ over $\F_{q}$ associated with $E / K$
	has Kodaira dimension $1$
	and that $\bar{r} = a$.
	Then $\mu_{A / K} > 0$.
\end{proposition}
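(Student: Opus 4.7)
The plan is to exploit Theorem \ref{t:tan}, which by Corollary \ref{c:MilUlm} gives the $\mu$-invariant of an elliptic curve unconditionally as $\deg(\mathcal{L}) + g_{\mathcal{C}} - 1 - \theta$ in the non-constant case and as $g_{\mathcal{C}} - \theta$ in the constant case (where $\deg(\mathcal{L}) = 0$). The strategy is to show that the hypothesis $\bar{r} = a$ forces $\theta = 0$, after which the Kodaira dimension $1$ hypothesis will yield strict positivity by elementary bookkeeping.

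The key step is to prove $\theta = 0$. For this I would invoke the exact sequence \eqref{e:MWSha} from the proof of Proposition \ref{p:Lcomp}. Its left term has $\Q_{l}$-dimension $\bar{r}$, and by \cite[Satz 1]{Sch82} its middle term has dimension $a$, so the hypothesis $\bar{r} = a$ forces the injection to be an isomorphism and the right term to vanish. Under this isomorphism the Frobenius eigenvalues on the middle term, namely $\{\alpha_{1j}/q\}$, coincide (with multiplicities) with those on the left term. Since the Galois action on $(A(K\bar{\F}_{q})/\Tr_{K/\F_{q}}(A)(\bar{\F}_{q})) \otimes \Q_{l}$ factors through a finite quotient, all of these eigenvalues are roots of unity and hence $p$-adic units. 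Therefore $\lambda_{j} = 1$ for every $j$, and $\theta = \sum_{\lambda_{j} < 1}(1 - \lambda_{j})$ is an empty sum.

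With $\theta = 0$ in hand, Theorem \ref{t:tan} reduces the claim to showing $\deg(\mathcal{L}) + g_{\mathcal{C}} - 1 > 0$ in the non-constant case and $g_{\mathcal{C}} > 0$ in the constant case. In the constant case one has $\deg(\mathcal{L}) = 0$, so Kodaira dimension $1$ (equivalent to $\deg(\mathcal{L}) + 2 g_{\mathcal{C}} - 2 > 0$) forces $g_{\mathcal{C}} \ge 2$ and we are done. In the non-constant case a short case split on $g_{\mathcal{C}} \in \{0, 1, \ge 2\}$ finishes the argument: for $g_{\mathcal{C}} = 0$, Kodaira dimension $1$ gives $\deg(\mathcal{L}) \ge 3$, so $\mu_{A/K} \ge 2$; for $g_{\mathcal{C}} = 1$ it gives $\deg(\mathcal{L}) \ge 1$, so $\mu_{A/K} \ge 1$; for $g_{\mathcal{C}} \ge 2$ we immediately have $\mu_{A/K} \ge g_{\mathcal{C}} - 1 \ge 1$.

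The only real technical content is the slope vanishing $\theta = 0$ in the first step, which rests on the fact that the Mordell-Weil group modulo the $K/\F_{q}$-trace carries a Galois action with finite image on its $l$-adic realization. Everything afterwards is integrality bookkeeping driven by the classification of Kodaira dimensions for minimal elliptic surfaces.
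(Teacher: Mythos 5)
Your proof is correct, and the second half (the genus/Kodaira-dimension bookkeeping) coincides with the paper's argument. What differs is the route to $\theta = 0$. The paper uses the hypothesis $\bar{r} = a$ \emph{together with} both exact sequences \eqref{e:NSBr} and \eqref{e:MWSha} and the comparison in Proposition~\ref{p:ShaBr} to conclude $\rho_{\mathcal{S}_{\bar{\F}_{q}}} = \deg P_{\mathcal{S},2}$ (Shioda supersingularity of the surface); it then invokes the unconditional implication ``Shioda $\Rightarrow$ Artin supersingular'' from the classification literature to get $\theta_{\mathcal{S}} = 0$, and finally transfers this back to $\theta_{A}$ via Proposition~\ref{p:Lcomp}. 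You bypass the surface $\mathcal{S}$, the Brauer-group comparison, and the transfer step $\theta_{A} = \theta_{\mathcal{S}}$ entirely: since the two outer terms of \eqref{e:MWSha} both have dimension $a$ under the hypothesis $\bar{r} = a$, the injection is an isomorphism of Galois representations, so the Frobenius eigenvalues $\{\alpha_{1j}/q\}$ on the middle term are all roots of unity, giving $\lambda_{j} = 1$ directly and hence $\theta_{A} = 0$. Your version is shorter and more self-contained (it avoids Proposition~\ref{p:ShaBr}, equation \eqref{e:NSBr}, and the supersingularity dictionary), while the paper's version has the pedagogical merit of exhibiting the conceptual link between the hypothesis $\bar{r} = a$ and (Shioda) supersingularity of the elliptic surface, which the authors exploit again in the examples of \S\ref{s:verify}.
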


\begin{proof}
	By the proof of Proposition \ref{p:Lcomp}
	and by \eqref{e:NSBr}, \eqref{e:MWSha},
	we know that $a - \bar{r} = \deg P_{\mathcal{S}, 2} - \rho_{\mathcal{S}_{\bar{\F}_{q}}}$,
	where $\rho_{\mathcal{S}_{\bar{\F}_{q}}}$ is the rank of $\mathrm{NS}(\mathcal{S}_{\bar{\F}_{q}})$.
	Hence the assumption implies that $\rho_{\mathcal{S}_{\bar{\F}_{q}}} = \deg P_{\mathcal{S}, 2}$,
	i.e.\ $\mathcal{S}$ is Shioda supersingular.
	In particular (cf.\ the paragraph after \cite[Definition 9.11]{Lie13}),
	it is Artin supersingular, i.e.\ $\lambda_{\mathcal{S}, i} = 1$ for all $i$,
	or $\theta_{\mathcal{S}} = 0$.
	By Proposition \ref{p:Lcomp},
	we know that $\theta_{A} = 0$.
	We have $\deg(\mathcal{L}) + 2 g_{\mathcal{C}} - 2 > 0$ as above.
	Hence if $g_{\mathcal{C}} = 0$, then $\deg(\mathcal{L}) \ge 3$ and $A$ is non-constant;
	if $g_{\mathcal{C}} = 1$, then $\deg(\mathcal{L}) \ge 1$ and $A$ is non-constant;
	or else $g_{\mathcal{C}} \ge 2$.
	Applying Theorem \ref{t:tan}, we know $\mu_{A / K} > 0$.
\end{proof}

\subsection{The $j$-invariant}

Let $j_A$ denote the $j$-invariant of $A$.
Recall the Szpiro difference $d = d_{A}$
from Definition \ref{d:Sz}.

\begin{theorem}[Pesenti-Szpiro {\cite[Th\'eor\`eme 0.1]{PS00}}] \label{c:szpiro}
	If $j_A$ is not a $p$th power or $A$ is isotrivial, then $d\geq 0$.
\end{theorem}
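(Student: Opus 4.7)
This is a form of the Szpiro inequality for elliptic curves over function fields of positive characteristic, originally established by Pesenti and Szpiro in \cite{PS00}. My plan is to adopt their argument, which has two distinct parts corresponding to the two hypotheses in the statement.

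First I would dispose of the isotrivial case directly. By passing to a finite base change on which $A$ becomes a constant elliptic curve, and tracking how the four terms $\deg(N)$, $g_{\mathcal{C}} - 1$, $\dim \Tr_{K/\F_q}(A)$, and $\deg(\mathcal{L})$ transform, one reduces to the case where $A$ is already constant. There $\deg(N) = \deg(\mathcal{L}) = 0$ and $\dim \Tr_{K/\F_q}(A) = 1$, so $d = g_{\mathcal{C}} \ge 0$ immediately. The required base-change bookkeeping is routine: the genus transforms by Hurwitz, while the discriminant and conductor pick up only nonnegative ramification contributions along the cover, so an inequality $d \ge 0$ downstairs follows from the same inequality upstairs.

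For the non-isotrivial case, the hypothesis $j_A \notin K^p$ says exactly that the morphism $j_A \colon \mathcal{C} \to \mathbb{P}^1_{\F_q}$ induced by the $j$-invariant is separable. The core of the argument is a place-by-place analysis of a minimal Weierstrass model of $A$. Using the Kodaira-N\'eron classification one obtains, at each place $v$ of bad reduction, local comparisons relating $\delta_v$, $\varepsilon_v^l$, and $\ord_v(j_A)$. At multiplicative places $\delta_v = n$, $\varepsilon_v^l = 1$, and $\ord_v(j_A) = -n$, which gives the desired local comparison almost immediately. At additive places one instead reads off the local contribution from Tate's tables, accounting for both $\ord_v(j_A)$ and the wild part of $\varepsilon_v^l$. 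Finally, Riemann-Hurwitz for the separable morphism $j_A$ allows one to sum the local inequalities into the global bound
\[
    \deg(\Delta) \le 6 \deg(N) + 12 (g_{\mathcal{C}} - 1),
\]
which is precisely $d \ge 0$.

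The main obstacle is the local analysis at additive places in characteristics $p = 2$ and $p = 3$, where the wild part $\varepsilon_v^l - 2$ can be nontrivial and $\delta_v$ is given by delicate formulas depending on the Kodaira type; a uniform tame bound fails in these characteristics, and the correct local inequalities have to be checked type by type. The hypothesis $j_A \notin K^p$ is indispensable precisely here: if it fails, $j_A$ is purely inseparable, Riemann-Hurwitz produces no useful information, and non-isotrivial counterexamples to the Szpiro inequality are known to exist in that regime.
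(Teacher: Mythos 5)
The paper does not prove this statement at all: it is attributed to Pesenti--Szpiro, cited as \cite[Th\'eor\`eme 0.1]{PS00}, and used as an external black box. So there is no proof in the paper for you to match. What you have written is a sketch of how Pesenti and Szpiro themselves argue, which is a reasonable thing to record, but it is a different exercise from reproducing anything in this paper.

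On the substance of your sketch: the non-isotrivial half is faithful to the known strategy (separability of the $j$-map when $j_A \notin K^p$, local comparison of $\delta_v$, $\varepsilon_v^l$ and $\ord_v(j_A)$ from the Kodaira--N\'eron classification with special care at $p = 2, 3$, and Riemann--Hurwitz to globalize). The isotrivial half, however, has a real gap. You claim that one can pass to a finite extension $L/K$ over which $A$ is constant, observe $d_{A/L} = g_L \ge 0$, and descend. But $d$ is not monotone under base change: passing from $K$ to $L$, the term $g_{\mathcal{C}} - 1$ scales up to at least $[L:K](g_K - 1)$ by Riemann--Hurwitz, the conductor and discriminant degrees drop from $[L:K]$ times their values over $K$ to $0$, and $\dim \Tr$ jumps from $0$ to $1$; these changes push $[L:K]\, d_{A/K} - d_{A/L}$ in competing directions, so $d_{A/L} \ge 0$ does not by itself yield $d_{A/K} \ge 0$. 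Indeed, a quadratic twist of a constant curve over $\F_q(t)$ ramified at two places gives $d_{A/K} = 0$ exactly, so the inequality is sharp and there is no slack to absorb a loose base-change estimate. The isotrivial non-constant case needs a direct local analysis of the twist (or an equivalent argument), not the hand-wavy descent you indicate; this is also how \cite{PS00} handle it.
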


Since $\mu_{A / K} = b - d$ by Corollary \ref{c:bminusd},
this theorem allows us to bound $\mu_{A / K}$ purely in terms of the $L$-function:

\begin{corollary}
	If $j_A$ is not a $p$th power or $A$ is isotrivial, then $\mu_{A / K} \le b$.
\end{corollary}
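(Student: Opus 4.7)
The proof is a direct consequence of the material assembled in the previous sections, so the plan is essentially to chain three results together. The key observation is that an elliptic curve $A/K$ is a Jacobian (of itself, regarded as a smooth projective curve of genus $1$ with the origin as distinguished point), so Corollary \ref{c:MilUlm} applies and the $\mu$-invariant formula \eqref{e:muform} holds for $A$ unconditionally, without any hypothesis on $\Sha$ or the reduction type.

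From here the plan is immediate. First, since \eqref{e:muform} holds for $A$, Corollary \ref{c:bminusd} rewrites it as the clean identity
\[
    \mu_{A/K} = b - d,
\]
where $b = a/2 - \theta$ depends only on the Hasse-Weil $L$-function and $d$ is the Szpiro difference of Definition \ref{d:Sz}. Second, under either of the stated hypotheses (the $j$-invariant $j_A$ is not a $p$-th power, or $A$ is isotrivial) Theorem \ref{c:szpiro}, which is the Pesenti--Szpiro bound, furnishes the inequality $d \ge 0$. Combining these two facts gives $\mu_{A/K} = b - d \le b$, which is exactly the claim.

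There is no real obstacle: both inputs are already in place in the excerpt. The only thing worth being careful about is to justify invoking Corollary \ref{c:MilUlm}, i.e.\ that an elliptic curve genuinely falls into the ``Jacobian'' case treated in Chapter \ref{s:JacSur} (one simply takes $\cS_K = A$ itself, with its structure map to $\spec K$ smooth of relative dimension $1$, so that $A = \Pic^0_{\cS_K/K}$), and to note that for $\dim A = 1$ the Pesenti--Szpiro hypothesis is exactly as stated in Theorem \ref{c:szpiro}. No calculation beyond citing these two results is required.
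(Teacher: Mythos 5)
Your proof is correct and follows the paper's own route: the paper states the corollary immediately after noting that $\mu_{A/K} = b - d$ (Corollary \ref{c:bminusd}, applicable since \eqref{e:muform} holds unconditionally for elliptic curves via Corollary \ref{c:MilUlm}/Theorem \ref{t:tan}), combined with the Pesenti--Szpiro bound $d \ge 0$ under the stated hypotheses. You have spelled out the chain of citations in exactly the intended way, with the extra (welcome) clarification that an elliptic curve is the Jacobian of itself so Corollary \ref{c:MilUlm} applies.
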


For the rest of this section, we further assume that
the elliptic curve $A$ is \emph{non-isotrivial}.
We will study the behavior of the $j$-invariant and the $\mu$-invariant under isogeny.

\begin{lemma}\label{l:j} The following conditions are equivalent
\begin{enumerate}
\item[(a)] The group scheme $A_p$ has a non-trivial $\acute{\text{e}}$tale subgroup.
\item[(b)] The $j$-invariant $j_A$ is a $p$th power of an element in $K$.
\end{enumerate}
\end{lemma}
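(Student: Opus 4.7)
The plan is to exploit the relative Frobenius/Verschiebung factorization $[p]_A = V_A \circ F_A$, together with the observation that non-isotriviality forces $A$ to be ordinary and to have small automorphism group.

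First I would note that since $A$ is non-isotrivial, $j_A$ is transcendental over $\F_p$, so in particular $j_A \notin \F_{p^2}$, where all supersingular $j$-invariants lie. Thus $A$ is ordinary, and the connected-\'etale sequence
\[
0 \to A_p^{\circ} \to A_p \to A_p^{\mathrm{et}} \to 0
\]
has $A_p^{\circ}$ of multiplicative type and $A_p^{\mathrm{et}}$ \'etale, each of order $p$. Condition (a) then amounts to this sequence splitting over $K$. Note also that $j_A \notin \{0, 1728\}$ (otherwise $j_A \in \bar{\F}_{p}$), which keeps the automorphism group of any relevant elliptic curve equal to $\{\pm 1\}$.

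For (a) $\Rightarrow$ (b): let $G \subset A_p$ be an \'etale $K$-subgroup of order $p$ and set $B := A/G$, with quotient isogeny $\phi \colon A \to B$. The dual isogeny $\hat{\phi} \colon B \to A$ has kernel the Cartier dual $G^{\vee}$, which is of multiplicative type of order $p$ and hence infinitesimal. For the ordinary curve $B$, the unique infinitesimal subgroup of order $p$ in $B_p$ is $B_p^{\circ} = \ker F_B$. Therefore $\ker \hat{\phi} = \ker F_B$, so $\hat{\phi}$ factors through $F_B$ to yield a $K$-isomorphism $A \cong B^{(p)}$; comparing $j$-invariants gives $j_A = j_B^p \in K^p$.

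For (b) $\Rightarrow$ (a): write $j_A = b^p$ and choose any $B/K$ with $j_B = b$. Then $B$ is ordinary, since $b \notin \bar{\F}_{p}$ (otherwise $j_A = b^p$ would be as well). Because $j_{B^{(p)}} = b^p = j_A$, the curve $A$ is a $K$-twist of $B^{(p)}$ by a cocycle with values in $\mathrm{Aut}(B^{(p)}) = \{\pm 1\}$. The Verschiebung $V_B \colon B^{(p)} \to B$ is separable (as $B$ is ordinary), so $\ker V_B \subset B^{(p)}$ is an \'etale $K$-subgroup of order $p$. Because $V_B$ is a group homomorphism, $V_B \circ [-1] = [-1] \circ V_B$, so $\ker V_B$ is preserved by $[-1]$; hence it remains stable under the twisted Galois action defining $A$, producing an \'etale $K$-subgroup of order $p$ in $A_p$.

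The main technical point is the twist argument in (b) $\Rightarrow$ (a): one must verify that the canonical \'etale subgroup of $B^{(p)}$ really descends to the twist $A$. This relies on $\mathrm{Aut}(B^{(p)}) = \{\pm 1\}$, which is itself an automatic consequence of non-isotriviality via $j_A \notin \{0, 1728\}$.
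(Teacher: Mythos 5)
Your proof is correct and follows essentially the same route as the paper's: in (a) $\Rightarrow$ (b) the dual isogeny $\hat\phi$ has kernel $G^\vee$, which is connected and hence (by ordinarity of $B$) equal to $\ker F_B$, so $\hat\phi$ factors through $F_B$ and gives $A\cong B^{(p)}$; and in (b) $\Rightarrow$ (a) the canonical \'etale subgroup $\ker V_B\subset B^{(p)}_p$ is shown to descend to the twist $A$. Your handling of the descent step, via the explicit $\{\pm 1\}$-valued cocycle and the $\Aut$-stability of $\ker V_B$, spells out what the paper leaves as a one-line assertion after invoking the separable isomorphism $A\cong B^{(p)}$, but it is the same underlying argument.
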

\begin{proof}
If $E$ is a non-trivial $\acute{\text{e}}$tale subgroup scheme of $A_p$, then it is of order $p$ and $B:=A/E$
is isogenous to $A$ by the natural homomorphism $\varphi:A\longrightarrow B$.
Because the group scheme $C= \varphi(A_p)$ is connected and hence $B/C=A$ is isomorphic to $B^{(p)}$,
$\varphi$ is isomorphic to the Verschiebung. This shows
$j_A=j_{B^{(p)}}$ is a $p$th power.
Conversely, if $j_A=j_0^p$ and let $B/K$ be an elliptic curve with $j_B=j_0$,
then $j_A=j_{B^{(p)}}$ and hence $A$ and $B^{(p)}$ are isomorphic over the separable closure of $K$ \cite[III.1.4(b),A.1.2(b)]{Sil86}.
Since $B^{(p)}_p$ contains a non-trivial $\acute{\text{e}}$tale subgroup scheme, so does $A$.
\end{proof}

\begin{lemma}\label{l:isogeny}
Every non-isotrivial elliptic curve is isogenous to an elliptic curve with $j$-invariant not a $p$-th power.
\end{lemma}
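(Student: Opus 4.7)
My plan is straightforward: iterate the construction appearing in the proof of Lemma \ref{l:j} and rule out non-termination by a simple valuation-theoretic argument. If $j_A \notin K^p$ there is nothing to prove, so assume $j_A$ is a $p$-th power. Then Lemma \ref{l:j} furnishes a non-trivial \'etale subgroup scheme $E_1 \subset A_p$, and the argument recorded in the proof of that lemma identifies the quotient $A_1 := A/E_1$ (an elliptic curve over $K$ isogenous to $A$) as satisfying $A \cong A_1^{(p)}$, so that $j_A = j_{A_1}^p$.

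Next I would iterate: if $j_{A_1}$ is not a $p$-th power, stop; otherwise produce $A_2$ isogenous to $A_1$ (hence to $A$) with $j_{A_1} = j_{A_2}^p$, i.e.\ $j_A = j_{A_2}^{p^2}$. Continuing, either the procedure terminates at some finite stage $n$ with an elliptic curve $A_n/K$ isogenous to $A$ and $j_{A_n} \notin K^p$ --- which is exactly what we want --- or we obtain an infinite sequence, in which case $j_A \in \bigcap_{n \ge 0} K^{p^n}$.

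The only substantive point is to exclude the non-terminating scenario, and this is where I expect the reader to want a sentence of explanation, though it is not really an obstacle. For any $\alpha \in \bigcap_{n} K^{p^n}$ and any place $v$ of $\mathcal{C}$, writing $\alpha = \beta_n^{p^n}$ with $\beta_n \in K$ shows that $v(\alpha)$ is divisible by $p^n$ for every $n$, hence $v(\alpha) = 0$; since this holds at every place, $\alpha$ is a global unit and therefore lies in the constant field $\F_q$. Applied to $\alpha = j_A$, this would force $j_A \in \F_q$, in which case $A$ becomes $\bar{K}$-isomorphic to a constant elliptic curve over $\bar{\F}_q$ (two elliptic curves with the same $j$-invariant over an algebraically closed field are isomorphic) and is thus isotrivial, contradicting the hypothesis. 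Hence the iteration must terminate, producing the desired $B$.
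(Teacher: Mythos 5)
Your proof is correct and follows essentially the same approach as the paper: the paper simply takes the maximal \'etale subgroup $\mathcal{E}\subset A_{p^\infty}$, asserts it is finite, and quotients by it, while you unroll this as an iteration of the Verschiebung quotient from Lemma \ref{l:j}. The valuation argument you supply to force termination --- namely that $\bigcap_n K^{p^n}=\F_q$, so a $j$-invariant lying in every $K^{p^n}$ would be constant and $A$ isotrivial --- is exactly the justification the paper's one-line proof leaves implicit, and it is stated correctly.
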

\begin{proof} Let $\mathcal E$ be the maximal  $\acute{\text{e}}$tale subgroup of $A_{p^\infty}$. Then $\mathcal E$ is finite and
$(A/\mathcal E)_p$ has no non-trivial $\acute{\text{e}}$tale subgroup.
\end{proof}

\subsection{The isogeny class}\label{su:isogen}

Consider the connected-\'etale sequence
$$0\longrightarrow A_{p^n}^0\longrightarrow A_{p^n}\longrightarrow \pi_0(A_{p^n})\longrightarrow 0,$$
where $A_{p^n}^0$ and $\pi_0(A_{p^n})$ are Cartier dual to each other. Since $\pi_0(A_{p^n})$ is of $p$-rank $1$, there exists
a unique filtration
$$0\subset G_1\subset \cdots \subset G_{n-1}\subset G_{n}=\pi_0(A_{p^n})$$
with $G_i$ of order $p^i$, and by duality
the filtration
$$0\subset H_1\subset \cdots \subset H_{n-1}\subset H_{n}=A_{p^n}^0$$
with $H_i$ of order $p^i$. If $\varphi:A\longrightarrow B$
is an isogeny of degree $p^n$, then $E:=\ker \varphi\subset A_{p^n}$ with $E^0\subset A_{p^n}^0$ and $\pi_0(E)\subset \pi_0(A_{p^n})$.
By the above filtrations, $\varphi$ is decomposed into a sequence
$$
\xymatrix{A \ar[r]^-{\phi_1} & A_1 \ar[r]^-{\phi_2} &\cdots \ar[r]^-{\phi_c} & A_c \ar[r]^-{\psi_1} & B_1\ar[r]^-{\psi_2} & \cdots
\ar[r]^{\psi_e} & B_e \ar[r]^-{\psi_{e+1}}
 & B}
$$
where each $\phi_i$ is a Frobenius, $\psi_j$ a Verschiebung, all of degree $p$. Suppose both $c$ and $e$ are positive.
Then up to isomorphisms, the segment $\xymatrix{A_{c-1}\ar[r]^-{\phi_c} & A_c \ar[r]^-{\psi_1} & B_1}$ equals to the multiplication by $p$.
Hence $\varphi$ can be written as $\varphi'\circ [p]$ with $\varphi'$ of less degree than $\varphi$. By repeating this procedure, we prove the
following lemma.

\begin{lemma}\label{l:piso}
If $\varphi:A\longrightarrow B$ is an isogeny of degree $p^n$ for some $n$, then either $B=A^{(p^m)}$ or $A=B^{(p^m)}$
for some $m$.
\end{lemma}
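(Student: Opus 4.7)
The plan is to formalize the reduction already sketched in the paragraph immediately preceding the lemma, by strong induction on $n$. The base case $n = 0$ is trivial: $\varphi$ is an isomorphism, so $B \cong A = A^{(p^0)}$.

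For the inductive step, I would invoke the decomposition of $\varphi$ already constructed from the canonical filtration of $\ker\varphi$ by its connected and \'etale parts. This writes $\varphi$, up to isomorphism, as a composition of $c$ Frobenius steps $\phi_1,\dots,\phi_c$ followed by $e$ Verschiebung steps $\psi_1,\dots,\psi_e$, each of degree $p$, with $c + e = n$. If $e = 0$, then $\varphi$ is a pure Frobenius chain, and iterating $F_X \colon X \to X^{(p)}$ yields $B = A^{(p^c)}$; take $m = c$. If $c = 0$, then $\varphi$ is a pure Verschiebung chain, and iterating $V_X \colon X^{(p)} \to X$ yields $A = B^{(p^e)}$; take $m = e$.

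Otherwise $c, e \geq 1$, and the central segment $A_{c-1} \xrightarrow{\phi_c} A_c \xrightarrow{\psi_1} B_1$ is available. Since $\phi_c$ is the relative Frobenius, $A_c \cong A_{c-1}^{(p)}$; since $\psi_1$ is the Verschiebung on $A_{c-1}^{(p)}$, one gets $B_1 \cong A_{c-1}$ and $\psi_1 \circ \phi_c = V_{A_{c-1}} \circ F_{A_{c-1}} = [p]_{A_{c-1}}$. Using the functoriality identity $\phi \circ [p] = [p] \circ \phi$ for every group scheme homomorphism $\phi$, I would pull the factor $[p]_{A_{c-1}}$ past the remaining Frobenius maps $\phi_{c-1}, \dots, \phi_1$ all the way to the source, producing a factorization $\varphi = \varphi' \circ [p]_A$ with $\varphi' \colon A \to B$ of degree $p^{n-2} < p^n$. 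The inductive hypothesis applied to $\varphi'$ (whose source and target coincide with those of $\varphi$) then gives the desired conclusion for $\varphi$ itself.

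The only delicate points are the identity $V \circ F = [p]$ on each elliptic curve and the commutation of $[p]$ with Frobenius, but both are standard in the theory of relative Frobenius and Verschiebung on abelian varieties in characteristic $p$; once they are in hand the argument is bookkeeping. A minor subtlety is that the decomposition exists only up to isomorphism of the intermediate curves $A_i, B_j$, so I would be careful to state the induction in terms of the isomorphism class of the target $B$ rather than a specific model, which is precisely what the statement of the lemma asks for.
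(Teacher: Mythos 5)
Your proof is correct and takes essentially the same approach as the paper: the paper's proof is exactly the reduction via the central segment $\psi_1 \circ \phi_c = [p]$ combined with ``repeating this procedure,'' which is what your strong induction on $n$ formalizes. Your treatment of the terminal cases ($c = 0$ or $e = 0$) and the observation that the source and target of $\varphi'$ are still $A$ and $B$ after commuting $[p]$ past the Frobenius steps are exactly the bookkeeping the paper leaves implicit.
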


Suppose $A/K$ has semi-stable reduction everywhere. Then
\begin{equation}\label{e:delta}
\deg \Delta_{A^{(p^m)}/K}=p^m \deg\Delta_{A/K},
\end{equation}
while $N_{A^{(p^m)}/K}=N_{A/K}$ and the Hasse-Weil $L$-function for both curves are the same. Hence Theorem \ref{t:tan} says
\begin{equation}\label{e:apm}
\mu_{A^{(p^m)}/K}=\mu_{A/K}+(p^m-1)\cdot \frac{\deg \Delta_{A/K}}{12}.
\end{equation}

\begin{lemma}\label{l:l}
If $\varphi:B\longrightarrow A$ is an isogeny of degree $l$ with $(l,p)=1$, then
$$\deg \Delta_{B/K}=\deg \Delta_{A/K}.$$
Hence $\mu_{B/K}=\mu_{A/K}$.
\end{lemma}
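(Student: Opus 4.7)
The plan is to reduce both conclusions to Theorem \ref{t:tan} by independently establishing the isogeny invariance of the $\mu$-invariant and of the slope term $\theta$; these two invariances together force the discriminant equality, and the $\mu$-invariant equality then appears as either an input or an output, so the ``hence'' in the statement works in either logical order.

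First I would show $\mu_{A/K}=\mu_{B/K}$ via Selmer groups. Let $\hat\varphi\colon A\to B$ be the dual isogeny, so that $\hat\varphi\circ\varphi=[\ell]_B$ and $\varphi\circ\hat\varphi=[\ell]_A$. Because $(\ell,p)=1$, multiplication by $\ell$ is an automorphism of any $p$-primary torsion abelian group. Since both $\varphi$ and $\hat\varphi$ are defined over $K$, the induced maps $\Sel_{p^\infty}(B/K_\infty^{(p)})\leftrightarrows\Sel_{p^\infty}(A/K_\infty^{(p)})$ are mutually inverse isomorphisms of $\Lambda$-modules. Dualizing gives $X_{A/K}\cong X_{B/K}$ as $\Lambda$-modules, and in particular $\mu_{A/K}=\mu_{B/K}$.

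Second, I would invoke the standard isogeny invariance of the Hasse--Weil $L$-function. By Schneider's description used in the proof of Proposition \ref{p:Lcomp}, $P_1(t/q)$ is the characteristic polynomial of Frobenius on $\coh^1(\mathcal{C}_{\bar{\F}_q},\mathrm{V}_l(\mathcal{A}))$; the isogeny $\varphi$ induces an isomorphism $\mathrm{V}_l(\mathcal{A})\cong\mathrm{V}_l(\mathcal{B})$ for every prime $l$ (rationally, even for $l=\ell$). Hence $P_1^A=P_1^B$, and in particular $\theta_A=\theta_B$.

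Third, non-isotriviality is an isogeny-invariant property (since $\Tr_{K/\F_q}$ is compatible with isogenies up to finite groups), so $B$ is non-isotrivial and Theorem \ref{t:tan} applies to both curves. Combining the two formulas with the equalities from the previous steps yields
\[
\frac{\deg\Delta_{A/K}}{12}-\theta_A=\mu_{A/K}-g_{\mathcal{C}}+1=\mu_{B/K}-g_{\mathcal{C}}+1=\frac{\deg\Delta_{B/K}}{12}-\theta_B,
\]
so $\deg\Delta_{A/K}=\deg\Delta_{B/K}$. The only mild technical point is to confirm that the dual-isogeny argument respects the $\Lambda$-action, but this is immediate because $\varphi$ and $\hat\varphi$ are defined over $K$ and hence commute with $\Gal(K_\infty^{(p)}/K)$. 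I do not foresee any serious obstacle beyond this bookkeeping.
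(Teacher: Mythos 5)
Your proof is correct, but it runs in the opposite direction from the paper's. The paper proves the discriminant equality directly by a purely local computation: fix an invariant differential $\omega'$ on $A$ and set $\omega=\varphi^{*}\omega'$, write both $\deg\Delta_{B/K}$ and $\deg\Delta_{A/K}$ as $12\sum_v \ord_v(\omega/\omega_v)\deg v$ with $\omega_v,\omega_v'$ the local N\'eron differentials, and observe that $\varphi^{*}\omega_v'=\alpha_v\omega_v$ and $\hat\varphi^{*}\omega_v=\beta_v\omega_v'$ with $\alpha_v\beta_v=l\in\mathcal{O}_v^{\times}$, forcing $\alpha_v\in\mathcal{O}_v^{\times}$ and hence $\ord_v(\omega'/\omega_v')=\ord_v(\omega/\omega_v)$ for every $v$; the $\mu$-equality is then the ``hence'' via Theorem~\ref{t:tan}. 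You instead prove the $\mu$-equality first and extract the discriminant equality from it. Your Selmer-group argument is sound: since $(l,p)=1$, the isogeny restricts to an isomorphism $B_{p^n}\cong A_{p^n}$, the induced maps on $\Sel_{p^\infty}$ over $K_\infty^{(p)}$ are compatible with the local conditions, and $\hat\varphi_*\circ\varphi_*=[l]_*$ and $\varphi_*\circ\hat\varphi_*=[l]_*$ are automorphisms of the $p$-primary Selmer groups, so $\varphi_*$ is a $\Lambda$-isomorphism and $X_{A/K}\cong X_{B/K}$. The $L$-function comparison and the use of Theorem~\ref{t:tan} are also fine. As for the trade-offs: your Selmer argument is dimension-free and proves $\mu$-invariance under prime-to-$p$ isogenies for arbitrary abelian varieties, which the paper's N\'eron-differential calculation cannot; on the other hand, the paper's proof of the discriminant equality is elementary and self-contained, whereas your derivation of that equality invokes the full strength of Theorem~\ref{t:tan} (hence Corollary~\ref{c:MilUlm}, Milne's formula, and the Tate--Shafarevich group scheme) together with isogeny-invariance of the $L$-function, which is much heavier machinery for what is in the end a local statement about N\'eron models.
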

\begin{proof}
Choose an invariant differential $\omega'$ of $A/K$ and denote $\omega:=\varphi^*\omega'$.
For each place $v$, let $\omega_v$ and $\omega_v'$ respectively be local N$\acute{\text{e}}$ron differentials
of $B$ and $A$.  Then we have (see \cite[\S 1.2]{Tan95})
$$\deg \Delta_{B/K}=12\sum_v \ord_v{\frac{\omega}{\omega_v}}\cdot \deg v,$$
and
$$\deg \Delta_{A/K}=12\sum_v \ord_v{\frac{\omega'}{\omega'_v}}\cdot \deg v.$$
Let $\psi:A\longrightarrow B$ denote the dual isogeny. Write $\varphi^*\omega'_v=\alpha_v\cdot \omega_v$, $\psi^*\omega_v=\beta_v\cdot\omega'_v$, $\alpha_v,\beta_v\in\O_v$. But, since $\psi\circ\varphi=\left[l\right]$, we have $\alpha_v\cdot\beta_v=l$. Hence $\alpha_v\in\O_v^*$.
Consequently,
$$\ord_v\frac{\omega'}{\omega'_v}=\ord_v\frac{\varphi^*\omega'}{\varphi^*\omega'_v}=\ord_v\frac{\omega}{\alpha_v\cdot\omega_v}=\ord_v\frac{\omega}{\omega_v},$$
so $\deg \Delta_{B/K}=\deg \Delta_{A/K}$.
\end{proof}
We summarize the above discussion below.
\begin{proposition}\label{p:isoclass}
Among an isogeny class of elliptic curves with semi-stable reduction everywhere, those curves with $j$-invariant not a $p$-th power have minimal $\mu$-invariant.
\end{proposition}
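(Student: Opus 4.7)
The plan is to synthesize Lemmas \ref{l:j}, \ref{l:isogeny}, \ref{l:piso}, \ref{l:l} and the identity \eqref{e:apm}. By Lemma \ref{l:isogeny}, every isogeny class of non-isotrivial curves contains some elliptic curve $A$ with $j_{A}$ not a $p$-th power. Fix such an $A$ and let $B$ be any other curve in the isogeny class. I would show $\mu_{A/K}\le\mu_{B/K}$, which immediately gives the claim (and, by applying the same argument with the roles of $A$ and $B$ swapped when $j_{B}$ is also not a $p$-th power, shows all such minimizers have equal $\mu$-invariant).

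Choose any isogeny $\varphi\colon A\to B$. Its kernel is a finite flat commutative $K$-group scheme of order $n p^{a}$ with $\gcd(n,p)=1$, and splits canonically as $\ker\varphi[n]\times\ker\varphi[p^{a}]$. Setting $A':=A/\ker\varphi[p^{a}]$, this gives a factorization
\[
   A\xrightarrow{\ \psi\ }A'\xrightarrow{\ \chi\ }B,
   \qquad \deg\psi=p^{a},\quad \deg\chi=n.
\]
Apply Lemma \ref{l:piso} to $\psi$: either (i) $A'\cong A^{(p^{m})}$ for some $m\ge 0$, or (ii) $A\cong (A')^{(p^{m})}$ for some $m\ge 1$. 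In case (ii) one would have $j_{A}=j_{A'}^{p^{m}}$, contradicting the assumption on $A$ via Lemma \ref{l:j} (since this would exhibit $A$ as a Verschiebung-image). Hence case (i) holds. Lemma \ref{l:l} applied to the prime-to-$p$ isogeny $\chi$ yields $\mu_{A'/K}=\mu_{B/K}$, and since semistability is preserved by isogeny and by Frobenius twisting, formula \eqref{e:apm} applies to $A$ and gives
\[
   \mu_{B/K}
   =\mu_{A^{(p^{m})}/K}
   =\mu_{A/K}+(p^{m}-1)\cdot\frac{\deg\Delta_{A/K}}{12}
   \ge\mu_{A/K},
\]
with equality exactly when $m=0$, i.e.\ when $\psi$ is an isomorphism.

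The proposition is really a bookkeeping consequence of the earlier results, so I do not expect a serious obstacle. The only points needing care are (a) the canonical coprime decomposition $\ker\varphi=\ker\varphi[n]\times\ker\varphi[p^{a}]$ that produces the correct $p$-power/prime-to-$p$ factorization in the right order, and (b) correctly ruling out case (ii) of Lemma \ref{l:piso} from the hypothesis that $j_{A}$ is not a $p$-th power via Lemma \ref{l:j}. The semistability hypothesis enters solely to justify \eqref{e:apm} for the Frobenius twists $A^{(p^{m})}$, which inherit semistable reduction from $A$ since the Néron model of $A^{(p^{m})}$ is the Frobenius pull-back of that of $A$.
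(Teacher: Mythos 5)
Your proof is correct and follows essentially the same route the paper intends (the paper's ``proof'' is merely ``We summarize the above discussion below'', i.e.\ the combination of Lemmas \ref{l:j}, \ref{l:piso}, \ref{l:l} and \eqref{e:apm}). Your explicit coprime factorization $\ker\varphi = \ker\varphi[n]\times\ker\varphi[p^{a}]$ and the use of Lemma \ref{l:j} to rule out the Verschiebung direction are exactly the right bookkeeping. The only small imprecision is the final parenthetical: equality $\mu_{B/K}=\mu_{A/K}$ holds iff $m=0$ (since $\deg\Delta_{A/K}>0$ for a non-isotrivial curve), but $m=0$ does not force $\psi$ to be an isomorphism --- $\psi$ could be $[p^k]$ for some $k>0$, giving $A'\cong A$ without $\psi$ being invertible. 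This does not affect the validity of the argument.
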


The elliptic curve $A$ in Proposition \ref{p:Shio} has $j$-invariant not a $p$-th power.
Hence by Proposition \ref{p:isoclass},
we cannot replace $A$ by an isogenous elliptic curve
for which the $\mu$-invariant is zero.
This should be compared with Greenberg's conjecture \cite[Conjecture 1.11]{Gre99}
on $\mu$-invariants of elliptic curves over $\Q$.
To recall this conjecture, let $E$ be an elliptic curve over $\Q$.
Assume that $\Sel_{p^{\infty}}(E / \Q^{cyc})^{\vee}$ is torsion over $\Z_{p}[[\Gal(\Q^{cyc} / \Q)]]$.
Then Greenberg's conjecture claims the existence of a $\Q$-isogenous elliptic curve $E'$
such that $\Sel_{p^{\infty}}(E' / \Q^{cyc})^{\vee}$ has $\mu$-invariant zero.

\subsection{Elliptic curves with trivial $\mu$-invariant}\label{su:trivmu}

In this subsection, we produce infinitely many elliptic curves with trivial $\mu$-invariant.

Assume that $\mathrm{char}(K) \ne 2$ and $A/K$ is defined by the Legendre form
(see \cite[\S III.1.7]{Sil86})
\begin{equation}\label{e:legend}
Y^2=X(X-1)(X-\lambda),
\end{equation}
with $\lambda\in K$, so that $F_0:= \F_q(\lambda)\subset K$. The equation \eqref{e:legend} defines an elliptic curve $E_0/F_0$ such that $A/K$ is just its base change to $K$. Put $t^2=\lambda$, $F=\F_q(t)$ and let $E/F$ be the elliptic curve defined by
\begin{equation}\label{e:legsemi}
 Y^2=X(X-1)(X-t^2)
\end{equation}
Then $E/F$ is the base change of $E_0$ to $F$. Let $K'=KF$, then $A/K'$ is the base change of $E$ to $K'$.

\begin{definition}\label{d:Legendreptype}
An elliptic curve is of Legendre $p$-type, if
\begin{enumerate}
\item $char(K)\neq 2$ and $A$ has the Legendre invariant $\lambda\in K$.
\item $K/F_0$, where $F_0=\F_q(\lambda)$, is a Galois $p$-extension unramified at supersingular places of $E_{0} / F_{0}$.
\end{enumerate}
\end{definition}

\begin{theorem}\label{t:Legendretype} If $A/K$ is of Legendre $p$-type, then $\mu_{A/K}=0$.
\end{theorem}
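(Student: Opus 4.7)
The plan is to find an intermediate field where the $\mu$-invariant can be computed to vanish by a direct Euler-characteristic argument, then use the base-change results of \S\ref{sub:bc} to propagate the vanishing to $K$. Specifically, let $F := \F_{q}(t)$ with $t^{2} = \lambda$ and $K' := KF$; note that $A \times_{K} K' = E_{0} \times_{F_{0}} K' = E \times_{F} K'$, where $E \colon y^{2} = x(x-1)(x-t^{2})$ is the corresponding elliptic curve over $F$.

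I would first show $\mu_{E / F} = 0$. The Weierstrass discriminant is $16 t^{4}(1 - t^{2})^{2}$ and the $j$-invariant is $256 (t^{4} - t^{2} + 1)^{3} / (t^{4}(1 - t^{2})^{2})$. A local minimization at $t = \infty$ via $s = 1/t$, $x = X/s^{2}$, $y = Y/s^{3}$ replaces the equation by $Y^{2} = X(X-1)(X-s^{2})$, revealing an $I_{4}$ fiber there; the fibers at $t = 0, \pm 1$ are read off directly as $I_{4}, I_{2}, I_{2}$ respectively. Hence $E/F$ is semistable everywhere with $\deg(\Delta_{E/F}) = 12$, so $\deg(\mathcal{L}) = 1$. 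Combined with $g_{F} = 0$ and the non-constancy of $E$, Theorem \ref{t:tan} gives
\[
        \mu_{E / F}
    =
        \deg(\mathcal{L}) + g_{F} - 1 - \theta_{E}
    =
        -\theta_{E},
\]
which together with $\mu_{E/F} \geq 0$ (Theorem \ref{muissA}) and $\theta_{E} \geq 0$ (Proposition \ref{p:slope}) forces $\mu_{E/F} = 0$.

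Next I would propagate this vanishing to $K'$ via Theorem \ref{t:ocht}. Since $p$ is odd and $[F : F_{0}] = 2$, the fields $K$ and $F$ are linearly disjoint over $F_{0}$, so $K'/F$ is a Galois $p$-extension with $\Gal(K'/F) \cong \Gal(K/F_{0})$; a place $w$ of $F$ is unramified in $K'/F$ whenever $w|_{F_{0}}$ is unramified in $K/F_{0}$. The supersingular locus of $E/F$ is the preimage of that of $E_{0}/F_{0}$ under $F \to F_{0}$, so by the Legendre $p$-type hypothesis $K'/F$ is unramified at every supersingular place of $E/F$. As $E/F$ has no additive bad reduction, this is exactly the condition needed by the proof of Theorem \ref{t:ocht} (whose argument accommodates ramification at multiplicative places via \cite[Theorem 2(c)]{Tan10}). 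Applying Theorem \ref{t:ocht} yields $\mu_{A/K'} = \mu_{E \times_{F} K'/K'} = 0$.

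Finally, $K'/K$ is a finite Galois extension of degree at most $2$, so Lemma \ref{l:kl} concludes $\mu_{A/K} = 0$. The main obstacle is the discriminant computation for $E/F$ at $t = \infty$: the naive Weierstrass equation is not integral there, and it is precisely the local minimization showing $\delta_{\infty} = 4$ that yields the integer $\deg(\mathcal{L}) = 1$ which then pins $\theta_{E} = 0$ and hence $\mu_{E/F} = 0$.
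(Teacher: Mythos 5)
Your proposal is correct and follows essentially the same route as the paper's proof: reduce to $K'=KF$ via Lemma \ref{l:kl}, reduce further to $E/F$ via Theorem \ref{t:ocht} using the key observation that the unique additive place $\lambda=\infty$ of $E_0/F_0$ becomes a multiplicative (hence ordinary) place $t=\infty$ for $E/F$, and then verify $\deg(\Delta_{E/F})=12$ so that Theorem \ref{t:tan} forces $\mu_{E/F}=-\theta_{E}\le 0$, hence $=0$. The only superficial differences are that you carry out the minimization at $t=\infty$ explicitly rather than invoking Lemma \ref{l:ss} and formula \eqref{e:ddelta}, and you phrase the ramification comparison directly rather than by contrapositive.
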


\begin{proof} By Lemma \ref{l:kl}, it is enough to show $\mu_{A/K'}=0$.
Here $K'=KF$ and $F=\F_q(t)$ with $t^2=\lambda$.
Let $E_{0} / F_{0}$ and $E / F$ be the elliptic curves
defined by \eqref{e:legend} and \eqref{e:legsemi}, respectively.
By the condition (2), if a place of $F$ ramifies in $K'$,
then the corresponding place of $F_{0}$ is
either an ordinary or additive place for $E_{0} / F_{0}$.
The only place where $E_{0} / F_{0}$ has additive reduction is where $\lambda = \infty$
(see the proof of Lemma \ref{l:ss} below).
The corresponding place $t = \infty$ of $F$ is an ordinary place for $E/F$ (see Lemma \ref{l:ss}).
Therefore $K' / F$ is a Galois $p$-extension unramified outside ordinary places of $E / F$.
Now Theorem \ref{t:ocht} says we only need to prove $\mu_{E/F}=0$.
Lemma \ref{l:ss} below says $E/F$ has semi-stable reduction everywhere, and by simple computation (see \S\ref{sub:example}),
we find $\deg(\Delta_{E / F})=12$, and hence $\mu_{E / F} = 0$
by Theorem \ref{t:tan}.
Thus $\mu_{A/K} = 0$.
\end{proof}

\begin{remark}
	Let $A / K$ be an elliptic curve satisfying the condition (1)
	such that $\mu_{A / K} = 0$.
	The previous theorem is a type of results
	saying $\mu_{A / L} = 0$ for
	some extension $L / K$ unramified at supersingular places of $A / K$.
	But $L / K$ being ramified at a supersingular place does not always imply that $\mu_{A / L} > 0$.
	For example, let $K=\F_3(t)$ and $A$ defined by $Y^2=X(X-1)(X-t^2)$.
	There is one supersingular place which is the zero of $(t^2+1)$.
	The extension $L=K(s)$ of $K$ with $s^2=1+t^2$ is a quadratic extension ramified at $(t^2+1)$.
	However, Magma gives a trivial $\mu$-invariant for $A/L$ (see \S\ref{su:high} below).
\end{remark}

\section{$\mu = 0$ for generic elliptic curves} \label{su:generic}
In this section, we show that ``generic'' elliptic curves over $K$ with $p > 3$
in a certain precise sense have trivial $\mu$-invariant
(Theorem \ref{t:GenVan}).
This partially extends the result \cite[Lemma (5.10)]{Art74b}
that the generic fibers of general Weierstrass fibrations which are K3 have $\mu = 0$.
We do not assume that the elliptic curves are non-isotrivial or semistable.

\subsection{An elliptic curve having $\mu = 0$ with given $\deg(\Delta)$}

We first produce some (isotrivial) elliptic curves having $\mu = 0$ with given $\deg(\Delta)$.

\begin{proposition} \label{p:NonEmp}
	Assume $p > 2$.
	Let $n$ be a non-negative integer.
	Then there exist a finite constant extension $K'$ of $K$
	and an elliptic curve $E / K'$ such that $\deg(\Delta_{E / K'}) = 12 n$ and $\mu_{E / K'} = 0$.
\end{proposition}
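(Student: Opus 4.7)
Plan: I would handle $n = 0$ and $n \geq 1$ separately.

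For $n = 0$, set $K' = K$ and $E = E_0 \times_{\F_q} K$ for any ordinary elliptic curve $E_0 / \F_q$. Then trivially $\deg(\Delta_{E/K}) = 0$, and $\mu_{E/K} = 0$ by \cite[Theorem 1.8]{OT09}.

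For $n \geq 1$ the plan is to realize $E$ as the generic fiber of an \emph{ordinary} (in the sense of Illusie-Raynaud) minimal Weierstrass elliptic surface $\pi \colon \mathcal{S} \to \mathcal{C}_{\F_{q^r}}$ with a section and with $\chi(\mathcal{O}_{\mathcal{S}}) = n$. Granted such an $\mathcal{S}$, the identity $\deg(\Delta) = 12 \chi(\mathcal{O}_{\mathcal{S}})$ used earlier in this section gives $\deg(\Delta_{E/K'}) = 12n$, while Proposition \ref{p:trivmu} combined with the remark after it (ordinary implies Hodge-Witt type by \cite[III, Th\'eor\`eme (4.13)]{IR83}) forces $\mu_{E/K'} = 0$.

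Concretely, after a suitable finite constant extension $K' = K \F_{q^r}$, I would pick a line bundle $\mathcal{L}$ of degree $n$ on $\mathcal{C}_{\F_{q^r}}$ for which both $\coh^0(\mathcal{L}^{\otimes 4})$ and $\coh^0(\mathcal{L}^{\otimes 6})$ are non-zero, and work inside the parameter space $V = \coh^0(\mathcal{L}^{\otimes 4}) \oplus \coh^0(\mathcal{L}^{\otimes 6})$ of Weierstrass data. A Zariski open subset of $V$ consists of pairs $(a_4, a_6)$ with nonzero discriminant $-16(4 a_4^3 + 27 a_6^2)$ and only nodal degenerate fibers, producing a minimal $\mathcal{S}$ with $\chi(\mathcal{O}_{\mathcal{S}}) = n$; the further condition that $\mathcal{S}$ be ordinary cuts out another Zariski open subset of $V$ (e.g., as the non-vanishing locus of the Hasse-Witt map on $\coh^2(\mathcal{S}, \mathcal{O}_{\mathcal{S}})$).

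The main obstacle is to verify that this ordinary locus is \emph{non-empty} over some finite extension of $\F_q$, for every $n \geq 1$ and every base $\mathcal{C}$. My plan for this is to reduce to the rational base case $\mathcal{C} = \mathbb{P}^1$ via a sufficiently generic finite cover $\mathcal{C} \to \mathbb{P}^1$ (using that pullback of an ordinary elliptic surface along such a cover remains ordinary, which requires some care), and then to produce explicit ordinary Weierstrass surfaces over $\F_{q^r}(t)$ of each prescribed Euler characteristic. For anchor values of $n$ one can use the Legendre $p$-type examples of Theorem \ref{t:Legendretype} (most notably $Y^2 = X(X-1)(X-(t^p - t))$ and its Artin-Schreier iterates), whose Hodge-Witt property follows from Proposition \ref{p:trivmu} and whose ordinariness is verifiable by a direct Hasse-Witt computation; a generic Weierstrass deformation then fills in the remaining values of $n$, possibly after further enlarging the constant field.
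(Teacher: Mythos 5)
Your $n = 0$ case is fine and essentially coincides with the paper's. For $n \ge 1$, however, your plan takes a genuinely different route and has gaps that you flag but do not close, some of them serious. The paper's own proof is a direct construction: choose a finite constant extension $K'$ with $2n$ rational places $v_{1}, \dots, v_{2n}$; use the exact sequence $(K\bar{\F}_q)^{\times} \to \bigoplus_{v} \Z \to \Pic(\mathcal{C}_{\bar{\F}_q}) \to 0$ together with divisibility of $\Pic^{0}(\mathcal{C}_{\bar{\F}_q})$ to produce $f \in K'^{\times}$ with $v_{i}(f)$ odd for all $i$ and $v(f)$ even at every other place; and take $E$ to be the quadratic twist by $f$ of any constant ordinary elliptic curve $E_{0} / K'$. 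Then $\mu_{E / K'} = 0$ by \cite[Theorem 1.8 (1)]{OT09}, and a local computation shows $E$ has good reduction where $v(f)$ is even and additive reduction with $\ord_{v_{i}}(\Delta) = 6$ at each $v_{i}$, giving $\deg(\Delta_{E / K'}) = 12 n$. No moduli input is needed.

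Your plan, by contrast, tries to exhibit a point of the ordinary (hence Hodge--Witt, hence $\mu = 0$) locus in the Weierstrass moduli space, which is structurally awkward here because Proposition \ref{p:NonEmp} is precisely the non-emptiness input that Theorem \ref{t:GenVan} later converts into density; so the final step ``a generic Weierstrass deformation fills in the remaining values of $n$'' is essentially the statement you are trying to prove. Concretely: (i) pulling back an ordinary Weierstrass fibration from $\mathbb{P}^{1}$ to $\mathcal{C}$ along a degree-$d$ cover multiplies $\deg(\Delta)$ (hence $\chi(\mathcal{O}_{\mathcal{S}})$) by roughly $d$, so you cannot hit an arbitrary target $n$ over a high-genus $\mathcal{C}$ this way; (ii) preservation of ordinariness under pullback is not automatic and is sensitive to ramification at supersingular places (cf.\ Theorem \ref{t:ocht}, which only handles $p$-covers unramified outside ordinary places); (iii) the Legendre $p$-type anchor examples you cite have $\deg(\Delta)$ values dictated by the chosen $\lambda$, and interpolating between them requires exactly the deformation argument that is missing. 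The quadratic-twist construction in the paper avoids all of these difficulties and is the cleaner approach.
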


\begin{proof}
	Let $K'$ be a finite constant extension of $K$
	having $2 n$ places $v_{1}, \dots, v_{2 n}$ of degree $1$.
	We first show that by replacing $K'$
	by a larger finite constant extension of $K$,
	we can choose an element $f \in K'$
	such that $v_{i}(f)$ is odd for any $i$
	and $v(f)$ is even for any other place $v$ of $K'$.
	Consider the exact sequence
		\[
				 (K \bar{\F}_{q})^{\times}
			\to
				\bigoplus_{v} \Z
			\to
				\Pic(\mathcal{C}_{\bar{\F}_{q}})
			\to
				 0,
		\]
	where the middle direct sum is over all places of $K \bar{\F}_{q}$,
	the first map is the sum of the valuations and
	the second map is $1 \mapsto v$ for each summand corresponding to a place $v$.
	Since $\Pic^{0}(\mathcal{C}_{\bar{\F}_{q}})$ is
	the group of geometric points of the Jacobian of $\mathcal{C}$ and hence divisible,
	this gives an exact sequence
		\[
			 (K \bar{\F}_{q})^{\times} \to \bigoplus_{v} \Z / 2 \Z \to \Z / 2 \Z \to 0,
		\]
	where the second map is the summation map.
	Consider the element of $\bigoplus_{v} \Z / 2 \Z$
	such that the component corresponding to each $v_{i}$ is $1$
	and the other components are zero.
	This element belongs to the kernel of the summation map.
	Hence there exists an element $f \in (K \bar{\F}_{q})^{\times}$
	such that $v_{i}(f) \equiv 1 \mod 2$ for any $i$ and $v(f) \equiv 0 \mod 2$ for any other $v$.
	Replace $K'$ by a larger finite constant extension of $K$ so that $f$ is in $K'$.
	This choice of $K'$ and $f$ does the job.

	Let $E_{0} \colon y^{2} = x^{3} + a x^{2} + b x + c$ be
	any constant ordinary elliptic curve over $K'$.
	Let $E \colon f y^{2} = x^{3} + a x^{2} + b x + c$ be
	its quadratic twist by $f$.
	This $E$ has $\mu = 0$ by \cite[Theorem 1.8 (1)]{OT09}.
	We show $\deg(\Delta_{E / K'}) = 12 n$.
	Let $v$ be any place of $K'$.
	If $v(f)$ is even and hence $f = u \pi^{m}$ for some unit $u$ at $v$,
	uniformizer $\pi$ at $v$ and integer $m$,
	then $E$ is isomorphic to $u y^{2} = x^{3} + a x^{2} + b x + c$ over $K'_{v}$,
	which has good reduction at $v$.
	If $v(f)$ is odd, then similarly, $E$ is isomorphic to
	$y^{2} = x^{3} + \pi a x^{2} + \pi^{2} b x + \pi^{3} c$ over $K_{v}$
	for some uniformizer $\pi$ at $v$.
	Its discriminant is the discriminant of $E_{0}$ times $\pi^{6}$.
	Hence this equation is minimal at $v$ and $v(\Delta_{E / K'}) = 6$.
	Therefore, by the choice of $f$,
	we know that $\deg(\Delta_{E / K'}) = 6 \cdot 2 n = 12 n$.
\end{proof}

\subsection{Moduli of elliptic curves with given $\deg(\Delta)$}

To precisely define ``generic'' elliptic curves,
we need a parameter space for elliptic curves over $K$ with given $\deg(\Delta)$.
We follow \cite{Kas77}.
See also \cite{Mir81} and \cite{Sei87},
which work over positive characteristic as well.

For a perfect field extension $k / \F_{q}$ and
an abelian variety $A$ over the fraction field of $K \otimes_{\F_{q}} k$
with N\'eron model $\mathcal{A}$ over $\mathcal{C}_{k}$,
we define $\mu_{A}$ to be the dimension of the group scheme
$\mathbf{H}^{1}(\mathcal{C}_{k}, \mathcal{A})$ over $k$.
If $A$ is an elliptic curve and
the associated minimal elliptic surface $\mathcal{S}$ has smooth Picard scheme,
then this $\mu_{A}$ is equal to the dimension of the unipotent part
of the formal Brauer group of $\mathcal{S}$
by Remark \ref{sdef} and Proposition \ref{p:ShaBr}
(which are still true with $\F_{q}$ replaced by $k$ by the same proof).
When $k = \bar{\F}_{q}$, we denote $\bar{\mathcal{C}} = \mathcal{C}_{\bar{\F}_{q}}$.
In what follows, any fiber product is taken over $\bar{\F}_{q}$ when ``$\times$'' has no specified base.
For any integer $n$,
let $\Pic_{\bar{\mathcal{C}} / \bar{\F}_{q}}^{n}$ be the degree $n$ component of
the Picard scheme $\Pic_{\bar{\mathcal{C}} / \bar{\F}_{q}}$,
which parametrizes the line bundles of degree $n$.
Let $\mathcal{P}$ be a Poincar\'e sheaf on
$\bar{\mathcal{C}} \times \Pic_{\bar{\mathcal{C}} / \bar{\F}_{q}}$
(\cite[Exercise 4.3]{Kle05}, \cite[8.2/4]{BLR90})
(rigidified at some closed point of $\bar{\mathcal{C}}$).
Its restriction to $\bar{\mathcal{C}} \times \Pic_{\bar{\mathcal{C}} / \bar{\F}_{q}}^{n}$
for any $n$ is denoted by $\mathcal{P}_{n}$.
Denote the second projection
	$
			\bar{\mathcal{C}} \times \Pic_{\bar{\mathcal{C}} / \bar{\F}_{q}}^{n}
		\twoheadrightarrow
			\Pic_{\bar{\mathcal{C}} / \bar{\F}_{q}}^{n}
	$
by $\mathrm{pr}_{2}$.

If $n > 2 g_{\mathcal{C}} - 2$,
then for any line bundle $\mathscr{L}$ of degree $n$ on $\bar{\mathcal{C}}$
(that is, any element of $\Pic_{\bar{\mathcal{C}} / \bar{\F}_{q}}^{n}(\bar{\F}_{q})$),
the dimension of $\coh^{0}(\bar{\mathcal{C}}, \mathscr{L})$ is $n + 1 - g_{\mathcal{C}}$
by the Riemann-Roch theorem,
which does not depend on $\mathscr{L}$.
Hence the direct image sheaf $\mathrm{pr}_{2 \ast} \mathcal{P}_{n}$ on
$\Pic_{\bar{\mathcal{C}} / \bar{\F}_{q}}^{n}$
has a constant rank over all points of $\Pic_{\bar{\mathcal{C}} / \bar{\F}_{q}}^{n}$.
Therefore Grauert's theorem (\cite[III, Corollary 12.9]{Har77}) implies that
$\mathrm{pr}_{2 \ast} \mathcal{P}_{n}$ is a locally free sheaf
whose fiber at any point $\mathscr{L}$ of $\Pic_{\bar{\mathcal{C}} / \bar{\F}_{q}}^{n}$ is given by
$\coh^{0}(\bar{\mathcal{C}}, \mathscr{L})$.
Let
	$
			\mathbf{V}((\mathrm{pr}_{2 \ast} \mathcal{P}_{n})^{\vee})
		\twoheadrightarrow
			\Pic_{\bar{\mathcal{C}} / \bar{\F}_{q}}^{n}
	$
be the total space of $\mathrm{pr}_{2 \ast} \mathcal{P}_{n}$
(where $\vee$ denotes the dual bundle),
which is the relative $\spec$ of the symmetric algebra of
$(\mathrm{pr}_{2 \ast} \mathcal{P}_{n})^{\vee}$.
The closed points of $\mathbf{V}((\mathrm{pr}_{2 \ast} \mathcal{P}_{n})^{\vee})$ are
given by pairs $(\mathscr{L}, s)$,
where $\mathscr{L}$ is a line bundle of degree $n$ on $\bar{\mathcal{C}}$
and $s \in \coh^{0}(\bar{\mathcal{C}}, \mathscr{L})$.

Assume $p > 3$.
Let $n > (g_{\mathcal{C}} - 1) / 2$ be an integer.
Let $\mathcal{P}_{n}^{\otimes 4}$ and $\mathcal{P}_{n}^{\otimes 6}$
be the tensor powers of $\mathcal{P}_{n}$,
which are line bundles on $\Pic_{\bar{\mathcal{C}} / \bar{\F}_{q}}^{n}$.
Since $6 n \ge 4 n > 2 g_{\mathcal{C}} - 2$,
we have a vector bundle
	$
		\mathbf{V}((
			\mathrm{pr}_{2 \ast}(
				\mathcal{P}_{n}^{\otimes 4} \oplus \mathcal{P}_{n}^{\otimes 6}
			)
		)^{\vee})
	$
over $\Pic_{\bar{\mathcal{C}} / \bar{\F}_{q}}^{n}$ as above.
The closed points of
	$
		\mathbf{V}((
			\mathrm{pr}_{2 \ast}(
				\mathcal{P}_{n}^{\otimes 4} \oplus \mathcal{P}_{n}^{\otimes 6}
			)
		)^{\vee})
	$
are given by triples $(\mathscr{L}, g_{2}, g_{3})$,
where $\mathscr{L}$ is a line bundle of degree $n$ on $\bar{\mathcal{C}}$
and $g_{2} \in \coh^{0}(\bar{\mathcal{C}}, \mathscr{L}^{\otimes 4})$,
$g_{3} \in \coh^{0}(\bar{\mathcal{C}}, \mathscr{L}^{\otimes 6})$.
Following the paragraphs after \cite[Theorem 1]{Kas77},
we make the following definition:

\begin{definition}
	Define $Y(n, \bar{\mathcal{C}})$ to be the set of triples $(\mathscr{L}, g_{2}, g_{3})$ as above
	satisfying the following two conditions:
	\begin{itemize}
		\item
			$4 g_{2}^{3} - 27 g_{3}^{2} \ne 0$ as an element of
			$\coh^{0}(\bar{\mathcal{C}}, \mathscr{L}^{\otimes 12})$.
		\item
			For every closed point $v$ of $\bar{\mathcal{C}}$,
			$\min(3 \ord_{v}(g_{2}), 2 \ord_{v}(g_{3})) < 12$.
	\end{itemize}
\end{definition}

Then $Y(n, \bar{\mathcal{C}})$ is an open subset of the set of closed points of
	$
		\mathbf{V}((
			\mathrm{pr}_{2 \ast}(
				\mathcal{P}_{n}^{\otimes 4} \oplus \mathcal{P}_{n}^{\otimes 6}
			)
		)^{\vee})
	$.
Adding generic points, we obtain an open subscheme of
	$
		\mathbf{V}((
			\mathrm{pr}_{2 \ast}(
				\mathcal{P}_{n}^{\otimes 4} \oplus \mathcal{P}_{n}^{\otimes 6}
			)
		)^{\vee})
	$,
which we denote by the same symbol $Y(n, \bar{\mathcal{C}})$ by abuse of notation.
Below we only consider $Y(n, \bar{\mathcal{C}})$ as a scheme.

Consider the action of $\mathbb{G}_{m, \Pic_{\bar{\mathcal{C}} / \bar{\F}_{q}}^{n}}$
on
	$
		\mathbf{V}((
			\mathrm{pr}_{2 \ast}(
				\mathcal{P}_{n}^{\otimes 4} \oplus \mathcal{P}_{n}^{\otimes 6}
			)
		)^{\vee})
	$
minus the zero section given by
$\lambda \cdot (\mathscr{L}, g_{2}, g_{3}) = (\mathscr{L}, \lambda^{4} g_{2}, \lambda^{6} g_{3})$.
The quotient of
	$
		\mathbf{V}((
			\mathrm{pr}_{2 \ast}(
				\mathcal{P}_{n}^{\otimes 4} \oplus \mathcal{P}_{n}^{\otimes 6}
			)
		)^{\vee})
	$
minus the zero section by this action of $\mathbb{G}_{m, \Pic_{\bar{\mathcal{C}} / \bar{\F}_{q}}^{n}}$
is the (total space of) weighted projective bundle associated with the vector bundle
$\mathrm{pr}_{2 \ast}(\mathcal{P}_{n}^{\otimes 4} \oplus \mathcal{P}_{n}^{\otimes 6})$
over $\Pic_{\bar{\mathcal{C}} / \bar{\F}_{q}}^{n}$
with weight $(4, 6)$ (\cite[\S 1]{Dol82}),
which is a universal geometric quotient (\cite[\S 1.2.1]{Dol82}).
The subscheme $Y(n, \bar{\mathcal{C}})$ is stable under the action of
$\mathbb{G}_{m, \Pic_{\bar{\mathcal{C}} / \bar{\F}_{q}}^{n}}$.

\begin{definition}
	Define $X(n, \bar{\mathcal{C}})$ to be the quotient of $Y(n, \bar{\mathcal{C}})$
	by this action of $\mathbb{G}_{m, \Pic_{\bar{\mathcal{C}} / \bar{\F}_{q}}^{n}}$.
\end{definition}

Let $k$ be an algebraically closed extension field of $\bar{\F}_{q}$.
By \cite[Theorem 2]{Kas77}
(which also works in any characteristic $p > 3$, since
minimal Weierstrass equations still have only rational double points by
\cite[Chapter 9, Theorem 4.35 (a)]{Liu02} and duality%
\footnote{The exact statement can also be found in Corollary 8.4 of Conrad's notes
``Minimal models for elliptic curves'' available at
\texttt{http://math.stanford.edu/\~{}conrad/papers/minimalmodel.pdf}}),
the set of $k$-points of $X(n, \bar{\mathcal{C}})$
corresponds bijectively to the isomorphism classes of
minimal elliptic surfaces $\mathcal{S} \to \mathcal{C}_{k}$
with section and $\chi(\mathcal{S}, \mathcal{O}_{\mathcal{S}}) = n$.
It also corresponds bijectively to the isomorphism classes of
elliptic curves over the fraction field of $K \otimes_{\F_{q}} k$ with $\deg(\Delta) = 12 n$
by the paragraph after Theorem \ref{t:tan}.
For a (not necessarily closed) point $a \in X(n, \bar{\mathcal{C}})$,
we define its $\mu$-invariant $\mu_{a}$ to be
the $\mu$-invariant of the elliptic curve
corresponding to a geometric point (the spectrum of an algebraically closed field) lying over $a$.
This defines a function on the underlying set of the scheme $X(n, \bar{\mathcal{C}})$
valued in non-negative integers.

\subsection{Statement and proof}

Now the following theorem states that general points of $X(n, \bar{\mathcal{C}})$
correspond to elliptic curves with trivial $\mu$-invariant.

\begin{theorem} \label{t:GenVan}
	Let $p > 3$ and $n > (g_{\mathcal{C}} - 1) / 2$ be as above.
	Then the $\mu$-invariant function $a \mapsto \mu_{a}$ on $X(n, \bar{\mathcal{C}})$ as defined above
	is upper semicontinuous.
	The (open) locus where $\mu_{a} = 0$ is dense.
\end{theorem}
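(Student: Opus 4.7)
The plan is to compute $\mu_a$ geometrically as $s_{\mathcal{S}_a}$ via Theorem \ref{muissA} and Proposition \ref{p:ShaBr}, then prove upper semicontinuity of $a \mapsto s_{\mathcal{S}_a}$ on $X(n,\bar{\mathcal{C}})$ using Milne's formula \eqref{e:Milne} together with Grothendieck's specialization theorem for Newton polygons of $F$-crystals, and finally combine this with Proposition \ref{p:NonEmp} and the irreducibility of $X(n,\bar{\mathcal{C}})$ to obtain density.

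First, over $Y(n,\bar{\mathcal{C}})$ the tautological sections $g_2,g_3$ of the pullbacks of $\mathcal{P}_n^{\otimes 4}$ and $\mathcal{P}_n^{\otimes 6}$ define a Weierstrass fibration over $\bar{\mathcal{C}} \times Y(n,\bar{\mathcal{C}})$; since $p > 3$ its singular fibers have only rational double points, so a simultaneous canonical minimal resolution (cf.\ \cite[Chapter 9]{Liu02}) produces a smooth projective family $\pi \colon \mathcal{S}_{\mathrm{univ}} \to Y(n,\bar{\mathcal{C}})$ of minimal elliptic surfaces with section whose formation commutes with base change to geometric points. The $\mu$-invariant is $\mathbb{G}_m$-invariant under the scaling $(g_2,g_3)\mapsto(\lambda^4 g_2,\lambda^6 g_3)$, so it descends to a well-defined function on $X(n,\bar{\mathcal{C}})$.

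By Milne's formula \eqref{e:Milne},
\[
		s_{\mathcal{S}_a}
	=
		\chi(\mathcal{S}_a,\mathcal{O}_{\mathcal{S}_a}) - 1 + \dim \Pic^0_{\mathcal{S}_a/k(a),\mathrm{red}} - \theta_{\mathcal{S}_a}.
\]
The Euler characteristic is the constant $n$. The dimension $\dim \Pic^0_{\mathcal{S}_a,\mathrm{red}} = g_{\mathcal{C}} + \dim \mathrm{Tr}_{K / \F_{q}}(A_a)$ is upper semicontinuous, since the isotrivial locus (where the trace dimension jumps from $0$ to $1$) is cut out by the closed condition that the $j$-invariant be fiberwise constant on $\bar{\mathcal{C}}$. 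For $\theta_{\mathcal{S}_a}$, Proposition \ref{p:Lcomp} identifies it with $\sum_{\lambda_j<1}(1-\lambda_j)$ for the $q$-valuations $\lambda_j$ of the Frobenius eigenvalues on $H^2_{\mathrm{crys}}(\mathcal{S}_a)$; applying Grothendieck's specialization theorem to the $F$-crystal $R^2\pi_\ast\mathcal{O}_{\mathrm{crys}}$ shows that the Newton polygon can only go up under specialization, so the area $\sum_{\lambda_j<1}(1-\lambda_j)$ below the slope-$1$ line can only decrease, i.e.\ $\theta_{\mathcal{S}_a}$ is lower semicontinuous. Combining, $\mu_a = s_{\mathcal{S}_a}$ is upper semicontinuous, and in particular $\{\mu_a = 0\}$ is open in $X(n,\bar{\mathcal{C}})$.

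For density, by irreducibility of $X(n,\bar{\mathcal{C}})$ it suffices to exhibit one $\bar{\F}_q$-point with $\mu = 0$. Proposition \ref{p:NonEmp} furnishes an elliptic curve $E/K'$ over some finite constant extension $K'/K$ with $\deg(\Delta_{E/K'}) = 12n$ and $\mu_{E/K'} = 0$; its base change to $K\bar{\F}_q$ gives a $\bar{\F}_q$-point of $X(n,\bar{\mathcal{C}})$ whose $\mu$-invariant --- being $\dim\mathcal{G}_A^{(1)}$ by Theorem \ref{muissA} --- is unchanged by the constant field extension, hence still zero. The main anticipated obstacle is the careful construction of the relative minimal model $\pi$ over the non-proper positive-characteristic base $Y(n,\bar{\mathcal{C}})$ and the verification that Grothendieck's specialization theorem applies to $R^2\pi_\ast\mathcal{O}_{\mathrm{crys}}$ with slopes at each geometric point matching $\theta_{\mathcal{S}_a}$ as defined via the $L$-function in Proposition \ref{p:Lcomp}; for $p > 3$ this is standard but the compatibilities must be checked.
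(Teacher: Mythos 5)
Your plan correctly identifies Milne's formula \eqref{e:Milne} as the bridge between $\mu_a$ and slopes of $H^2$, and the direction of your semicontinuity argument (Grothendieck specialization raises the Newton polygon, lowers $\theta$, raises $\mu$) is right. The density step is also in order and matches the paper.

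However, the proof has a genuine gap at the very first step: you claim that over $Y(n,\bar{\mathcal{C}})$ a ``simultaneous canonical minimal resolution'' exists, producing a smooth projective family $\pi\colon\mathcal{S}_{\mathrm{univ}}\to Y(n,\bar{\mathcal{C}})$. This is false. Even though each fiber $\mathcal{S}_b^\ast$ has only rational double points, a \emph{simultaneous} resolution of a family of surfaces does not in general exist as a scheme over the base; this is precisely Artin's observation \cite{Art74a}, and the paper flags it explicitly (``That is why we work with the non-smooth surfaces $\mathcal{S}_b^\ast$''). The resolution exists only after an \'etale base change and as an algebraic space. Since your semicontinuity argument via the Newton polygon of $R^2\pi_\ast\mathcal{O}_{\mathrm{crys}}$ rests entirely on having such a smooth proper family, the rest of the argument collapses. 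You would either have to (a) work out that $H^2_{\mathrm{rig}}$ of the \emph{singular} Weierstrass fibers $\mathcal{S}_b^\ast$ forms an overconvergent $F$-isocrystal on $Y(n,\bar{\mathcal{C}})$ and that the exceptional-divisor difference between $H^2$ of $\mathcal{S}_b$ and $\mathcal{S}_b^\ast$ is slope-$1$ and therefore does not contribute to $\theta$, or (b) abandon the Newton-polygon route altogether. Neither is ``standard'' in the way your last paragraph suggests.

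The paper takes route (b): it works with the non-smooth Weierstrass family $\mathcal{S}^\ast \to Y(n,\bar{\mathcal{C}})$, shows (Lemmas \ref{l:PicFib}--\ref{l:RelBr}) that the relative formal Brauer group $\hat{\Br}_{\mathcal{S}^\ast/Y}$ exists as a formally smooth formal group whose fiber dimension of the unipotent part equals $\mu_b$, and then proves semicontinuity directly (Lemma \ref{l:usc}) by an elementary argument: the finite group scheme $\mathcal{G}[F^m]\cap\mathcal{G}[p^d]$ is finite over $Y$, its fibered degree bounds $m\mu_b$ from above and $m\mu_b + d h_b$ from below, and Nakayama's lemma makes the degree upper semicontinuous. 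This avoids both the non-existent resolution and the machinery of $F$-isocrystal specialization entirely. One small further remark: your worry that $\dim\Pic^0_{\mathcal{S}_a,\mathrm{red}}$ might jump on the isotrivial locus is unnecessary in the range of the theorem, since for $n>0$ the sheaf $\mathcal{L}$ has positive degree, forcing $\dim H^1(\mathcal{S}_a,\mathcal{O}) = g_{\mathcal{C}}$ identically and $\Tr_{K/\F_q}(A_a)=0$ for every $a$ (the trace is non-zero only for \emph{constant} elliptic curves), while for $n=0$ the family is constant; this is Lemma \ref{l:PicFib}.
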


Note that if $g_{\mathcal{C}} = 0$, then there is no restriction on $n$.
We will prove this theorem below.
We need some preparations.
We first define a universal family of Weierstrass models over $Y(n, \bar{\mathcal{C}})$.

Let $h \colon Y(n, \bar{\mathcal{C}}) \to \Pic_{\bar{\mathcal{C}} / \bar{\F}_{q}}^{n}$
be the composite of the inclusion
	$
			Y
		\hookrightarrow
			\mathbf{V}((
				\mathrm{pr}_{2 \ast}(
					\mathcal{P}_{n}^{\otimes 4} \oplus \mathcal{P}_{n}^{\otimes 6}
				)
			)^{\vee})
	$
and the natural projection
	$
			\mathbf{V}((
				\mathrm{pr}_{2 \ast}(
					\mathcal{P}_{n}^{\otimes 4} \oplus \mathcal{P}_{n}^{\otimes 6}
				)
			)^{\vee})
		\twoheadrightarrow
			\Pic_{\bar{\mathcal{C}} / \bar{\F}_{q}}^{n}
	$,
which is smooth.
Let
	\[
			h_{\bar{\mathcal{C}}}
		\colon
			\bar{\mathcal{C}} \times Y(n, \bar{\mathcal{C}})
		\to
			\bar{\mathcal{C}} \times \Pic_{\bar{\mathcal{C}} / \bar{\F}_{q}}^{n}
	\]
be the product of the identity on $\bar{\mathcal{C}}$ with $h$.
Let
	$
		\mathbb{P} \bigl(
			h_{\bar{\mathcal{C}}}^{\ast}(
				\mathcal{P}_{n}^{\otimes 2} \oplus \mathcal{P}_{n}^{\otimes 3} \oplus \mathcal{O}
			)^{\vee}
		\bigr)
	$
be the (total space of) projective bundle associated with the vector bundle
	$
		h_{\bar{\mathcal{C}}}^{\ast}(
			\mathcal{P}_{n}^{\otimes 2} \oplus \mathcal{P}_{n}^{\otimes 3} \oplus \mathcal{O}
		)^{\vee}
	$
on $\bar{\mathcal{C}} \times Y(n, \bar{\mathcal{C}})$,
where $\mathcal{O}$ is the structure sheaf of
$\bar{\mathcal{C}} \times \Pic_{\bar{\mathcal{C}} / \bar{\F}_{q}}^{n}$.

\begin{definition}
	Define $\mathcal{S}^{\ast}$ to be the closed subscheme of
		$
			\mathbb{P} \bigl(
				h_{\bar{\mathcal{C}}}^{\ast}(
					\mathcal{P}_{n}^{\otimes 2} \oplus \mathcal{P}_{n}^{\otimes 3} \oplus \mathcal{O}
				)^{\vee}
			\bigr)
		$
	defined by the homogeneous Weierstrass equation
	$y^{2} z = x^{3} - g_{2} x z^{2} - g_{3} z^{3}$,
	where $(\mathscr{L}, g_{2}, g_{3})$ is any point of $Y(n, \bar{\mathcal{C}})$
	(so $g_{2}$ is a global section of $\mathscr{L}^{\otimes 4}$
	and $g_{3}$ is a global section of $\mathscr{L}^{\otimes 6}$)
	and $x, y, z$ are coordinates of
	$\mathcal{P}_{n}^{\otimes 2} \oplus \mathcal{P}_{n}^{\otimes 3} \oplus \mathcal{O}$
	(or $\mathscr{L}^{\otimes 2} \oplus \mathscr{L}^{\otimes 3} \oplus \mathcal{O}$).
\end{definition}

The equations for the relative affine patches
(affine over $\bar{\mathcal{C}} \times Y(n, \bar{\mathcal{C}})$) are given by
$y^{2} = x^{3} - g_{2} x - g_{3}$ in
	$
		\mathbb{V} \bigl(
			h_{\bar{\mathcal{C}}}^{\ast}(
				\mathcal{P}_{n}^{\otimes 2} \oplus \mathcal{P}_{n}^{\otimes 3}
			)^{\vee}
		\bigr)
	$
(for the locus $z \ne 0$) and
$z = x^{3} - g_{2} x z^{2} - g_{3} z^{3}$ in
	$
		\mathbb{V} \bigl(
			h_{\bar{\mathcal{C}}}^{\ast}(
				\mathcal{P}_{n}^{\otimes -1} \oplus \mathcal{P}_{n}^{\otimes -3}
			)^{\vee}
		\bigr)
	$
(for the locus $y \ne 0$).
The composite of the inclusion
	$
			\mathcal{S}^{\ast}
		\hookrightarrow
			\mathbb{P} \bigl(
				h_{\bar{\mathcal{C}}}^{\ast}(
					\mathcal{P}_{n}^{\otimes 2} \oplus \mathcal{P}_{n}^{\otimes 3} \oplus \mathcal{O}
				)^{\vee}
			\bigr)
	$
and the natural projection
	$
			\mathbb{P} \bigl(
				h_{\bar{\mathcal{C}}}^{\ast}(
					\mathcal{P}_{n}^{\otimes 2} \oplus \mathcal{P}_{n}^{\otimes 3} \oplus \mathcal{O}
				)^{\vee}
			\bigr)
		\twoheadrightarrow
			\bar{\mathcal{C}} \times Y(n, \bar{\mathcal{C}})
	$
defines a morphism
$\mathcal{S}^{\ast} \to \bar{\mathcal{C}} \times Y(n, \bar{\mathcal{C}})$.

\begin{lemma} \label{l:UniWei} \mbox{}
	\begin{itemize}
		\item
			The morphism $\mathcal{S}^{\ast} \to \bar{\mathcal{C}} \times Y(n, \bar{\mathcal{C}})$ is
			projective and flat.
		\item
			The fibers of the morphism
			$\mathcal{S}^{\ast} \to Y(n, \bar{\mathcal{C}})$
			are normal projective surfaces with only rational double points.
		\item
			For any geometric point $b$ of $Y(n, \bar{\mathcal{C}})$,
			let $\mathcal{S}_{b}^{\ast}$ be the fiber of
			$\mathcal{S}^{\ast} \to Y(n, \bar{\mathcal{C}})$ over $b$
			and $\mathcal{S}_{b}$ the minimal resolution of singularities of $\mathcal{S}_{b}^{\ast}$.
			Then for any algebraically closed field $k$ over $\F_{q}$,
			the map $b \in Y(n, \bar{\mathcal{C}})(k) \mapsto [\mathcal{S}_{b} \to \mathcal{C}_{k}]$
			gives a bijection between $X(n, \bar{\mathcal{C}})(k) = Y(n, \bar{\mathcal{C}})(k) / k^{\times}$
			and the set of isomorphism classes of minimal elliptic surfaces over $k$
			fibered over $\mathcal{C}_{k}$ with section and
			$\chi(\mathcal{S}_{b}, \mathcal{O}_{\mathcal{S}_{b}}) = n$.
	\end{itemize}
\end{lemma}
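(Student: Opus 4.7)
The plan is to establish the three parts sequentially, leveraging Kas's theorem as extended to characteristic $p > 3$ by the references cited in the footnote preceding the lemma.

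For (1), projectivity of $\mathcal{S}^{\ast} \to \bar{\mathcal{C}} \times Y(n, \bar{\mathcal{C}})$ is immediate from its construction as a closed subscheme of a projective bundle over the base. For flatness, I would argue locally: over any open subset of $\bar{\mathcal{C}} \times Y(n, \bar{\mathcal{C}})$ where $\mathcal{P}_{n}$ is trivialized, $\mathcal{S}^{\ast}$ becomes a closed subscheme of $\mathbb{P}^{2}$ (relative to the base) cut out by a single homogeneous cubic. The defining equation is non-trivial on every fiber, so each geometric fiber is a plane cubic curve, hence of constant Hilbert polynomial. Flatness over the noetherian base then follows from \cite[III, Theorem 9.9]{Har77}.

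For (2), fix a geometric point $b = (\mathscr{L}, g_{2}, g_{3})$ of $Y(n, \bar{\mathcal{C}})$. The fiber $\mathcal{S}_{b}^{\ast}$ is the Weierstrass surface over $\mathcal{C}_{k}$ with discriminant $\Delta_{b} = 4 g_{2}^{3} - 27 g_{3}^{2}$. The condition $\Delta_{b} \ne 0$ ensures that $\mathcal{S}_{b}^{\ast} \to \mathcal{C}_{k}$ is a generically smooth elliptic fibration, so $\mathcal{S}_{b}^{\ast}$ is irreducible of dimension two. The condition $\min(3 \ord_{v}(g_{2}), 2 \ord_{v}(g_{3})) < 12$ for every closed point $v$ of $\mathcal{C}_{k}$ is precisely Tate's criterion for the Weierstrass equation to be minimal at every place, since a substitution $(x, y) \mapsto (\pi_{v}^{2} x, \pi_{v}^{3} y)$ would reduce $(g_{2}, g_{3})$ to $(\pi_{v}^{-4} g_{2}, \pi_{v}^{-6} g_{3})$, which is integral at $v$ only if both $3 \ord_{v}(g_{2}) \ge 12$ and $2 \ord_{v}(g_{3}) \ge 12$. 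For $p > 3$, \cite[Chapter 9, Theorem 4.35 (a)]{Liu02} together with Conrad's notes cited in the footnote show that a minimal Weierstrass fibration has at worst rational double point singularities. Normality then follows from Serre's criterion, since $\mathcal{S}_{b}^{\ast}$ is Cohen--Macaulay (as a Cartier divisor in a smooth ambient scheme) and regular in codimension one (the singularities being isolated).

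For (3), I would invoke Kas's theorem \cite[Theorem 2]{Kas77}, extended to characteristic $p > 3$ by the references in the footnote. Given a relatively minimal elliptic surface $\pi \colon \mathcal{S} \to \mathcal{C}_{k}$ with section $o$ and $\chi(\mathcal{S}, \mathcal{O}_{\mathcal{S}}) = n$, one recovers the Weierstrass data by taking $\mathscr{L} = o^{\ast} \Omega^{1}_{\mathcal{S}/\mathcal{C}_{k}}$ (of degree $n$, by the Riemann--Roch computation recalled immediately after Theorem \ref{t:tan}) and reading off $g_{2}, g_{3}$ from a Weierstrass equation over a trivializing open cover. The only ambiguity in this data is the rescaling $(g_{2}, g_{3}) \mapsto (\lambda^{4} g_{2}, \lambda^{6} g_{3})$, which corresponds to the change of Weierstrass coordinates $(x, y) \mapsto (\lambda^{2} x, \lambda^{3} y)$; this is exactly the $\mathbb{G}_{m}$-action defining the quotient $X(n, \bar{\mathcal{C}}) = Y(n, \bar{\mathcal{C}}) / \mathbb{G}_{m}$. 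Conversely, the minimal resolution of singularities of the Weierstrass surface $\mathcal{S}_{b}^{\ast}$ produced in (2) is a relatively minimal elliptic surface over $\mathcal{C}_{k}$ with section and the prescribed $\chi$, giving the inverse to Kas's map and hence the claimed bijection.

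The main obstacle is the rational double point claim in (2) in positive characteristic: while classical over $\mathbb{C}$, it requires careful invocation of the cited references, and it is here that the hypothesis $p > 3$ enters crucially (to avoid the pathological Weierstrass forms in characteristic $2$ and $3$ and the possibility of non-rational singularities). A secondary subtlety in (3) is ensuring that the inverse construction behaves well in families, which is why the lemma is formulated pointwise on geometric points rather than as a representability statement.
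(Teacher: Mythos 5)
Your proposal is correct and follows essentially the same route as the paper, which simply records that flatness follows from the relative affine patches and that the rest is \cite[Theorems 1 and 2]{Kas77} (valid in characteristic $p > 3$ by the references in the footnote). Your expansion of the Kas citation into Tate's minimality criterion, the Liu/Conrad rational-double-point result, Serre's criterion, and the description of the $\mathbb{G}_m$-ambiguity is accurate and matches what the citation is doing. The one place you take a genuinely different route is flatness: the paper's intended argument reads off flatness from the explicit affine equations (each patch is $R[x,y]/(f)$ with $f$ monic of degree $2$ in $y$, respectively monic of degree $3$ in $x$, so the ring is free over $R[x]$, resp.\ $R[z]$, hence flat over $R$), whereas you instead observe that over a trivializing open the fibers are plane cubics and invoke constancy of the Hilbert polynomial over the integral base via \cite[III, Theorem 9.9]{Har77}. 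Both are valid; the paper's freeness argument is more elementary and does not need integrality of the base, while yours is a clean global statement once irreducibility of $Y(n,\bar{\mathcal{C}})$ is known (which the paper only establishes later, in the proof of Theorem \ref{t:GenVan}), so it is worth being aware that your version borrows that fact earlier than the paper does.
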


\begin{proof}
	The flatness follows from the above description of the relative affine patches.
	The rest is \cite[Theorems 1 and 2]{Kas77}
	(valid in any characteristic $p > 3$ as noted before).
\end{proof}

Note that a blowup of $\mathcal{S}^{\ast}$ does not give
a family over $Y(n, \bar{\mathcal{C}})$ with fibers $\mathcal{S}_{b}$
due to exceptional divisors.
A simultaneous resolution of singularities for a family of surfaces
does not always exist as a scheme (\cite{Art74a}).
That is why we work with the non-smooth surfaces $\mathcal{S}_{b}^{\ast}$.

\begin{lemma} \label{l:PicFib}
	For any geometric point $b$ of $Y(n, \bar{\mathcal{C}})$,
	the object $\Pic_{\mathcal{S}_{b}^{\ast} / b}^{0}$ is
	an abelian variety of dimension independent of $b$.
\end{lemma}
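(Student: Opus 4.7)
First I observe that, since $\mathcal{S}_{b}^{\ast}$ has at worst rational double points by Lemma \ref{l:UniWei}, the minimal resolution $\pi_{b} \colon \mathcal{S}_{b} \to \mathcal{S}_{b}^{\ast}$ satisfies $\pi_{b \ast} \mathcal{O}_{\mathcal{S}_{b}} = \mathcal{O}_{\mathcal{S}_{b}^{\ast}}$ and $R^{i} \pi_{b \ast} \mathcal{O}_{\mathcal{S}_{b}} = 0$ for $i \geq 1$. Hence $\coh^{i}(\mathcal{S}_{b}^{\ast}, \mathcal{O}) \cong \coh^{i}(\mathcal{S}_{b}, \mathcal{O})$ for all $i$, and since the exceptional divisors of $\pi_{b}$ give linearly independent classes in $\mathrm{NS}(\mathcal{S}_{b})$, pullback induces an isomorphism $\Pic^{0}_{\mathcal{S}_{b}^{\ast}/k} \xrightarrow{\sim} \Pic^{0}_{\mathcal{S}_{b}/k}$ of group schemes. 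It therefore suffices to analyse the smooth projective minimal elliptic surface $\mathcal{S}_{b} \to \mathcal{C}_{k}$ with section and $\chi(\mathcal{O}_{\mathcal{S}_{b}}) = n$.

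Next I would compute $\dim \Pic^{0}_{\mathcal{S}_{b}/k, \mathrm{red}}$. Applying the analogue of Proposition \ref{p:TrPic} over the base $k$ gives $\dim \Pic^{0}_{\mathcal{S}_{b}/k, \mathrm{red}} = g_{\mathcal{C}} + \dim \Tr_{K_{k}/k}(A_{b})$, where $A_{b}$ denotes the generic fiber. I split into two cases. If $n \geq 1$, then $\deg(\Delta_{A_{b}}) = 12 n > 0$ forces $A_{b}$ to have a place of bad reduction; since constant elliptic curves have everywhere good reduction and reduction type is an isogeny invariant, $A_{b}$ is not $K_{k}$-isogenous to any constant elliptic curve, giving $\Tr_{K_{k}/k}(A_{b}) = 0$ and $\dim \Pic^{0}_{\mathrm{red}} = g_{\mathcal{C}}$. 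The only other case permitted by $n > (g_{\mathcal{C}} - 1)/2$ is $g_{\mathcal{C}} = 0$ with $n = 0$; then $\mathcal{C}_{k} = \mathbb{P}^{1}_{k}$ and $A_{b}$ has everywhere good reduction, hence is constant, yielding $\Tr_{K_{k}/k}(A_{b}) = A_{b}$ and $\dim \Pic^{0}_{\mathrm{red}} = 1 = g_{\mathcal{C}} + 1$. In either case the dimension does not depend on $b$.

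Finally, to upgrade this to the statement that $\Pic^{0}_{\mathcal{S}_{b}^{\ast}/b}$ itself is an abelian variety, I would compare with the $h^{1}$ formulas recalled after Theorem \ref{t:tan}: $\coh^{1}(\mathcal{S}_{b}, \mathcal{O}_{\mathcal{S}_{b}})$ has dimension $g_{\mathcal{C}}$ when $\mathcal{L}_{b}$ is non-trivial and $g_{\mathcal{C}} + 1$ when $\mathcal{L}_{b}$ is trivial. Under the hypothesis $n > (g_{\mathcal{C}} - 1)/2$, these two regimes coincide exactly with the two cases above ($\deg(\mathcal{L}_{b}) = n \geq 1$, respectively $n = 0$ with $A_{b}$ constant), so the tangent space $\coh^{1}(\mathcal{O}_{\mathcal{S}_{b}})$ at the origin of $\Pic^{0}$ has the same dimension as $\Pic^{0}_{\mathrm{red}}$. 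This forces $\Pic^{0}_{\mathcal{S}_{b}^{\ast}/b}$ to be smooth, and hence to be an abelian variety.

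The main obstacle I foresee is the reducedness of $\Pic^{0}$ in characteristic $p$, which relies on the numerical coincidence matching $h^{1}(\mathcal{O}_{\mathcal{S}_{b}})$ with $\dim \Pic^{0}_{\mathrm{red}}$; this in turn requires carefully partitioning the parameter space $Y(n, \bar{\mathcal{C}})$ according to whether $\mathcal{L}_{b}$ is trivial and cross-referencing with whether $A_{b}$ is constant. A secondary subtlety is handling isotrivial but non-constant $A_{b}$ arising when $n \geq 1$: here the trace vanishes because bad reduction obstructs any $K_{k}$-isogeny to a constant curve, rather than because $A_{b}$ itself fails to become constant after a base change.
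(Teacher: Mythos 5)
Your argument is correct, but it takes a genuinely different route from the paper's. The paper handles the case $n > 0$ entirely within $\mathcal{S}_{b}^{\ast}$: the Weierstrass section $(0:1:0)$ gives a section of $\mathcal{S}_{b}^{\ast} \to \mathcal{C}_{b}$, hence an injection $\Pic^{0}_{\mathcal{C}_{b}/b} \hookrightarrow \Pic^{0}_{\mathcal{S}_{b}^{\ast}/b}$ of proper connected group schemes (properness of the target from normality of $\mathcal{S}_{b}^{\ast}$ and \cite[Theorem~5.4]{Kle05}); since $\dim T_{0}\Pic^{0}_{\mathcal{S}_{b}^{\ast}/b} = h^{1}(\mathcal{O}_{\mathcal{S}_{b}^{\ast}}) = h^{1}(\mathcal{O}_{\mathcal{S}_{b}}) = g_{\mathcal{C}}$ equals $\dim\Pic^{0}_{\mathcal{C}_{b}/b}$, the injection is already an isomorphism, giving an abelian variety of dimension $g_{\mathcal{C}}$ in one stroke. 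The case $n=0$ is dispatched by observing the family is constant over $\mathbb{P}^{1}$. You instead transfer to the resolution $\mathcal{S}_{b}$ and invoke the analogue of Proposition~\ref{p:TrPic} over $k$ plus an isogeny-invariance argument for good reduction to compute $\dim\Pic^{0}_{\mathrm{red}}$, then match it against $h^{1}(\mathcal{O}_{\mathcal{S}_{b}})$. This works, and your care about the isotrivial-but-non-constant case and about which $n$ make $\mathcal{L}$ trivial is exactly the right thing to check, but it is heavier machinery than needed: the injection from $\Pic^{0}_{\mathcal{C}_{b}}$ supplies the dimension lower bound for free, with no trace analysis and no detour through $\mathcal{S}_{b}$.

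Two points in your write-up deserve tightening. First, the assertion that pullback gives an \emph{isomorphism} $\Pic^{0}_{\mathcal{S}_{b}^{\ast}/k} \xrightarrow{\sim} \Pic^{0}_{\mathcal{S}_{b}/k}$ of group schemes is stronger than what you justify (linear independence of exceptional classes controls $\mathrm{NS}$, not the scheme structure of $\Pic^{0}$); what you actually need and can more easily establish is only that the closed immersion $\Pic^{0}_{\mathcal{S}_{b}^{\ast}/k} \hookrightarrow \Pic^{0}_{\mathcal{S}_{b}/k}$ is bijective on $\bar{k}$-points (line bundles in $\Pic^{0}$ have degree zero, hence restrict trivially on the exceptional $(-2)$-curves and descend) and an isomorphism on tangent spaces, which already forces equality of $\dim$ and of $\dim T_{0}$ and hence transfers smoothness. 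Second, you should note explicitly that properness of $\Pic^{0}_{\mathcal{S}_{b}^{\ast}/b}$ comes from normality via Kleiman; your argument establishes smoothness, and you need properness and connectedness as well before you may conclude ``abelian variety''.
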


\begin{proof}
	If $n = 0$, then $\deg(\Delta) = 0$ and $g_{\mathcal{C}} = 0$.
	Hence $\mathcal{S}_{b} = \mathcal{S}_{b}^{\ast} \to \mathcal{C}_{b}$ is
	a smooth family of elliptic curves over a projective line,
	which is necessarily a constant family.
	Therefore $\Pic_{\mathcal{S}_{b}^{\ast} / b}^{0}$ is an elliptic curve.
	Let $n > 0$.
	The morphism $\mathcal{S}_{b}^{\ast} \to \mathcal{C}_{b}$ has a section
	corresponding to $(x : y : z) = (0 : 1 : 0)$.
	Hence the morphism
	$\Pic_{\mathcal{C}_{b} / b} \to \Pic_{\mathcal{S}_{b}^{\ast} / b}$
	is injective.
	The group scheme $\Pic_{\mathcal{S}_{b}^{\ast} / b}^{0}$ is projective
	by the (geometric) normality of $\mathcal{S}_{b}^{\ast}$ and \cite[Theorem 5.4]{Kle05}.
	Its tangent space at zero is $\coh^{1}(\mathcal{S}_{b}^{\ast}, \mathcal{O}_{\mathcal{S}_{b}^{\ast}})$,
	which is isomorphic to $\coh^{1}(\mathcal{S}_{b}, \mathcal{O}_{\mathcal{S}_{b}})$
	since $\mathcal{S}_{b}^{\ast}$ has only rational singularities
	by Lemma \ref{l:UniWei}.
	The dimension of this space is $g_{\mathcal{C}}$
	by $\deg(\mathcal{L}) = n > 0$ and the paragraph after Theorem \ref{t:tan}.
	Therefore $\Pic_{\mathcal{C}_{b} / b}^{0}$ and $\Pic_{\mathcal{S}_{b}^{\ast} / b}^{0}$
	have the same tangent space.
	Hence $\Pic_{\mathcal{C}_{b} / b}^{0} \overset{\sim}{\to} \Pic_{\mathcal{S}_{b}^{\ast} / b}^{0}$,
	which is an abelian variety of dimension $g_{\mathcal{C}}$.
\end{proof}

\begin{lemma} \label{l:BrRes}
	For any geometric point $b$ of $Y(n, \bar{\mathcal{C}})$, we have
	$\hat{\Br}_{\mathcal{S}_{b}^{\ast} / b} \overset{\sim}{\to} \hat{\Br}_{\mathcal{S}_{b} / b}$,
	which is a formal Lie group.
\end{lemma}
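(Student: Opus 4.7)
My plan is to prove the two assertions of the lemma separately: first that $\hat{\Br}_{\mathcal{S}_b/b}$ is pro-representable by a formal Lie group, then that the pullback $\pi^\ast$ is an isomorphism. The case $n = 0$ is trivial: there $\deg(\Delta) = 0$ and $g_{\mathcal{C}} = 0$ force $\mathcal{S}_b^\ast = \mathcal{S}_b$ to be a constant elliptic curve family over $\mathbb{P}^1_b$, and the assertion is standard. So I focus on $n > 0$.

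For pro-representability, the strategy is to apply \cite[(4.1)]{AM77} (recalled in Remark \ref{sdef}): it suffices to show that $\Pic_{\mathcal{S}_b/b}$ is smooth. Lemma \ref{l:PicFib} gives $\Pic^0_{\mathcal{S}_b^\ast/b} \cong \Pic^0_{\mathcal{C}_b/b}$, an abelian variety of dimension $g_{\mathcal{C}}$. The Weierstrass section $(0:1:0)$ of $\mathcal{S}_b^\ast \to \mathcal{C}_b$ lies in the affine patch $z \neq 0$ and so avoids the rational double points of $\mathcal{S}_b^\ast$; hence it lifts uniquely to a section of $\mathcal{S}_b \to \mathcal{C}_b$. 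Pulling back along this composite yields an injective morphism $\Pic^0_{\mathcal{C}_b/b} \hookrightarrow \Pic^0_{\mathcal{S}_b/b,\mathrm{red}}$. Since rational double points are rational singularities, $R\pi_\ast \mathcal{O}_{\mathcal{S}_b} = \mathcal{O}_{\mathcal{S}_b^\ast}$, and hence $\dim \coh^1(\mathcal{S}_b,\mathcal{O}_{\mathcal{S}_b}) = \dim \coh^1(\mathcal{S}_b^\ast,\mathcal{O}_{\mathcal{S}_b^\ast}) = g_{\mathcal{C}}$, the tangent dimension of $\Pic^0_{\mathcal{S}_b/b}$ at the origin. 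Combined with the injection from an abelian variety of the same dimension, this forces $\Pic^0_{\mathcal{S}_b/b} = \Pic^0_{\mathcal{S}_b/b,\mathrm{red}} \cong \Pic^0_{\mathcal{C}_b/b}$ to be smooth.

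For the isomorphism assertion, I would use the functorial description $\hat{\Br}_X(R) = \Ker(\coh^2_{fl}(X_R,\mathbb{G}_m) \to \coh^2_{fl}(X,\mathbb{G}_m))$ for $R$ an Artinian local $b$-algebra with residue field $k(b)$. The $\mathfrak{m}_R$-adic filtration on $\mathcal{O}_{X_R}^\times$ has successive subquotients isomorphic to $(\mathfrak{m}_R^i / \mathfrak{m}_R^{i+1}) \otimes_{k(b)} \mathcal{O}_X$ as sheaves on the fppf site, reducing the computation of $\hat{\Br}_X(R)$ to that of $\coh^2_{fl}(X, \mathcal{F})$ for quasi-coherent $\mathcal{O}_X$-modules $\mathcal{F}$. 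Applied to the flat base change $\pi_R \colon \mathcal{S}_{b,R} \to \mathcal{S}^\ast_{b,R}$, the rationality of the singularities of $\mathcal{S}_b^\ast$, preserved under flat base change, gives $R\pi_{R\ast}\mathcal{O}_{\mathcal{S}_{b,R}} = \mathcal{O}_{\mathcal{S}^\ast_{b,R}}$, and so the projection formula together with the Leray spectral sequence yields $\coh^2(\mathcal{S}^\ast_{b,R}, \mathcal{F}) \cong \coh^2(\mathcal{S}_{b,R}, \pi_R^\ast \mathcal{F})$ on every graded piece. Assembling these via successive extensions, $\pi^\ast$ is an isomorphism functorially in $R$, giving the required isomorphism of formal group functors.

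The main obstacle is the careful comparison of flat cohomology of $\mathbb{G}_m$ through the resolution in the second step. In characteristic $p$, the exponential sequence relating $\mathbb{G}_m$ and $\mathbb{G}_a$ is unavailable, so one must argue truncation-by-truncation through $\mathcal{O}_{X_R}^\times / (1 + \mathfrak{m}_R^i \mathcal{O}_{X_R})$, as sketched above. A subsidiary point is to verify that $R^1\pi_\ast \mathbb{G}_m$, being a constructible sheaf of $\mathbb{Z}$-modules supported on the ADE exceptional configurations (whose Picard groups are discrete lattices), contributes nothing to the formal completion. The underlying structural input — that rational singularities preserve $R\pi_\ast$ of the structure sheaf and its tensor twists after flat base change — is however classical and delivers what is needed.
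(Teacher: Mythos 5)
Your proof is correct and follows the paper's line of argument: the paper asserts ``the same argument as the previous lemma'' gives that $\Pic^0_{\mathcal{S}_b/b}$ is an abelian variety (hence $\hat{\Br}_{\mathcal{S}_b/b}$ is a formal Lie group by Artin--Mazur), and invokes \cite[(2.1)]{Art74b} for the isomorphism of formal Brauer groups across a resolution of rational double points; you reconstruct both steps. One minor slip in the first part: the section $(0:1:0)$ lies in the affine patch $y \neq 0$, not $z \neq 0$, and it avoids the rational double points because those all live in the locus $z = 1$, the Weierstrass model being smooth at infinity on every fibre.
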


\begin{proof}
	The same argument as the previous lemma shows that
	$\Pic_{\mathcal{S}_{b} / b}^{0}$
	is an abelian variety.
	Hence the formal Brauer group of $\mathcal{S}_{b}$ is a formal Lie group
	by \cite[II, Corollary (4.1)]{AM77} and the paragraphs after.
	By the same argument as \cite[(2.1)]{Art74b}, the morphism
	$\hat{\Br}_{\mathcal{S}_{b}^{\ast} / b} \to \hat{\Br}_{\mathcal{S}_{b} / b}$
	is an isomorphism.
\end{proof}

\begin{lemma} \label{l:RelPic}
	$\Pic_{\mathcal{S}^{\ast} / Y(n, \bar{\mathcal{C}})}^{0}$ is represented
	by a smooth scheme over $Y(n, \bar{\mathcal{C}})$.
	In particular, the formal completion of $\Pic_{\mathcal{S}^{\ast} / Y(n, \bar{\mathcal{C}})}$
	along the zero section is formally smooth.
\end{lemma}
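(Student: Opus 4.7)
The plan is to first establish representability of $\Pic^{0}_{\mathcal{S}^{\ast} / Y(n, \bar{\mathcal{C}})}$ by a scheme, then prove smoothness via a cohomological criterion. For representability, Lemma \ref{l:UniWei} tells us that $\mathcal{S}^{\ast} \to Y(n, \bar{\mathcal{C}})$ is projective and flat with geometrically normal (in particular geometrically integral) fibers. Grothendieck's theorem on the representability of the Picard functor (as presented, for instance, in \cite[\S 8.2, Theorem 1]{BLR90}) then yields a scheme locally of finite type over the base; the identity component $\Pic^{0}$ is carved out as the union of those components meeting the zero section.

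For smoothness, I would use translation invariance of the group scheme structure on $\Pic^{0}$ to reduce to checking smoothness at the identity section, and then invoke Grothendieck's cohomological criterion (\cite[\S 8.4, Theorem 1]{BLR90}), which guarantees smoothness at the identity provided the base change maps
\[
R^{2} \pi_{\ast} \mathcal{O}_{\mathcal{S}^{\ast}} \otimes_{\mathcal{O}_{Y(n, \bar{\mathcal{C}})}} k(b) \to \coh^{2}(\mathcal{S}_{b}^{\ast}, \mathcal{O}_{\mathcal{S}_{b}^{\ast}})
\]
are injective at every point $b$ of $Y(n, \bar{\mathcal{C}})$. The strategy is to show that the fiber cohomology dimensions $\dim \coh^{i}(\mathcal{S}_{b}^{\ast}, \mathcal{O})$ are actually constant in $b$ for all $i \le 2$, so that Grauert's theorem turns the base change maps into isomorphisms.

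The key input for the constancy is that $\mathcal{S}_{b}^{\ast}$ has only rational double points (Lemma \ref{l:UniWei}), so that $\coh^{i}(\mathcal{S}_{b}^{\ast}, \mathcal{O}) \cong \coh^{i}(\mathcal{S}_{b}, \mathcal{O})$ via the Leray spectral sequence for the minimal resolution $\mathcal{S}_{b} \to \mathcal{S}_{b}^{\ast}$. These dimensions are given by the explicit values cited in the paragraph after Theorem \ref{t:tan}, which depend only on $n$ and $g_{\mathcal{C}}$; the dichotomy between trivial and non-trivial $\mathcal{L}$ collapses in our setting since either $n > 0$ and $\mathcal{L}$ is non-trivial at every $b$, or $n = 0 = g_{\mathcal{C}}$ and $\mathcal{L}$ is trivial at every $b$. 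The formal smoothness of the completion along the zero section is then an immediate consequence of the smoothness just established.

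The main obstacle I anticipate is handling the singular total space $\mathcal{S}^{\ast}$ with care: applying the Picard smoothness criterion requires the sheaves $R^{i} \pi_{\ast} \mathcal{O}_{\mathcal{S}^{\ast}}$ to be well-behaved across the whole family, and the crucial point that saves us is precisely that rational double points are cohomologically invisible (i.e.\ $R^{j} \rho_{\ast} \mathcal{O}_{\mathcal{S}_{b}} = 0$ for $j > 0$ where $\rho \colon \mathcal{S}_{b} \to \mathcal{S}_{b}^{\ast}$ is the minimal resolution). This lets us bypass the non-existence of a scheme-theoretic simultaneous resolution of the family already alluded to in the excerpt via Artin's work, since we can do everything with $\mathcal{S}^{\ast}$ while pulling constancy of cohomology dimensions back from the fiberwise resolutions $\mathcal{S}_{b}$.
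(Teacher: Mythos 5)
Your representability step is fine and essentially matches the paper (the paper cites Kleiman's survey in place of BLR, but the underlying result is Grothendieck's). The gap is in the smoothness step, and it is not cosmetic: the criterion you attribute to \cite[\S 8.4, Theorem 1]{BLR90} --- that injectivity of the base change maps $R^{2}\pi_{\ast}\mathcal{O}\otimes k(b)\to \coh^{2}(\mathcal{S}_{b}^{\ast},\mathcal{O})$ forces smoothness of $\Pic^{0}$ along the zero section --- is false in positive characteristic. The statement you want to use requires, in addition, that the Picard scheme of each \emph{fiber} be smooth (equivalently reduced). Constancy of the fiberwise cohomology dimensions $h^{i}(\mathcal{S}_{b}^{\ast},\mathcal{O})$, even in all degrees $i\le 2$, makes the base change maps isomorphisms by Grauert's theorem, but does not rule out a non-reduced $\Pic^{0}_{\mathcal{S}_{b}^{\ast}/b}$. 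A constant family of $\mu_{2}$-Enriques surfaces in characteristic $2$ is the standard cautionary example: all $h^{i}$ are locally constant and all base change maps are isomorphisms, yet $\Pic^{0}$ of the family is $\mu_{2}\times S$, which is flat but not smooth. So the implication you invoke does not hold as stated.

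What is missing is precisely Lemma \ref{l:PicFib}: there the paper shows $\Pic^{0}_{\mathcal{S}_{b}^{\ast}/b}$ is an abelian variety of dimension $g_{\mathcal{C}}$ for every geometric point $b$, by comparing it with $\Pic^{0}_{\mathcal{C}_{b}/b}$ (the section forces the latter to embed, and the tangent-space count $\coh^{1}(\mathcal{S}_{b}^{\ast},\mathcal{O})\cong\coh^{1}(\mathcal{S}_{b},\mathcal{O})$ of dimension $g_{\mathcal{C}}$ forces equality). Once you know the fibers of $\Pic^{0}$ are abelian varieties of constant dimension, the paper concludes directly from \cite[Proposition 5.20]{Kle05}, which packages exactly the implication ``smooth projective fibers of constant dimension $\Rightarrow$ smooth $\Pic^{0}$ over the base.'' Your dimension computations (via the rational double point comparison $\coh^{i}(\mathcal{S}_{b}^{\ast},\mathcal{O})\cong\coh^{i}(\mathcal{S}_{b},\mathcal{O})$ and the formulae after Theorem \ref{t:tan}) are correct and do give that $h^{1}$ is constant, but by themselves they only bound $\dim\Pic^{0}_{\mathcal{S}_{b}^{\ast}}$ from above. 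To close the argument along your lines, you would still need to show $\dim\Pic^{0}_{\mathcal{S}_{b}^{\ast}} = h^{1}$, i.e.\ that $\Pic^{0}_{\mathcal{S}_{b}^{\ast}}$ is reduced --- and the most natural way to do that is exactly the embedding of the Jacobian of $\mathcal{C}$ used in Lemma \ref{l:PicFib}.
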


\begin{proof}
	The morphism $\mathcal{S}^{\ast} \to Y(n, \bar{\mathcal{C}})$ is projective and flat
	and its fibers are normal surfaces
	by Lemma \ref{l:UniWei}.
	Hence the sheaf $\Pic_{\mathcal{S}^{\ast} / Y(n, \bar{\mathcal{C}})}$ is represented by
	a separated scheme locally of finite type by \cite[Theorem 4.8]{Kle05}.
	The fibers of $\Pic_{\mathcal{S}^{\ast} / Y(n, \bar{\mathcal{C}})}^{0}$
	are abelian varieties of the same dimension by Lemma \ref{l:PicFib}.
	Therefore \cite[Proposition 5.20]{Kle05} implies that
	$\Pic_{\mathcal{S}^{\ast} / Y(n, \bar{\mathcal{C}})}^{0}$ is smooth over $Y(n, \bar{\mathcal{C}})$.
\end{proof}

\begin{lemma} \label{l:RelBr}
	The formal Brauer group $\hat{\Br}_{\mathcal{S}^{\ast} / Y(n, \bar{\mathcal{C}})}$
	of $\mathcal{S}^{\ast} \to Y(n, \bar{\mathcal{C}})$ exists
	as a formally smooth formal group scheme over $Y(n, \bar{\mathcal{C}})$.
	Any point of $Y(n, \mathcal{C})$ has an affine open neighborhood $\mathcal{U} = \spec R$
	where $\hat{\Br}_{\mathcal{S}^{\ast} / Y(n, \bar{\mathcal{C}})}$
	is given by a formal group law over $R$.
\end{lemma}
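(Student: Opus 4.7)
The plan is to work locally on $Y = Y(n, \bar{\mathcal{C}})$: since the claim is local on the base, I may replace $Y$ by an affine open neighborhood of a given point and assume $Y = \spec R$ with $R$ Noetherian. Write $\pi \colon \mathcal{S}^{\ast} \to Y$ for the structure morphism. I will construct $\hat{\Br}_{\mathcal{S}^{\ast}/Y}$ as the pro-representing object of a deformation functor on Artinian augmented $R$-algebras, following the strategy of Artin-Mazur \cite{AM77} adapted to the relative setting.

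The first step is to verify that $R^{i} \pi_{\ast} \mathcal{O}_{\mathcal{S}^{\ast}}$ is a locally free $\mathcal{O}_{Y}$-module of constant rank for every $i$, vanishing for $i > 2$. The morphism $\pi$ is projective and flat by Lemma \ref{l:UniWei}. On each geometric fiber $h^{0}(\mathcal{S}_{b}^{\ast}, \mathcal{O}) = 1$ since each $\mathcal{S}_{b}^{\ast}$ is a geometrically connected normal projective surface; the rational-singularities identification of Lemma \ref{l:PicFib} gives $h^{1}(\mathcal{S}_{b}^{\ast}, \mathcal{O}) = h^{1}(\mathcal{S}_{b}, \mathcal{O}) = g_{\mathcal{C}}$; and since $\chi$ is locally constant on $Y$ by flatness, $h^{2}$ is also constant. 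Grauert's theorem then gives the desired local freeness and compatibility with arbitrary base change.

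The second step is to show that the relative formal Brauer functor
$$\Phi^{2}_{\mathcal{S}^{\ast}/Y}(A) := \Ker\bigl(\coh^{2}_{fl}(\mathcal{S}^{\ast} \times_{R} A, \mathbb{G}_{m}) \to \coh^{2}_{fl}(\mathcal{S}^{\ast} \times_{R} A/\mathfrak{m}, \mathbb{G}_{m})\bigr)$$
on Artinian local augmented $R$-algebras $(A, \mathfrak{m})$ is pro-representable by a formally smooth formal $R$-scheme. For a small extension $0 \to I \to A' \to A \to 0$ (with $I \cdot \mathfrak{m} = 0$), the exponential sequence $0 \to \pi^{\ast} I \to \mathbb{G}_{m, \mathcal{S}^{\ast}_{A'}} \to \mathbb{G}_{m, \mathcal{S}^{\ast}_{A}} \to 0$ identifies the tangent space of $\Phi^{2}$ with $R^{2}\pi_{\ast}\mathcal{O}_{\mathcal{S}^{\ast}} \otimes_{R} I$ and places obstructions into $R^{3}\pi_{\ast}\mathcal{O}_{\mathcal{S}^{\ast}} \otimes I = 0$, modulo an $\coh^{1}$-contribution that is killed by the smoothness of $\Pic^{0}_{\mathcal{S}^{\ast}/Y}$ (Lemma \ref{l:RelPic}), which permits lifting relative line bundles as needed. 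Schlessinger's criterion in its relative form then gives pro-representability, and the tensor product of $\mathbb{G}_{m}$-torsors induces the group law; on each geometric fiber $b$ the result recovers the formal Brauer group of Lemma \ref{l:BrRes}.

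For the final statement, since the tangent sheaf is locally free of constant rank $d$, on a small enough affine $\spec R \subset Y$ I can choose a basis of $R^{2}\pi_{\ast}\mathcal{O}_{\mathcal{S}^{\ast}}$, and formal smoothness together with this finite-rank tangent bundle allow the inductive construction of coordinates identifying $\hat{\Br}_{\mathcal{S}^{\ast}/Y}|_{\spec R}$ with $\Spf(R[[t_{1}, \ldots, t_{d}]])$ and producing a formal group law in $R[[t_{1}, \ldots, t_{d}, t'_{1}, \ldots, t'_{d}]]$ by standard lifting along powers of the augmentation ideal. The main obstacle is the careful bookkeeping in relative Artin-Mazur with non-smooth (rational-double-point) fibers: one must verify that the $\coh^{1}$-obstructions are genuinely absorbed by the smoothness of $\Pic^{0}_{\mathcal{S}^{\ast}/Y}$ rather than by the smoothness of any ambient resolution (which does not exist globally on $Y$), and that the exponential computation of obstructions via $R^{3}\pi_{\ast}\mathcal{O}_{\mathcal{S}^{\ast}} = 0$ survives in the singular setting because Lemma \ref{l:BrRes} guarantees the same pro-representing object fiberwise.
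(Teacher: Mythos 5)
Your proposal follows essentially the same route as the paper: establish local freeness of the relevant $R^{i}\pi_{\ast}\mathcal{O}$ via constancy of fiber Hodge numbers and Grauert, use the smoothness of $\Pic^{0}_{\mathcal{S}^{\ast}/Y}$ (Lemma~\ref{l:RelPic}) as the crucial input for pro-representability and formal smoothness, identify the Lie algebra with $R^{2}\pi_{\ast}\mathcal{O}_{\mathcal{S}^{\ast}}$, and deduce a formal group law where the Lie algebra is free. The paper simply cites \cite[Corollary~(4.1)]{AM77} (for existence and formal smoothness, with cohomological flatness in dimension zero supplied by \cite[(7.8.6)]{Gro63} from flatness and geometrically reduced fibers) and \cite[II, Corollary~2.32]{Zin84} (for the formal group law), whereas your proof inlines brief sketches of these arguments; the Schlessinger/obstruction bookkeeping in your second step and the inductive coordinate construction in your last paragraph are precisely the content of those cited results, and they would need to be carried out carefully to upgrade the sketch to a full proof.
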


\begin{proof}
	The morphism $\mathcal{S}^{\ast} \to Y(n, \bar{\mathcal{C}})$ is
	cohomologically flat in dimension zero by \cite[(7.8.6)]{Gro63}
	since it is flat with geometrically reduced fibers
	by Lemma \ref{l:UniWei}.
	The same lemma says that these fibers are normal projective surfaces.
	Therefore, with Lemma \ref{l:RelPic},
	we may apply \cite[Corollary (4.1)]{AM77} and the paragraphs after
	to see that $\hat{\Br}_{\mathcal{S}^{\ast} / Y(n, \bar{\mathcal{C}})}$
	exists as a formally smooth formal group scheme over $Y(n, \bar{\mathcal{C}})$.
	By \cite[II, (2.4) and (3.2)]{AM77},
	the Lie algebra of $\hat{\Br}_{\mathcal{S}^{\ast} / Y(n, \bar{\mathcal{C}})}$
	is the second direct image of the structure sheaf of $\mathcal{S}^{\ast}$
	by the morphism $\mathcal{S}^{\ast} \to Y(n, \bar{\mathcal{C}})$.
	For any point $b \in Y(n, \bar{\mathcal{C}})$,
	the dimension of
		$
				\coh^{2}(\mathcal{S}_{b}^{\ast}, \mathcal{O}_{\mathcal{S}_{b}^{\ast}})
			\cong
				\coh^{2}(\mathcal{S}_{b}, \mathcal{O}_{\mathcal{S}_{b}})
		$
	depends only on $n$ and $g_{\mathcal{C}}$ but not on $b$
	by the paragraph after Theorem \ref{t:tan}.
	Therefore by Grauert's theorem (\cite[III, Corollary 12.9]{Har77}),
	$\mathrm{Lie}(\hat{\Br}_{\mathcal{S}^{\ast} / Y(n, \bar{\mathcal{C}})})$
	is a vector bundle on $Y(n, \mathcal{C})$.
	On each open affine $\mathcal{U} = \spec R$ of $Y(n, \mathcal{C})$
	where $\mathrm{Lie}(\hat{\Br}_{\mathcal{S}^{\ast} / Y(n, \bar{\mathcal{C}})})$ is free,
	the formal group $\hat{\Br}_{\mathcal{S}^{\ast} / Y(n, \bar{\mathcal{C}})}$
	is given by a formal group law over $R$
	by \cite[II, Corollary 2.32]{Zin84}.
\end{proof}

By pullback, the function $a \mapsto \mu_{a}$ on $X(n, \bar{\mathcal{C}})$ defines
a function $b \mapsto \mu_{b}$ on $Y(n, \bar{\mathcal{C}})$.
The value $\mu_{b}$ is equal to the dimension of the unipotent part of
the geometric fiber of $\hat{\Br}_{\mathcal{S}^{\ast} / Y(n, \bar{\mathcal{C}})}$ at $b$
by Remark \ref{sdef}, Proposition \ref{p:ShaBr} and Lemma \ref{l:BrRes}.

\begin{lemma} \label{l:usc}
	The function $b \mapsto \mu_{b}$ on $Y(n, \bar{\mathcal{C}})$ is upper semicontinuous.
\end{lemma}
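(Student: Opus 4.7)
The plan is to identify $\mu_b$ with an invariant that is upper semicontinuous in families of formal Lie groups. Any commutative formal Lie group $G$ of dimension $d$ over an algebraically closed field of characteristic $p$ fits in a canonical short exact sequence $0 \to G_{pd} \to G \to G_{unip} \to 0$, where $G_{pd}$ is the maximal $p$-divisible formal subgroup and $G_{unip}$ is unipotent. By Remark \ref{sdef}, Lemma \ref{l:BrRes}, and Proposition \ref{p:ShaBr} (whose proofs extend verbatim to any algebraically closed extension of $\bar{\F}_q$), we have $\mu_G = \dim G_{unip} = d - \dim G_{pd}$. Moreover, $G_{pd}$ coincides with the scheme-theoretic image of $[p^N] \colon G \to G$ as soon as $N$ is large enough to annihilate $G_{unip}$, and the structure theory of the Cartier module of a $d$-dimensional formal Lie group gives a bound on such $N$ depending only on $d$ (for instance, $N = d$ suffices).

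Next, working on an affine open $\mathcal{U} = \spec R$ where $\hat{\Br}_{\mathcal{S}^{\ast}/Y}|_{\mathcal{U}}$ is given by a formal group law $F$ in $d$ variables (Lemma \ref{l:RelBr}), the factorization $[p] = V \circ F$ in characteristic $p$ implies $[p^N]_F(\underline{X}) \in (\underline{X})^{p^N}$. Hence $[p^N]$ descends to a morphism of finite-type flat $R$-schemes between suitable truncations of $\spec R[[\underline{X}]]$ by high powers of the augmentation ideal. The scheme-theoretic image of this morphism at a geometric point $b$ is precisely $(\hat{\Br}_{\mathcal{S}_b})_{pd}$, of dimension $d - \mu_b$.

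Finally, since this is a homomorphism of (truncations of) smooth formal group schemes over $\spec R$, the fiber image dimension is lower semicontinuous on $Y$; this can be seen either directly by a rank analysis on Lie algebras combined with the iterates $[p^i]$, or, more conceptually, by invoking Grothendieck's upper semicontinuity of Newton polygons for the Dieudonn\'e crystal attached to $\hat{\Br}_{\mathcal{S}^{\ast}/Y}$ in the family (with unipotent parts contributing $\infty$-slopes). Therefore $b \mapsto d - \mu_b$ is lower semicontinuous, so $\mu_b$ is upper semicontinuous on $Y$. The main obstacle is to establish the uniform bound on $N$ across all fibers and the lower-semicontinuity statement in the formal-group-scheme setting; both are standard consequences of Dieudonn\'e theory but require some care to formulate cleanly in families.
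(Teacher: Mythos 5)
Your general strategy---reading off $\mu_b$ as the codimension of the ``$p$-divisible part'' of the fiber of $\hat{\Br}_{\mathcal{S}^{\ast}/Y}$ and then appealing to a semicontinuity statement in families---is conceptually reasonable and close in spirit to Artin's stratification arguments for K3 surfaces, but the proof as written has a genuine gap exactly where you flag it: neither of your two proposed ways to obtain lower semicontinuity of $b\mapsto d-\mu_b$ actually goes through as stated. The ``rank analysis on Lie algebras combined with the iterates $[p^i]$'' cannot work because $[p]$ induces the \emph{zero} map on the Lie algebra of any formal group in characteristic $p$, so Lie-level ranks of $[p^i]$ carry no information about $\mu_b$ (the relevant data live in higher-order truncations, not the tangent space). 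The Newton-polygon specialization theorem of Grothendieck applies to $F$-crystals (equivalently to $p$-divisible groups), and $\hat{\Br}_{\mathcal{S}^{\ast}/Y}$ is emphatically \emph{not} a $p$-divisible group over $Y$: the fibers where $\mu_b>0$ have a nontrivial unipotent part and the $p$-divisible parts have jumping height, so there is no Dieudonn\'e $F$-crystal over $Y$ to which one could apply the theorem, and the suggestion to assign ``$\infty$-slopes'' to the unipotent part is a convention, not a theorem. (Also a small sign error: the canonical sequence for a formal Lie group over a perfect field of characteristic $p$ puts the unipotent part as the \emph{sub}object and the $p$-divisible part as the \emph{quotient}, $0\to G_{\mathrm{uni}}\to G\to G_{\mathrm{div}}\to 0$, cf.\ Zink V \S 8; this does not affect your dimension count but the exact sequence you wrote is reversed.)

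The paper's proof sidesteps all of this by never trying to extract the $p$-divisible quotient in the family. Instead it exploits that, by Lemma \ref{l:RelBr}, the relative Frobenius kernels $\mathcal{G}[F^m]$ are \emph{finite flat} over $Y$, so that $\mathcal{G}[F^m]\cap\mathcal{G}[p^d]$ is a finite (not necessarily flat) $Y$-scheme and its fiberwise degree is automatically upper semicontinuous by Nakayama. The degree $p^{h_{b,m}}$ of the fiber at $b$ is then sandwiched, $m\mu_b\le h_{b,m}\le m\mu_b+d h_b$, via the connected-\'etale-type decomposition at $b$: the unipotent part contributes exactly $m\mu_b$ because it is killed by $p^d$, while the $p$-divisible part contributes at most $d h_b$ because the intersection with $\mathcal{G}[p^d]$ cuts it down to at most its $p^d$-torsion. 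Choosing $m=d h_b+1$ makes these inequalities force $\mu_c\le\mu_b$ on the open set where $h_{c,m}\le h_{b,m}$. This avoids both the need for a uniform annihilation exponent that is visibly functorial in $R$ and the absence of an $F$-crystal structure: everything reduces to upper semicontinuity of fiber degree of a finite morphism, which is elementary. If you want to salvage your approach, you would essentially be rediscovering this: instead of the scheme-theoretic image of a single $[p^N]$, track the degree of suitable finite Frobenius truncations, which is what $\mathcal{G}[F^m]$ encodes.
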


\begin{proof}
	Set $\mathcal{Y} = Y(n, \bar{\mathcal{C}})$,
	$\mathcal{G} = \hat{\Br}_{\mathcal{S}^{\ast} / Y(n, \bar{\mathcal{C}})}$
	and $d = \dim(\mathcal{G})$.
	For any non-negative integer $m$,
	let $\mathcal{G}[F^{m}]$ be the kernel of the $p^{m}$-th power relative Frobenius morphism
	$\mathcal{G} \to \mathcal{G}^{(p^{m})}$ over $\mathcal{Y}$.
	Then $\mathcal{G}[F^{m}]$ is finite flat over $\mathcal{Y}$ by Lemma \ref{l:RelBr}.
	Hence the intersection $\mathcal{G}[F^{m}] \cap \mathcal{G}[p^{d}]$ of
	$\mathcal{G}[F^{m}]$ and the $p^{d}$-torsion part $\mathcal{G}[p^{d}]$
	is finite over $\mathcal{Y}$.
	
	For any $b \in \mathcal{Y}$,
	let $\mathcal{G}_{b, \mathrm{uni}}$ be the unipotent part of the fiber $\mathcal{G}_{b}$
	and $\mathcal{G}_{b, \mathrm{div}}$ the $p$-divisible quotient of $\mathcal{G}_{b}$
	(\cite[V, \S 8, Theorem 5.36]{Zin84}).
	Let $h_{b}$ be the height of $\mathcal{G}_{b, \mathrm{div}}$.
	Let $h_{b, m}$ be the log base $p$ of the degree of the finite group scheme
	$\mathcal{G}_{b}[F^{m}] \cap \mathcal{G}_{b}[p^{d}]$ over $b$.
	We first show that
		\[
				m \mu_{b}
			\le
				h_{b, m}
			\le
				m \mu_{b} + d h_{b}.
		\]
	The exact sequence
		$
				0
			\to
				\mathcal{G}_{b, \mathrm{uni}}
			\to
				\mathcal{G}_{b}
			\to
				\mathcal{G}_{b, \mathrm{div}}
			\to
				0
		$
	induces an exact sequence
		\[
				0
			\to
				\mathcal{G}_{b, \mathrm{uni}}[F^{m}] \cap \mathcal{G}_{b, \mathrm{uni}}[p^{d}]
			\to
				\mathcal{G}_{b}[F^{m}] \cap \mathcal{G}_{b}[p^{d}]
			\to
				\mathcal{G}_{b, \mathrm{div}}[F^{m}] \cap \mathcal{G}_{b, \mathrm{div}}[p^{d}].
		\]
	Since $\mathcal{G}_{b, \mathrm{uni}}$ is killed by $p^{d}$,
	the first term
	$\mathcal{G}_{b, \mathrm{uni}}[F^{m}] \cap \mathcal{G}_{b, \mathrm{uni}}[p^{d}]$
	is equal to $\mathcal{G}_{b, \mathrm{uni}}[F^{m}]$,
	whose log base $p$ of the degree is $m \mu_{b}$.
	The third term
	$\mathcal{G}_{b, \mathrm{div}}[F^{m}] \cap \mathcal{G}_{b, \mathrm{div}}[p^{d}]$
	is contained in $\mathcal{G}_{b, \mathrm{div}}[p^{d}]$,
	whose log base $p$ of the degree is $d h_{b}$.
	These show the desired inequalities.
	
	Set $m = m_{b} = d h_{b} + 1$.
	Let $\mathcal{U}_{b}$ be the set of points $c \in \mathcal{Y}$
	such that $h_{c, m} \le h_{b, m}$.
	By the finiteness of $\mathcal{G}[F^{m}] \cap \mathcal{G}[p^{d}]$ over $\mathcal{Y}$
	and Nakayama's lemma,
	we know that $\mathcal{U}_{b}$ is an open subset of $\mathcal{Y}$
	and hence an open neighborhood of $b$.
	For any $c \in \mathcal{U}_{b}$,
	we have
		\[
				m \mu_{c}
			\le
				h_{c, m}
			\le
				h_{b, m}
			\le
				m \mu_{b} + d h_{b}
			=
				m (\mu_{b} + 1) - 1,
		\]
	so $\mu_{c} \le \mu_{b}$.
	This shows that $b \mapsto \mu_{b}$ is upper semicontinuous.
\end{proof}

Now we prove Theorem \ref{t:GenVan}.

\begin{proof}[Proof of Theorem \ref{t:GenVan}]
	By Lemma \ref{l:usc},
	the locus of $Y(n, \bar{\mathcal{C}})$
	where $\mu_{b} = 0$ (or equivalently, $\mu_{b} < 1$) is open.
	We show that this locus is dense.
	The scheme $\Pic_{\bar{\mathcal{C}} / \bar{\F}_{q}}^{n}$ is irreducible
	since for any choice of a closed point $P$ of $\bar{\mathcal{C}}$,
	the map $D \mapsto D + P$ of divisors defines an isomorphism
		$
				\Pic_{\bar{\mathcal{C}} / \bar{\F}_{q}}^{0}
			\overset{\sim}{\to}
				\Pic_{\bar{\mathcal{C}} / \bar{\F}_{q}}^{n}
		$
	from the Jacobian variety $\Pic_{\bar{\mathcal{C}} / \bar{\F}_{q}}^{0}$ of $\bar{\mathcal{C}}$.
	Hence the vector bundle
		$
			\mathbf{V}((
				\mathrm{pr}_{2 \ast}(
					\mathcal{P}_{n}^{\otimes 4} \oplus \mathcal{P}_{n}^{\otimes 6}
				)
			)^{\vee})
		$
	over it is also irreducible.
	Hence so is its non-empty open subscheme $Y(n, \bar{\mathcal{C}})$.
	The locus where $\mu = 0$ on $X(n, \bar{\mathcal{C}})$ or $Y(n, \bar{\mathcal{C}})$ is non-empty
	by Proposition \ref{p:NonEmp}.
	A non-empty open subset of an irreducible topological space is dense.
	Therefore the locus of $Y(n, \bar{\mathcal{C}})$ with $\mu_{b} = 0$ is dense.
	
	As we saw before, $X(n, \bar{\mathcal{C}})$ is
	a universal geometric quotient of $Y(n, \bar{\mathcal{C}})$.
	In particular, $X(n, \bar{\mathcal{C}})$ has a quotient topology of $Y(n, \bar{\mathcal{C}})$.
	Hence the function $a \mapsto \mu_{a}$ on $X(n, \bar{\mathcal{C}})$ is
	also upper semicontinuous,
	and the locus of $X(n, \bar{\mathcal{C}})$ with $\mu_{a} = 0$ is dense.
\end{proof}

If the condition $n > (g_{\mathcal{C}} - 1) / 2$ is not satisfied,
the scheme structure on the set $X(n, \bar{\mathcal{C}})$ is much more complicated;
see \cite{Sei87}.
In particular, it is not necessarily irreducible.
It is not clear if the theorem extends to this case.

\begin{remark}
	The variety $X(n, \bar{\mathcal{C}})$ over $\bar{\F}_{q}$ can be naturally defined
	over a finite extension of $\F_{q}$ where $\mathcal{C}$ has a rational point
	(so that a Poincar\'e sheaf is well-defined).
	Assume that $\mathcal{C}$ has a rational point
	and denote the resulting variety over $\F_{q}$ by $X(n, \mathcal{C})$.
	The residue fields at closed points of $X(n, \mathcal{C})$ are finite.
	Therefore Theorem \ref{t:GenVan} says that
	there are ``many'' elliptic curves over finite constant extensions of $K$
	with given $\deg(\Delta)$ and $\mu = 0$,
	where this $\mu$ means the ``usual'' $\mu$-invariant
	as defined in Definition \ref{d:mu}.
	More specifically, the $\mu=0$ locus
	$X(n, \mathcal{C})_{\mu = 0}$ is a dense open subvariety of $X(n, \mathcal{C})$.
	Hence the set of closed point of $X(n, \mathcal{C})_{\mu = 0}$
	with the induced Zariski topology is dense open in
	the set of closed points of $X(n, \mathcal{C})$.
\end{remark}

\section{Computations}\label{s:verify}

In this section, we give a certain number of examples where we are able to calculate the $\mu$-invariant.

\subsection{Legendre curves}
Assume that $\mathrm{char}(K)\not=2$ and $A$ is defined by the Legendre form,
\begin{equation}\label{e:legendre}
y^2=x(x-1)(x-f).
\end{equation}
For each non-zero $g\in K$, let $(g)_0$ and $(g)_\infty$ denote the divisors of zeros and poles of $g$.
Write
$$N=\sum_v n_v\cdot v,$$
and
$$\Delta=\sum_v\delta_v\cdot v.$$

For a place $v$, let $\F_v$ denote the residue field and choose a local parameter $\pi_v$.
Suppose $v$ does not divide $(f)_\infty$ and let $\bar f\in\F_v$ denote the residue class of $f$ at $v$.
Then the equation
\begin{equation}\label{e:bar}
y^2=x(x-1)(x-\bar f)
\end{equation}
defines a curve over $\F_v$ having at worst a node. Hence it actually defines $\bar A$, the reduction of $A$ at $v$,
so \eqref{e:legendre} is a minimal equation.
Furthermore, if $v$ does not divide $(f)_0+(f-1)_0$, then $\bar A$ is an elliptic curve, in this case, we have
\begin{equation}\label{e:nd0}
n_v=0,\;\;\text{and}\;\;\delta_v=0.
\end{equation}
In the case where $v$ divides $(f)_0$, the equation \eqref{e:bar} reads $y^2=x^2(x-1)$. Put $y=\xi\cdot x$, then
for $(x,y)\not=(0,0)$, the singularity of $\bar A$, we have $x=\xi^2+1$, $y=\xi(\xi^2+1)$. Thus, $A$ has split multiplicative reduction if
$-1$ is a square in $\F_v$; non-split multiplicative reduction, if $-1$ is not a square. Let $m_v$ be the coefficient of $v$ in $(f)_0$
so that $f=\pi^{m_v}u$, for some $u\in\O_v$. Then the discriminant of \eqref{e:legendre} equals
$$(\pi_v^{m_v}u(\pi_v^{m_v}u-1))^2=\pi^{2m_v}u',\;\;u'\in\O_v^*.$$
Thus, in this case, we have
\begin{equation}\label{e:nd1}
n_v=1,\;\;\text{and}\;\;\delta_v=2m_v.
\end{equation}

If $v$ divides $(f-1)_0$, the equation \eqref{e:bar} becomes $y^2=x(x-1)^2$. By putting $y=\xi\cdot (x-1)$
and by an argument similar to the above, we deduce that $A$ has split multiplicative reduction at $v$ and if $m_v$ is the coefficient of $v$ in
$(f-1)_0$, then also
\begin{equation}\label{e:nd2}
n_v=1,\;\;\text{and}\;\;\delta_v=2m_v.
\end{equation}

Call a divisor $C=\sum c_v\cdot v$ even, if all $c_v$ are even integers.

\begin{lemma}\label{l:ss}
The elliptic curve $A$ has semi-stable reduction at all places, if and only if $(f)_\infty$ is even.
\end{lemma}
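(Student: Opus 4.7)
My plan is to use the preceding analysis in the subsection to reduce everything to a pole-by-pole check, then do a direct Weierstrass computation.

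By the formulas \eqref{e:nd0}, \eqref{e:nd1} and \eqref{e:nd2}, at any place $v$ not in the support of $(f)_\infty$, the Legendre equation \eqref{e:legendre} is already minimal and the reduction is either good ($v \nmid (f)_0+(f-1)_0$) or multiplicative (otherwise). In both cases the reduction is semistable and independent of the parity of any local invariant. Consequently, semistability at all places of $K$ reduces to checking what happens at each $v \mid (f)_\infty$. So the lemma is equivalent to the statement: for $v \in \mathrm{supp}(f)_\infty$ with $-m := v(f) < 0$, the curve $A$ has semistable (equivalently, multiplicative) reduction at $v$ if and only if $m$ is even.

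For each such $v$, write $f = u\pi_v^{-m}$ with $u \in \O_v^\times$. If $m = 2k$ is even, the substitution $x = X/\pi_v^{2k}$, $y = Y/\pi_v^{3k}$ (which is a genuine Weierstrass transformation since both exponents are integers) turns \eqref{e:legendre} into the integral equation
\[
    Y^2 = X(X - \pi_v^{2k})(X - u),
\]
whose reduction modulo $\pi_v$ is $Y^2 = X^2(X - \bar u)$ with $\bar u \ne 0$: a node, hence multiplicative reduction.

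If $m = 2k+1$ is odd, the substitution $x = X/\pi_v^{2k+2}$, $y = Y/\pi_v^{3k+3}$ yields the integral equation
\[
    Y^2 = X(X - \pi_v^{2k+2})(X - u\pi_v),
\]
whose reduction modulo $\pi_v$ is $Y^2 = X^3$: a cusp. The main (only) obstacle is to verify that this model is \emph{minimal} at $v$, since otherwise a further change of variables could conceivably recover semistability. To do this I expand the right-hand side as $X^3 - (\pi_v^{2k+2} + u\pi_v)X^2 + u\pi_v^{2k+3}X$, compute
\[
    c_4 = 16\bigl[(\pi_v^{2k+2} + u\pi_v)^2 - 3u\pi_v^{2k+3}\bigr],
\]
and observe that the $u^2\pi_v^2$ term dominates, so $v(c_4) = 2 < 4$. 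By Tate's minimality criterion ($v(c_4) < 4$ or $v(\Delta) < 12$ suffices), the equation is minimal. Hence the reduction type is truly additive, and $A$ fails to be semistable at $v$.

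Combining the two cases, $A$ is semistable at every $v \mid (f)_\infty$ if and only if the coefficient of each such $v$ in $(f)_\infty$ is even, which is exactly the condition that $(f)_\infty$ be an even divisor.
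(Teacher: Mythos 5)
Your proof is correct and follows essentially the same route as the paper's: reduce to poles of $f$ via \eqref{e:nd0}--\eqref{e:nd2}, apply the same coordinate scaling $x\mapsto x\pi_v^{-2e}$, $y\mapsto y\pi_v^{-3e}$, and read off nodal versus cuspidal reduction. The one place you add detail is the explicit computation $v(c_4)=2<4$ to certify minimality in the odd-pole case, where the paper simply cites Silverman; this is a welcome concrete check but not a different argument.
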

\begin{proof}
Suppose $m_v>0$ is the coefficient of a place $v$ in $(f)_\infty$ so that $f=\pi^{-m_v}u$, for some $u\in\O_v^*$.
If $m_v=2e$ is even, then by replacing $(x,y)$ with $(x\pi_v^{-2e},y\pi_v^{-3e})$, the equation \eqref{e:legendre} becomes
$$y^2=x(x-\pi_v^{2e})(x-u).$$
This turns out to be a minimal equation with multiplicative reduction.
In this case,
\begin{equation}\label{e:nd3}
n_v=1,\;\;\text{and}\;\;\delta_v=2m_v.
\end{equation}
On the other hand, if $m_v=2e-1$ is odd, then by replacing $(x,y)$
with $(x\pi_v^{-2e},y\pi_v^{-3e})$, the equation \eqref{e:legendre} becomes
$$y^2=x(x-\pi_v^{2e})(x-u\pi_v),$$
which is also a minimal equation (see \cite[\S VI.1]{Sil86}) with additive reduction.
\end{proof}

If $(f)_\infty$ is even, then the equalities \eqref{e:nd0}, \eqref{e:nd1}, \eqref{e:nd2}, \eqref{e:nd3} together implies
\begin{equation}\label{e:ddelta}
\deg \Delta=6\deg f.
\end{equation}

\subsection{A simple example}\label{sub:example}
Let $K=\F_q(t)$. Here we demonstrate the case of lower degree $f$\footnote{Here the degree means that of $(f)_0$, or $(f)_\infty$.}.
Take $f(t)=\frac{g(t)}{t^2}$, $g(t)\in \F_q(t)$, $ t\nmid g(t)$, $\deg g(t)\leq 2$, so $\deg f=2$ and hence
$$\deg \Delta=12.$$
Therefore $\mu_{A / K} = 0$
by Theorem \ref{t:tan}.
The associated surface is rational
(hence has trivial Brauer group over $\bar{\F}_{q}$)
by the paragraph after Theorem \ref{t:tan}.

\subsection{Computation using Magma}\label{su:high}

In this subsection, we examine the $\mu$-invariant of an elliptic curve
over $K$ other than a rational function field using Magma \cite{BCP97}.
We limit ourselves to an elliptic curve over a quadratic extension of a rational
function field since Magma cannot treat curves over cubic extensions or
higher at the time we wrote the paper.

Let $\mathcal{C}/\F_7$ be the supersingular elliptic curve with Weierstrass equation $Y^2=X^3-5X$. We denote by $K$ its function field. It is a quadratic extension of the rational function field $\F_7(x)$. Let $E/\F_7(x)$ be the elliptic curve given by the Legendre form $Y^2=X(X-1)(X-1/x^2)$. Then, the $L$-function of $E/K$ can be computed as the product $L_{E}(s).L_{tw(E)}(s)$, where $tw(E)/\F_7(x)$ is the quadratic twist of $E$ by the element $x^3+5x$ of $\F_7(x)$. The integers $\deg(\Delta)$ and $\theta$ of $E/K$ are computed as follows: First to compute $\theta$ we just need to know $P(T)$ such that $P(7^{-s})=L_{E}(s)$. This information is obtained by the following Magma command:
\begin{enumerate}
\item[] \verb#F<x> := FunctionField(GF(7));#
\item[] \verb#f := (1/x^2);#
\item[] \verb#E := EllipticCurve([0,-1-f,0,f,0]);#
\item[] \verb#Etw := QuadraticTwist(E,x^3-5*x);#
\item[] \verb#LFunction(Etw);#
\item[] \verb#LFunction(E);#
\end{enumerate}
We obtain
\begin{enumerate}
\item[] \verb#-117649*T^6 + 1715*T^4 - 35*T^2 + 1#
\item[] \verb#1#
\end{enumerate}
By hand computation, we know that $7^{2} P(7^{-1} T)$ is $7$-primitive,
so $\theta = 2$.

Next, to implement the curve $E/K$, we use the Magma command:
\begin{enumerate}
\item[] \verb#S<z> := PolynomialRing(F);#
\item[] \verb#K<t> := ext<F |z^2-x^3-5*x>;#
\item[] \verb#EK:=BaseChange(E,K);#
\end{enumerate}
The integer $\deg(\Delta_{E/K})$ is obtained by the following command:
\begin{enumerate}
\item[] \verb#SK:=BadPlaces(EK);#
\item[] \verb#&+[LocalInformation(EK,v)[2]*Degree(v):v in SK];#
\end{enumerate}
We obtain $\deg(\Delta_{E/K})=24$ and conclude that the $\mu$-invariant of $E/K$ is zero
by Theorem \ref{t:tan}.
The associated surface has Kodaira dimension $1$
by the paragraph after Theorem \ref{t:tan}.

\subsection{An example of non-semistable reduction}\label{su:nonss}
We consider the curve $y^2=x(x-1)(x-f(t))$
with $f:= (1+t+t^2)/(1+t)\in\F_3[t]$ on Magma. The output is the following
\vskip5pt
\noindent
\begin{enumerate}
\item[(a)] The bad places are $( t + 1)_0$, $(1/t)_0$, those are additive places, and $(t + 2)_0$, $(t)_0$, multiplicative places.
\item[(b)] The $j$-invariant is
$$j=\frac{t^{12} + t^9 + 2t^6 + 2t^3 + 1}{t^{10} + t^9 + 2t^8 + t^7 + 2t^6 + t^5 + t^4}=\frac{t^{12} + t^9 + 2t^6 + 2t^3 + 1}{(t+1)^2(t-1)^4t^4}.$$
One can compute by
hand that
$$\Delta=8(t+1)_0+8(1/t)_0+4(t)_0+4(t+2)_0,$$
so $\deg \Delta=24$. The equation \eqref{e:ddelta} is not satisfied here. Note that $j=c_4^3/\Delta$, but in this case,
$\Delta$ is not the exact ``denominator''
of $j$.
\item[(c)] $P(T)=-9T^2+1$, so $\theta=0$.
\end{enumerate}

Therefore $\mu = 1$.
The associated surface is a K3 surface,
and the result $\mu = 1$ shows that it is supersingular.

\subsection{A supersingular isotrivial curve} \label{su:sic}

We give an example of an isotrivial non-constant elliptic curve
with everywhere good supersingular reduction and $\mu = 0$
over a function field with non-invertible Hasse-Witt matrix.
As any finite extension of such a function field has non-invertible Hasse-Witt matrix,
this gives a counterexample to the ``only if'' direction of \cite[Theorem 1.8 (2)]{OT09}.

Assume $p \ne 2$.
Let $\mathcal{C}' \to \mathcal{C}$ be a degree $2$ isogeny between supersingular elliptic curves over $\F_{q}$.
Let $K' / K$ be the corresponding extension of function fields.
Let $A_{0} / K$ be a constant supersingular elliptic curve.
Let $A / K$ be the quadratic twist of $A_{0}$ by $K' / K$,
which has good reduction everywhere since $K' / K$ is unramified everywhere.
The elliptic surfaces over $\F_{q}$ corresponding to $A_{0} / K$ and $A_{0} / K'$
are both supersingular abelian surfaces.
Therefore $\mu_{A_{0} / K} = \mu_{A_{0} / K'} = 1$
by Proposition \ref{p:aborK3}.
As $p \nmid [K' : K] = 2$,
the $\Lambda$-module $\Sel_{p^{\infty}}(A_{0} / K_{\infty}'^{(p)})$ is isomorphic to
$\Sel_{p^{\infty}}(A / K_{\infty}^{(p)}) \oplus \Sel_{p^{\infty}}(A_{0} / K_{\infty}^{(p)})$.
Therefore $\mu_{A / K} = \mu_{A_{0} / K'} - \mu_{A_{0} / K} = 0$.
This $A / K$ thus gives a desired example.

One checks that the elliptic surface corresponding to $A / K$ is a hyperelliptic surface.
As such, we have $\mu_{A / K} = 0$ also by Proposition \ref{p:aborK3}.
More generally, for any hyperelliptic surface $\mathcal{S}$
obtained as a quotient of the product of two supersingular elliptic curves (\cite[Theorem 4]{BM77}),
the generic fiber $A / K$ of its Albanese morphism $\mathcal{S} \to \mathcal{C}$ gives a desired example.
This includes the case of $p = 2$.

What is true about the ``only if'' direction of \cite[Theorem 1.8 (2)]{OT09}
and what the proof of this statement in \cite{OT09} actually says is that
for a \emph{constant} non-zero supersingular abelian variety $A / K$,
if $\mu_{A / K} = 0$, then the Hasse-Witt matrix of $K$ is invertible.

\subsection{Large rank curves of Ulmer and Shioda}
We compute the $\mu$-invariants of the elliptic curves with large ranks
considered by Ulmer \cite[Theorem 1.5]{Ulm02} and Shioda \cite[Remark 10]{Shi86}.
This is an application of the method of proof of Proposition \ref{p:KodOne}.

Ulmer's elliptic curve is defined by the equation
$y^{2} + x y = x^{3} - t^{d}$ over $K = \F_{p}(t)$,
where $p$ is an arbitrary prime, $d = p^{n} + 1$ and $n$ a positive integer.
As explained in \cite[\S 10.2]{Ulm02},
its geometric Mordell-Weil rank $\bar{r}$ attains the geometric bound,
i.e., $\bar{r} = a$ ($= \deg(P_{1}(t))$; see \eqref{e:OS}).
This implies $\theta = 0$ by the proof of Proposition \ref{p:KodOne}.
Therefore $\mu = \deg(\Delta) / 12 - 1$ by Theorem \ref{t:tan}.
The calculations in \cite[\S 2]{Ulm02} show that
$\deg(\Delta) / 12 = \lceil d / 6 \rceil$ (the least integer $\ge d / 6$).
Hence $\mu = \lceil d / 6 \rceil - 1$.

On the other hand, Shioda's elliptic curve is defined by the equation
$y^{2} = x^{3} + x + t^{d}$ over $K = \F_{p}(t)$,
where $p \equiv -1 \mod 4$, $d = (p^{\nu} + 1) / 2$
and $\nu$ is a positive odd integer.
Let $\mathcal{S} / \F_{p}$ be the elliptic surface corresponding to this curve.
Then its geometric Lefschetz number
	$
			\lambda(\mathcal{S}_{\bar{\F}_{p}})
		=
			\deg P_{\mathcal{S}, 2} - \rho_{\mathcal{S}_{\bar{\F}_{p}}}
	$
is zero as shown in \cite[Remark 10]{Shi86}.
Therefore $\theta = 0$ again by the proof of Proposition \ref{p:KodOne}.
Therefore $\mu = \deg(\Delta) / 12 - 1$ by Theorem \ref{t:tan}.
A calculation similar to \cite[\S 2]{Ulm02} shows that
$\deg(\Delta) / 12 = \lceil d / 6 \rceil$.
Hence $\mu = \lceil d / 6 \rceil - 1$.

\subsection{Other examples in the literature} Numerous other computations can be found in \cite{Ulm19} (for example \cite{Ulm19}, Remark 4.5, Theorem 5.1, Corollary 6.5 and its applications in Sections 10--12).

\appendix

\section{Integrality of sum of slopes less than one}

In this appendix, we show that the number $\theta$ defined in Definition \ref{d:theta}
is an integer.
We follow \cite{TY14}.

Let $A / K$ be an abelian variety.
Let $\mathcal{U}$ be an \emph{affine} open subscheme of $\mathcal{C}$
where $A$ has good reduction.
Recall from \cite[Equation (6)]{TY14} the following long exact sequence
	\[
			\cdots
		\to
			\coh^{i}_{\mathrm{rig}, c}(\mathcal{U}, D^{\dagger}(A))
		\to
			\coh^{i}_{\mathrm{rig}}(\mathcal{U}, D^{\dagger}(A))
		\to
			\coh^{i}(\mathrm{DR}_{A_{\mathcal{U}}^{\mathrm{loc}}}(D^{\dagger}(A)))
		\to
			\cdots
	\]
of $F$-isocrystals over $\F_{q}$,
where $D^{\dagger}(A)$ is the $F$-overconvergent isocrystal associated with $A$,
$\coh^{i}_{\mathrm{rig}}$ rigid cohomology and
$\coh^{i}_{\mathrm{rig}, c}$ its version with compact support
(see \cite[\S 4.2]{TY14} for the definition of the third term).
We write the ($p$-th power) Frobenius map for any of these $F$-isocrystals by $\varphi_{p}$
and set $\varphi_{q} = \varphi_{p}^{e}$.
As in the paragraph after \cite[Remark 4.2.1]{TY14},
let $\mathcal{H}^{1}_{\Q_{p}}$ be the image of the map
$\coh^{1}_{\mathrm{rig}, c}(\mathcal{U}, D^{\dagger}(A))
\to \coh^{1}_{\mathrm{rig}}(\mathcal{U}, D^{\dagger}(A))$.
All the determinants below are taken over $W(\F_{q})[1 / p]$.

\begin{Aproposition} \label{p:numpadic}
	$P_{1}(t) = \det(1 - \varphi_{q} t \,|\, \mathcal{H}^{1}_{\Q_{p}}(-1))$,
	where $(-1)$ denotes a Tate twist.
\end{Aproposition}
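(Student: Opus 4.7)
The plan is to extract the claim from the cohomological description of the full $L$-function of $A/K$ in rigid cohomology of the overconvergent $F$-isocrystal $D^{\dagger}(A)$, combined with the long exact sequence displayed just before the statement.

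First I would recall, from \cite{TY14} (which is itself derived from Kato--Trihan \cite{KT03} by comparing log-crystalline and rigid cohomologies), the cohomological formula
\[
		L_{A}(s) \cdot \prod_{v \notin \mathcal{U}} P_{v}(q_{v}^{-s})
	=
		\prod_{i = 0}^{2}
			\det\bigl(
				1 - \varphi_{q} q^{-s} \bigm| \coh^{i}_{\mathrm{rig}, c}(\mathcal{U}, D^{\dagger}(A))(-1)
			\bigr)^{(-1)^{i + 1}}
\]
for the partial $L$-function on the affine open $\mathcal{U}$ of good reduction, and I would also record its parallel in terms of $\coh^{i}_{\mathrm{rig}}(\mathcal{U}, D^{\dagger}(A))(-1)$. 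Plugging the long exact sequence of the excerpt into this alternating product, and using multiplicativity of characteristic polynomials on short exact sequences of $F$-isocrystals, realizes each $\coh^{i}_{\mathrm{rig}, c}$-factor as the product of a $\coh^{i}_{\mathrm{rig}}$-factor with a factor coming from $\coh^{\bullet}(\mathrm{DR}_{A_{\mathcal{U}}^{\mathrm{loc}}}(D^{\dagger}(A)))$. In particular, the $i = 1$ row contributes $\det(1 - \varphi_{q} t \mid \mathcal{H}^{1}_{\Q_{p}}(-1))$, since $\mathcal{H}^{1}_{\Q_{p}}$ is by definition the image of $\coh^{1}_{\mathrm{rig}, c} \to \coh^{1}_{\mathrm{rig}}$, together with auxiliary factors attached to its kernel and cokernel.

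The next step is to match the remaining pieces with the classical ingredients of $L_{A}(s)$. The $\coh^{0}$ and $\coh^{2}$ rows, via the $p$-adic realization of $\mathrm{Tr}_{K/\F_{q}}(A)$, should contribute precisely $P_{0}(t)$ and $P_{2}(t)$, paralleling the $\ell$-adic identification recalled in \S\ref{s:Lfun}. The boundary rows should contribute exactly the Euler factors $\prod_{v \notin \mathcal{U}} P_{v}(q_{v}^{-s})$, one from each place of bad reduction, recovered from the local monodromy of $D^{\dagger}(A)$ at $v$. Cancelling these matched factors from both sides of the resulting equality leaves the desired identity $P_{1}(t) = \det(1 - \varphi_{q} t \mid \mathcal{H}^{1}_{\Q_{p}}(-1))$ after the substitution $t = q^{-s}$.

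The main obstacle I expect is this local matching at bad places: one has to prove that, for each $v \notin \mathcal{U}$, the contribution of $\coh^{\bullet}(\mathrm{DR}_{A_{v}^{\mathrm{loc}}}(D^{\dagger}(A)))$ to the alternating product equals the Hasse-Weil Euler factor $P_{v}(q_{v}^{-s})$. This is the $p$-adic counterpart of Grothendieck's local $\ell$-adic computation and requires unpacking the (weight-monodromy filtration on the) restriction of $D^{\dagger}(A)$ to a punctured formal disc around $v$, and comparing the $q_{v}$-Frobenius action there with the Tate-module definition of $P_{v}$. Once this is established, the global identity collapses to the claimed formula for $P_{1}(t)$.
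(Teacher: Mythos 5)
Your route is genuinely different from the paper's, and substantially more work. You propose a term-by-term matching: decompose the cohomological expression of $L_{A,\mathcal Z}(s)$ using the long exact sequence, identify the $\coh^0$ and $\coh^2$ rows with $P_0(t)$ and $P_2(t)$ via the $p$-adic realization of $\Tr_{K/\F_q}(A)$, identify the de Rham/boundary terms with the bad Euler factors $\prod_{v\in\mathcal Z}P_v(q_v^{-s})$, and cancel. You flag the local matching as the main obstacle, and it is indeed an obstacle: the $p$-adic analogue of Grothendieck's local monodromy computation would have to be carried out for $D^\dagger(A)$ at each bad $v$. The paper's proof bypasses all of this.

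The paper instead runs a pure weight argument in the style of Kedlaya's $p$-adic Weil II. Fix $\iota\colon W(\F_q)[1/p]\hookrightarrow\C$. Since $D^\dagger(A)$ is $\iota$-pure of weight $-1$, one gets: $\coh^0_{\mathrm{rig},c}=0$; $\coh^2_{\mathrm{rig},c}$ is $\iota$-pure of weight $1$ (dual to $\coh^0_{\mathrm{rig}}$); $\coh^1_{\mathrm{rig},c}$ is $\iota$-mixed of weights $\le 0$; $\coh^1_{\mathrm{rig}}$ of weights $\ge 0$; and $\coh^0(\mathrm{DR}_{A_{\mathcal U}^{\mathrm{loc}}}(D^\dagger(A)))$ of weights $\le -1$. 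The exactness then forces $\mathcal H^1_{\Q_p}$ to be the maximal weight-$0$ quotient of $\coh^1_{\mathrm{rig},c}$, since the kernel of $\coh^1_{\mathrm{rig},c}\to\coh^1_{\mathrm{rig}}$ (the image of $\coh^0(\mathrm{DR})$) lies strictly below weight $0$. One then compares the two expressions of $L_{A,\mathcal Z}(s)$ only by weight: on the classical side the weight-$2$ reciprocal roots (after Tate twist) occur exactly in $P_1(t)$, since $P_0,P_2,P_v$ have other weights; on the cohomological side they occur exactly in the maximal weight-$2$ quotient of $\det(1-\varphi_q t\,|\,\coh^1_{\mathrm{rig},c}(-1))$, which is $\det(1-\varphi_q t\,|\,\mathcal H^1_{\Q_p}(-1))$. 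No identification of $\coh^0_{\mathrm{rig},c}$, $\coh^2_{\mathrm{rig},c}$, or the boundary terms with $P_0$, $P_2$, $P_v$ is ever needed; only their weight bounds. So your plan is a correct-in-principle but heavier alternative: what it buys you is an explicit cohomological interpretation of every factor of $L_{A,\mathcal Z}$, whereas the paper's weight argument buys economy, replacing the local monodromy computation (your acknowledged obstacle) with ready-made purity statements from \cite{Ked06} and \cite{TY14}.
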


\begin{proof}
	We fix an embedding $\iota \colon W(\F_{q})[1 / p] \hookrightarrow \C$.
	Since $D^{\dagger}(A)$ is $\iota$-pure of weight $-1$ by \cite[Lemma 4.2.3]{TY14},
	we know that $\coh_{\mathrm{rig}}^{0}(\mathcal{U}, D^{\dagger}(A))$ is
	$\iota$-pure of weight $-1$
	by the third paragraph of \cite[\S 6.1]{Ked06}.
	Dualizing, $\coh_{\mathrm{rig}, c}^{2}(\mathcal{U}, D^{\dagger}(A))$ is
	$\iota$-pure of weight $1$.
	By the proof of \cite[Lemma 4.2.4]{TY14},
	$\coh^{0}(\mathrm{DR}_{A_{\mathcal{U}}^{\mathrm{loc}}}(D^{\dagger}(A)))$ is
	$\iota$-mixed of weights $\le -1$.
	Also, $\coh_{\mathrm{rig}, c}^{1}(\mathcal{U}, D^{\dagger}(A))$ is
	$\iota$-mixed of weights $\le 0$ by \cite[6.6.2 (a)]{Ked06}.
	Similarly, $\coh_{\mathrm{rig}}^{1}(\mathcal{U}, D^{\dagger}(A))$ is
	$\iota$-mixed of weights $\ge 0$.
	
	Hence in the exact sequence
		\[
			 	\coh^{0}(\mathrm{DR}_{A_{\mathcal{U}}^{\mathrm{loc}}}(D^{\dagger}(A)))
			 \to
			 	\coh_{\mathrm{rig}, c}^{1}(\mathcal{U}, D^{\dagger}(A))
			 \to
			 	\coh_{\mathrm{rig}}^{1}(\mathcal{U}, D^{\dagger}(A)),
		\]
	the terms are $\iota$-mixed of weights $\le -1$, $\le 0$ and $\ge 0$, respectively.
	Therefore the image of the second map, which is $\mathcal{H}_{\Q_{p}}^{1}$,
	is the maximal quotient of $\coh_{\mathrm{rig}, c}^{1}(\mathcal{U}, D^{\dagger}(A))$
	$\iota$-pure of weight $0$.
	
	Let $\mathcal{Z} = \mathcal{C} \setminus \mathcal{U}$.
	We have $\coh_{\mathrm{rig}, c}^{0}(\mathcal{U}, D^{\dagger}(A)) = 0$
	since $\coh_{\mathrm{rig}, c}^{i + 1}(\mathcal{U}, D^{\dagger}(A))$
	is the $i$-th cohomology of a complex concentrated in degrees $0$ and $1$ by \cite[\S 4.2]{TY14}.
	Therefore the $L$-function $L_{A, \mathcal{Z}}(s)$ can be written as
		\[
			\frac{
				\det \bigl(
					1 - \varphi_{q} t \,|\, \coh_{\mathrm{rig}, c}^{1}(\mathcal{U}, D^{\dagger}(A))(-1)
				\bigr)
			}{
				\det \bigl(
					1 - \varphi_{q} t \,|\, \coh_{\mathrm{rig}, c}^{2}(\mathcal{U}, D^{\dagger}(A))(-1)
				\bigr)
			}
		\]
	by \cite[4.3]{KT03},
	where $t = q^{-s}$.
	As seen above, the reciprocal roots of the numerator are
	Weil $q$-numbers of weights $\le 2$ (via the embedding $\iota$)
	and the reciprocal roots of the denominator are Weil $q$-numbers of weight $3$.
	The same function $L_{A, \mathcal{Z}}(s)$ can also be written as
		\[
				\frac{P_{1}(t)}{P_{0}(t) P_{2}(t)}
			\cdot
				\prod_{v \in \mathcal{Z}} P_{v}(t_{v}),
		\]
	where $t_{v} = t^{\deg(v)}$ and $P_{v}(t_{v})$ is the Euler factor at $v$.
	The reciprocal roots of $P_{i}(t)$ are Weil $q$-numbers of weight $i + 1$ and
	the reciprocal roots of $P_{v}(t_{v})$ are Weil $q_{v}$-numbers of weights $0$ and $1$.
	Comparing the weights of the zeros and poles of these two expressions of $L_{A, \mathcal{Z}}(s)$,
	we know that the maximal quotient of
		$
			\det \bigl(
				1 - \varphi_{q} t \,|\, \coh_{\mathrm{rig}, c}^{1}(\mathcal{U}, D^{\dagger}(A))(-1)
			\bigr)
		$
	with reciprocal roots of weight $2$ is $P_{1}(t)$.
	Hence $\det(1 - \varphi_{q} t \,|\, \mathcal{H}_{\Q_{p}}^{1}(-1))$ is $P_{1}(t)$.
\end{proof}

\begin{Aproposition} \label{p:isoc}
	Let $(M, \varphi_{p})$ be an $F$-isocrystal over $\F_{q}$.
	Then the vertices of the Newton polygon of $\det(1 - \varphi_{q} t \,|\, M)$
	with respect to the $q$-valuation have integer coordinates,
	where $\varphi_{q} = \varphi_{p}^{e}$.
\end{Aproposition}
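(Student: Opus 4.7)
The plan is to reduce to the case of an isoclinic $F$-isocrystal via the slope filtration, and then use the fact that on an isoclinic piece all eigenvalues of $\varphi_q$ share the same $q$-valuation equal to the slope.

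Concretely, by Katz's slope filtration theorem, $(M,\varphi_p)$ admits a functorial filtration $0 = M_0 \subset M_1 \subset \cdots \subset M_n = M$ by sub-$F$-isocrystals over $\F_q$ with isoclinic subquotients $N_i := M_i/M_{i-1}$ of pairwise distinct slopes $\lambda_1 < \cdots < \lambda_n$. Since $\sigma^e$ is the identity on $K_0 := W(\F_q)[1/p]$, the operator $\varphi_q = \varphi_p^e$ is $K_0$-linear and preserves the filtration, so
\[
    \det(1 - \varphi_q t \mid M) = \prod_{i=1}^n \det(1 - \varphi_q t \mid N_i).
\]
Write $\lambda_i = s_i/r_i$ in lowest terms with $r_i > 0$. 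By Dieudonn\'e--Manin, one has $N_i \otimes_{K_0} W(\bar{\F}_q)[1/p] \cong E_{\lambda_i}^{\oplus m_i}$, so $\dim_{K_0} N_i = m_i r_i$, and there exists a $W(\F_q)$-lattice $\mathcal{L}_i \subset N_i$ with $\varphi_p^{r_i}(\mathcal{L}_i) = p^{s_i} \mathcal{L}_i$. Writing $\varphi_p^{r_i} = p^{s_i} U_i$ for a $\sigma^{r_i}$-semilinear automorphism $U_i$ of $\mathcal{L}_i$, and using that $\sigma$ fixes $p$, the $e$-fold iterate is $\varphi_q^{r_i} = (\varphi_p^{r_i})^e = p^{e s_i} U_i^e$, where $U_i^e$ is $\sigma^{e r_i}$-semilinear, hence $K_0$-linear since $\sigma^e|_{K_0} = \mathrm{id}$. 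As $U_i^e$ is a lattice automorphism, its eigenvalues are $p$-adic units, so every reciprocal root $\alpha$ of $\det(1 - \varphi_q t \mid N_i)$ satisfies $\alpha^{r_i} = p^{e s_i} \cdot u$ for some unit $u$, giving $v_q(\alpha) = s_i / r_i = \lambda_i$. Thus the Newton polygon of $\det(1 - \varphi_q t \mid N_i)$ is the single straight segment from $(0,0)$ to $(m_i r_i, m_i s_i)$, with integer endpoints.

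Concatenating these segments in the order of increasing slope yields the Newton polygon of $\det(1 - \varphi_q t \mid M)$, whose vertices are exactly the cumulative breakpoints
\[
    \Bigl(\sum_{j \le k} m_j r_j,\ \sum_{j \le k} m_j s_j\Bigr), \qquad k = 0, 1, \ldots, n,
\]
all of which have integer coordinates.

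The main obstacle is the semilinear algebra on each isoclinic piece: because $\varphi_p$ twists by $\sigma$, the lattice identity $\varphi_p^{r_i}(\mathcal{L}_i) = p^{s_i} \mathcal{L}_i$ does not assert that $\varphi_p^{r_i}$ is scalar multiplication by $p^{s_i}$, and it is only after passing to the $e$-fold iterate that the semilinear factor $U_i^e$ becomes $K_0$-linear and can be diagonalized in the usual sense. The companion structural input---that $\dim_{K_0} N_i$ is divisible by $r_i$ and an adapted lattice exists on each isoclinic $F$-isocrystal over $\F_q$ of slope $s_i/r_i$---is a standard consequence of Dieudonn\'e--Manin, but needs to be invoked explicitly. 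Once these ingredients are in place, the rest of the argument is purely bookkeeping of integer contributions.
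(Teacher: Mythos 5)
Your argument is correct and is essentially the paper's proof: reduce to the isoclinic case via the slope filtration (the paper uses the slope decomposition, but both give the same multiplicativity of the determinant), then produce an adapted $W(\F_q)$-lattice with $\varphi_p^{r}(\mathcal{L}) = p^{s}\mathcal{L}$ from Dieudonn\'e--Manin plus descent to conclude that every reciprocal root has $q$-valuation equal to the slope, and finally observe that $\dim \cdot \lambda$ is an integer on each isoclinic piece. The only difference is that you spell out the semilinear bookkeeping ($\varphi_q^{r} = p^{es}U^{e}$ with $U^{e}$ a $K_0$-linear lattice automorphism) that the paper leaves implicit.
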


\begin{proof}
	First assume that $M$ is isoclinic.
	Let $n$ be the dimension of $M$ over $W(\F_{q})[1 / p]$.
	Let $\lambda = s / r$ be the slope of $M$,
	where $s$ and $r$ are coprime integers with $r > 0$.
	Then $M$ admits a $W(\F_{q})$-lattice $M_{0}$ such that
	$\varphi_{p}^{r}(M_{0}) = p^{s} M_{0}$ by
	the Dieudonn\'e-Manin classification over $\bar{\F}_{q}$ and Galois descent.
	Hence $\varphi_{q}^{r}(M_{0}) = q^{s} M_{0}$.
	Therefore the $q$-valuations of the reciprocal roots of $\det(1 - \varphi_{q} t \,|\, M)$ are $\lambda$.
	Hence the vertices of the Newton polygon of $\det(1 - \varphi_{q} t \,|\, M)$
	with respect to the $q$-valuation are $(0, 0)$ and $(n, n \lambda)$.
	The number $n \lambda$ is an integer by the Dieudonn\'e-Manin classification over $\bar{\F}_{q}$.
	
	The general case follows from this calculation and the slope decomposition.
\end{proof}

\begin{Aproposition}
	$\theta \in \Z$.
\end{Aproposition}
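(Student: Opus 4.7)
The plan is to combine Propositions A.1 and A.2 essentially directly. Let $v_q = v_p / e$ denote the $q$-adic valuation. By construction, $\theta$ equals $-\min_i v_q(c_i)$, where $P_1(t/q) = \sum_i c_i t^i$ with $c_0 = 1$. Equivalently, $-\theta$ is the $y$-coordinate of the lowest point of the Newton polygon of $P_1(t/q)$ computed with respect to $v_q$; this lowest point is automatically a vertex of the polygon (it is either an endpoint, or the breakpoint where the slopes transition from negative to non-negative, corresponding to the split in Proposition \ref{p:slope} between $\lambda_j < 1$ and $\lambda_j \ge 1$). Therefore it suffices to show that the vertices of this Newton polygon have integer coordinates.

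First, I would convert Proposition A.1 into a statement for $P_1(t/q)$ rather than $P_1(t)$. The Tate twist by $(-1)$ multiplies the action of $\varphi_q$ by $q$, hence
\[
\det(1 - \varphi_q t \mid \mathcal{H}^1_{\Q_p}(-1)) = \det(1 - q\, \varphi_q t \mid \mathcal{H}^1_{\Q_p}),
\]
and substituting $t \mapsto t/q$ on both sides yields
\[
P_1(t/q) = \det(1 - \varphi_q t \mid \mathcal{H}^1_{\Q_p}).
\]
Note that $\mathcal{H}^1_{\Q_p}$, being the image of a morphism of $F$-isocrystals over $\F_q$, is itself an $F$-isocrystal over $\F_q$.

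Next, I would apply Proposition A.2 to $M = \mathcal{H}^1_{\Q_p}$. This immediately yields that the vertices of the Newton polygon of $P_1(t/q)$ with respect to $v_q$ have integer coordinates. Combined with the preliminary reduction, the lowest point of this Newton polygon is a vertex with integer $y$-coordinate $-\theta$, so $\theta \in \Z$.

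There is really no obstacle beyond careful bookkeeping: the only things one needs to check are the effect of the Tate twist on the Frobenius action (which is standard) and that $\mathcal{H}^1_{\Q_p}$ is a genuine $F$-isocrystal rather than merely an abstract $W(\F_q)[1/p]$-vector space with a Frobenius semilinear endomorphism. Both are immediate from the setup imported from \cite{TY14}, so the argument reduces to one line after Propositions A.1 and A.2 are in place.
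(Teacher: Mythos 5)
Your proof is correct and takes essentially the same route as the paper: both rest on Propositions A.1 and A.2 and the observation that the relevant quantity sits at an integer-coordinate vertex of a Newton polygon. The only cosmetic difference is that you untwist and work with the Newton polygon of $P_1(t/q)$ directly (so that $-\theta$ is literally the $y$-coordinate of its lowest vertex), whereas the paper works with $P_1(t)$, extracts the integrality of $\sum_{\lambda_j<1}\lambda_j$ from a vertex, and then invokes Proposition \ref{p:slope}; both paths are equivalent one-line bookkeeping.
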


\begin{proof}
	By Proposition \ref{p:numpadic},
	$P_{1}(t)$ is the characteristic polynomial of the $q$-th power Frobenius action
	on an $F$-isocrystal over $\F_{q}$.
	Hence the vertices of the Newton polygon of $P_{1}(t)$ with respect to the $q$-valuation
	have integer coordinates
	by Proposition \ref{p:isoc}.
	In particular, $\sum_{\lambda_{j} < 1} \lambda_{j} \in \Z$.
	This implies that $\theta \in \Z$
	by Proposition \ref{p:slope}.
\end{proof}

\begin{Aremark} \mbox{}
	\begin{enumerate}
	\item
		To see how non-trivial this statement $\theta \in \Z$ is,
		consider the case $q = 4$ and
		the polynomial $P'(t) = 1 - 2 t + 16 t^{2}$ instead of $P_{1}(t)$.
		The reciprocal roots of $P'(t)$ are Weil $q$-numbers of weight $2$, just as $P_{1}(t)$.
		The polynomial $P'(t)$ also satisfies the expected functional equation
		$P'(t) = 16 t^2 P'(1 / 16 t)$.
		The $q$-valuations $\{\lambda_{j}'\}$ of the reciprocal roots of $P'(t)$ are $1/2$ and $3/2$,
		which are numbers between $0$ and $2$
		invariant under $\lambda_{j}' \leftrightarrow 2 - \lambda_{j}'$.
		Therefore $P'(t)$ shares most of the properties of $P_{1}(t)$.
		Yet $\sum_{\lambda_{j}' < 1} \lambda_{j}'$ is $1/2$, which is not an integer.
		It is crucial that $P_{1}(t)$ comes from an $F$-isocrystal over $\F_{q}$.
	\item
		In the proof of \cite[4.1]{Ulm19},
		it is stated that since the break points of a Newton polygon have integer coordinates,
		$\sum_{\lambda_{i} < 1} (\lambda_{i} - 1)$ is an integer.
		A possible interpretation of this line is the combination of the above three propositions.
		This integrality is used to prove that
		$\dim \Sha(A)$ in the notation of \cite[4.1]{Ulm19} is an integer.
	\item
		In the case $A$ is a Jacobian,
		the integrality of $\theta$ ($= \theta_{A}$) also follows from Crew-Milne's formula
		\eqref{e:Milne} for surfaces (so $\theta_{\mathcal{S}} \in \Z$)
		and Proposition \ref{p:Lcomp}.
	\item
		In \cite[Theorem 1 b)]{Kah14}, it is stated that
		$P_{1}(t)$ is the reciprocal of the ``$Z$-function'' of a certain Chow motive over $\F_{q}$.
		If we used this statement, then $P_{1}(t)$ would be expressed as
		the characteristic polynomial of the $q$-th power Frobenius on
		the crystalline realization of this Chow motive
		by \cite[Proposition 3.4 and Lemma 5.1]{Kah14}.
		This could be used instead of Proposition \ref{p:numpadic}
		to give another proof of $\theta \in \Z$.
		Unfortunately, the proof of \cite[Theorem 1 b)]{Kah14}
		(especially \cite[Proposition 3.5]{Kah14}) is valid only for the case $A$ is a Jacobian,
		as pointed out in the paragraphs after \cite[Th\'eor\`eme 8.6]{Kah18b}.
	\end{enumerate}
\end{Aremark}

\end{document}